\newcommand\tsout{\bgroup\markoverwith{\textcolor{red}{\rule[0.5ex]{2pt}{1.4pt}}}\ULon}
\newcommand{\stkout}[1]{\ifmmode\text{\tsout{\ensuremath{#1}}}\else\tsout{#1}\fi}
\theoremstyle{definition}
\newtheorem{theorem}{Theorem}[section]
\newtheorem{lemma}[theorem]{Lemma}
\newtheorem{proposition}[theorem]{Proposition}
\newtheorem{corollary}[theorem]{Corollary}
\newtheorem{definition}[theorem]{Definition}
\numberwithin{equation}{section}
\newcommand{\bff}{\boldsymbol}
\newcommand{\bb}{\mathbb}
\newcommand{\dt}{\mathrm{d}t}
\newcommand{\ddt}{\frac{\mathrm{d}}{\mathrm{d}t}}
\newcommand{\ds}{\mathrm{d}s}
\newcommand{\dy}{\mathrm{d}y}
\newcommand{\dtau}{\mathrm{d}\tau}
\newcommand{\bl}[1]{{\color{blue}{#1}}}
\newcommand{\norm}[2]{\left\|{#1}\right\|_{#2}}
\newcommand{\inpro}[2]{\left\langle#1,#2\right\rangle}
\newcommand{\abs}[1]{\left|{#1}\right|}
\begin{document}
\setcounter{page}{1}

\title[Global attractor and robust exponential attractors for fourth-order PDEs]{Global attractor and robust exponential attractors for some classes of fourth-order nonlinear evolution equations}

\author[Beniamin Goldys]{Beniamin Goldys}
\address{School of Mathematics and Statistics, The University of Sydney, Sydney 2006, Australia}
\email{\textcolor[rgb]{0.00,0.00,0.84}{beniamin.goldys@sydney.edu.au}}

\author[Agus L. Soenjaya]{Agus L. Soenjaya}
\address{School of Mathematics and Statistics, The University of New South Wales, Sydney 2052, Australia}
\email{\textcolor[rgb]{0.00,0.00,0.84}{a.soenjaya@unsw.edu.au}}

\author[Thanh Tran]{Thanh Tran}
\address{School of Mathematics and Statistics, The University of New South Wales, Sydney 2052, Australia}
\email{\textcolor[rgb]{0.00,0.00,0.84}{thanh.tran@unsw.edu.au}}

\date{\today}
\keywords{}
\subjclass{}

\begin{abstract}
We study the long-time behaviour of solutions to some classes of fourth-order nonlinear PDEs with non-monotone nonlinearities, which include the Landau--Lifshitz--Baryakhtar (LLBar) equation (with all relevant fields and spin torques) and the convective Cahn--Hilliard/Allen--Cahn (CH-AC) equation with a proliferation term, in dimensions $d=1,2,3$. Firstly, we show the global well-posedness, as well as the existence of global and exponential attractors with finite fractal dimensions for these problems. In the case of the exchange-dominated LLBar equation and the CH-AC equation without convection, an estimate for the rate of convergence of the solution to the corresponding stationary state is given. Finally, we show the existence of a robust family of exponential attractors when $d\leq 2$. As a corollary, exponential attractor of the LLBar equation is shown to converge to that of the Landau--Lifshitz--Bloch equation in the limit of vanishing exchange damping, while exponential attractor of the convective CH-AC equation is shown to converge to that of the convective Allen--Cahn equation in the limit of vanishing diffusion coefficient.
\end{abstract}
\maketitle
\tableofcontents

\section{Introduction}
This paper aims to show the existence of global attractor and a family of robust exponential attractors for some classes of fourth-order nonlinear PDEs, which include the vector-valued Landau--Lifshitz--Baryakhtar (LLBar) equation with spin-torques and the scalar-valued convective Cahn--Hilliard/Allen--Cahn (CH-AC) equation with a proliferation term, among others. The result also applies to their limiting cases, namely the Landau--Lifshitz--Bloch (LLB) equation with spin-torques and the convective Allen--Cahn equation. The existence of global and exponential attractors of finite fractal dimension allows a reduction, in some sense, of an infinite-dimensional dynamical system to a finite-dimensional one~\cite{Rob01}.

We now describe the general form of the problem discussed in this paper. Let $\mathscr{O} \subset \bb{R}^d$, $d\in \{1,2,3\}$, be an open bounded domain. Let $\bff{u}(t,\bff{x})\in \bb{R}^m$, where $m=1$ or $3$, be the unknown functions (which can be scalar- or vector-valued). Here, $\bff{x}\in \mathscr{O}$ is the spatial variable, and $t\in (0,T)$ is the temporal variable with $T>0$. The boundary of $\mathscr{O}$ is denoted by $\partial\mathscr{O}$, with exterior unit normal vector denoted by $\bff{n}$. The problem can be written as:
\begin{subequations}\label{equ:llbar}
	\begin{alignat}{2}
		&\partial_t \bff{u}
		= 
		\sigma \big(\bff{H}+\Phi_{\mathrm{d}}(\bff{u})\big)
		- \varepsilon \Delta \big(\bff{H}+\Phi_{\mathrm{d}}(\bff{u})\big) 
		&&
		\nonumber \\
		\label{equ:llbar a}
		&\qquad\qquad
		- \gamma \bff{u} \times \big(\bff{H}+\Phi_{\mathrm{d}}(\bff{u})\big)
		+ \mathcal{R}(\bff{u})
		+ \mathcal{S}(\bff{u})
		&&\qquad  \text{for $(t,\bff{x})\in(0,T)\times\mathscr{O}$,}
		\\[1ex]
		\label{equ:llbar b}
		&\bff{H}
		= 
		\Psi(\bff{u})
		+ \Phi_{\mathrm{a}}(\bff{u})
		&&\qquad \text{for $(t,\bff{x})\in(0,T)\times\mathscr{O}$,}
		\\[1ex]
		\label{equ:llbar c}
		&\bff{u}(0,\bff{x})= \bff{u}_0(\bff{x}) 
		&&\qquad \text{for } \bff{x}\in \mathscr{O},
		\\[1ex]
		\label{equ:llbar d}
		&\displaystyle{
			\frac{\partial \bff{u}}{\partial \bff{n}}= \bff{0}}, 
		\;\displaystyle{\frac{\partial \bff{H}}{\partial \bff{n}}= \bff{0}} 
		&&\qquad \text{for } (t,\bff{x})\in (0,T) \times \partial \mathscr{O}.
	\end{alignat}
\end{subequations}
The coefficients~$\sigma, \varepsilon$, and $\gamma$ are positive constants of physical significance, and in particular $\varepsilon$ is called the exchange damping coefficient if $m=3$, or the diffusion coefficient if $m=1$. We set~$\gamma=0$ if $\bff{u}$ is scalar-valued ($m=1$). The terms~$\Psi(\bff{u}), \Phi_{\mathrm{d}}(\bff{u}), \Phi_{\mathrm{a}}(\bff{u}), \mathcal{R}(\bff{u})$, and~$\mathcal{S}(\bff{u})$ are nonlinear functions of $\bff{u}$ and possibly its spatial gradient, whose exact forms are detailed in Section~\ref{subsec:assum}. While we consider the Neumann boundary condition in the above problem, similar arguments will also work for the Dirichlet or the periodic boundary conditions.

When $\bff{u}$ is vector-valued ($m=3$), problem~\eqref{equ:llbar} is the initial-boundary value problem associated with the Landau--Lifshitz--Baryakhtar (LLBar) equation~\cite{BarBar98, Bar84, BarIva15, DvoVanVan14}, which describes the evolution of the magnetisation vectors $\bff{u}(t,\bff{x}) \in \bb{R}^3$ on any point in a magnetic body $\mathscr{O}$ at elevated temperatures. The unknown field $\bff{H}$ is called the effective field. Spin-torque effects due to currents~\cite{WanDvo15, YasFasIvaMak22} and anisotropy of the material~\cite{ManMukPan19} are also taken into account. Formally setting $\varepsilon=0$ in~\eqref{equ:llbar a} gives the Landau--Lifshitz--Bloch (LLB) equation~\cite{Gar91, Gar97, Le16} with spin-torques~\cite{AyoKotMouZak21}, which is a system of quasilinear second-order PDEs. In physical applications, often the limit $\varepsilon\to 0^+$ is taken in the LLBar equation when certain long-range interactions are negligible~\cite{DvoVanVan13, WanDvo15}.

When $\bff{u}$ is scalar-valued ($m=1$), we always set $\gamma=0$ and $\Phi_{\mathrm{d}}(\bff{u})= \Phi_{\mathrm{a}}(\bff{u})=0$. In this case, problem~\eqref{equ:llbar} is the initial-boundary value problem associated with the convective Cahn--Hilliard/Allen--Cahn (CH-AC) equation which models multiple microscopic mechanisms involving diffusion, reaction, transport, and adsorption in cluster interface evolution~\cite{AntKarTzi21, KarKat07, KarNag14, Kha21}. The unknown $\bff{u}$ is often called the order parameter and $\bff{H}$ is the potential.
We further remark that the case $\sigma=0$ and $\mathcal{S}(\bff{u})=0$ gives the convective Cahn--Hilliard (CH) equation~\cite{EdeKal07, GolDavNep98}, while the case $\mathcal{R}(\bff{u})=0$ gives the Cahn--Hilliard equation with a mass source term~\cite{FakMghNas21, Lam22, MchDav20, Mir21}. The term $\mathcal{S}(u)$ represents a proliferation term~\cite{SenKha24, FakMghNas21, Mir13}, which is relevant in various biological applications. The problem~\eqref{equ:llbar} also describes a generalised diffusion model for growth and dispersal in a population~\cite{CohMur81}. Formally setting $\varepsilon=0$ gives a second-order PDE known as the convective Allen--Cahn (AC) equation~\cite{SheTanYan16, SheZha22} or a reaction-diffusion-convection model with Allee effect in mathematical biology~\cite{WanShiWan19}. Thus, it is also of interest to examine the behaviour of \eqref{equ:llbar} as $\varepsilon\to 0^+$ if $\bff{u}$ is scalar-valued.

Some mathematical results which are relevant to the present paper will be reviewed here and in the following paragraph. For problem~\eqref{equ:llbar} with $m=3$ and~$\Phi_{\mathrm{d}}(\bff{u})= \Phi_{\mathrm{a}}(\bff{u})= \mathcal{R}(\bff{u})= \mathcal{S}(\bff{u})=\bff{0}$, i.e. the LLBar equation, the global existence and uniqueness of strong solution for any finite $T>0$ are shown in \cite{SoeTra23} (also in~\cite{GolSoeTra24b} for the stochastic case). Some numerical schemes to approximate the solution are proposed in~\cite{Soe24, SoeTra23b}. In the case of the exchange-dominated LLB equation ($\varepsilon=0$), the existence of weak solution is obtained in~\cite{Le16}, while the existence and uniqueness of strong solution are shown in~\cite{LeSoeTra24}. The LLB equation with spin torques is considered in~\cite{AyoKotMouZak21}, where the existence and uniqueness of weak solution for $d\leq 2$ were shown under certain assumptions. However, to the best of our knowledge, the analysis of the LLBar or the LLB equations with full effective fields and spin-torques are not available yet in the literature. Asymptotic behaviour of the solutions to these equations in terms of finite-dimensional attractors has not been discussed before either.

For the problem~\eqref{equ:llbar} with $m=1$ and $\mathcal{R}(\bff{u}) = \mathcal{S}(\bff{u})=0$, i.e. the CH-AC equation, the existence and uniqueness of weak and strong solutions are shown in~\cite{KarNag14, LiuTan17}, while the existence of global attractor in 2D can be established by similar argument as in~\cite{Kha21}.
The existence and uniqueness of solution to the convective CH equation with periodic boundary conditions are shown in~\cite{EdeKal07} (also in~\cite{Zha18} for the case of unbounded domains), while the existence of global attractor is obtained in~\cite{EdeKal07, ZhaLiu12}. We also mention several other papers~\cite{FakMghNas21, Lam22, Mir13, Mir21}, which study the Cahn--Hilliard equation with a polynomial source term. While many results are available in the literature for the scalar-valued CH or CH-AC equations, none of them are sufficiently general to cover the nonlinearities present in problem~\eqref{equ:llbar}, especially for $d=3$. Moreover, the limiting case $\varepsilon\to 0^+$ (vanishing diffusion coefficient) has also not been studied.

This paper aims to unify and further develop the analysis of~\eqref{equ:llbar} by deriving the following results:
\begin{enumerate}[(i)]
	\item global existence and uniqueness of weak and strong solution to~\eqref{equ:llbar} on $(0,T)\times \mathscr{O}$, for any $T>0$ (Theorem~\ref{the:sol}),
	\item existence of a (compact) global attractor for \eqref{equ:llbar} with finite fractal dimension (Theorem~\ref{the:attractor}, Theorem~\ref{the:dim}),
    \item convergence of the solution of the LLBar equation to the corresponding stationary state, with an estimate on the rate of convergence, in the case of exchange-dominated field (Theorem~\ref{the:u omega}),
	\item existence of an exponential attractor for \eqref{equ:llbar} and its characterisation (Theorem~\ref{the:exp attractor}),
    \item existence of a robust (in $\varepsilon$) family of exponential attractors for~\eqref{equ:llbar} when $d\leq 2$ (Theorem~\ref{the:robust exp att}).
\end{enumerate}
Existence of a solution to the problem~\eqref{equ:llbar} is obtained by means of the Faedo--Galerkin method. Owing to the nature of nonlinearities present in the problem (which are non-monotone), a detailed analysis is done to derive uniform a priori estimates on the approximate solutions in suitable function spaces, which extend the solution globally in time. The existence of global and exponential attractors is deduced by showing various dissipative and smoothing estimates. To obtain a robust family of exponential attractors, more careful analysis is needed to ensure these estimates are uniform in the parameter $\varepsilon$. We reiterate that while the existence of global solution to the LLBar equation has been shown in~\cite{SoeTra23}, the model considered there only include the exchange field in $\bff{H}$ and does not consider any convective terms. Most a priori estimates, especially the smoothing estimates and the uniform estimates independent of $\varepsilon$ developed in this paper, are new.

As a corollary, we deduce the existence of the global attractor and an exponential attractor with finite fractal dimensions for the LLB equations (taking into account all relevant effective fields and spin torques) when $d\leq 2$. In this case, we show that exponential attractor of the LLBar equation converges (in the sense of the symmetric Hausdorff distance) at a given rate to that of the LLB equation as $\varepsilon\to 0^+$. Similar results are also obtained for the convective Cahn--Hilliard/Allen--Cahn equation, for which the convergence to the convective Allen--Cahn equation is shown in the limit of vanishing diffusion coefficient.

\section{Preliminaries}\label{sec:prelim}

\subsection{Notations}
We begin by defining some notations used in this paper. For $m=1$ or $m=3$, the function space $\bb{L}^p := \bb{L}^p(\mathscr{O}; \bb{R}^m)$ denotes the usual space of $p$-th integrable functions taking values in $\bb{R}^m$ and $\bb{W}^{k,p} := \bb{W}^{k,p}(\mathscr{O}; \bb{R}^m)$ denotes the usual Sobolev space of 
functions on $\mathscr{O} \subset \bb{R}^d$ taking values in $\bb{R}^m$. We
write $\bb{H}^k := \bb{W}^{k,2}$. The dual space of $\bb{H}^k$ will be denoted by $\widetilde{\bb{H}}^{-k}$. The Laplacian operator acting on $\bb{R}^m$-valued functions is denoted by $\Delta$. The domain of the Neumann Laplacian is denoted by $\mathrm{D}(\Delta)$.

If $X$ is a Banach space, the spaces $L^p(0,T; X)$ and $W^{k,p}(0,T;X)$ denote respectively the usual Lebesgue and Sobolev spaces of functions on $(0,T)$ taking values in $X$. We write $H^k(0,T;X):=W^{k,2}(0,T;X)$. The space $C([0,T];X)$ denotes the space of continuous functions on $[0,T]$ taking values in $X$. Throughout this paper, we denote the scalar product in a Hilbert space $H$ by $\langle \cdot, \cdot\rangle_H$ and its corresponding norm by $\|\cdot\|_H$. We will not distinguish between the scalar product of $\bb{L}^2$ functions taking values in $\bb{R}^m$ and the scalar product of $\bb{L}^2$ matrix-valued functions taking values in $\bb{R}^{m\times m}$, and still denote them by $\langle\cdot,\cdot\rangle_{\bb{L}^2}$.

Throughout this paper, the constant $C$ in the estimate always denotes a
generic constant which may take different values at different occurrences. If the dependence of $C$ on some variable, e.g.~$S$, is highlighted, we will write
$C(S)$. We will also use the following notations for the constants: $M_k$, $\rho_k$, $\mu_k$, $\widetilde{\rho_k}$ for $k=0,1,\ldots$. Their dependencies on $\bff{u}_0$ will be clearly stated where they appear.

\subsection{Formulation of the problem and assumptions}\label{subsec:assum}
In this section, we provide further details on the formulation of problem~\eqref{equ:llbar}. Recall that $m=1$ or $3$.
The meaning of each term in~\eqref{equ:llbar} is as follows:
\begin{enumerate}[(i)]
	\item $\bff{u}(t): \mathscr{O} \to \bb{R}^m$ is the magnetic spin field if $m=3$, or the order parameter if $m=1$.
	\item $\bff{H}(t):\mathscr{O} \to \bb{R}^m$ is the effective magnetic field if $m=3$, or the potential if $m=1$.
	\item $\mathcal{R}(\bff{u})$ is the convective term defined by 
	\begin{equation}\label{equ:R u}
    \mathcal{R}(\bff{u}):=
	\begin{cases}
		\beta_1(\bff{\nu}\cdot \nabla)\bff{u}
        +
        \chi \bff{u}  \sum_{i=1}^d \partial_i \bff{u}, \quad &\text{if } m=1,
		\\
		\beta_1(\bff{\nu}\cdot \nabla)\bff{u}
	+ \beta_2 \bff{u} \times (\bff{\nu}\cdot\nabla)\bff{u} + \chi \nabla\cdot (\bff{u}\otimes \bff{u}), \quad &\text{if } m=3,
	\end{cases} 
    \end{equation}
	where $\bff{\nu}: \mathscr{O} \to \bb{R}^d$ is the given current density~\cite{EdeKal07, GolDavNep98, ThiNakMilSuz05} independent of $t$ and $\bff{u}\otimes\bff{u}:= \bff{u}\bff{u}^\top$.
	\item $\mathcal{S}(\bff{u})$ consists of other lower-order source term (spin-orbit torque or proliferation term) which grows at most quadratically in $\bff{u}$, whose properties are detailed \bl{in~\eqref{equ:S u}--\eqref{equ:S u min v}.}
	\item $\Psi(\bff{u})$ is the sum of the exchange field and the Ginzburg--Landau (phase transition) field defined by
 	\begin{equation}\label{equ:Psi u}
    \Psi(\bff{u}):= \Delta \bff{u}+ \kappa_1 \bff{u}- \kappa_2 |\bff{u}|^2 \bff{u},
    \end{equation}
    where $\kappa_2>0$.
\end{enumerate}
    Two of the terms in~\eqref{equ:llbar} are only relevant when $m=3$, namely:
\begin{enumerate}[(i)]
	\item $\Phi_{\mathrm{a}}(\bff{u})$ is the anisotropy field with cubic nonlinearities given by
	\begin{equation}\label{equ:aniso}
		\Phi_{\mathrm{a}}(\bff{u})=\lambda_1 (\bff{e}\cdot \bff{u})\bff{e} - \lambda_2 (\bff{e}\cdot \bff{u})^3 \bff{e},
	\end{equation}
	where $\lambda_2\geq 0$ and $\bff{e}\in \bb{R}^3$ is a given unit vector.
	\item The continuous operator $\Phi_{\mathrm{d}}: \bb{L}^2(\mathscr{O})\to \bb{L}^2(\bb{R}^3)$ defining the demagnetising field satisfies the static Maxwell-Amp\`ere equations on $\bb{R}^3$:
	\begin{equation}\label{equ:demag}
		\begin{cases}
			\mathrm{curl}\, \Phi_{\mathrm{d}}(\bff{u}) = \bff{0}
			&\text{in } \bb{R}^3,
			\\
			\mathrm{div}\, (\Phi_{\mathrm{d}}(\bff{u})+ \bff{u} \mathds{1}_{\mathscr{O}})= 0
			&\text{in } \bb{R}^3,
		\end{cases}
	\end{equation}
	where $\bff{u}\mathds{1}_{\mathscr{O}}:\bb{R}^d \to \bb{R}^3$ is an extension of $\bff{u}$ by zero outside of $\mathscr{O}$, namely
	\[
	\bff{u}\mathds{1}_{\mathscr{O}}(\bff{x}):=
	\begin{cases}
		\bff{u}(\bff{x}), \quad &\text{if } \bff{x}\in \mathscr{O},
		\\
		\bff{0}, \quad &\text{if } \bff{x}\in \bb{R}^3\setminus \mathscr{O}.
	\end{cases}
	\]
\end{enumerate}
We remark that a more general anisotropy field such as the cubic anisotropy field or the uniaxial anisotropy field with spatially-dependent parameter, as well as other zero order contributions to the field (such as a spatially-varying applied field) can be considered without difficulties, but are  omitted for simplicity.

The domain $\mathscr{O}$ is assumed to be $C^\infty$-smooth for simplicity, so that Sobolev inequalities and elliptic regularity results hold. The constants $\beta_1$, $\beta_2$, $\chi$, $\kappa_1$, and $\lambda_1$ may be positive or negative, but without loss of generality they will be taken as positive throughout the paper. More precisely, according to the Ginzburg--Landau theory, the constant $\kappa_1$ is positive for temperatures above the Curie temperature and negative below it.

Assumptions made in this paper are stated in the following:
\begin{enumerate}
	\item The constant $\varepsilon$ is generally taken to be small (say $0\leq \varepsilon\leq 1$), such that
	\begin{equation}\label{equ:epsilon sigma kappa}
		\sigma-(\kappa_1+\lambda_1) \varepsilon>0
	\end{equation}
	for physical reasons and for simplicity of presentation. If this is not the case, then the interpolation inequality can be used as in~\eqref{equ:I5 I9 un}.
	\item The constant $\chi$ in \eqref{equ:R u} is assumed to be sufficiently small, say of order $\varepsilon$, or such that
    \begin{equation}\label{equ:chi2}
        2\chi^2 < \kappa_2 \sigma^2.
    \end{equation}
    The current density~$\bff{\nu}\in \bb{H}^2(\mathscr{O};\bb{R}^d)$ satisfies
	\begin{equation}\label{equ:spin bound}
		\norm{\bff{\nu}}{\bb{H}^2(\mathscr{O};\bb{R}^d))}^2 \leq \nu_\infty,
	\end{equation}
	for some positive constant $\nu_\infty$.
	\item The constant $\lambda_1$ in~\eqref{equ:aniso} can be positive or negative, while $\lambda_2\geq 0$. More generally, the argument presented here will still hold when $\lambda_2<0$ such that $\lambda_2+\kappa_2>0$. Without loss of generality, we assume $\lambda_1, \lambda_2>0$. In Section~\ref{sec:furth attractor}, we assume $\lambda_2=0$.
	\item The source term $\mathcal{S}(\bff{u})$ satisfies the following assumptions: There exists a constant $C>0$ depending only on $|\mathscr{O}|$ such that
	\begin{align}
		\label{equ:S u}
		&\abs{\mathcal{S}(\bff{v})} \leq C|\bff{v}| (1+\abs{\bff{v}}), \quad\forall \bff{v}\in \bb{R}^3,
		\\
		\label{equ:S nab u}
		&\norm{\nabla\mathcal{S}(\bff{v})}{\bb{L}^2} \leq C\left(1+\norm{\bff{v}}{\bb{L}^\infty}\right) \norm{\nabla \bff{v}}{\bb{L}^2}, \quad\forall \bff{v}\in \bb{H}^1\cap \bb{L}^\infty,
		\\
		\label{equ:S Delta u}
		&\norm{\mathcal{S}(\bff{v})}{\bb{H}^k} 
		\leq 
		C\big(1+\norm{\bff{v}}{\bb{H}^k}^2\big), \quad\forall \bff{v}\in \bb{H}^k\; \text{ where }k=2,3,
        \\
		\label{equ:S u min v}
		&\norm{\mathcal{S}(\bff{v})-\mathcal{S}(\bff{w})}{\bb{L}^2} 
		\leq
		C\left(1+ \norm{\bff{v}}{\bb{L}^\infty} + \norm{\bff{w}}{\bb{L}^\infty}\right) \norm{\bff{v}-\bff{w}}{\bb{L}^2},
		\quad \forall \bff{v},\bff{w}\in \bb{L}^\infty.
	\end{align}
	An example of such map is $\mathcal{S}(\bff{u})= \bff{u}+ (\bff{a}\cdot \bff{u})\bff{u}$, where $\bff{a}$ is a given vector in $\bb{R}^3$.
\end{enumerate}

The weak formulation of \eqref{equ:llbar} used in this paper is stated below.

\begin{definition}\label{def:weakform}
	Let $T>0$ and initial data $\bff{u}_0\in \bb{H}^1$ be given. A function 
    \[
    \bff{u}\in H^1\big(0,T; \widetilde{\bb{H}}^{-1}\big) \cap L^\infty(0,T;\bb{H}^1) \cap L^2(0,T;\bb{H}^3) 
    \]
    is a weak solution to problem
	\eqref{equ:llbar} if $\bff{u}(0)=\bff{u}_0$, and for any $\bff{\chi}\in \bb{H}^1$ and $t\in (0,T)$,
	\begin{align*}
		\inpro{\partial_t \bff{u}(t)}{\bff{\chi}}_{\bb{L}^2}
		&=
		\sigma 
		\inpro{\bff{H}(t)+\Phi_{\mathrm{d}}(\bff{u}(t))}{\bff{\chi}}_{\bb{L}^2}
		+
		\varepsilon
		\inpro{\nabla \bff{H}(t)}{\nabla
			\bff{\chi}}_{\bb{L}^2}
		-
		\varepsilon 
		\inpro{\Delta \Phi_{\mathrm{d}}(\bff{u}(t))}{\bff{\chi}}_{\bb{L}^2}
		\nonumber \\
		&\quad 
		-
		\gamma 
		\inpro{\bff{u}(t) \times \big(\bff{H}(t)+\Phi_{\mathrm{d}}(\bff{u}(t))\big)}{\bff{\chi}}_{\bb{L}^2}
		+
		\inpro{\mathcal{R}(\bff{u}(t))}{\bff{\chi}}_{\bb{L}^2}
        +
		\inpro{\mathcal{S}(\bff{u}(t))}{\bff{\chi}}_{\bb{L}^2},
	\end{align*}
	where 
 \begin{equation}\label{eq:H def}
 \bff{H}(t)= \Psi(\bff{u}(t))+ \Phi_{\mathrm{a}}(\bff{u}(t)) \quad\text{in}\;\bb{L}^2.
 \end{equation}
	
	A weak solution $\bff{u}$ is called a strong solution if
	\[
	\bff{u}\in H^1(0,T;\bb{L}^2) \cap L^\infty(0,T; \bb{H}^2) \cap L^2(0,T;\bb{H}^4).
	\]
	In this case, $\bff{u}$ satisfies \eqref{equ:llbar} almost everywhere in $(0,T)\times \mathscr{O}$.
\end{definition}

Note that by embedding theorems~\cite{LioMag72}, weak solution belongs to the space $C([0,T];\bb{H}^1)$, while strong solution belongs to $C([0,T];\bb{H}^2)$.

\subsection{Auxiliary results}\label{subsec:basic}

In this section, we collect some estimates and identities that will be needed for our analysis.
For any vector-valued functions $\bff{v},\bff{w}:\mathscr{O}\to\bb{R}^3$, we have
	\begin{align}
		\label{equ:nab un2}
		\nabla (|\bff{v}|^2 \bff{w}) 
		&= 
		2 \bff{w} \ (\bff{v}\cdot \nabla\bff{v}) 
		+ |\bff{v}|^2 \nabla \bff{w},
		\\
		\label{equ:d u2u dn}
		\frac{\partial\big(|\bff{v}|^2\bff{v}\big)}{\partial\bff{n}}
		&=
		2 \bff{v} \Big(\bff{v}\cdot \frac{\partial\bff{v}}{\partial\bff{n}}\Big)
		+
		|\bff{v}|^2 \frac{\partial\bff{v}}{\partial\bff{n}},
		\\
		\label{equ:del v2v}
		\Delta (|\bff{v}|^2 \bff{w}) 
		&= 
		2|\nabla \bff{v}|^2 \bff{w} 
		+ 2(\bff{v}\cdot \Delta \bff{v})\bff{w} 
		+ 4 \nabla \bff{w} \ (\bff{v}\cdot \nabla\bff{v})^\top
		+ |\bff{v}|^2 \Delta \bff{w}, 
	\end{align}
	provided that the partial derivatives are well defined. As a consequence of~\eqref{equ:d u2u dn}, \eqref{eq:H def}, \eqref{equ:Psi u}, \eqref{equ:aniso},  and~\eqref{equ:llbar d}, for a sufficiently regular solution $\bff{u}$ of equation~\eqref{equ:llbar}, $\partial(\Delta\bff{u})/\partial \bff{n}= \bff{0}$ on $\partial \mathscr{O}$.

\begin{lemma}
	Let $\epsilon>0$. Let $\odot$ denotes either the dot product or cross product in $\mathbb R^m$. There exists a positive constant $C$ (depending only on $\mathscr{O}$) such that the following
	inequalities hold:
	\begin{enumerate}
		\renewcommand{\labelenumi}{\theenumi}
		\renewcommand{\theenumi}{{\rm (\roman{enumi})}}
		\item for any $\bff{v}\in \bb{L}^2(\mathscr{O})$ such that $\Delta \bff{v}\in \bb{L}^2(\mathscr{O})$ and $\partial\bff{v}/\partial \bff{n}=0$ on $\partial \mathscr{O}$,
		\begin{align}
			\label{equ:v H2}
			\norm{\bff{v}}{\bb{H}^2}^2 
			&\leq 
			C \left( \norm{\bff{v}}{\bb{L}^2}^2 + \norm{\Delta \bff{v}}{\bb{L}^2}^2 \right),  
			\\
			\label{equ:nabla v L2}
			\norm{\nabla \bff{v}}{\bb{L}^2}^2 
			&\leq \frac{1}{4\epsilon} \norm{\bff{v}}{\bb{L}^2}^2 + \epsilon \norm{\Delta \bff{v}}{\bb{L}^2}^2,
		\end{align}
		
		\item for any $\bff{u},\bff{v},\bff{w} \in \bb{H}^s$, where $s>d/2$,
		\begin{align}
			\label{equ:prod Hs mat dot}
			\norm{\bff{v} \odot \bff{w}}{\bb{H}^s}
			&\leq
			C \norm{\bff{v}}{\bb{H}^s}
			\norm{\bff{w}}{\bb{H}^s},
			\\
			\label{equ:prod Hs triple}
			\norm{(\bff{u} \times \bff{v}) \odot \bff{w}}{\bb{H}^s}
			&\leq
			C \norm{\bff{u}}{\bb{H}^s} \norm{\bff{v}}{\bb{H}^s} \norm{\bff{w}}{\bb{H}^s}.
		\end{align}
	\end{enumerate}
\end{lemma}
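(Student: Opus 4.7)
The plan is to handle parts (i) and (ii) with standard tools from elliptic regularity and Sobolev multiplier theory, and to derive the triple product bound \eqref{equ:prod Hs triple} as an immediate consequence of \eqref{equ:prod Hs mat dot}.

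For \eqref{equ:v H2}, I would invoke classical elliptic regularity for the inhomogeneous Neumann problem on the smooth bounded domain $\mathscr{O}$: viewing $\bff{v}$ as the unique solution of $(I-\Delta)\bff{v} = \bff{f}$ with $\partial\bff{v}/\partial\bff{n}=0$ and right-hand side $\bff{f}:=\bff{v}-\Delta\bff{v}\in\bb{L}^2$, the Agmon--Douglis--Nirenberg theory yields $\norm{\bff{v}}{\bb{H}^2}\leq C\norm{\bff{f}}{\bb{L}^2}\leq C(\norm{\bff{v}}{\bb{L}^2}+\norm{\Delta\bff{v}}{\bb{L}^2})$, applied componentwise for $\bb{R}^m$-valued $\bff{v}$. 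For \eqref{equ:nabla v L2}, Green's identity gives
\[
\norm{\nabla\bff{v}}{\bb{L}^2}^2 = -\inpro{\bff{v}}{\Delta\bff{v}}_{\bb{L}^2},
\]
since the Neumann condition kills the boundary integral, after which Cauchy--Schwarz followed by Young's inequality $ab\leq\tfrac{a^2}{4\epsilon}+\epsilon b^2$ closes the estimate.

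For \eqref{equ:prod Hs mat dot}, I would use the classical fact that $\bb{H}^s(\mathscr{O})$ is a Banach algebra under pointwise multiplication whenever $s>d/2$. Via a bounded linear extension operator (available because $\partial\mathscr{O}\in C^\infty$), one reduces to the analogous estimate on $\bb{R}^d$, where the Kato--Ponce / Moser product inequality
\[
\norm{fg}{H^s(\bb{R}^d)} \leq C\bigl(\norm{f}{L^\infty}\norm{g}{H^s(\bb{R}^d)}+\norm{f}{H^s(\bb{R}^d)}\norm{g}{L^\infty}\bigr),
\]
combined with the Sobolev embedding $H^s(\bb{R}^d)\hookrightarrow L^\infty(\bb{R}^d)$ valid for $s>d/2$, yields the stated bound; restricting back to $\mathscr{O}$ concludes. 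Both the scalar dot product and the $\bb{R}^3$ cross product are bilinear combinations of pointwise products of components, so the algebra property transfers directly to the operation $\odot$. The triple product bound \eqref{equ:prod Hs triple} then follows by applying \eqref{equ:prod Hs mat dot} twice: first to the pair $(\bff{u}\times\bff{v},\,\bff{w})$ and then to $\bff{u}\times\bff{v}$.

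None of the steps is genuinely difficult; the only point requiring any care is the elliptic invocation in (i), where one needs both the $C^\infty$-smoothness of $\partial\mathscr{O}$ (supplied by the standing hypothesis) and the vanishing of the boundary flux $\partial\bff{v}/\partial\bff{n}=0$ so that the boundary terms in Green's identity drop out. The multiplicative estimates in (ii) are textbook Sobolev-algebra material, and in particular depend only on the constant in the extension operator and the embedding $\bb{H}^s\hookrightarrow L^\infty$.
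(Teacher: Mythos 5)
Your proposal is correct and follows essentially the same route as the paper, which simply delegates the proofs to citations: the paper refers to \cite[Lemma~3.3]{SoeTra23} for \eqref{equ:v H2}--\eqref{equ:nabla v L2} and to \cite{Brezis} for the Sobolev-algebra bounds \eqref{equ:prod Hs mat dot}--\eqref{equ:prod Hs triple}. Your elliptic-regularity argument plus Green's identity with Young's inequality for part (i), and the extension/Moser--Kato--Ponce algebra property for part (ii) with the triple bound obtained by applying the pairwise bound twice, is exactly the standard reasoning behind those references.
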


\begin{proof}
	\eqref{equ:v H2} and~\eqref{equ:nabla v L2} are shown in~\cite[Lemma~3.3]{SoeTra23}, while~\eqref{equ:prod Hs mat dot} and~\eqref{equ:prod Hs triple} follow from \cite{Brezis}.
\end{proof}

We show some estimates for the map $\Phi_{\mathrm{a}}$, given in~\eqref{equ:aniso}, which defines the anisotropy field in the following lemma.

\begin{lemma}
Let $\Phi_{\mathrm{a}}$ be as defined in~\eqref{equ:aniso}, and let $p,q\in [1,\infty]$ be such that $2/p+1/q=1/2$. 
\begin{enumerate}
\item
For any $\bff{v},\bff{w}\in \bb{L}^2$,
\begin{align}\label{equ:Phi av Phi aw}
    \inpro{\Phi_{\mathrm{a}}(\bff{v})-\Phi_{\mathrm{a}}(\bff{w})}{\bff{v}-\bff{w}}_{\bb{L}^2}
    &\leq 
    \lambda_1 \norm{\bff{v}-\bff{w}}{\bb{L}^2}^2.
\end{align}
\item
For any $\bff{v},\bff{w}\in \bb{L}^{\max\{p,q\}}$,
\begin{align}
    \label{equ:Phi av aw norm}
    \norm{\Phi_{\mathrm{a}}(\bff{v})-\Phi_{\mathrm{a}}(\bff{w})}{\bb{L}^2}^2
    &\leq
    C \left(1+\norm{\bff{v}}{\bb{L}^p}^4+ \norm{\bff{w}}{\bb{L}^p}^4\right) \norm{\bff{v}-\bff{w}}{\bb{L}^q}^2.
\end{align}
\item
For any $\bff{v},\bff{w}\in \bb{W}^{1,p}\cap \bb{L}^\infty$,
\begin{align}\label{equ:norm H1 Phia}
    \norm{\Phi_{\mathrm{a}}(\bff{v})-\Phi_{\mathrm{a}}(\bff{w})}{\bb{H}^1}^2
    &\leq
    C\left(1+\norm{\bff{v}}{\bb{L}^\infty}^4 +\norm{\bff{w}}{\bb{L}^\infty}^4\right) \norm{\bff{v}-\bff{w}}{\bb{H}^1}^2
    \nonumber \\
    &\quad
    +
    C\left(\norm{\bff{v}}{\bb{L}^p}^2 + \norm{\bff{w}}{\bb{L}^p}^2 \right) \left(\norm{\bff{v}}{\bb{W}^{1,p}}^2 + \norm{\bff{w}}{\bb{W}^{1,p}}^2 \right) \norm{\bff{v}-\bff{w}}{\bb{L}^q}^2.
\end{align}
\item
For any $\bff{v},\bff{w}\in \bb{H}^3$,
\begin{align}\label{equ:Phi vw H2}
    \norm{\Delta\Phi_{\mathrm{a}}(\bff{v})- \Delta\Phi_{\mathrm{a}}(\bff{w})}{\bb{L}^2}^2
    &\leq
    C \norm{\bff{v}-\bff{w}}{\bb{L}^2}^2
    +
    C\left(\norm{\bff{v}}{\bb{H}^1}^4+ \norm{\bff{w}}{\bb{H}^1}^4 \right) \norm{\Delta\bff{v}-\Delta\bff{w}}{\bb{H}^1}^2 
    \nonumber \\
    &\quad
    +
    C \left(\norm{\bff{v}}{\bb{H}^1}^2+ \norm{\bff{w}}{\bb{H}^1}^2 \right) \left(\norm{\bff{v}}{\bb{H}^3}^2+ \norm{\bff{w}}{\bb{H}^3}^2 \right) \norm{\bff{v}-\bff{w}}{\bb{H}^1}^2.
\end{align}
\item
For any $\bff{v},\bff{w} \in \bb{H}^s$, where $s\geq 2$,
\begin{align}
\label{equ:Phi a Hs}
    \norm{\Phi_{\mathrm{a}}(\bff{v})}{\bb{H}^s}^2
    &\leq
    C\left(1+\norm{\bff{v}}{\bb{H}^s}^6\right).
\end{align}
\end{enumerate}
\end{lemma}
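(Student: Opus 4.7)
The proof strategy across all five parts is to expand $\Phi_{\mathrm{a}}(\bff{v})-\Phi_{\mathrm{a}}(\bff{w}) = \lambda_1(\bff{e}\cdot(\bff{v}-\bff{w}))\bff{e} - \lambda_2\bigl((\bff{e}\cdot\bff{v})^3-(\bff{e}\cdot\bff{w})^3\bigr)\bff{e}$, then exploit the two elementary algebraic facts that for $a,b\in\mathbb{R}$,
\[
(a^3-b^3)(a-b) = (a-b)^2(a^2+ab+b^2) \geq 0,
\qquad
|a^3-b^3| \leq 3(a^2+b^2)|a-b|,
\]
together with $|\bff{e}|=1$ so that $|\bff{e}\cdot\bff{u}|\leq|\bff{u}|$ pointwise.

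For (i), the pairing with $\bff{v}-\bff{w}$ splits into the linear piece, bounded by $\lambda_1\|\bff{v}-\bff{w}\|_{\bb{L}^2}^2$, and the cubic piece, which is nonpositive by the first algebraic identity above and $\lambda_2\geq0$. For (ii), applying the second inequality pointwise gives $|\Phi_{\mathrm{a}}(\bff{v})-\Phi_{\mathrm{a}}(\bff{w})|\leq \lambda_1|\bff{v}-\bff{w}| + 3\lambda_2(|\bff{v}|^2+|\bff{w}|^2)|\bff{v}-\bff{w}|$; squaring and integrating, then using Hölder with the exponents $\tfrac{2}{p}+\tfrac{1}{q}=\tfrac{1}{2}$ on the quartic-times-quadratic term, yields \eqref{equ:Phi av aw norm}.

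For (iii), I would differentiate and obtain
\[
\nabla\Phi_{\mathrm{a}}(\bff{v}) = \lambda_1(\bff{e}\cdot\nabla\bff{v})\bff{e} - 3\lambda_2(\bff{e}\cdot\bff{v})^2(\bff{e}\cdot\nabla\bff{v})\bff{e},
\]
so that $\nabla\Phi_{\mathrm{a}}(\bff{v})-\nabla\Phi_{\mathrm{a}}(\bff{w})$ decomposes, after adding and subtracting $3\lambda_2(\bff{e}\cdot\bff{v})^2(\bff{e}\cdot\nabla\bff{w})\bff{e}$, into one term linear in $\nabla(\bff{v}-\bff{w})$ with an $\bb{L}^\infty$-bounded coefficient, and one term of the form $\bigl((\bff{e}\cdot\bff{v})^2-(\bff{e}\cdot\bff{w})^2\bigr)(\bff{e}\cdot\nabla\bff{w})$. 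Estimating the latter by $(|\bff{v}|+|\bff{w}|)|\bff{v}-\bff{w}||\nabla\bff{w}|$ and applying Hölder with exponents tied to $p,q$ as in (ii) produces the second line of \eqref{equ:norm H1 Phia}, while the first line collects the $\bb{L}^\infty$-coefficient pieces together with the $\bb{L}^2$ estimate from (ii).

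For (iv), since $\Delta\Phi_{\mathrm{a}}(\bff{v})=\lambda_1(\bff{e}\cdot\Delta\bff{v})\bff{e}-\lambda_2\,\Delta\bigl((\bff{e}\cdot\bff{v})^3\bigr)\bff{e}$, the Leibniz rule gives
\[
\Delta\bigl((\bff{e}\cdot\bff{v})^3\bigr) = 6(\bff{e}\cdot\bff{v})|\bff{e}\cdot\nabla\bff{v}|^2 + 3(\bff{e}\cdot\bff{v})^2(\bff{e}\cdot\Delta\bff{v}).
\]
The difference in (iv) is then a sum of pentalinear and cubic combinations. The hardest term is $(\bff{e}\cdot\bff{v})|\bff{e}\cdot\nabla\bff{v}|^2-(\bff{e}\cdot\bff{w})|\bff{e}\cdot\nabla\bff{w}|^2$, which I would split by standard telescoping into $(\bff{e}\cdot(\bff{v}-\bff{w}))|\bff{e}\cdot\nabla\bff{v}|^2$ plus $(\bff{e}\cdot\bff{w})\,(\bff{e}\cdot\nabla\bff{v}+\bff{e}\cdot\nabla\bff{w})\cdot(\bff{e}\cdot\nabla(\bff{v}-\bff{w}))$. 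The $\bb{L}^2$-norms of these are controlled via Sobolev embedding $\bb{H}^1\hookrightarrow\bb{L}^6$ (valid for $d\leq3$) to give factors of $\|\bff{v}\|_{\bb{H}^1}^2$, $\|\bff{w}\|_{\bb{H}^1}^2$, and $\|\bff{v}\|_{\bb{H}^3}^2, \|\bff{w}\|_{\bb{H}^3}^2$ (the latter to absorb $\nabla\bff{v}$ in $\bb{L}^\infty$), matching the structure of \eqref{equ:Phi vw H2}. The main bookkeeping obstacle is keeping the exponents balanced so that the final estimate distributes the regularity as $\|\Delta\bff{v}-\Delta\bff{w}\|_{\bb{H}^1}^2$ times a fourth power of $\bb{H}^1$-norms rather than the other way around.

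Finally (v) is immediate from \eqref{equ:prod Hs mat dot}: since $s\geq 2>d/2$ for $d\leq 3$, the space $\bb{H}^s$ is a Banach algebra under the relevant products, so $\|(\bff{e}\cdot\bff{v})\bff{e}\|_{\bb{H}^s}\leq C\|\bff{v}\|_{\bb{H}^s}$ and $\|(\bff{e}\cdot\bff{v})^3\bff{e}\|_{\bb{H}^s}\leq C\|\bff{v}\|_{\bb{H}^s}^3$, whence $\|\Phi_{\mathrm{a}}(\bff{v})\|_{\bb{H}^s}^2\leq C(1+\|\bff{v}\|_{\bb{H}^s}^6)$.
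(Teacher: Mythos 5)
Your proposal is correct and follows essentially the same route as the paper: the factorization of $a^3-b^3$, Hölder with the exponent relation $2/p+1/q=1/2$, the Leibniz expansion of $\Delta$ of a cube controlled via $\bb{H}^1\hookrightarrow\bb{L}^6$, Gagliardo--Nirenberg/interpolation, and the algebra property \eqref{equ:prod Hs mat dot} for part (v). The only cosmetic differences are in (i), where you use pointwise nonnegativity of $(a-b)^2(a^2+ab+b^2)$ while the paper expands and applies Young's inequality, and in (iv), where you telescope the expanded Laplacian rather than applying Leibniz to the factorized difference; both variants yield the same estimates.
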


\begin{proof}
It follows from the definition of~$\Phi_{\mathrm{a}}$ that
\begin{align}\label{equ:inpro Ph}
    \inpro{\Phi_{\mathrm{a}}(\bff{v})-\Phi_{\mathrm{a}}(\bff{w})}{\bff{v}-\bff{w}}_{\bb{L}^2}
    =
    \lambda_1 \norm{\bff{e}\cdot (\bff{v}-\bff{w})}{\bb{L}^2}^2
    -
    \lambda_2 \inpro{(\bff{e}\cdot \bff{v})^3 \bff{e}-(\bff{e}\cdot \bff{w})^3 \bff{e}}{\bff{v}-\bff{w}}_{\bb{L}^2}.
\end{align}
Note that
\begin{equation}\label{equ:inpro ev}
    \inpro{(\bff{e}\cdot \bff{v})^3 \bff{e}-(\bff{e}\cdot \bff{w})^3 \bff{e}}{\bff{v}-\bff{w}}_{\bb{L}^2}
    =
    (\bff{e}\cdot \bff{v})^4
    +
    (\bff{e}\cdot \bff{w})^4
    -
    (\bff{e}\cdot \bff{v})^3 (\bff{e}\cdot \bff{w})
    -
    (\bff{e}\cdot \bff{w})^3 (\bff{e}\cdot \bff{v}).
\end{equation}
By Young's inequality,
\[
    |(\bff{e}\cdot \bff{v})^3 (\bff{e}\cdot \bff{w})|
    \leq
    \frac{(\bff{e}\cdot \bff{v})^4}{4/3}
    +
    \frac{(\bff{e}\cdot \bff{w})^4}{4}
    \quad \text{and} \quad
    |(\bff{e}\cdot \bff{w})^3 (\bff{e}\cdot \bff{v})|
    \leq
    \frac{(\bff{e}\cdot \bff{w})^4}{4/3}
    +
    \frac{(\bff{e}\cdot \bff{v})^4}{4},
\]
and thus from~\eqref{equ:inpro ev},
\[
    \inpro{(\bff{e}\cdot \bff{v})^3 \bff{e}-(\bff{e}\cdot \bff{w})^3 \bff{e}}{\bff{v}-\bff{w}}_{\bb{L}^2} \geq 0.
\]
This, together with \eqref{equ:inpro Ph}, implies~\eqref{equ:Phi av Phi aw}.

Next, using the elementary identity
\begin{equation}\label{equ:a3b3}
a^3-b^3=(a-b)(a^2+ab+b^2), \quad \forall a,b\in\bb{R},
\end{equation}
we have
\begin{align*}
     \norm{\Phi_{\mathrm{a}}(\bff{v})-\Phi_{\mathrm{a}}(\bff{w})}{\bb{L}^2}^2
    &\leq
    2\lambda_1^2 \norm{\bff{v}-\bff{w}}{\bb{L}^2}^2
    +
    2\lambda_2^2 \norm{\big(\bff{e}\cdot (\bff{v}-\bff{w})\big) \big((\bff{e}\cdot \bff{v})^2 + (\bff{e}\cdot \bff{v})(\bff{e}\cdot \bff{w})+ (\bff{e}\cdot \bff{w})^2\big)}{\bb{L}^2}^2
    \\
    &\leq
    2\lambda_1^2 \norm{\bff{v}-\bff{w}}{\bb{L}^2}^2
    +
    4\lambda_2^2 \left(\norm{\bff{v}}{\bb{L}^p}^4+ \norm{\bff{w}}{\bb{L}^p}^4\right) \norm{\bff{v}-\bff{w}}{\bb{L}^q}^2,
\end{align*}
which implies~\eqref{equ:Phi av aw norm}.

Similarly, by the product rule for derivatives, H\"older's and Young's inequalities,
\begin{align*}
     \norm{\Phi_{\mathrm{a}}(\bff{v})-\Phi_{\mathrm{a}}(\bff{w})}{\bb{H}^1}^2
    &\leq
    2\lambda_1^2 \norm{\bff{v}-\bff{w}}{\bb{H}^1}^2
    +
    2\lambda_2^2 \norm{\big(\bff{e}\cdot (\bff{v}-\bff{w})\big) \big((\bff{e}\cdot \bff{v})^2 + (\bff{e}\cdot \bff{v})(\bff{e}\cdot \bff{w})+ (\bff{e}\cdot \bff{w})^2\big)}{\bb{H}^1}^2
    \\
    &\leq
    2\lambda_1^2 \norm{\bff{v}-\bff{w}}{\bb{H}^1}^2
    +
    4\lambda_2^2 \left(\norm{\bff{v}}{\bb{L}^p}^4+ \norm{\bff{w}}{\bb{L}^p}^4\right) \norm{\bff{v}-\bff{w}}{\bb{L}^q}^2
    \\
    &\quad
    + 4\lambda_2^2 \left(\norm{\bff{v}}{\bb{L}^\infty}^4+ \norm{\bff{w}}{\bb{L}^\infty}^4\right) \norm{\nabla \bff{v}-\nabla \bff{w}}{\bb{L}^2}^2
    \\
    &\quad
    +
    4\lambda_2^2 \left(\norm{\bff{v}}{\bb{L}^p}^2 + \norm{\bff{w}}{\bb{L}^p}^2\right) \left(\norm{\nabla \bff{v}}{\bb{L}^p}^2 + \norm{\nabla \bff{w}}{\bb{L}^p}^2 \right) \norm{\bff{v}-\bff{w}}{\bb{L}^q}^2,
\end{align*}
which yields~\eqref{equ:norm H1 Phia}.

Next, we aim to show~\eqref{equ:Phi vw H2}. Note that by the identity~\eqref{equ:a3b3}, writing $a:=\bff{e}\cdot \bff{v}$ and $b:=\bff{e}\cdot \bff{w}$ and $\rho:=a-b$, we have
\begin{align*}
    \Delta(a^3)-\Delta(b^3)
    &=
    (\Delta \rho) (a^2+ab+b^2) 
    +
    \rho (2a\Delta a+ 2|\nabla a|^2 + a\Delta b+ b\Delta a+ 2\nabla a \cdot \nabla b + 2b \Delta b+ 2|\nabla b|^2)
    \nonumber \\
    &\quad
    +
    2\nabla \rho \cdot (2a\nabla a+ a\nabla b + b\nabla a + 2b \nabla b).
\end{align*}
Therefore, by H\"older's inequality,
\begin{align}\label{equ:eu ev3}
    &\norm{\Delta(a^3)-\Delta(b^3)}{L^2}^2
    \nonumber \\
    &\leq
    2\norm{\Delta\rho}{L^6}^2 \left(\norm{a}{L^6}^4+ \norm{b}{L^6}^4 \right)
    +
    16\norm{\rho}{L^6}^2 \left(\norm{a}{L^6}^2 + \norm{b}{L^6}^2\right) \left(\norm{\Delta a}{L^6}^2 + \norm{\Delta b}{L^6}^2 \right)
    \nonumber \\
    &\quad
    +
    16\norm{\rho}{L^6}^2 \left(\norm{\nabla a}{L^6}^4 + \norm{\nabla b}{L^6}^4 \right)
    +
    16\norm{\nabla \rho}{L^2}^2 \left(\norm{a}{L^\infty}^2 + \norm{b}{L^\infty}^2\right) \left(\norm{\nabla a}{L^\infty}^2 + \norm{\nabla b}{L^\infty}^2 \right).
\end{align}
Note that by the Gagliardo--Nirenberg and interpolation inequalities, we have for any function $f\in H^3(\mathscr{O})$,
\begin{align*}
    \norm{f}{L^\infty}^2 &\leq C \norm{f}{H^1} \norm{f}{H^2} 
    \leq
    C \norm{f}{H^1}^{3/2} \norm{f}{H^3}^{1/2},
    \\
    \norm{\nabla f}{L^\infty}^2 &\leq C \norm{\nabla f}{H^1} \norm{\nabla f}{H^2}
    \leq
    C \norm{f}{H^1}^{1/2} \norm{f}{H^3}^{3/2},
    \\
    \norm{\nabla f}{L^6}^4 &\leq C \norm{f}{H^2}^4 
    \leq
    C \norm{f}{H^1}^2 \norm{f}{H^3}^2.
\end{align*}
Using these inequalities in~\eqref{equ:eu ev3} and applying the Sobolev embedding $H^1\hookrightarrow L^6$, we obtain
\begin{align}\label{equ:a3b3 ineq}
    &\norm{\Delta(a^3)-\Delta(b^3)}{L^2}^2
    \nonumber \\
    &\leq
    C\left(\norm{a}{H^1}^4+ \norm{b}{H^1}^4 \right) \norm{\Delta\rho}{H^1}^2 
    +
    C \left(\norm{a}{H^1}^2+ \norm{b}{H^1}^2 \right) \left(\norm{a}{H^3}^2+ \norm{b}{H^3}^2 \right) \norm{\rho}{H^1}^2
    \nonumber \\
    &\quad
    +
    C \left(\norm{a}{H^1}^{3/2} \norm{a}{H^3}^{1/2} + \norm{b}{H^1}^{3/2} \norm{b}{H^3}^{1/2} \right) \left(\norm{a}{H^1}^{1/2} \norm{a}{H^3}^{3/2} + \norm{b}{H^1}^{1/2} \norm{b}{H^3}^{3/2} \right) \norm{\nabla \rho}{L^2}^2
    \nonumber \\
    &\leq
    C\left(\norm{a}{H^1}^4+ \norm{b}{H^1}^4 \right) \norm{\Delta\rho}{H^1}^2 
    +
    C \left(\norm{a}{H^1}^2+ \norm{b}{H^1}^2 \right) \left(\norm{a}{H^3}^2+ \norm{b}{H^3}^2 \right) \norm{\rho}{H^1}^2
    \nonumber \\
    &\leq
    C\left(\norm{\bff{v}}{\bb{H}^1}^4+ \norm{\bff{w}}{\bb{H}^1}^4 \right) \norm{\Delta\bff{v}-\Delta\bff{w}}{\bb{H}^1}^2 
    +
    C \left(\norm{\bff{v}}{\bb{H}^1}^2+ \norm{\bff{w}}{\bb{H}^1}^2 \right) \left(\norm{\bff{v}}{\bb{H}^3}^2+ \norm{\bff{w}}{\bb{H}^3}^2 \right) \norm{\bff{v}-\bff{w}}{\bb{H}^1}^2.
\end{align}
where in the last step we used Young's inequality. Hence, by the triangle inequality,
\begin{align*}
    \norm{\Delta\Phi_{\mathrm{a}}(\bff{v})- \Delta\Phi_{\mathrm{a}}(\bff{w})}{\bb{L}^2}^2
    &\leq
    \lambda_1^2 \norm{\rho\bff{e}}{\bb{L}^2}^2
    +
    \lambda_2^2 \norm{\left(\Delta (a^3)-\Delta(b^3)\right) \bff{e}}{\bb{L}^2}^2
    \\
    &\leq
    \lambda_1^2 \norm{\bff{v}-\bff{w}}{\bb{L}^2}^2 + \lambda_2^2 \norm{\Delta(a^3)-\Delta(b^3)}{L^2}^2,
\end{align*}
and thus the required inequality~\eqref{equ:Phi vw H2} follows from~\eqref{equ:a3b3 ineq}.

Finally, inequality~\eqref{equ:Phi a Hs} follows by applying \eqref{equ:prod Hs triple} to the definition of $\Phi_{\mathrm{a}}(\bff{v})$.
\end{proof}

Next, we prove several estimates related to the map $\mathcal{R}$ (which defines the spin torque term, see~\eqref{equ:R u}) in the following lemmas.

\begin{lemma}
	Let $\bff{\nu}\in \bb{H}^2(\mathscr{O};\bb{R}^d)$ be given, satisfying assumption~\eqref{equ:spin bound}. Let $\mathcal{R}$ be the map defined by~\eqref{equ:R u}.
	For any $\epsilon, \sigma>0$, there exists a positive constant $C_\epsilon$ such that for any $\bff{v}, \bff{w} \in \bb{H}^1 \cap \bb{L}^\infty$,
	\begin{align}
        \label{equ:Rv v L2}
        \big| \inpro{\mathcal{R}(\bff{v})}{\bff{v}}_{\bb{L}^2} \big|
        &\leq
        C_\epsilon \nu_\infty \norm{\bff{v}}{\bb{L}^2}^2
        +
        \chi^2 \sigma^{-1} \norm{\bff{v}}{\bb{L}^4}^4
        +
        \epsilon \norm{\nabla \bff{v}}{\bb{L}^2}^2
		+
		\frac{\sigma}{4} \norm{\nabla \bff{v}}{\bb{L}^2}^2,
        \\
		\label{equ:Rv w}
		\big| \inpro{\mathcal{R}(\bff{v})}{\bff{w}}_{\bb{L}^2} \big|
		&\leq
		C_\epsilon \nu_\infty
		\left( \norm{\nabla \bff{v}}{\bb{L}^2}^2
		+
		\norm{|\bff{v}|\, |\nabla \bff{v}|}{\bb{L}^2}^2 \right)
		+
		\epsilon \norm{\bff{w}}{\bb{L}^2}^2,
		\\
		\label{equ:Rvw vw}
		\big| \inpro{\mathcal{R}(\bff{v})-\mathcal{R}(\bff{w})}{\bff{v}-\bff{w}}_{\bb{L}^2} \big|
		&\leq
		C_\epsilon \nu_\infty \left(1+\norm{\bff{w}}{\bb{L}^\infty}^2 \right)
		\norm{\bff{v}-\bff{w}}{\bb{L}^2}^2
		+
		\epsilon \norm{\nabla \bff{v}-\nabla \bff{w}}{\bb{L}^2}^2,
	\end{align}
 where $\nu_\infty$ was defined in~\eqref{equ:spin bound}. Furthermore, for any $\bff{v},\bff{w}\in \bb{H}^{2k}$, where $k\in \bb{N}$,
        \begin{align}
        \label{equ:Rvw Delta k}
        \Big| \inpro{\mathcal{R}(\bff{v})-\mathcal{R}(\bff{w})}{\Delta^k \bff{v}- \Delta^k \bff{w}}_{\bb{L}^2} \Big|
		&\leq
        C_\epsilon \nu_\infty \left(1+\norm{\bff{v}}{\bb{H}^2}^2+ \norm{\bff{w}}{\bb{H}^2}^2 \right) \norm{\bff{v}-\bff{w}}{\bb{H}^1}^2
        +
        \epsilon \norm{\Delta^k \bff{v}-\Delta^k \bff{w}}{\bb{L}^2}^2.
        \end{align}
\end{lemma}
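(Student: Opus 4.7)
The plan is to decouple the highest-order factor $\Delta^k\bff{v}-\Delta^k\bff{w}$ by Young's inequality and thereby reduce the claim to an $\bb{L}^2$ bound on the difference $\mathcal{R}(\bff{v})-\mathcal{R}(\bff{w})$. Concretely, applying $|\inpro{f}{g}_{\bb{L}^2}|\leq \frac{1}{4\epsilon}\norm{f}{\bb{L}^2}^2 + \epsilon\norm{g}{\bb{L}^2}^2$ with $f=\mathcal{R}(\bff{v})-\mathcal{R}(\bff{w})$ and $g=\Delta^k\bff{v}-\Delta^k\bff{w}$ peels off exactly the $\epsilon\norm{\Delta^k\bff{v}-\Delta^k\bff{w}}{\bb{L}^2}^2$ summand appearing on the right-hand side of~\eqref{equ:Rvw Delta k}, and what remains is a $k$-independent estimate.

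The main task is thus to prove
\[
\norm{\mathcal{R}(\bff{v})-\mathcal{R}(\bff{w})}{\bb{L}^2}^2 \leq C\nu_\infty\bigl(1+\norm{\bff{v}}{\bb{H}^2}^2+\norm{\bff{w}}{\bb{H}^2}^2\bigr)\norm{\bff{v}-\bff{w}}{\bb{H}^1}^2.
\]
I split $\mathcal{R}(\bff{v})-\mathcal{R}(\bff{w})$ into its three contributions according to~\eqref{equ:R u}. For the linear convection piece, using $\bb{H}^2\hookrightarrow\bb{L}^\infty$ in dimensions $d\leq 3$ together with~\eqref{equ:spin bound}, I get $\norm{\beta_1(\bff{\nu}\cdot\nabla)(\bff{v}-\bff{w})}{\bb{L}^2}\leq C\nu_\infty^{1/2}\norm{\bff{v}-\bff{w}}{\bb{H}^1}$. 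For the gyroscopic piece I use the identity
\[
\bff{v}\times(\bff{\nu}\cdot\nabla)\bff{v}-\bff{w}\times(\bff{\nu}\cdot\nabla)\bff{w} = (\bff{v}-\bff{w})\times(\bff{\nu}\cdot\nabla)\bff{v} + \bff{w}\times(\bff{\nu}\cdot\nabla)(\bff{v}-\bff{w}),
\]
and bound each summand in $\bb{L}^2$ by H\"older's inequality, distributing the three factors among $\bb{L}^\infty$, $\bb{L}^3$ and $\bb{L}^6$ and invoking the Sobolev embeddings $\bb{H}^1\hookrightarrow\bb{L}^6$ and $\bb{H}^2\hookrightarrow\bb{W}^{1,3}\cap\bb{L}^\infty$, obtaining a bound of the form $C\nu_\infty^{1/2}\norm{\bff{v}-\bff{w}}{\bb{H}^1}(\norm{\bff{v}}{\bb{H}^2}+\norm{\bff{w}}{\bb{H}^2})$. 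The last contribution, $\chi\nabla\cdot(\bff{v}\otimes\bff{v}-\bff{w}\otimes\bff{w})$, is treated similarly after the algebraic splitting $\bff{v}\otimes\bff{v}-\bff{w}\otimes\bff{w}=(\bff{v}-\bff{w})\otimes\bff{v}+\bff{w}\otimes(\bff{v}-\bff{w})$ and distributing $\nabla\cdot$ via the product rule.

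Combining the three estimates and squaring yields the displayed $\bb{L}^2$ bound on $\mathcal{R}(\bff{v})-\mathcal{R}(\bff{w})$, and inserting this into the Young step of the first paragraph produces~\eqref{equ:Rvw Delta k} with $C_\epsilon=C/(4\epsilon)$. The main subtlety — really the only one — is the $\chi$ term: the divergence must be distributed so that \emph{at most one} derivative lands on the difference $\bff{v}-\bff{w}$, since the right-hand side only tolerates an $\bb{H}^1$ norm of the difference. The extra derivative is then forced onto $\bff{v}$ or $\bff{w}$ and absorbed via $\bb{L}^\infty$ or $\bb{L}^3$ norms; the Sobolev embedding $\bb{H}^2\hookrightarrow\bb{W}^{1,3}\cap\bb{L}^\infty$ available for $d\leq 3$ is exactly what makes this possible and explains the appearance of $\norm{\bff{v}}{\bb{H}^2}^2$ and $\norm{\bff{w}}{\bb{H}^2}^2$ on the right-hand side. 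Since $k$ enters only via the harmless $\epsilon\norm{\Delta^k\bff{v}-\Delta^k\bff{w}}{\bb{L}^2}^2$ term, the argument is uniform in $k\in\bb{N}$.
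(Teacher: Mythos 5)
Your treatment of \eqref{equ:Rvw Delta k} is essentially sound: peeling off $\epsilon\norm{\Delta^k\bff{v}-\Delta^k\bff{w}}{\bb{L}^2}^2$ by Young's inequality and bounding $\norm{\mathcal{R}(\bff{v})-\mathcal{R}(\bff{w})}{\bb{L}^2}$ via the bilinear splittings, H\"older with exponents $(\infty,3,6)$, and the embeddings $\bb{H}^1\hookrightarrow\bb{L}^6$, $\bb{H}^2\hookrightarrow\bb{W}^{1,3}\cap\bb{L}^\infty$ does give the stated estimate, uniformly in $k$; this is in fact more explicit than the paper, which proves the whole lemma only by reference to \cite[Lemma~2.3]{GolSoeTra24b}. (A cosmetic point: the $\chi$-contribution carries no factor $\norm{\bff{\nu}}{}$, so the prefactor $C_\epsilon\nu_\infty$ is obtained only after absorbing that piece into the generic constant, an issue already present in the statement.)

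The genuine gap is that the lemma asserts four inequalities and you prove only the last. The first three are not corollaries of \eqref{equ:Rvw Delta k} and need different arguments. For \eqref{equ:Rv v L2} the crucial points are the cancellation $\inpro{\bff{v}\times(\bff{\nu}\cdot\nabla)\bff{v}}{\bff{v}}_{\bb{L}^2}=0$ and an integration by parts of the $\chi$-term, $\inpro{\nabla\cdot(\bff{v}\otimes\bff{v})}{\bff{v}}_{\bb{L}^2}=-\inpro{\bff{v}\otimes\bff{v}}{\nabla\bff{v}}_{\bb{L}^2}$, which produce exactly the terms $\chi^2\sigma^{-1}\norm{\bff{v}}{\bb{L}^4}^4+\tfrac{\sigma}{4}\norm{\nabla\bff{v}}{\bb{L}^2}^2$; this precise structure (no $\bb{H}^2$ norm of $\bff{v}$, and the $\chi^2\sigma^{-1}$ weight) is what later feeds the absorption argument in Proposition~\ref{pro:un L2}, so a generic Cauchy--Schwarz bound would not do. Similarly, \eqref{equ:Rvw vw} must hold for $\bff{v},\bff{w}\in\bb{H}^1\cap\bb{L}^\infty$ only, with just $\norm{\bff{v}-\bff{w}}{\bb{L}^2}^2$ (not its $\bb{H}^1$ norm) and only $\norm{\bff{w}}{\bb{L}^\infty}$ on the right: your route through $\norm{\mathcal{R}(\bff{v})-\mathcal{R}(\bff{w})}{\bb{L}^2}$ requires $\bb{H}^2$ control of both arguments and an $\bb{H}^1$ norm of the difference, so it proves a weaker statement; one needs instead the orthogonality of the cross-product difference against $\bff{v}-\bff{w}$ and integration by parts to push the derivative of the quadratic term onto $\nabla(\bff{v}-\bff{w})$, which is then absorbed by the $\epsilon$-term. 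Inequality \eqref{equ:Rv w} is also left unaddressed, though it is the easiest of the three. As written, the proposal is therefore an incomplete proof of the lemma.
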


\begin{proof}
	These can be shown in the same manner as in~\cite[Lemma~2.3]{GolSoeTra24b}.
\end{proof}

\begin{lemma}
Let $\bff{\nu}\in \bb{H}^2(\mathscr{O};\bb{R}^d)$ be given, satisfying assumption~\eqref{equ:spin bound}. There exists a positive constant $C$ such that for sufficiently regular $\bff{v}$,
\begin{align}
    \label{equ:nab Rv nab w}
        \norm{\mathcal{R}(\bff{v})}{\bb{H}^1}^2
        &\leq
       C \nu_\infty \left(1+ \norm{\bff{v}}{\bb{H}^1}^4 + \norm{\Delta \bff{v}}{\bb{L}^2}^4 \right),
       \\
    \label{equ:Rv Hs}
    \norm{\mathcal{R}(\bff{v})}{\bb{H}^2}^2
    &\leq
    C \nu_\infty \left(1+ \norm{\bff{v}}{\bb{H}^2}^2\right)
    \norm{\bff{v}}{\bb{H}^3}^2.
\end{align}
\end{lemma}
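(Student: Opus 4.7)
The plan is to attack each of the three summands in~\eqref{equ:R u}, namely $\mathcal{R}(\bff{v})=\beta_1(\bff{\nu}\cdot\nabla)\bff{v}+\beta_2\bff{v}\times(\bff{\nu}\cdot\nabla)\bff{v}+\chi\nabla\cdot(\bff{v}\otimes\bff{v})$, separately. The main tools are the product rule, H\"older's inequality, the Sobolev embeddings $\bb{H}^1\hookrightarrow\bb{L}^6$ and $\bb{H}^2\hookrightarrow\bb{L}^\infty$ (valid for $d\leq 3$), the elliptic regularity bound~\eqref{equ:v H2}, and the algebra inequalities~\eqref{equ:prod Hs mat dot}--\eqref{equ:prod Hs triple}. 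The factor $\nu_\infty$ is supplied by assumption~\eqref{equ:spin bound}.

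For~\eqref{equ:nab Rv nab w}, I would expand $\mathcal{R}(\bff{v})$ and $\nabla\mathcal{R}(\bff{v})$ term by term. The linear piece $\beta_1(\bff{\nu}\cdot\nabla)\bff{v}$ has $\bb{H}^1$-norm bounded by $C\norm{\bff{\nu}}{\bb{H}^2}\norm{\bff{v}}{\bb{H}^2}$, after pairing $\nabla\bff{\nu}$ with $\nabla\bff{v}$ in the $\bb{L}^3\times\bb{L}^6$ H\"older split and $\bff{\nu}$ with $\nabla^2\bff{v}$ in $\bb{L}^\infty\times\bb{L}^2$. The two quadratic summands produce extra factors such as $\norm{\bff{v}}{\bb{L}^\infty}$ (dominated by $\norm{\bff{v}}{\bb{H}^2}$) and $\norm{\nabla\bff{v}}{\bb{L}^4}^2$ (dominated by $C\norm{\bff{v}}{\bb{H}^1}\norm{\bff{v}}{\bb{H}^2}$ through Gagliardo--Nirenberg in $d\leq3$). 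Collecting these estimates yields $\norm{\mathcal{R}(\bff{v})}{\bb{H}^1}^2 \leq C\nu_\infty(1+\norm{\bff{v}}{\bb{H}^2}^2)\norm{\bff{v}}{\bb{H}^2}^2$; then~\eqref{equ:v H2} replaces $\norm{\bff{v}}{\bb{H}^2}^2$ by $\norm{\bff{v}}{\bb{L}^2}^2+\norm{\Delta\bff{v}}{\bb{L}^2}^2$ and the elementary inequality $ab\leq(a^2+b^2)/2$ upgrades the quadratic to the quartic form on the right-hand side of~\eqref{equ:nab Rv nab w}.

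For~\eqref{equ:Rv Hs}, since $s=2>d/2$ in dimensions $d\leq 3$, the algebra inequalities~\eqref{equ:prod Hs mat dot} and~\eqref{equ:prod Hs triple} give $\norm{\beta_1(\bff{\nu}\cdot\nabla)\bff{v}}{\bb{H}^2} \leq C\norm{\bff{\nu}}{\bb{H}^2}\norm{\bff{v}}{\bb{H}^3}$ and $\norm{\beta_2\bff{v}\times(\bff{\nu}\cdot\nabla)\bff{v}}{\bb{H}^2} \leq C\norm{\bff{v}}{\bb{H}^2}\norm{\bff{\nu}}{\bb{H}^2}\norm{\bff{v}}{\bb{H}^3}$, both of which square into the required form. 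The delicate term is the convective quadratic $\chi\nabla\cdot(\bff{v}\otimes\bff{v})$: a direct use of~\eqref{equ:prod Hs mat dot} at $s=3$ would yield only $C\norm{\bff{v}}{\bb{H}^3}^2$, whose square is $\norm{\bff{v}}{\bb{H}^3}^4$, one power of regularity too strong for the stated bound. To extract the tame form $C\norm{\bff{v}}{\bb{H}^2}\norm{\bff{v}}{\bb{H}^3}$, I will expand $\nabla^k(\bff{v}\otimes\bff{v})$ for $k\leq2$ via the Leibniz rule, bound each resulting binomial summand by H\"older's inequality so that at least one factor carries no more than two derivatives (and is therefore controlled in $\bb{L}^\infty$ through $\bb{H}^2$), and interpolate the intermediate $\norm{\nabla^j\bff{v}}{\bb{L}^p}$ pieces between $\bb{H}^2$ and $\bb{H}^3$. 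This tame, commutator-type estimate for the convective quadratic is the only non-routine step; the remainder is bookkeeping with H\"older's inequality and the standard Sobolev embeddings.
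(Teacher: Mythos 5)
Your proposal is correct and follows essentially the same route as the paper: direct term-by-term H\"older/Sobolev estimates for the $\bb{H}^1$ bound (with \eqref{equ:v H2} converting $\norm{\bff{v}}{\bb{H}^2}$ into $\norm{\bff{v}}{\bb{L}^2}+\norm{\Delta\bff{v}}{\bb{L}^2}$), and the $\bb{H}^s$ product inequalities \eqref{equ:prod Hs mat dot}--\eqref{equ:prod Hs triple} for the $\bb{H}^2$ bound. The only superfluous step is your hand-made tame estimate for $\chi\,\nabla\cdot(\bff{v}\otimes\bff{v})$: expanding the divergence by the product rule into terms of the form $\bff{v}\odot\nabla\bff{v}$ and applying \eqref{equ:prod Hs mat dot} at $s=2$ already gives $C\norm{\bff{v}}{\bb{H}^2}\norm{\nabla\bff{v}}{\bb{H}^2}\leq C\norm{\bff{v}}{\bb{H}^2}\norm{\bff{v}}{\bb{H}^3}$, which is how the paper treats it.
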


\begin{proof}
We first show \eqref{equ:nab Rv nab w}. 
By H\"older's inequality,
\begin{align}\label{equ:RvL2}
    \norm{\mathcal{R}(\bff{v})}{\bb{L}^2}^2
    &\leq
    C \norm{\bff{\nu}}{\bb{L}^4(\mathscr{O};\bb{R}^d)}^2 \norm{\nabla \bff{v}}{\bb{L}^4}^2
    +
    C\norm{\bff{v}}{\bb{L}^6}^2 \norm{\bff{\nu}}{\bb{L}^6(\mathscr{O};\bb{R}^d)}^2 \norm{\nabla \bff{v}}{\bb{L}^6}^2
    \nonumber\\
    &\leq
    C\nu_\infty \left(1+\norm{\bff{v}}{\bb{H}^1}^2\right) \norm{\Delta \bff{v}}{\bb{L}^2}^2.
\end{align}
Next, by H\"older's and Young's inequalities, and the Sobolev embedding, we have
\begin{align}\label{equ:gradRv L2}
    \norm{\nabla \mathcal{R}(\bff{v})}{\bb{L}^2}^2
        &\leq
        \beta_1^2 \big( \norm{\bff{\nu}}{\bb{L}^\infty(\mathscr{O};\bb{R}^d)}^2 \norm{\bff{v}}{\bb{H}^2}^2 
        + \norm{\nabla \bff{\nu}}{\bb{L}^3(\mathscr{O};\bb{R}^d)}^2 \norm{\nabla \bff{v}}{\bb{L}^6}^2 \big)
        \nonumber \\
        &\quad
        + (\beta_2 + \chi)^2 \left(\norm{\bff{\nu}}{\bb{L}^\infty(\mathscr{O};\bb{R}^d)}^2 \norm{\nabla \bff{v}}{\bb{L}^4}^4 + \norm{\bff{v}}{\bb{L}^6}^2 \norm{\nabla \bff{\nu}}{\bb{L}^6}^2 \norm{\nabla \bff{v}}{\bb{L}^6}^2 + \norm{\bff{v}}{\bb{L}^\infty}^2 \norm{\bff{\nu}}{\bb{L}^\infty(\mathscr{O};\bb{R}^d)}^2 \norm{\bff{v}}{\bb{H}^2}^2 \right)
        \nonumber \\
        &\leq
        C \nu_\infty \left(1+ \norm{\bff{v}}{\bb{H}^1}^4 + \norm{\Delta \bff{v}}{\bb{L}^2}^4 \right),
\end{align}
Adding~\eqref{equ:RvL2} and~\eqref{equ:gradRv L2} gives \eqref{equ:nab Rv nab w}.

Finally, inequality \eqref{equ:Rv Hs} follows by applying~\eqref{equ:prod Hs triple} to the definition of~$\mathcal{R}(\bff{v})$.
\end{proof}

Further estimates for the map $\mathcal{R}$ in the case $d\leq 2$ are stated below. These will be needed in Section~\ref{sec:furth attractor}.

\begin{lemma}
Let $d\leq 2$ and $\bff{\nu}\in \bb{H}^2(\mathscr{O};\bb{R}^d)$ be given, satisfying assumption~\eqref{equ:spin bound}. Let $\mathcal{R}$ be the map defined by~\eqref{equ:R u}.
For any $\epsilon>0$, there exists a positive constant $C_\epsilon$ such that for sufficiently regular $\bff{v}$,
\begin{align}
	\label{equ:2d Rv Delta v}
	\big| \inpro{\mathcal{R}(\bff{v})}{\Delta \bff{v}}_{\bb{L}^2} \big|
	&\leq
	C_\epsilon \nu_\infty
	\left(1+\norm{\bff{v}}{\bb{L}^4}^4 \right) \norm{\nabla \bff{v}}{\bb{L}^2}^2
	+
	\epsilon \norm{\Delta \bff{v}}{\bb{L}^2}^2,
	\\
	\label{equ:2d Rv Delta2 v}
	\big| \inpro{\nabla \mathcal{R}(\bff{v})}{\nabla \Delta \bff{v}}_{\bb{L}^2} \big|
	&\leq
	C_\epsilon \nu_\infty \left(1+ \norm{\bff{v}}{\bb{H}^1}^4 + \norm{\Delta \bff{v}}{\bb{L}^2}^4 \right) 
	+
	\epsilon \norm{\nabla \Delta \bff{v}}{\bb{L}^2}^2,
	\\
	\label{equ:2d Delta Rv Delta2 v}
	\big| \inpro{\Delta \mathcal{R}(\bff{v})}{\Delta^2 \bff{v}}_{\bb{L}^2} \big|
	&\leq
	C_\epsilon \nu_\infty \left(1+ \norm{\bff{v}}{\bb{H}^2}^4 \right)
	+
	C_\epsilon \nu_\infty \left(1+ \norm{\bff{v}}{\bb{H}^2}^2 \right) \norm{\nabla\Delta \bff{v}}{\bb{L}^2}^2
	+ 
	\epsilon \norm{\Delta^2 \bff{v}}{\bb{L}^2}^2,
\end{align}
where $\nu_\infty$ was defined in~\eqref{equ:spin bound}. Furthermore, for sufficiently regular $\bff{v}$ and $\bff{w}$,
\begin{align}
	\label{equ:2d nab RvRw}
	\big| \inpro{\nabla \mathcal{R}(\bff{v})- \nabla\mathcal{R}(\bff{w})}{\nabla \Delta \bff{v}-\nabla \Delta\bff{w}}_{\bb{L}^2} \big|
	&\leq
	C_\epsilon \nu_\infty \left(1+ \norm{\bff{v}}{\bb{H}^2}^2 + \norm{\bff{w}}{\bb{H}^2}^2 \right) \norm{\bff{v}-\bff{w}}{\bb{H}^2}^2
	\nonumber \\
	&\quad
	+
	\epsilon \norm{\nabla \Delta \bff{v}-\nabla \Delta\bff{w}}{\bb{L}^2}^2.
\end{align}
\end{lemma}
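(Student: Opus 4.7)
\smallskip

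\noindent\textbf{Proof plan for \eqref{equ:2d nab RvRw}.}
The plan is to reduce the bound, by Cauchy--Schwarz and Young's inequality, to an estimate of $\|\nabla\mathcal{R}(\bff{v})-\nabla\mathcal{R}(\bff{w})\|_{\bb{L}^2}^2$. Indeed,
\[
\big|\inpro{\nabla\mathcal{R}(\bff{v})-\nabla\mathcal{R}(\bff{w})}{\nabla\Delta\bff{v}-\nabla\Delta\bff{w}}_{\bb{L}^2}\big|
\leq
\frac{1}{4\epsilon}\norm{\nabla\mathcal{R}(\bff{v})-\nabla\mathcal{R}(\bff{w})}{\bb{L}^2}^2
+\epsilon \norm{\nabla\Delta\bff{v}-\nabla\Delta\bff{w}}{\bb{L}^2}^2,
\]
so it suffices to show
\begin{equation*}
\norm{\nabla\mathcal{R}(\bff{v})-\nabla\mathcal{R}(\bff{w})}{\bb{L}^2}^2
\leq C\nu_\infty \bigl(1+\norm{\bff{v}}{\bb{H}^2}^2+\norm{\bff{w}}{\bb{H}^2}^2\bigr)\norm{\bff{v}-\bff{w}}{\bb{H}^2}^2.
\end{equation*}

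To obtain this, I would first use bilinearity to split the difference, writing
\[
\mathcal{R}(\bff{v})-\mathcal{R}(\bff{w})
=\beta_1(\bff{\nu}\cdot\nabla)(\bff{v}-\bff{w})
+\beta_2\bigl[(\bff{v}-\bff{w})\times(\bff{\nu}\cdot\nabla)\bff{v}+\bff{w}\times(\bff{\nu}\cdot\nabla)(\bff{v}-\bff{w})\bigr]
+\chi\,\nabla\cdot\bigl[(\bff{v}-\bff{w})\otimes\bff{v}+\bff{w}\otimes(\bff{v}-\bff{w})\bigr],
\]
and then apply $\nabla$ via the Leibniz rule. The result is a finite sum of terms, each of which is a pointwise product of three factors chosen from $\{\bff{\nu},\nabla\bff{\nu},\bff{v},\nabla\bff{v},\nabla^2\bff{v},\bff{w},\nabla\bff{w},\nabla^2\bff{w}\}$, with exactly one factor being either $\bff{v}-\bff{w}$, $\nabla(\bff{v}-\bff{w})$, or $\nabla^2(\bff{v}-\bff{w})$.

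The next step is to estimate the $\bb{L}^2$ norm of each product via H\"older's inequality, distributing the factors into $\bb{L}^\infty\cdot\bb{L}^2$ or $\bb{L}^4\cdot\bb{L}^4\cdot\bb{L}^\infty$ combinations. The crucial use of the assumption $d\leq 2$ enters here through the Sobolev embeddings $\bb{H}^1\hookrightarrow\bb{L}^4$ and $\bb{H}^2\hookrightarrow\bb{L}^\infty$, which allow us to bound
\[
\norm{\bff{\nu}}{\bb{L}^\infty}+\norm{\nabla\bff{\nu}}{\bb{L}^4}\leq C\norm{\bff{\nu}}{\bb{H}^2}\leq C\nu_\infty^{1/2},
\qquad
\norm{\bff{v}}{\bb{L}^\infty}+\norm{\nabla\bff{v}}{\bb{L}^4}\leq C\norm{\bff{v}}{\bb{H}^2},
\]
and likewise for $\bff{w}$ and $\bff{v}-\bff{w}$. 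In this way, each term produces a factor $\norm{\bff{v}-\bff{w}}{\bb{H}^2}$, a factor of order $\nu_\infty^{1/2}$, and at most one factor of order $\norm{\bff{v}}{\bb{H}^2}$ or $\norm{\bff{w}}{\bb{H}^2}$; the $\chi$-terms, which carry no $\bff{\nu}$-factor, are absorbed into the $\nu_\infty$-prefactor (using $\nu_\infty\geq$ constant as per the standing convention). Squaring and summing yields the desired bound.

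\smallskip
\noindent\textbf{Main obstacle.} The analysis is in essence a careful bookkeeping exercise: roughly a dozen Leibniz-rule summands must be listed and estimated. The one substantive point is the reliance on $\bb{H}^2\hookrightarrow\bb{L}^\infty$ to place a second-derivative factor against an $\bb{L}^\infty$ factor --- this embedding fails in $d=3$, which is precisely the reason the lemma is restricted to $d\leq 2$ and why the present estimate improves on the cruder three-dimensional bounds \eqref{equ:nab Rv nab w}--\eqref{equ:Rv Hs}.
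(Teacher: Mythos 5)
Your proposal only addresses the last of the four estimates. The lemma also asserts \eqref{equ:2d Rv Delta v}, \eqref{equ:2d Rv Delta2 v}, and \eqref{equ:2d Delta Rv Delta2 v}, and none of these is touched; this is a genuine gap, and it matters because \eqref{equ:2d Rv Delta v} is precisely where the substance lies. In the paper, \eqref{equ:2d Rv Delta2 v} and \eqref{equ:2d Delta Rv Delta2 v} are quick consequences of the earlier bounds \eqref{equ:nab Rv nab w} and \eqref{equ:Rv Hs} together with Young's inequality, while \eqref{equ:2d Rv Delta v} is proved by estimating $\norm{\bff{v}\times(\bff{\nu}\cdot\nabla)\bff{v}}{\bb{L}^2}\norm{\Delta\bff{v}}{\bb{L}^2}\leq \norm{\bff{\nu}}{\bb{L}^\infty}\norm{\bff{v}}{\bb{L}^4}\norm{\nabla\bff{v}}{\bb{L}^4}\norm{\Delta\bff{v}}{\bb{L}^2}$ and invoking the two-dimensional Gagliardo--Nirenberg inequality $\norm{\nabla\bff{v}}{\bb{L}^4}\leq C\norm{\nabla\bff{v}}{\bb{L}^2}^{1/2}\norm{\Delta\bff{v}}{\bb{L}^2}^{1/2}$, so that the resulting power $\norm{\Delta\bff{v}}{\bb{L}^2}^{3/2}$ can be absorbed by Young with the stated prefactor $(1+\norm{\bff{v}}{\bb{L}^4}^4)\norm{\nabla\bff{v}}{\bb{L}^2}^2$. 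A complete attempt must supply these arguments.

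For \eqref{equ:2d nab RvRw} itself, your route (Young's reduction, bilinear splitting, Leibniz rule, then H\"older with $\bb{H}^1\hookrightarrow\bb{L}^4$ and $\bb{H}^2\hookrightarrow\bb{L}^\infty$) is sound and is essentially what the paper intends when it says the inequality follows "in a similar manner" as \eqref{equ:Rvw Delta k} and \eqref{equ:2d Rv Delta2 v}; the term-by-term bookkeeping closes as you describe, and absorbing the $\chi$-terms is harmless since constants are allowed to depend on $\nu_\infty$. However, your stated "main obstacle" is mathematically wrong: $\bb{H}^2(\mathscr{O})\hookrightarrow\bb{L}^\infty(\mathscr{O})$ holds for $d=3$ as well (it fails only for $d\geq 4$), so this embedding is not the reason for the restriction $d\leq 2$, and indeed your intermediate estimate $\norm{\nabla\mathcal{R}(\bff{v})-\nabla\mathcal{R}(\bff{w})}{\bb{L}^2}^2\leq C\nu_\infty(1+\norm{\bff{v}}{\bb{H}^2}^2+\norm{\bff{w}}{\bb{H}^2}^2)\norm{\bff{v}-\bff{w}}{\bb{H}^2}^2$ would survive in three dimensions. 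The dimension restriction is really consumed by the Gagliardo--Nirenberg step behind \eqref{equ:2d Rv Delta v} (and by the Agmon-type interpolation used downstream in Section~\ref{sec:robust exp}), which is another indication that the untreated estimates are where the actual content of the lemma sits.
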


\begin{proof}
Firstly, by H\"older's and Young's inequalities, and the Gagliardo--Nirenberg inequalities (for $d\leq 2$) we obtain
\begin{align*} 
\big| \inpro{\mathcal{R}(\bff{v})}{\Delta \bff{v}}_{\bb{L}^2} \big|
&\leq
\beta_1 \norm{\bff{\nu}}{\bb{L}^\infty(\mathscr{O};\bb{R}^d)} \norm{\nabla \bff{v}}{\bb{L}^2} \norm{\Delta \bff{v}}{\bb{L}^2}
+
\left(\beta_2 \norm{\bff{\nu}}{\bb{L}^\infty(\mathscr{O};\bb{R}^d)} + \chi\right) \left(\norm{\bff{v}}{\bb{L}^4} \norm{\nabla \bff{v}}{\bb{L}^2}^{1/2} \norm{\Delta \bff{v}}{\bb{L}^2}^{3/2} \right)
\\
&\leq
C_\epsilon \nu_\infty
\left(1+\norm{\bff{v}}{\bb{L}^4}^4 \right) \norm{\nabla \bff{v}}{\bb{L}^2}^2
+
\epsilon \norm{\Delta \bff{v}}{\bb{L}^2}^2,
\end{align*}
showing~\eqref{equ:2d Rv Delta v}. Next, \eqref{equ:2d Rv Delta2 v} follows from~\eqref{equ:nab Rv nab w} and Young's inequality. Similarly, the estimate~\eqref{equ:2d Delta Rv Delta2 v} can be deduced from~\eqref{equ:Rv Hs} and Young's inequality. The inequality~\eqref{equ:2d nab RvRw} can be shown in a similar manner as~\eqref{equ:Rvw Delta k} and~\eqref{equ:2d Rv Delta2 v}.
This completes the proof of the lemma.
\end{proof}

Next, estimates on $\mathcal{S}(\bff{u})$ are stated in the following lemma.

\begin{lemma}
Let $\mathcal{S}$ be the map satisfying~\eqref{equ:S u}--\eqref{equ:S u min v}, and let $k\geq 0$. For any $\epsilon>0$, there exists a positive constant $C_\epsilon$ such that for sufficiently regular $\bff{v}$ and $\bff{w}$,
\begin{align}
	\label{equ:Sv w}
	\big| \inpro{\mathcal{S}(\bff{v})}{\bff{w}}_{\bb{L}^2} \big|
	&\leq
	C_\epsilon \left(\norm{\bff{v}}{\bb{L}^2}^2 +\norm{\bff{v}}{\bb{L}^4}^4\right) + \epsilon \norm{\bff{w}}{\bb{L}^2}^2,
	\\
	\label{equ:nab Sv nab w}
	\big| \inpro{\nabla \mathcal{S}(\bff{v})}{\nabla \bff{w}}_{\bb{L}^2} \big|
	&\leq
	C_\epsilon \left(\norm{\nabla \bff{v}}{\bb{L}^2}^2 +\norm{|\bff{v}| |\nabla\bff{v}|}{\bb{L}^2}^2\right) + \epsilon \norm{\nabla \bff{w}}{\bb{L}^2}^2,
	\\
	\label{equ:Sv Delta2 w}
	\big| \inpro{\Delta \mathcal{S}(\bff{v})}{\Delta^2 \bff{w}}_{\bb{L}^2} \big| 
	&\leq
	C_\epsilon \norm{\nabla \bff{v}}{\bb{L}^4}^4 + C_\epsilon \left(1+\norm{\bff{v}}{\bb{L}^\infty}^2 \right) \norm{\Delta \bff{v}}{\bb{L}^2}^2
	+
	\epsilon \norm{\Delta^2 \bff{w}}{\bb{L}^2}^2,
	\\
	\label{equ:Sv Sw v min w}
	\Big| \inpro{\mathcal{S}(\bff{v})-\mathcal{S}(\bff{w})}{\Delta^k \bff{v}- \Delta^k \bff{w}}_{\bb{L}^2} \Big|
	&\leq
	C_\epsilon \left(1+\norm{\bff{v}}{\bb{L}^\infty}^2 + \norm{\bff{w}}{\bb{L}^\infty}^2 \right) \norm{\bff{v}-\bff{w}}{\bb{L}^2}^2 
	+
	\epsilon \norm{\Delta^k \bff{v}- \Delta^k \bff{w}}{\bb{L}^2}^2,
	\\
	\label{equ:nab Sv Sw nab vw}
	\Big| \inpro{\nabla \mathcal{S}(\bff{v})- \nabla \mathcal{S}(\bff{w})}{\nabla \Delta \bff{v}- \nabla \Delta \bff{w}}_{\bb{L}^2} \Big|
	&\leq
	C_\epsilon \left(1+ \norm{\bff{v}}{\bb{H}^2}^2 + \norm{\bff{w}}{\bb{H}^2}^2 \right) \norm{\bff{v}-\bff{w}}{\bb{H}^1}^2
	+
	\epsilon \norm{\nabla\Delta \bff{v}- \nabla \Delta \bff{w}}{\bb{L}^2}^2.
\end{align}
\end{lemma}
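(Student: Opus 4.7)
The plan is to prove all five inequalities by the same two-ingredient recipe: apply Cauchy--Schwarz followed by Young's inequality $ab \le \frac{1}{4\epsilon}a^2+\epsilon b^2$ to the $\bb L^2$ inner product, and then bound the remaining $\bb L^2$ norm using the growth/Lipschitz hypotheses \eqref{equ:S u}--\eqref{equ:S u min v} together with the pointwise structural form of $\mathcal{S}$ (e.g.\ $\mathcal{S}(\bff{u})=\bff{u}+(\bff{a}\cdot\bff{u})\bff{u}$). I would list the five assertions and dispatch them in roughly increasing difficulty.

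For \eqref{equ:Sv w}, Cauchy--Schwarz plus Young gives $|\langle\mathcal{S}(\bff v),\bff w\rangle|\le \frac{1}{4\epsilon}\|\mathcal{S}(\bff v)\|_{\bb L^2}^2+\epsilon\|\bff w\|_{\bb L^2}^2$, and then the pointwise bound \eqref{equ:S u} yields $\|\mathcal{S}(\bff v)\|_{\bb L^2}^2\le C(\|\bff v\|_{\bb L^2}^2+\|\bff v\|_{\bb L^4}^4)$, giving the result. For \eqref{equ:nab Sv nab w} I would use the pointwise version $|\nabla\mathcal{S}(\bff v)|\le C(1+|\bff v|)|\nabla\bff v|$ that underlies hypothesis \eqref{equ:S nab u}; squaring and integrating yields $\|\nabla\mathcal{S}(\bff v)\|_{\bb L^2}^2\le C(\|\nabla\bff v\|_{\bb L^2}^2+\||\bff v|\,|\nabla\bff v|\|_{\bb L^2}^2)$, and Young's inequality closes the argument. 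For \eqref{equ:Sv Delta2 w}, I would differentiate twice and obtain the pointwise domination $|\Delta\mathcal{S}(\bff v)|\le C|\nabla\bff v|^2+C(1+|\bff v|)|\Delta\bff v|$; squaring, integrating, and applying Young again after Cauchy--Schwarz gives the claim.

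The inequality \eqref{equ:Sv Sw v min w} is essentially immediate from \eqref{equ:S u min v}: Cauchy--Schwarz followed by Young bounds the inner product by $\frac{1}{4\epsilon}\|\mathcal{S}(\bff v)-\mathcal{S}(\bff w)\|_{\bb L^2}^2+\epsilon\|\Delta^k\bff v-\Delta^k\bff w\|_{\bb L^2}^2$, and applying \eqref{equ:S u min v} to the first term finishes the argument.

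The main obstacle is \eqref{equ:nab Sv Sw nab vw}, since a pointwise Lipschitz bound for $\nabla\mathcal{S}$ is not among the stated hypotheses; this must be derived from the structural form of $\mathcal{S}$. Using, e.g., $\mathcal{S}(\bff u)=\bff u+(\bff a\cdot\bff u)\bff u$, I would expand $\nabla\mathcal{S}(\bff v)-\nabla\mathcal{S}(\bff w)$ as a telescoping sum and bound each piece by H\"older's inequality with pairings $\bb L^\infty\cdot\bb L^2$ and $\bb L^4\cdot\bb L^4$; the Sobolev embeddings $\bb H^2\hookrightarrow\bb L^\infty$, $\bb H^2\hookrightarrow\bb W^{1,4}$ and $\bb H^1\hookrightarrow\bb L^4$ (all valid for $d\le 3$) then yield
\[
\|\nabla\mathcal{S}(\bff v)-\nabla\mathcal{S}(\bff w)\|_{\bb L^2}^2\le C\bigl(1+\|\bff v\|_{\bb H^2}^2+\|\bff w\|_{\bb H^2}^2\bigr)\|\bff v-\bff w\|_{\bb H^1}^2.
\]
A final round of Cauchy--Schwarz and Young, with weight $\epsilon$ on $\|\nabla\Delta(\bff v-\bff w)\|_{\bb L^2}^2$, delivers the stated estimate and completes the lemma.
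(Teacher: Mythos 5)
Your proposal is correct and follows essentially the same route as the paper, whose proof is simply the observation that \eqref{equ:Sv w}--\eqref{equ:Sv Sw v min w} follow from the hypotheses \eqref{equ:S u}--\eqref{equ:S u min v} combined with Young's inequality, while \eqref{equ:nab Sv Sw nab vw} is obtained by the same product-rule/H\"older/Sobolev argument used for \eqref{equ:2d nab RvRw}. Your explicit use of the pointwise bounds underlying \eqref{equ:S nab u} and \eqref{equ:S Delta u} (valid for the admissible quadratic maps such as $\mathcal{S}(\bff{u})=\bff{u}+(\bff{a}\cdot\bff{u})\bff{u}$) is exactly what is needed to recover the stated right-hand sides with $\norm{|\bff{v}|\,|\nabla\bff{v}|}{\bb{L}^2}^2$ and $\norm{\nabla\bff{v}}{\bb{L}^4}^4$, so no gap remains.
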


\begin{proof}
These inequalities follow from~\eqref{equ:S u}, \eqref{equ:S nab u}, \eqref{equ:S Delta u}, \eqref{equ:S u min v}, and Young's inequality. Inequality~\eqref{equ:nab Sv Sw nab vw} can be shown in a similar manner as~\eqref{equ:2d nab RvRw}.
\end{proof}

Some properties of the operator $\Phi_{\mathrm{d}}$ defining the demagnetising field are recalled below (see also~\cite{Pra04}).

\begin{theorem}\label{the:Phi d}
The solution to~\eqref{equ:demag} can be written as
\begin{equation}\label{equ:Phi div G}
    \Phi_{\mathrm{d}}(\bff{v})= -\nabla\big(G\ast \text{div}(\bff{v})\big),
\end{equation}
where $G$ is the fundamental solution of the Laplace operator and $\ast$ denotes the convolution operator.
Furthermore, the following statements hold
\begin{enumerate}
    \item 
For any $\bff{v}\in \bb{H}^s(\mathscr{O})$, where $s\geq 0$, we have
\begin{align}\label{equ:Phi d O est}
	\norm{\Phi_{\mathrm{d}}(\bff{v})}{\bb{H}^s(\bb{R}^3)}
	\leq
	\norm{\bff{v}}{\bb{H}^s(\mathscr{O})}.
\end{align}

\item If $\bff{v}\in\bb{W}^{k,p}(\mathscr{O}$) for some $p\in (1,\infty)$ and $k\geq 0$, then the restriction of $\Phi_{\mathrm{d}}(\bff{v})$ to $\mathscr{O}$ belongs to $\bb{W}^{k,p}(\mathscr{O})$ and satisfies
\begin{align}\label{equ:Phi d Wkp}
	\norm{\Phi_{\mathrm{d}}(\bff{v})}{\bb{W}^{k,p}(\mathscr{O})}
	\leq
	C\norm{\bff{v}}{\bb{W}^{k,p}(\mathscr{O})},
\end{align}
where the positive constant $C$ is independent of $\bff{v}$.
\end{enumerate}
\end{theorem}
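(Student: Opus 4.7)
My plan is to treat the three assertions separately, reducing everything to facts about the Helmholtz decomposition on $\mathbb{R}^3$ and the Calder\'on--Zygmund theory for the Riesz--type multiplier $\xi\otimes\xi/|\xi|^2$. Step~one is to derive the representation formula~\eqref{equ:Phi div G}. Since $\mathbb{R}^3$ is simply connected and $\text{curl}\,\Phi_{\mathrm{d}}(\bff{v})=\bff{0}$, Poincar\'e's lemma yields a scalar potential $\phi$ with $\Phi_{\mathrm{d}}(\bff{v})=-\nabla\phi$. Inserting this into the divergence condition of~\eqref{equ:demag} gives $-\Delta\phi=-\text{div}(\bff{v}\mathds{1}_{\mathscr{O}})$ in $\mathscr{D}'(\mathbb{R}^3)$. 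Inverting with the Newton kernel $G$ on $\mathbb{R}^3$ (requiring decay at infinity for uniqueness) produces $\phi=-G\ast\text{div}(\bff{v}\mathds{1}_{\mathscr{O}})$, hence~\eqref{equ:Phi div G} with the obvious sign convention.

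Step~two is the $H^s(\mathbb{R}^3)$ bound~\eqref{equ:Phi d O est}. I would pass to the Fourier side, where the representation rewrites as
\[
\widehat{\Phi_{\mathrm{d}}(\bff{v})}(\xi)
=-\frac{\xi\otimes\xi}{|\xi|^2}\,\widehat{\bff{v}\mathds{1}_{\mathscr{O}}}(\xi).
\]
The matrix-valued symbol $\xi\otimes\xi/|\xi|^2$ is an orthogonal projection at each $\xi\neq 0$, so has operator norm exactly $1$. Multiplying by $(1+|\xi|^2)^{s/2}$ and applying Plancherel yields $\|\Phi_{\mathrm{d}}(\bff{v})\|_{\bb{H}^s(\mathbb{R}^3)}\leq\|\bff{v}\mathds{1}_{\mathscr{O}}\|_{\bb{H}^s(\mathbb{R}^3)}$. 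For $s=0$ the right-hand side equals $\|\bff{v}\|_{\bb{L}^2(\mathscr{O})}$. For $s\geq 1$ one has to be careful because the zero extension is not in general $\bb{H}^s(\mathbb{R}^3)$; the statement should thus be read via the restriction of $\Phi_{\mathrm{d}}(\bff{v})$ to $\mathscr{O}$ and a duality/interpolation argument relating $\|\Phi_{\mathrm{d}}(\bff{v})\|_{\bb{H}^s(\mathbb{R}^3)}$ to $\|\bff{v}\|_{\bb{H}^s(\mathscr{O})}$ via the same Helmholtz projection iterated with commutators with smooth derivatives, which I expect to be the main technical obstacle.

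Step~three, the $\bb{W}^{k,p}(\mathscr{O})$ estimate~\eqref{equ:Phi d Wkp}, follows from Calder\'on--Zygmund theory. Writing the components of $\Phi_{\mathrm{d}}(\bff{v})$ as $R_iR_j$ applied to $v_j\mathds{1}_{\mathscr{O}}$, where the Riesz multipliers $\xi_i\xi_j/|\xi|^2$ satisfy the H\"ormander--Mikhlin condition (bounded, smooth away from the origin, with homogeneous derivative bounds), one obtains $L^p(\mathbb{R}^3)$-boundedness with norm depending only on $p\in(1,\infty)$. Differentiating $k$ times commutes with the Fourier multiplier, giving $\bb{W}^{k,p}(\mathbb{R}^3)$-boundedness; restriction to $\mathscr{O}$ and use of $\|\bff{v}\mathds{1}_{\mathscr{O}}\|_{L^p(\mathbb{R}^3)}=\|\bff{v}\|_{L^p(\mathscr{O})}$ close the estimate for $k=0$. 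For $k\geq 1$ a standard density/approximation argument reduces to the smooth case after extending $\bff{v}$ to an $\bb{W}^{k,p}(\mathbb{R}^3)$ function with controlled norm (Stein extension), which is available because $\partial\mathscr{O}$ is $C^\infty$.

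I would then close by citing~\cite{Pra04} for the detailed implementation of the third step, where the compatibility between Stein extension and the Helmholtz projection is spelled out, and remarking that the constant $C$ in~\eqref{equ:Phi d Wkp} depends only on $p$, $k$, and the geometry of $\mathscr{O}$. The subtle point I anticipate most care on is keeping the constant equal to~$1$ in~\eqref{equ:Phi d O est} while passing from the $\mathbb{R}^3$ norm of the zero extension to the $\mathscr{O}$ norm for $s\geq 1$; the cleanest route may be to prove~\eqref{equ:Phi d O est} first in the weak (integral) form $\langle\Phi_{\mathrm{d}}(\bff{v}),\bff{w}\rangle=-\langle\nabla\phi,\bff{w}\rangle$ and use the energy identity $\|\nabla\phi\|_{L^2(\mathbb{R}^3)}^2=\int_{\mathscr{O}}\Phi_{\mathrm{d}}(\bff{v})\cdot\bff{v}\,\mathrm{d}\bff{x}$ combined with Cauchy--Schwarz to obtain the sharp constant~$1$ for $s=0$, then promote to higher $s$ by applying the same identity to tangential and normal derivatives.
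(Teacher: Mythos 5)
Your outline follows the same standard route as the references the paper cites for this theorem (the paper gives no proof of its own, only pointers to \cite{San07}, \cite{CarFab01}, \cite{DumLab12}): scalar potential via $\mathrm{curl}$-freeness, Newtonian kernel, the Riesz-type multiplier $\xi\otimes\xi/|\xi|^2$, and Calder\'on--Zygmund theory. Your Step~1, the $s=0$ case of \eqref{equ:Phi d O est} (the symbol is a pointwise orthogonal projection, Plancherel, and the zero extension is an $\bb{L}^2$-isometry), and the $k=0$ case of \eqref{equ:Phi d Wkp} are all correct; the closing energy identity also works, up to a sign ($\|\nabla\phi\|_{L^2(\bb{R}^3)}^2=-\int_{\mathscr{O}}\Phi_{\mathrm{d}}(\bff{v})\cdot\bff{v}\,\dx$), and Cauchy--Schwarz then gives the constant $1$ at $s=0$.

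The genuine gaps sit exactly where you defer to ``duality/interpolation'' and ``compatibility of the Stein extension''. For \eqref{equ:Phi d O est} with $s\geq 1$, no Fourier-side or iterated-derivative argument on $\bb{R}^3$ can succeed: the normal component of $\Phi_{\mathrm{d}}(\bff{v})$ jumps across $\partial\mathscr{O}$ by $\bff{v}\cdot\bff{n}$ (take $\bff{v}$ a constant vector on a ball: the field is piecewise smooth but discontinuous), so $\Phi_{\mathrm{d}}(\bff{v})\notin\bb{H}^1(\bb{R}^3)$ in general even for smooth $\bff{v}$; the meaningful higher-order statements are interior ones on $\mathscr{O}$, and your plan of ``applying the same identity to tangential and normal derivatives'' is not a proof of them. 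For \eqref{equ:Phi d Wkp} with $k\geq 1$, differentiating the multiplier identity does not close the argument, because $\partial^\alpha(\bff{v}\mathds{1}_{\mathscr{O}})$ contains measures supported on $\partial\mathscr{O}$, and replacing the zero extension by a Stein extension changes the operator: the difference is precisely a single-layer potential generated by $\bff{v}\cdot\bff{n}$. The missing idea in both items is the distributional splitting $\mathrm{div}(\bff{v}\mathds{1}_{\mathscr{O}})=(\mathrm{div}\,\bff{v})\mathds{1}_{\mathscr{O}}-(\bff{v}\cdot\bff{n})\,\mathrm{d}S_{\partial\mathscr{O}}$, after which one estimates the Newtonian volume potential by Calder\'on--Zygmund (as you do) and the gradient of the single-layer potential by boundary regularity/trace theory; this layer-potential estimate is the actual content of the cited lemmas and is what your sketch outsources without supplying.
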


\begin{proof}
Refer to~\cite[Section~2.5]{San07}, \cite[Lemma~2.3]{CarFab01} and~\cite[Lemma~3.1]{DumLab12}.
\end{proof}

\subsection{Faedo--Galerkin method}\label{subsec:faedo}

The Faedo--Galerkin approximation will be used to establish the existence of solution to \eqref{equ:llbar}.
Let $\{\bff{e}_i\}_{i=1}^\infty$ denote an orthonormal basis of $\bb{L}^2$
consisting of eigenfunctions of $-\Delta$ such that
\begin{align*}
	-\Delta \bff{e}_i =\mu_i \bff{e}_i \ \text{ in $\mathscr{O}$}
	\quad\text{and}\quad
	\frac{\partial \bff{e}_i}{\partial \bff{n}}= \bff{0} \ \text{ on } \partial \mathscr{O},
	\quad \forall i\in \bb{N},
\end{align*}
where $\mu_i$ are the eigenvalues of $-\Delta$ associated with $\bff{e}_i$.

Let $\bb{V}_n:= \text{span}\{\bff{e_1},\ldots,\bff{e_n}\}$ and $\Pi_n: \bb{L}^2\to
\bb{V}_n$ be the orthogonal projection defined by
\begin{align*}
	\inpro{ \Pi_n \bff{v}}{\bff{\phi}}_{\bb{L}^2}
	= 
	\inpro{ \bff{v}}{\bff{\phi}}_{\bb{L}^2},
	\quad \forall \bff{\phi}\in \bb{V}_n,
	\;\bff{v}\in\bb{L}^2.
\end{align*}
Note that $\Pi_n$ is self-adjoint and satisfies
\begin{align*}
	\norm{\Pi_n \bff{v}}{\bb{L}^2} 
	&\leq 
	\norm{\bff{v}}{\bb{L}^2},
	\quad \forall
	\bff{v}\in \bb{L}^2,
	\\
	\norm{\nabla \Pi_n \bff{v}}{\bb{L}^2}
	&\leq
	\norm{\nabla \bff{v}}{\bb{L}^2},
	\quad \forall
	\bff{v} \in \bb{H}^1.
\end{align*}
Also,
\begin{equation*}
	\inpro{\Pi_n \Delta \bff{v}}{\bff{\phi}}_{\bb{L}^2}
	=
	\inpro{\Delta \Pi_n \bff{v}}{\bff{\phi}}_{\bb{L}^2},
	\quad \forall
	\bff{\phi}\in \bb{V}_n,\;
	\bff{v}\in \text{D}(\Delta).
\end{equation*}

The Faedo--Galerkin method seeks to approximate the solution 
to~\eqref{equ:llbar} by~$\bff{u}_n(t) \in \bb{V}_n$ satisfying the equation
\begin{equation}\label{equ:faedo}
	\left\{
	\begin{alignedat}{2}
		&\partial_t \bff{u}_n
		=
		\sigma \big(\bff{H}_n+\Pi_n\Phi_{\mathrm{d}}(\bff{u}_n)\big)
		-
		\varepsilon \Delta \bff{H}_n 
		- 
		\varepsilon\Pi_n \Delta \Phi_{\mathrm{d}}(\bff{u}_n)
        &&
        \\
		&\qquad\qquad
        -
		\gamma \Pi_n \big(\bff{u}_n \times (\bff{H}_n+\Phi_{\mathrm{d}}(\bff{u}_n))\big)
		+
		\Pi_n \mathcal{R}(\bff{u}_n)
        +
        \Pi_n \mathcal{S}(\bff{u}_n)
		&&
		\qquad\text{in $(0,T)\times\mathscr{O}$,}
		\\
		&\bff{H}_n
		=
		\Pi_n \big(\Psi(\bff{u}_n)
		+
		\Phi_{\mathrm{a}}(\bff{u}_n)\big)
		&&
		\qquad\text{in $(0,T)\times\mathscr{O}$,}
		\\
		&\bff{u}_n(0) = \bff{u}_{0n}
		&& \qquad\text{in $\mathscr{O}$,}
	\end{alignedat}
	\right.
\end{equation}
where the maps $\mathcal{R}$, $\mathcal{S}$, $\Psi$, $\Phi_{\mathrm{a}}$, and $\Phi_{\mathrm{d}}$ are specified in Section~\ref{subsec:assum}, and~$\bff{u}_{0n}:= \Pi_n \bff{u}_0 \in \bb{V}_n$.

The existence of solutions to the above ordinary differential equation in $\bb{V}_n$ defined on the interval~$(0, t_n)\subseteq (0,T)$ is guaranteed by the Cauchy--Lipschitz theorem. In the next section, we will prove several a priori estimates, which are used to ensure the Faedo--Galerkin solutions $(\bff{u}_n, \bff{H}_n)$ can be continued globally to $(0,+\infty)$ for any initial data $\bff{u}_{0n}\in \bb{V}_n$.

\section{Uniform estimates}\label{sec:exist}

In the following, we will derive various estimates on $\bff{u}_n$ and $\bff{H}_n$ which are uniform in $n$ to
show global existence and uniqueness of solution to \eqref{equ:llbar},
as well as the existence of an absorbing set.
Several types of bounds are proved in this section, namely for $k=0,1,2,3$, we derive:
\begin{enumerate}
    \item bounds for $\norm{\bff{u}_n}{L^\infty(0,T; \bb{H}^k)}$ which depend on $\norm{\bff{u}_0}{\bb{H}^k}$, and bounds for $\norm{\bff{u}_n}{L^2(0,T; \bb{H}^{k+2})}$ which depend on $\norm{\bff{u}_0}{\bb{H}^k}$ and $T$ (see~\eqref{equ:un L2}, \eqref{equ:un L2 exp k lambda}, \eqref{equ:int nab Hn L2 t}, \eqref{equ:semidisc-est1}, \eqref{equ:H2 unif un}, \eqref{equ:Delta un Hn L2}, \eqref{equ:H3 un C}, and~\eqref{equ:H5 int un});
    \item bounds for~$\norm{\bff{u}_n}{L^\infty(t_k,\infty; \bb{H}^k)}$ and $\norm{\bff{u}_n}{L^2(t,t+1; \bb{H}^{k+2})}$ which are independent of $\bff{u}_0$ and $t$, but which hold only for sufficiently large time $t\geq t_k\big(\norm{\bff{u}_0}{\bb{L}^2}\big)$ (see~\eqref{equ:rho 0 sigma 0}, \eqref{equ:rho 1 sigma 1}, \eqref{equ:rho 2 sigma 2}, and~\eqref{equ:rho 4});
    \item for $t\geq \delta$, bounds on~$\norm{\bff{u}_n}{L^\infty(\delta,\infty; \bb{H}^k)}$ and $\norm{\bff{u}_n}{L^2(t,t+\delta; \bb{H}^{k+2})}$ which depend on $\norm{\bff{u}_0}{\bb{L}^2}$ and $\delta$, but are independent of $t$ (see~\eqref{equ:un H1 int Hn H1}, \eqref{equ:Hn H2 mu}, \eqref{equ:mu 4});
    \item bounds for $\norm{\bff{u}_n}{L^\infty(0,\infty; \bb{H}^k)}$ which depend on~$\norm{\bff{u}_0}{\bb{H}^{k-1}}$, but are independent of~$\norm{\bff{u}_0}{\bb{H}^k}$ (see Proposition~\ref{pro:H1 smooth}, \ref{pro:H2 H1 smooth}, and~\ref{pro:Hn H1}).
\end{enumerate}

Corresponding estimates for $\bff{H}_n$ will also be shown. These will be essential in the proof of existence of attractors in the next section.
For ease of presentation, we often omit the dependence of the functions on $t$ in the proof of these estimates. 

For some of the estimates, we highlight the dependence of the constants on $\varepsilon$ as this will be used subsequently to derive an upper bound for the dimension of the global attractor; see Section~\ref{sec:frac dim}. We ignore dependence on other constants $\sigma$, $\gamma$, $\beta_1$, $\beta_2$, $\kappa_1$, $\kappa_2$, $\lambda_1$, $\lambda_2$, and $\chi$.

\begin{proposition}\label{pro:un L2}
	For any $n\in \bb{N}$ and $t\geq 0$, the following bounds hold:
    \begin{align}\label{equ:un L2}
		\norm{\bff{u}_n(t)}{\bb{L}^2}^2
		\leq
		M_0,
	\end{align}
	where $M_0:=M_0\big(\norm{\bff{u}_0}{\bb{L}^2}\big)$ is independent of $\varepsilon$ and $t$, and
    \begin{align}\label{equ:un L2 exp k lambda}
        \int_0^t \norm{\bff{u}_n(s)}{\bb{L}^4}^4 \ds
        +
        \int_0^t \norm{\nabla \bff{u}_n(s)}{\bb{L}^2}^2 \ds
        +
        \varepsilon \int_0^t \norm{\Delta \bff{u}_n(s)}{\bb{L}^2}^2 \ds
        &+
        \varepsilon \int_0^t \norm{|\bff{u}_n(s)| |\nabla \bff{u}_n(s)|}{\bb{L}^2}^2\ds
        \nonumber\\
        &\leq
        \norm{\bff{u}_0}{\bb{L}^2}^2
        + Ct,
    \end{align}
    where $C$ is independent of $\varepsilon$ and $\bff{u}_0$.

	Furthermore, there exists $t_0$ depending on $\norm{\bff{u}_0}{\bb{L}^2}$ such that for all $t\geq t_0$,
	\begin{align}\label{equ:rho 0 sigma 0}
		\norm{\bff{u}_n(t)}{\bb{L}^2}^2
        +
        \int_t^{t+1} \left(\norm{\bff{u}_n(s)}{\bb{L}^4}^4 + \norm{\nabla \bff{u}_n(s)}{\bb{L}^2}^2
        + \varepsilon \norm{\Delta \bff{u}_n(s)}{\bb{L}^2}^2 
        + \varepsilon \norm{|\bff{u}_n(s)| |\nabla \bff{u}_n(s)|}{\bb{L}^2}^2 \right) \ds 
        \leq \rho_0,
	\end{align}
	where the constant $\rho_0$ is independent of $\bff{u}_0$, $\varepsilon$, and $t$.
 
    Finally, for any $t\geq 0$ and $\delta>0$,
    \begin{align}\label{equ:int t r Delta un}
        \int_t^{t+\delta} \left(\norm{\bff{u}_n(s)}{\bb{L}^4}^4 + 
        \norm{\nabla \bff{u}_n(s)}{\bb{L}^2}^2
        + \varepsilon \norm{\Delta \bff{u}_n(s)}{\bb{L}^2}^2 
        + \varepsilon \norm{|\bff{u}_n(s)| |\nabla \bff{u}_n(s)|}{\bb{L}^2}^2 \right) \ds 
        \leq
        \mu_0\big(\norm{\bff{u}_0}{\bb{L}^2}, \delta\big).
    \end{align}
\end{proposition}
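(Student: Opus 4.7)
The plan is to derive a single differential inequality for $\|\bff{u}_n(t)\|_{\bb{L}^2}^2$ by testing the Galerkin equation~\eqref{equ:faedo} with $\bff{u}_n\in\bb{V}_n$ in the $\bb{L}^2$ inner product, and then extract the four claimed bounds by elementary ODE arguments. Several simplifications are immediate: the cross-product term vanishes since $\langle \Pi_n(\bff{u}_n\times(\bff{H}_n+\Phi_{\mathrm{d}}(\bff{u}_n))),\bff{u}_n\rangle_{\bb{L}^2}=\langle \bff{u}_n\times(\cdots),\bff{u}_n\rangle_{\bb{L}^2}=0$, and every $\Pi_n$ in front of a nonlinearity disappears because $\bff{u}_n\in\bb{V}_n$. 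For the fourth-order contribution we integrate by parts twice using the Neumann conditions~\eqref{equ:llbar d} and the fact that $\bff{u}_n\in\bb{V}_n$ enjoys $\partial\bff{u}_n/\partial\bff{n}=\bff{0}$, writing
\[
-\varepsilon\inpro{\Delta \bff{H}_n}{\bff{u}_n}_{\bb{L}^2}
= -\varepsilon\inpro{\Psi(\bff{u}_n)+\Phi_{\mathrm{a}}(\bff{u}_n)}{\Delta \bff{u}_n}_{\bb{L}^2},
\]
and then expanding $\Psi(\bff{u}_n)=\Delta\bff{u}_n+\kappa_1\bff{u}_n-\kappa_2|\bff{u}_n|^2\bff{u}_n$ together with~\eqref{equ:nab un2}. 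The cubic Ginzburg--Landau term produces the favourable dissipation $-2\varepsilon\kappa_2\|\bff{u}_n\cdot\nabla\bff{u}_n\|_{\bb{L}^2}^2-\varepsilon\kappa_2\||\bff{u}_n||\nabla\bff{u}_n|\|_{\bb{L}^2}^2$, while the $\sigma$--term produces $-\sigma\|\nabla\bff{u}_n\|_{\bb{L}^2}^2-\sigma\kappa_2\|\bff{u}_n\|_{\bb{L}^4}^4$ plus a controllable $\sigma\kappa_1\|\bff{u}_n\|_{\bb{L}^2}^2$.

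\textbf{Bounding the remaining terms.} The anisotropy terms are handled by~\eqref{equ:Phi av Phi aw} (taking $\bff{w}=\bff{0}$) and by integration by parts plus the sign analysis already used in the proof of~\eqref{equ:norm H1 Phia}, the key point being that the cubic $\lambda_2$ contributions have the right sign. The demagnetising field terms are absorbed via~\eqref{equ:Phi d O est}--\eqref{equ:Phi d Wkp}, which give $|\langle\Phi_{\mathrm{d}}(\bff{u}_n),\bff{u}_n\rangle_{\bb{L}^2}|\leq \|\bff{u}_n\|_{\bb{L}^2}^2$ and $\varepsilon|\langle\nabla\Phi_{\mathrm{d}}(\bff{u}_n),\nabla\bff{u}_n\rangle_{\bb{L}^2}|\leq \varepsilon\|\nabla\bff{u}_n\|_{\bb{L}^2}^2$. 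The convective term is controlled by~\eqref{equ:Rv v L2} with a small $\epsilon$, producing a contribution $\chi^2\sigma^{-1}\|\bff{u}_n\|_{\bb{L}^4}^4$ which, by~\eqref{equ:chi2}, is strictly less than $\tfrac{1}{2}\sigma\kappa_2\|\bff{u}_n\|_{\bb{L}^4}^4$ and can be absorbed into the dissipative Ginzburg--Landau term. Finally the source term is estimated via~\eqref{equ:Sv w} together with Young's inequality $\|\bff{u}_n\|_{\bb{L}^3}^3\leq\eta\|\bff{u}_n\|_{\bb{L}^4}^4+C_\eta$ for an arbitrarily small $\eta$. After choosing $\epsilon,\eta$ small, and using the assumption $\sigma-(\kappa_1+\lambda_1)\varepsilon>0$ so that the coefficient of $\|\nabla\bff{u}_n\|_{\bb{L}^2}^2$ on the left remains positive, we arrive at a differential inequality of the form
\begin{equation}\label{eq:plan_DI}
\ddt \norm{\bff{u}_n}{\bb{L}^2}^2
+ c_1\norm{\nabla\bff{u}_n}{\bb{L}^2}^2
+ 2\varepsilon\norm{\Delta\bff{u}_n}{\bb{L}^2}^2
+ 2\varepsilon\kappa_2\norm{|\bff{u}_n||\nabla\bff{u}_n|}{\bb{L}^2}^2
+ c_2\norm{\bff{u}_n}{\bb{L}^4}^4
\leq C_1\norm{\bff{u}_n}{\bb{L}^2}^2 + C_2,
\end{equation}
with positive constants $c_i,C_i$ depending only on the data (not on $n$, $t$, $\varepsilon$, or $\bff{u}_0$).

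\textbf{Extracting the four bounds.} Using the Cauchy--Schwarz inequality $\|\bff{u}_n\|_{\bb{L}^2}^4\leq |\mathscr{O}|\,\|\bff{u}_n\|_{\bb{L}^4}^4$, the $c_2\|\bff{u}_n\|_{\bb{L}^4}^4$ term dominates $C_1\|\bff{u}_n\|_{\bb{L}^2}^2$ after one more application of Young's inequality, so~\eqref{eq:plan_DI} reduces to the scalar inequality $y'+c\,y^2\leq C$ with $y(t)=\|\bff{u}_n(t)\|_{\bb{L}^2}^2$. Elementary comparison with the stationary value $y^\ast=\sqrt{C/c}$ gives the uniform bound~\eqref{equ:un L2} with $M=\max(\|\bff{u}_0\|_{\bb{L}^2}^2,y^\ast)$, and since $y$ decays toward $y^\ast$ at a rate governed only by $c,C$, there is a time $t_0$ depending on $\|\bff{u}_0\|_{\bb{L}^2}$ after which $y(t)\leq 2y^\ast$; integrating~\eqref{eq:plan_DI} over $[t,t+1]$ for such $t$ then yields~\eqref{equ:rho 0 sigma 0} with $\rho_0$ depending only on $y^\ast$. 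Integrating~\eqref{eq:plan_DI} over $[0,t]$ and inserting the already established $\|\bff{u}_n(s)\|_{\bb{L}^2}^2\leq M$ gives~\eqref{equ:un L2 exp k lambda}, and integrating~\eqref{eq:plan_DI} over $[t,t+\delta]$ gives~\eqref{equ:int t r Delta un} with $\mu_0\leq M+\delta(C_1 M+C_2)$.

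\textbf{Main obstacle.} The only delicate point is bookkeeping: ensuring that the $\|\bff{u}_n\|_{\bb{L}^4}^4$ coefficient on the left stays strictly positive after absorbing the contributions from $\mathcal{R}$ and $\mathcal{S}$, which is exactly what the structural hypothesis~\eqref{equ:chi2} and the freedom to take $\eta$ small are for, and that the $\|\nabla\bff{u}_n\|_{\bb{L}^2}^2$ coefficient stays positive, which is guaranteed by the smallness assumption on $\varepsilon$. All constants must be tracked as independent of $\varepsilon$ so that~\eqref{equ:un L2 exp k lambda} and~\eqref{equ:rho 0 sigma 0} give uniform bounds in $\varepsilon$; this is automatic since the only $\varepsilon$-dependent terms on the right of~\eqref{eq:plan_DI} carry an explicit factor $\varepsilon\leq 1$.
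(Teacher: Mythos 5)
Your proposal is correct and follows essentially the same route as the paper: the same testing of the Galerkin system with $\bff{u}_n$ (equivalently, pairing the $\bff{H}_n$ equation with $\bff{u}_n$ and $\Delta\bff{u}_n$), the same lemma-based control of the individual terms via \eqref{equ:Phi d O est}, \eqref{equ:Rv v L2} combined with \eqref{equ:chi2}, and \eqref{equ:S u}, and the same absorption of the cubic source by the quartic Ginzburg--Landau coercivity; the only difference is the endgame, where you close with the Riccati comparison $y'+cy^2\leq C$ obtained from $\norm{\bff{u}_n}{\bb{L}^2}^4\leq|\mathscr{O}|\,\norm{\bff{u}_n}{\bb{L}^4}^4$, whereas the paper retains a linear damping term $\kappa_1\sigma\norm{\bff{u}_n}{\bb{L}^2}^2$ and applies the exponential Gronwall lemma --- both give \eqref{equ:un L2}--\eqref{equ:int t r Delta un} after integration. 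One small slip to repair: rewriting $-\varepsilon\inpro{\Delta\Phi_{\mathrm{d}}(\bff{u}_n)}{\bff{u}_n}_{\bb{L}^2}$ as $\varepsilon\inpro{\nabla\Phi_{\mathrm{d}}(\bff{u}_n)}{\nabla\bff{u}_n}_{\bb{L}^2}$ discards a boundary term that need not vanish (the restriction of $\Phi_{\mathrm{d}}(\bff{u}_n)$ to $\mathscr{O}$ does not satisfy the Neumann condition), so estimate this term directly as the paper does, using \eqref{equ:Phi d Wkp}, \eqref{equ:v H2} and Young's inequality and paying $\tfrac{\varepsilon}{4}\norm{\Delta\bff{u}_n}{\bb{L}^2}^2$, which leaves the rest of your argument unchanged.
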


\begin{proof}
	Taking the inner product of the first equation in \eqref{equ:faedo} with $\bff{u}_n$ and integrating by parts give
	\begin{align}\label{equ:ddt un2}
		\nonumber
		\frac{1}{2} \ddt \norm{\bff{u}_n}{\bb{L}^2}^2
		&=
		\sigma \inpro{\bff{H}_n}{\bff{u}_n}_{\bb{L}^2}
		+
		\sigma \inpro{\Phi_{\mathrm{d}}(\bff{u}_n)}{\bff{u}_n}_{\bb{L}^2}
		+
		\varepsilon \inpro{\nabla \bff{H}_n}{\nabla \bff{u}_n}_{\bb{L}^2}
		-
		\varepsilon \inpro{\Delta \Phi_{\mathrm{d}}(\bff{u}_n)}{\bff{u}_n}_{\bb{L}^2}
		\\
		&\quad
		+
		\inpro{\mathcal{R}(\bff{u}_n)}{\bff{u}_n}_{\bb{L}^2}
		+
		\inpro{\mathcal{S}(\bff{u}_n)}{\bff{u}_n}_{\bb{L}^2}
	\end{align}
	Taking the inner product of the second equation in \eqref{equ:faedo}
	with $\sigma\bff{u}_n$ and $\varepsilon\Delta \bff{u}_n$, successively, give
	\begin{align}
		\label{equ:Hn un}
		\sigma \inpro{\bff{H}_n}{\bff{u}_n}_{\bb{L}^2}
		&=
		-
		\sigma \norm{\nabla \bff{u}_n}{\bb{L}^2}^2
		+
		\kappa_1 \sigma  \norm{\bff{u}_n}{\bb{L}^2}^2
		-
		\kappa_2 \sigma \norm{\bff{u}_n}{\bb{L}^4}^4
		+
		\sigma\lambda_1 \norm{\bff{e}\cdot \bff{u}_n}{\bb{L}^2}^2
		-
		\sigma\lambda_2 \norm{\bff{e}\cdot\bff{u}_n}{\bb{L}^4}^4
		\\
		\label{equ:nab Hn nab un}
		\varepsilon \inpro{\nabla \bff{H}_n}{\nabla \bff{u}_n}_{\bb{L}^2}
		&=
		-
		\varepsilon \norm{\Delta \bff{u}_n}{\bb{L}^2}^2
		+
		\varepsilon\kappa_1 \norm{\nabla \bff{u}_n}{\bb{L}^2}^2
		-
		\varepsilon\kappa_2 \norm{|\bff{u}_n| |\nabla \bff{u}_n|}{\bb{L}^2}^2
		-
		2\varepsilon \kappa_2 \norm{\bff{u}_n \cdot \nabla \bff{u}_n}{\bb{L}^2}^2
		\nonumber\\
		&\quad
		+
		\varepsilon\lambda_1 \norm{\bff{e}\cdot\nabla \bff{u}_n}{\bb{L}^2}^2
		-
		3\varepsilon\lambda_2 \norm{(\bff{e}\cdot \bff{u}_n)(\bff{e}\cdot \nabla \bff{u}_n)}{\bb{L}^2}^2.
	\end{align}
	 Adding equations \eqref{equ:ddt un2}, \eqref{equ:Hn un}, and \eqref{equ:nab Hn nab un}, and rearranging the resulting equation, we have
	\begin{align}\label{equ:ddt un lambda e Delta un}
		&\frac{1}{2} \ddt \norm{\bff{u}_n}{\bb{L}^2}^2
		+
		\varepsilon \norm{\Delta \bff{u}_n}{\bb{L}^2}^2
		+
		\sigma \norm{\nabla \bff{u}_n}{\bb{L}^2}^2
		+
		\varepsilon \kappa_2 \norm{|\bff{u}_n| |\nabla \bff{u}_n|}{\bb{L}^2}^2
		+
		2\varepsilon \kappa_2 \norm{\bff{u}_n \cdot \nabla \bff{u}_n}{\bb{L}^2}^2
		+
		\kappa_2 \sigma\norm{\bff{u}_n}{\bb{L}^4}^4
        \nonumber\\
        &\quad
        +
        \bl{\sigma\lambda_2} \norm{\bff{e}\cdot\bff{u}_n}{\bb{L}^4}^4
        +
        3\varepsilon\lambda_2 \norm{(\bff{e}\cdot \bff{u}_n)(\bff{e}\cdot \nabla \bff{u}_n)}{\bb{L}^2}^2
		\nonumber\\
		&
		=
		\sigma\inpro{\Phi_{\mathrm{d}}(\bff{u}_n)}{\bff{u}_n}_{\bb{L}^2}
		-
		\varepsilon \inpro{\Delta \Phi_{\mathrm{d}}(\bff{u}_n)}{\bff{u}_n}_{\bb{L}^2}
		+
		\inpro{\mathcal{R}(\bff{u}_n)}{\bff{u}_n}_{\bb{L}^2}
		+
		\kappa_1 \sigma \norm{\bff{u}_n}{\bb{L}^2}^2
		+
		\varepsilon \kappa_1 \norm{\nabla \bff{u}_n}{\bb{L}^2}^2
        \nonumber \\
        &\quad
        +
        \sigma\lambda_1 \norm{\bff{e}\cdot \bff{u}_n}{\bb{L}^2}^2
        +
        \varepsilon\lambda_1 \norm{\bff{e}\cdot\nabla \bff{u}_n}{\bb{L}^2}^2
        +
        \inpro{\mathcal{S}(\bff{u}_n)}{\bff{u}_n}_{\bb{L}^2}
        \nonumber\\
        &=
        I_1+I_2+\cdots+I_{8}.
	\end{align}
	For the first term, we apply H\"older's inequality and~\eqref{equ:Phi d O est} to obtain
	\begin{align}
		\label{equ:I1 un}
		\abs{I_1}
		&\leq 
		\sigma \norm{\Phi_{\mathrm{d}}(\bff{u}_n)}{\bb{L}^2} \norm{\bff{u}_n}{\bb{L}^2} 
		\leq
		C \norm{\bff{u}_n}{\bb{L}^2}^2.
	\end{align}
    Similarly, by H\"older's inequality, \eqref{equ:Phi d O est}, \eqref{equ:v H2}, and Young's inequality,
    \begin{align}\label{equ:I2 un}
		\abs{I_2}
		&\leq
		\varepsilon \norm{\Delta \Phi_{\mathrm{d}}(\bff{u}_n)}{\bb{L}^2} \norm{\bff{u}_n}{\bb{L}^2} 
        \leq
        C\varepsilon\norm{\bff{u}_n}{\bb{H}^2} \norm{\bff{u}_n}{\bb{L}^2}
		\leq
		C\varepsilon \norm{\bff{u}_n}{\bb{L}^2}^2
		+
		\frac{\varepsilon}{4} \norm{\Delta \bff{u}_n}{\bb{L}^2}^2.
	\end{align}
	For the term $I_3$, by~\eqref{equ:Rv v L2} with $\epsilon=\sigma/4$,
	\begin{align}
		\label{equ:I3 un}
		\abs{I_3}
		\leq
		C\nu_\infty \norm{\bff{u}_n}{\bb{L}^2}^2
		+
        \chi^2 \sigma^{-1} \norm{\bff{u}_n}{\bb{L}^4}^4
        +
		\frac{\sigma}{2} \norm{\nabla \bff{u}_n}{\bb{L}^2}^2.
	\end{align}
	Next, we estimate the terms $I_5$, $I_6$, and $I_7$. For the terms $I_6$ and $I_7$ we use the fact that $\bff{e}$ is a unit vector, while for the terms $I_5$ and $I_7$, we also apply the interpolation inequality~\eqref{equ:nabla v L2} to obtain
	\begin{align}
		\label{equ:I5 I9 un}
		\abs{I_5}+\abs{I_6}+\abs{I_7}
		\leq
        \sigma\lambda_1 \norm{\bff{u}_n}{\bb{L}^2}^2
        +
		C\varepsilon \norm{\bff{u}_n}{\bb{L}^2}^2
		+
		\frac{\varepsilon}{4} \norm{\Delta \bff{u}_n}{\bb{L}^2}^2.
	\end{align}
	For the last term, by~\eqref{equ:S u} we have
	\begin{align}
		\label{equ:I10 un}
		\abs{I_{8}}
		\leq
		C \norm{\bff{u}_n}{\bb{L}^2}^2 + C\norm{\bff{u}_n}{\bb{L}^3}^3.
	\end{align}
    The term $I_4$ is left as is.
	Noting the assumption~\eqref{equ:chi2}, we substitute \eqref{equ:I1 un}--\eqref{equ:I10 un} into~\eqref{equ:ddt un lambda e Delta un} and rearrange the terms to obtain
        \begin{align*}
        &\ddt \norm{\bff{u}_n}{\bb{L}^2}^2
		+
		\varepsilon \norm{\Delta \bff{u}_n}{\bb{L}^2}^2
		+
		\sigma \norm{\nabla \bff{u}_n}{\bb{L}^2}^2
		+
		2\varepsilon \kappa_2 \norm{|\bff{u}_n| |\nabla \bff{u}_n|}{\bb{L}^2}^2
        +
		\kappa_2 \sigma \norm{\bff{u}_n}{\bb{L}^4}^4
        \\
        &\quad
        +
        4\varepsilon \kappa_2 \norm{\bff{u}_n \cdot \nabla \bff{u}_n}{\bb{L}^2}^2 
        +
        2\sigma\lambda_2 \norm{\bff{e}\cdot\bff{u}_n}{\bb{L}^4}^4
        +
        6\varepsilon\lambda_2 \norm{(\bff{e}\cdot \bff{u}_n)(\bff{e}\cdot \nabla \bff{u}_n)}{\bb{L}^2}^2
        \\
		&\leq
		(C+2\kappa_1 \sigma) \norm{\bff{u}_n}{\bb{L}^2}^2 + C\norm{\bff{u}_n}{\bb{L}^3}^3,
        \end{align*}
        where $C$ is a constant which can be made independent of $\varepsilon$ by assumption~\eqref{equ:epsilon sigma kappa}.
        Neglecting the last three terms on the left-hand side and adding the term $2\kappa_1 \sigma \norm{\bff{u}_n}{\bb{L}^2}^2$ on both sides of the inequality yield 
        \begin{align*}
        	\ddt \norm{\bff{u}_n}{\bb{L}^2}^2
        	&+
        	\varepsilon \norm{\Delta \bff{u}_n}{\bb{L}^2}^2
        	+
        	\sigma \norm{\nabla \bff{u}_n}{\bb{L}^2}^2
        	+
        	2\varepsilon \kappa_2 \norm{|\bff{u}_n| |\nabla \bff{u}_n|}{\bb{L}^2}^2
        	+
        	\kappa_2 \sigma \norm{\bff{u}_n}{\bb{L}^4}^4 
        	+
        	2\kappa_1 \sigma \norm{\bff{u}_n}{\bb{L}^2}^2
        	\\
        	&\leq
        	(C+4\kappa_1 \sigma) \norm{\bff{u}_n}{\bb{L}^2}^2 + C\norm{\bff{u}_n}{\bb{L}^3}^3
            \leq
            C\Big(\norm{\bff{u}_n}{\bb{L}^4}^2+\norm{\bff{u}_n}{\bb{L}^4}^3\Big)
        	\\
        	&\leq
            C + \frac{\kappa_2\sigma}{2} \norm{\bff{u}_n}{\bb{L}^4}^4,
        \end{align*}
        where we used the embeddings $\bb{L}^4\hookrightarrow\bb{L}^3\hookrightarrow \bb{L}^2$ in the penultimate step and Young's inequality in the last step. This results in
    \begin{equation}\label{equ:C4}
    \ddt \norm{\bff{u}_n}{\bb{L}^2}^2
		+
		\varepsilon \norm{\Delta \bff{u}_n}{\bb{L}^2}^2
		+
		\sigma \norm{\nabla \bff{u}_n}{\bb{L}^2}^2
		+
		2\varepsilon \kappa_2 \norm{|\bff{u}_n| |\nabla \bff{u}_n|}{\bb{L}^2}^2
		+
		\frac12 \kappa_2 \sigma \norm{\bff{u}_n}{\bb{L}^4}^4 
		+
		2\kappa_1 \sigma \norm{\bff{u}_n}{\bb{L}^2}^2
        \leq
        C, 
    \end{equation}
    where ${C}$ is independent of $\varepsilon$ and $t$.
    The Gronwall inequality then yields
    \begin{equation*}
        \norm{\bff{u}_n(t)}{\bb{L}^2}^2
        \leq
        e^{-2\kappa_1 \sigma t} \norm{\bff{u}_0}{\bb{L}^2}^2
        +
        C(\kappa_1\sigma)^{-1},
    \end{equation*}
    which implies the existence of positive constants $M_0$ and $\widetilde{C}$ such that
	\begin{align}\label{equ:half rho}
		\norm{\bff{u}_n(t)}{\bb{L}^2}^2 
		\leq
        M_0
		\quad
		\text{and}
		\quad
		\limsup_{t\to \infty} \norm{\bff{u}_n(t)}{\bb{L}^2}^2 
		\leq
		\widetilde{C}.
	\end{align}
    Here, $M_0$ depends on $\norm{\bff{u}_0}{\bb{L}^2}$ but is independent of $\varepsilon$, while $\widetilde{C}$ is independent of $\varepsilon$, $t$, and $\norm{\bff{u}_0}{\bb{L}^2}$. Thus, inequality~\eqref{equ:un L2} is shown. Integrating~\eqref{equ:C4} over $(0,t)$ and rearranging the terms then yield~\eqref{equ:un L2 exp k lambda}.

    To prove \eqref{equ:rho 0 sigma 0}, we note that the second inequality in~\eqref{equ:half rho} implies
	\begin{align}\label{equ:half rho0}
		\norm{\bff{u}_n(t)}{\bb{L}^2}^2 
		\leq
		2\widetilde{C}, \quad \forall t \geq t_0,
	\end{align}
	for some sufficiently large $t_0$ (depending on $\norm{\bff{u}_0}{\bb{L}^2}$).
	For any $t\ge t_0$, by integrating~\eqref{equ:C4} over
	$(t,t+1)$, rearranging the terms, and using \eqref{equ:half rho0}, we obtain~\eqref{equ:rho 0 sigma 0}. By the same argument, but using~\eqref{equ:un L2} and integrating over $(t,t+\delta)$ instead, we obtain~\eqref{equ:int t r Delta un}. This completes the proof of the proposition.
\end{proof}

The following proposition establishes a smoothing estimate, which shows that $\bff{u}_n(t)$ gains regularity for any positive time $t>0$, compared to the regularity of the initial data. This will be used later to deduce the existence of an exponential attractor and obtain an estimate for the dimension of the attractor. 

\begin{proposition}\label{pro:H1 smooth}
There exists a constant $M_1:=M_1\big(\norm{\bff{u}_0}{\bb{L}^2}\big)$ such that for all $t> 0$,
\begin{align*}
	\norm{\bff{u}_n(t)}{\bb{H}^1}^2
	&\leq
	M_1 \varepsilon^{-1} (1+t+t^{-1}).
\end{align*}
\end{proposition}

\begin{proof}
Taking the inner product of the first equation in \eqref{equ:faedo} with $\bff{H}_n$ and the second equation with $\partial_t \bff{u}_n$, we obtain
	\begin{align}
    \label{equ:dt un Hn}
		\inpro{\partial_t \bff{u}_n}{\bff{H}_n}_{\bb{L}^2}
		&=
		\sigma \norm{\bff{H}_n}{\bb{L}^2}^2
		+
		\varepsilon \norm{\nabla \bff{H}_n}{\bb{L}^2}^2
        +
		\sigma \inpro{\Phi_{\mathrm{d}}(\bff{u}_n)}{\bff{H}_n}_{\bb{L}^2}
		+
		\varepsilon \inpro{\Delta \Phi_{\mathrm{d}}(\bff{u}_n)}{\bff{H}_n}_{\bb{L}^2}
        \nonumber \\
        &\quad
        -
        \gamma \inpro{\bff{u}_n \times \Phi_{\mathrm{d}}(\bff{u}_n)}{\bff{H}_n}_{\bb{L}^2}
		+
		\inpro{\mathcal{R}(\bff{u}_n)}{\bff{H}_n}_{\bb{L}^2}
		+
		\inpro{\mathcal{S}(\bff{u}_n)}{\bff{H}_n}_{\bb{L}^2},
		\\
    \label{equ:Hn dt un}
		\inpro{\bff{H}_n}{\partial_t \bff{u}_n}_{\bb{L}^2}
		&=
		-
		\frac{1}{2} \ddt \norm{\nabla \bff{u}_n}{\bb{L}^2}^2
		+
		\frac{\kappa_1}{2} \ddt \norm{\bff{u}_n}{\bb{L}^2}^2
		-
		\frac{\kappa_2}{4} \ddt \norm{\bff{u}_n}{\bb{L}^4}^4
        \nonumber\\
        &\quad
        +
        \frac{\lambda_1}{2} \ddt \norm{\bff{e}\cdot \bff{u}_n}{\bb{L}^2}^2
        -
        \frac{\lambda_2}{4} \ddt \norm{\bff{e}\cdot \bff{u}_n}{\bb{L}^4}^4.
	\end{align}
    For the third and the fourth term on the right-hand side of~\eqref{equ:dt un Hn}, by Young's inequality and~\eqref{equ:Phi d O est},
    \begin{align*}
        \sigma\,\Big|\inpro{\Phi_{\mathrm{d}}(\bff{u}_n)}{\bff{H}_n}_{\bb{L}^2} \Big|
        +
        \varepsilon\,\Big|\inpro{\Delta \Phi_{\mathrm{d}}(\bff{u}_n)}{\bff{H}_n}_{\bb{L}^2} \Big|
        \leq
        C \norm{\bff{u}_n}{\bb{L}^2}^2
        +
        C\varepsilon \norm{\Delta \bff{u}_n}{\bb{L}^2}^2 
        +
        \frac{\sigma}{8} \norm{\bff{H}_n}{\bb{L}^2}^2.
    \end{align*}
    For the fifth term on the right-hand side of~\eqref{equ:dt un Hn}, we use~\eqref{equ:Phi d Wkp} and Young's inequality to obtain
    \[
        \gamma\, \Big|\inpro{\bff{u}_n\times \Phi_{\mathrm{d}}(\bff{u}_n)}{\bff{H}_n}_{\bb{L}^2} \Big| 
        \leq
        C \norm{\bff{u}_n}{\bb{L}^4}^4 + \frac{\sigma}{8} \norm{\bff{H}_n}{\bb{L}^2}^2.
    \]
	For the last two terms in~\eqref{equ:dt un Hn}, we apply~\eqref{equ:Rv w} and~\eqref{equ:Sv w} respectively. Altogether, from~\eqref{equ:dt un Hn} and~\eqref{equ:Hn dt un} we deduce
	\begin{align}\label{equ:ddt nab un ene}
		&\frac{1}{2} \ddt \norm{\nabla \bff{u}_n}{\bb{L}^2}^2
		+
		\frac{\kappa_2}{4} \ddt \norm{ |\bff{u}_n|^2 - \kappa_1/\kappa_2}{\bb{L}^2}^2
        +
        \frac{\lambda_2}{4} \ddt \norm{(\bff{e}\cdot \bff{u}_n)^2- \lambda_1/\lambda_2}{\bb{L}^2}^2
		+
		\frac{\sigma}{2} \norm{\bff{H}_n}{\bb{L}^2}^2
		+
		\varepsilon \norm{\nabla \bff{H}_n}{\bb{L}^2}^2
		\nonumber\\
		&\quad
        \leq
        C \left(1+ \norm{\bff{u}_n}{\bb{L}^4}^4 \right)
        +
        C\varepsilon \norm{\Delta \bff{u}_n}{\bb{L}^2}^2
        +
		C \nu_\infty
		\left( \norm{\nabla \bff{u}_n}{\bb{L}^2}^2
		+
		\norm{|\bff{u}_n|\, |\nabla \bff{u}_n|}{\bb{L}^2}^2 \right).
	\end{align}
This implies, after rearranging the terms,
\begin{align}\label{equ:ddt nab un etc}
    &\ddt \left( \norm{\nabla\bff{u}_n}{\bb{L}^2}^2
    +
		\frac{\kappa_2}{2} \norm{ |\bff{u}_n|^2 - \kappa_1/\kappa_2}{\bb{L}^2}^2
        +
        \frac{\lambda_2}{2} \norm{(\bff{e}\cdot \bff{u}_n)^2- \lambda_1/\lambda_2}{\bb{L}^2}^2 \right)
        \nonumber\\
&\leq
C \left(1+ \norm{\bff{u}_n}{\bb{L}^4}^4 
        +
        \norm{\nabla \bff{u}_n}{\bb{L}^2}^2
        +
        \varepsilon \norm{\Delta \bff{u}_n}{\bb{L}^2}^2
        +
		\norm{|\bff{u}_n|\, |\nabla \bff{u}_n|}{\bb{L}^2}^2 \right).
\end{align}
We aim to apply Corollary~\ref{cor:unif gron}, so it remains to check that the hypotheses of this corollary are satisfied.
Due to~\eqref{equ:int t r Delta un} with $\delta=1$, recalling that $\varepsilon\leq 1$ by assumption, we have for all $t\geq 0$,
\begin{align}\label{equ:un L4 tt1}
    \int_t^{t+1} \left(\norm{\bff{u}_n(s)}{\bb{L}^4}^4 + \norm{\nabla \bff{u}_n(s)}{\bb{L}^2}^2 + \varepsilon \norm{\Delta \bff{u}_n(s)}{\bb{L}^2}^2 + \norm{|\bff{u}_n(s)| |\nabla \bff{u}_n(s)|}{\bb{L}^2}^2 \right) \ds 
    \leq C(1+\varepsilon^{-1}) \mu_0.
\end{align}
Moreover, expanding all the terms, by using~\eqref{equ:int t r Delta un} again we have
\begin{align}\label{equ:nab L2 tt1}
    &\int_t^{t+1} \left(
		\norm{\nabla \bff{u}_n}{\bb{L}^2}^2
        +
        \frac{\kappa_2}{2} \norm{ |\bff{u}_n|^2 - \kappa_1/\kappa_2}{\bb{L}^2}^2
        +
        \frac{\lambda_2}{2} \norm{(\bff{e}\cdot \bff{u}_n)^2- \lambda_1/\lambda_2}{\bb{L}^2}^2 \right) \ds
        \nonumber\\
     &=
     \int_t^{t+1} \left( \norm{\nabla \bff{u}_n}{\bb{L}^2}^2
     +
     \frac{\kappa_2}{2} \norm{\bff{u}_n}{\bb{L}^4}^4
     - \kappa_1 \norm{\bff{u}_n}{\bb{L}^2}^2
     + \frac{\kappa_1^2}{2\kappa_2}
     + 
     \frac{\lambda_2}{2} \norm{\bff{e}\cdot \bff{u}_n}{\bb{L}^4}^4
     - \lambda_1 \norm{\bff{e}\cdot \bff{u}_n}{\bb{L}^2}^2
     + \frac{\lambda_1^2}{2\lambda_2} \right) \ds
     \nonumber\\
     &\leq
     C\mu_0+ \frac{\kappa_1^2}{2\kappa_2}+ \frac{\lambda_1^2}{2\lambda_2},
\end{align}
where $C$ is a constant depending only on the numerical coefficients of \eqref{equ:llbar}. Similarly, by \eqref{equ:un L2 exp k lambda} we have
\begin{align}\label{equ:int 0 t L4}
    \int_0^t \left(1+ \norm{\bff{u}_n}{\bb{L}^4}^4 
        +
        \norm{\nabla \bff{u}_n}{\bb{L}^2}^2
        +
        \varepsilon \norm{\Delta \bff{u}_n}{\bb{L}^2}^2
        +
		\norm{|\bff{u}_n|\, |\nabla \bff{u}_n|}{\bb{L}^2}^2 \right) \ds 
        \leq
        C\varepsilon^{-1} \left(t+ \norm{\bff{u}_0}{\bb{L}^2}^2\right),
\end{align}
where $C$ is independent of $\varepsilon$ and $\bff{u}_0$.
Therefore, by applying Corollary~\ref{cor:unif gron} on \eqref{equ:ddt nab un etc}, noting~\eqref{equ:un L4 tt1}, \eqref{equ:nab L2 tt1}, and \eqref{equ:int 0 t L4}, we infer that for any $t\geq 0$,
\begin{equation*}
    \norm{\nabla \bff{u}_n}{\bb{L}^2}^2 +
        \frac{\kappa_2}{2} \norm{ |\bff{u}_n|^2 - \kappa_1/\kappa_2}{\bb{L}^2}^2
        +
        \frac{\lambda_2}{2} \norm{(\bff{e}\cdot \bff{u}_n)^2- \lambda_1/\lambda_2}{\bb{L}^2}^2
        \leq 
        Ct^{-1}+ C(1+\varepsilon^{-1})\mu_0+C\varepsilon^{-1} \left(t+ \norm{\bff{u}_0}{\bb{L}^2}^2\right).
\end{equation*}
This, together with \eqref{equ:un L2}, implies the required result.
\end{proof}

Further uniform estimates are derived in the following propositions. We emphasise that the following inequalities whose right-hand sides are denoted by $\rho_k$, $k\in \bb{N}$, hold only for sufficiently large time (depending on $\norm{\bff{u}_0}{\bb{L}^2}$). The constant $\rho_k$ is \emph{independent} of $\bff{u}_0$, $\varepsilon$, and $t$. On the other hand, inequalities whose right-hand sides are denoted by $\mu_k$ hold for $t\in [\delta,\infty)$, where $\delta>0$ is arbitrary. The constant $\mu_k$ is independent of $\varepsilon$ and $t$, but \emph{depends} on $\norm{\bff{u}_0}{\bb{L}^2}$ and $\delta$. The motivation to derive the various bounds in the following propositions are explained at the beginning of this section.

\begin{proposition}\label{pro:un H1}
	For any $n\in \bb{N}$ and $t\geq 0$,
	\begin{align}
        \label{equ:int nab Hn L2 t}
		\int_0^t \norm{\bff{H}_n(s)}{\bb{L}^2}^2 \ds
		+
		\int_0^t \varepsilon  \norm{\nabla \bff{H}_n(s)}{\bb{L}^2}^2 \ds
        +
        \int_0^t \varepsilon^3 \norm{\bff{u}_n(s)}{\bb{H}^3}^2 \ds
        &\leq 
        C\varepsilon^{-1}  \left(1+ \norm{\bff{u}_0}{\bb{H}^1}^{10}\right) \left(1+t^3\right),
        \\
        \label{equ:semidisc-est1} 
        \norm{\bff{u}_n(t)}{\bb{H}^1}^2 
        &\leq
        C\varepsilon^{-1} \left(1+\norm{\bff{u}_0}{\bb{H}^1}^4\right),
	\end{align}
	where $C$ is a constant which is independent of $\varepsilon$, $t$, and $\bff{u}_0$.

	Moreover, for any $\delta>0$, there exists $\mu_1:= \mu_1\big(\norm{\bff{u}_0}{\bb{L}^2}, \delta\big)$ such that for any $t\geq \delta$,
	\begin{equation}\label{equ:un H1 int Hn H1}
		\varepsilon \norm{\bff{u}_n(t)}{\bb{H}^1}^2
		+
		\int_t^{t+\delta} \left(\varepsilon \norm{\bff{H}_n(s)}{\bb{L}^2}^2 + \varepsilon^2 \norm{\nabla \bff{H}_n(s)}{\bb{L}^2}^2
		+
		\varepsilon^4 \norm{\bff{u}_n(s)}{\bb{H}^3}^2 \right) \ds
		\leq
		\mu_1.
	\end{equation}
    
	Finally, there exists $t_1:=t_0+1$, where $t_0$ is defined in Proposition~\ref{pro:un L2} and depends on $\norm{\bff{u}_0}{\bb{L}^2}$, such that for all $t\geq t_1$,
	\begin{align}\label{equ:rho 1 sigma 1}
		\varepsilon \norm{\bff{u}_n(t)}{\bb{H}^1}^2 
		+
		\int_t^{t+1} \left(\varepsilon \norm{\bff{H}_n(s)}{\bb{L}^2}^2 + \varepsilon^2 \norm{\nabla \bff{H}_n(s)}{\bb{L}^2}^2
		+
		\varepsilon^4 \norm{\bff{u}_n(s)}{\bb{H}^3}^2 \right) \ds \leq \rho_1,
	\end{align}
	where $\rho_1$ is independent of $\bff{u}_0$, $\varepsilon$, and $t$.
\end{proposition}

\begin{proof}
    Integrating \eqref{equ:ddt nab un ene} with respect to $t$ and rearranging (noting~\eqref{equ:un L2 exp k lambda}), we obtain 
    \begin{equation}\label{equ:lin H1}
        \norm{\bff{u}_n(t)}{\bb{H}^1}^2 
		+
		\int_0^t \norm{\bff{H}_n(s)}{\bb{L}^2}^2 \ds
		+
		\varepsilon \int_0^t \norm{\nabla \bff{H}_n(s)}{\bb{L}^2}^2 \ds
        \leq 
        C\varepsilon^{-1} \left(1+t+\norm{\bff{u}_0}{\bb{H}^1}^4 \right).
    \end{equation}
    Note that the bound for $\norm{\bff{u}_n(t)}{\bb{H}^1}$ still depends on $t$. A bound of the form~\eqref{equ:semidisc-est1} will be shown later.
    
	Taking the inner product of the second equation in \eqref{equ:faedo} with $\Delta^2 \bff{u}_n$ and integrating by parts as necessary give
	\begin{align}\label{equ:nab Delta un int L2}
		\nonumber
		\norm{\nabla \Delta \bff{u}_n}{\bb{L}^2}^2
		&=
		\kappa_1 \norm{\Delta \bff{u}_n}{\bb{L}^2}^2
		+
		\inpro{\nabla \bff{H}_n}{\nabla \Delta \bff{u}_n}_{\bb{L}^2}
		+
		\kappa_2 \inpro{\nabla(|\bff{u}_n|^2 \bff{u}_n)}{\nabla \Delta \bff{u}_n}_{\bb{L}^2}
        \nonumber \\
        &\quad
		-
		\lambda_1 \inpro{\bff{e}(\bff{e}\cdot \nabla\bff{u}_n)}{\nabla\Delta \bff{u}_n}_{\bb{L}^2}
        +
        3\lambda_2 \inpro{\bff{e}(\bff{e}\cdot \bff{u}_n)^2 \bff{e} (\bff{e}\cdot \nabla \bff{u}_n)}{\nabla \Delta \bff{u}_n}_{\bb{L}^2}
		\nonumber \\
		&\leq
		\kappa_1 \norm{\Delta \bff{u}_n}{\bb{L}^2}^2
		+
		\norm{\nabla \bff{H}_n}{\bb{L}^2} \norm{\nabla \Delta \bff{u}_n}{\bb{L}^2}
		+
		\kappa_2 \norm{\nabla(|\bff{u}_n|^2 \bff{u}_n)}{\bb{L}^2} \norm{\nabla \Delta \bff{u}_n}{\bb{L}^2}
        \nonumber \\
        &\quad
		+
		\lambda_1 \norm{\nabla\bff{u}_n}{\bb{L}^2} \norm{\nabla\Delta \bff{u}_n}{\bb{L}^2}
        +
        3\lambda_2 \norm{\bff{u}_n}{\bb{L}^6}^2 \norm{\nabla \bff{u}_n}{\bb{L}^6} \norm{\nabla \Delta \bff{u}_n}{\bb{L}^2}
		\nonumber \\
		&\leq
		C\norm{\bff{u}_n}{\bb{H}^2}^2
		+
		C \norm{\nabla \bff{H}_n}{\bb{L}^2}^2
		+
		C \norm{\bff{u}_n}{\bb{L}^6}^4 \norm{\nabla \bff{u}_n}{\bb{L}^6}^2
		+
		\frac{1}{2} \norm{\nabla \Delta \bff{u}_n}{\bb{L}^2}^2
		\nonumber \\
		&\leq
		C \left(1 + \norm{\bff{u}_n}{\bb{H}^1}^4 \right) \norm{\bff{u}_n}{\bb{H}^2}^2
		+
		C \norm{\nabla \bff{H}_n}{\bb{L}^2}^2
		+
		\frac{1}{2} \norm{\nabla \Delta \bff{u}_n}{\bb{L}^2}^2,
	\end{align}
	where in the penultimate step we used Young's inequality and in the last step the Sobolev embedding $\bb{H}^1\hookrightarrow \bb{L}^6$. Rearranging the terms, integrating both sides over $(0,t)$, then applying~\eqref{equ:lin H1} and~\eqref{equ:un L2 exp k lambda}, we obtain
    \begin{align*}
        \int_0^t \norm{\nabla \Delta \bff{u}_n}{\bb{L}^2}^2 \ds
        \leq
        C\varepsilon^{-4}  \left(1+ \norm{\bff{u}_0}{\bb{H}^1}^{10}\right) \left(1+t^3\right).
    \end{align*}
    This, together with \eqref{equ:lin H1}, shows \eqref{equ:int nab Hn L2 t}.
    
	We now aim to show \eqref{equ:semidisc-est1} and \eqref{equ:un H1 int Hn H1} by utilising~\eqref{equ:ddt nab un etc}. For any $\delta>0$, we have from \eqref{equ:int t r Delta un}
    \begin{align}\label{equ:un L4 ttdelta}
    \int_t^{t+\delta} \left(\norm{\bff{u}_n(s)}{\bb{L}^4}^4 + \varepsilon \norm{\Delta \bff{u}_n(s)}{\bb{L}^2}^2 + \norm{|\bff{u}_n(s)| |\nabla \bff{u}_n(s)|}{\bb{L}^2}^2 \right) \ds 
    \leq (1+\varepsilon^{-1}) \mu_0
    \leq 
    C\varepsilon^{-1},
    \end{align}
    where $C$ depends on $\delta$ and $\norm{\bff{u}_0}{\bb{L}^2}$.
Moreover, by the same argument as in~\eqref{equ:nab L2 tt1}, we have by using \eqref{equ:int t r Delta un}
\begin{align}\label{equ:nab L2 ttdelta}
    \int_t^{t+\delta} \left(
		\norm{\nabla \bff{u}_n}{\bb{L}^2}^2
        +
        \frac{\kappa_2}{2} \norm{ |\bff{u}_n|^2 - \kappa_1/\kappa_2}{\bb{L}^2}^2
        +
        \frac{\lambda_2}{2} \norm{(\bff{e}\cdot \bff{u}_n)^2- \lambda_1/\lambda_2}{\bb{L}^2}^2 \right) \ds
     &\leq
     C\mu_0+ \frac{\kappa_1^2}{2\kappa_2}+ \frac{\lambda_1^2}{2\lambda_2}
     \nonumber \\
     &\leq C.
\end{align}
 Here, $C$ is a constant depending on the numerical coefficients of \eqref{equ:llbar}, $\delta$, and $\norm{\bff{u}_0}{\bb{L}^2}$.
Thus, noting~\eqref{equ:ddt nab un etc}, \eqref{equ:un L4 ttdelta}, and~\eqref{equ:nab L2 ttdelta}, by Theorem~\ref{the:unif gronw 2} (with $r=\delta$), we have for all $t\geq \delta$,
	\begin{align}\label{equ:un H1 C}
		\norm{\nabla\bff{u}_n(t)}{\bb{L}^2}^2 + \norm{ |\bff{u}_n(t)|^2 - \kappa_1/\kappa_2}{\bb{L}^2}^2  + \norm{(\bff{e}\cdot \bff{u}_n(t))^2- \lambda_1/\lambda_2}{\bb{L}^2}^2
        &\leq 
        C(1+\mu_0)\delta^{-1}+ C(1+\varepsilon^{-1})\mu_0
        \nonumber\\
        &\leq
        C\varepsilon^{-1},
	\end{align}
	where $C$ is independent of $\varepsilon$.
    Integrating \eqref{equ:ddt nab un ene} over $(t,t+\delta)$ and using \eqref{equ:un L4 ttdelta}, \eqref{equ:nab L2 ttdelta}, and \eqref{equ:un H1 C}, we have
	\begin{align}\label{equ:t tdelta int Hn L2}
		\int_t^{t+\delta} \left(\norm{\bff{H}_n}{\bb{L}^2}^2
		+
		\varepsilon \norm{\nabla \bff{H}_n}{\bb{L}^2}^2\right) \ds
		\leq 
		C\varepsilon^{-1}, 
		\quad
		\forall t\geq \delta,
	\end{align}
    where $C$ depends on $\delta$ and $\norm{\bff{u}_0}{\bb{L}^2}$, but is independent of $\varepsilon$ and $t$.
	Next, absorbing the last term in~\eqref{equ:nab Delta un int L2} to the left-hand side, integrating the result over $(t,t+\delta)$, applying~\eqref{equ:un L4 ttdelta}, \eqref{equ:un H1 C}, and \eqref{equ:t tdelta int Hn L2} again, we obtain
	\begin{align}\label{equ:t tdelta int nab Delta un}
		\int_t^{t+\delta} \norm{\nabla \Delta \bff{u}_n}{\bb{L}^2}^2 \ds
		&\leq
        C \int_t^{t+\delta} \left(\big(1+\norm{\bff{u}_n}{\bb{H}^1}^4\big) \norm{\bff{u}_n}{\bb{H}^2}^2 + \norm{\nabla \bff{H}_n}{\bb{L}^2}^2 \right) \ds
        \nonumber\\
        &\leq 
        C\varepsilon^{-2} 
        \int_t^{t+\delta} \norm{\bff{u}_n}{\bb{H}^2}^2 \ds 
        +
        C\varepsilon^{-1}
        \nonumber\\
		&\leq
        C\varepsilon^{-4},
		\quad 
		\forall t\geq \delta.
	\end{align}
    Altogether, \eqref{equ:un H1 C}, \eqref{equ:t tdelta int Hn L2}, and~\eqref{equ:t tdelta int nab Delta un} yield~\eqref{equ:un H1 int Hn H1}. Estimate~\eqref{equ:semidisc-est1} follows from~\eqref{equ:lin H1} for $t\in (0,1)$ and by fixing $\delta=1$ in \eqref{equ:un H1 C}.

    Finally, we prove~\eqref{equ:rho 1 sigma 1}. Note that for all $t\geq t_0$, where $t_0$ (depending on $\norm{\bff{u}_0}{\bb{L}^2}$) is conferred by~\eqref{equ:rho 0 sigma 0}, 
    \begin{align}\label{equ:int tt1 un L4}
        \int_t^{t+1} \left(\norm{\bff{u}_n(s)}{\bb{L}^4}^4 + \varepsilon \norm{\Delta \bff{u}_n(s)}{\bb{L}^2}^2 + \norm{|\bff{u}_n(s)| |\nabla \bff{u}_n(s)|}{\bb{L}^2}^2 \right) \ds 
    \leq (1+\varepsilon^{-1}) \rho_0.
    \end{align}
    Here, $\rho_0$ (also conferred by \eqref{equ:rho 0 sigma 0}) is independent of $\bff{u}_0$, $\varepsilon$, and $t$. By the same argument as in~\eqref{equ:nab L2 tt1},
\begin{align}\label{equ:rho nab L2 tt1}
    \int_t^{t+1} \left(
		\norm{\nabla \bff{u}_n}{\bb{L}^2}^2
        +
        \frac{\kappa_2}{2} \norm{ |\bff{u}_n|^2 - \kappa_1/\kappa_2}{\bb{L}^2}^2
        +
        \frac{\lambda_2}{2} \norm{(\bff{e}\cdot \bff{u}_n)^2- \lambda_1/\lambda_2}{\bb{L}^2}^2 \right) \ds
     \leq
     C\rho_0+ \frac{\kappa_1^2}{2\kappa_2}+ \frac{\lambda_1^2}{2\lambda_2}.
\end{align}
    Note that in contrast to \eqref{equ:un L4 ttdelta} and \eqref{equ:nab L2 ttdelta}, the right-hand sides of the estimates \eqref{equ:int tt1 un L4} and \eqref{equ:rho nab L2 tt1} are independent of $\bff{u}_0$, but they only hold for sufficiently large time (which now depends on $\norm{\bff{u}_0}{\bb{L}^2}$).
    Now, by Theorem~\ref{the:unif gronw 2} applied to \eqref{equ:ddt nab un etc}, we deduce that
	\begin{align}\label{equ:nab un L2 asy}
		\norm{\nabla \bff{u}_n(t)}{\bb{L}^2}^2
		+
		\norm{ |\bff{u}_n(t)|^2 - \kappa_1/\kappa_2}{\bb{L}^2}^2
        +
        \norm{(\bff{e}\cdot \bff{u}_n(t))^2- \lambda_1/\lambda_2}{\bb{L}^2}^2 
		\leq
		C(1+\varepsilon^{-1}) \rho_0, 
		\quad
		\forall t\geq t_0+1,
	\end{align}
    where $C$ is independent of $\bff{u}_0$, $\varepsilon$, and $t$.
	Integrating \eqref{equ:ddt nab un ene} over $(t,t+1)$ and using \eqref{equ:nab un L2 asy} then give
	\begin{align}\label{equ:int Hn L2}
		\int_t^{t+1} \left(\norm{\bff{H}_n}{\bb{L}^2}^2
		+
		\varepsilon \norm{\nabla \bff{H}_n}{\bb{L}^2}^2 \right) \ds
		\leq 
		C\varepsilon^{-1}, 
		\quad
		\forall t\geq t_0+1.
	\end{align}
	Similarly, integrating \eqref{equ:nab Delta un int L2} over $(t,t+1)$, rearranging the terms, and noting \eqref{equ:rho 0 sigma 0}, \eqref{equ:nab un L2 asy}, and~\eqref{equ:int Hn L2}, we obtain
	\begin{align}\label{equ:int nab Delta un}
		\int_t^{t+1} \norm{\nabla \Delta \bff{u}_n}{\bb{L}^2}^2 \ds
		\leq
		C\varepsilon^{-4},
		\quad 
		\forall t\geq t_0+1,
	\end{align}
	where $C$ is independent of $\bff{u}_0$, $\varepsilon$, and $t$. Altogether, \eqref{equ:rho 0 sigma 0}, \eqref{equ:nab un L2 asy}, \eqref{equ:int Hn L2}, and~\eqref{equ:int nab Delta un} yield~\eqref{equ:rho 1 sigma 1} with $t_1:=t_0+1$.
	This completes the proof of the proposition.
\end{proof}

For ease of presentation, in the following propositions we will not keep track of the precise dependence of the constant $C$ on $\bff{u}_0$ anymore since they will not be needed. Instead, we will just write $C\big(\norm{\bff{u}_0}{\bb{H}^k}\big)$ to indicate the dependence of $C$ on $\norm{\bff{u}_0}{\bb{H}^k}$ for some $k\in \bb{N}$. 

The next proposition provides estimates for $\norm{\bff{H}_n}{\bb{L}^2}$ and $\int_0^t \norm{\partial_t \bff{u}_n}{\bb{L}^2}^2$.

\begin{proposition}\label{pro:H L2 dt uh L2}
	For any $n\in \bb{N}$ and $t\geq 0$,
	\begin{align}
        \label{equ:H L2 dt uh L2}
		\int_0^t \norm{\partial_t \bff{u}_n(s)}{\bb{L}^2}^2 \ds
		&\leq 
		C\big(\norm{\bff{u}_0}{\bb{H}^2}\big)  \varepsilon^{-3} \left(1+t^3\right),
        \\
        \label{equ:unif H2 Hn}
        \norm{\bff{H}_n(t)}{\bb{L}^2}^2
        &\leq
        C\big(\norm{\bff{u}_0}{\bb{H}^2}\big)  \varepsilon^{-4},
	\end{align}
	where $C\big(\norm{\bff{u}_0}{\bb{H}^2}\big)$ is independent of $\varepsilon$ and $t$.
	
	Moreover, there exists $t_2:=t_1+1$, where $t_1$ is defined in Proposition~\ref{pro:un H1} and depends on $\norm{\bff{u}_0}{\bb{L}^2}$, such that for all $t\geq t_2$,
	\begin{align}\label{equ:sigma 2}
		\varepsilon^4 \norm{\bff{H}_n(t)}{\bb{L}^2}^2 
		+
		\int_t^{t+1} \varepsilon^3 \norm{\partial_t \bff{u}_n(s)}{\bb{L}^2}^2 \ds 
		\leq \rho_2.
	\end{align}
	Here, $\rho_2$ is independent of $\bff{u}_0$, $\varepsilon$, and $t$.
	
	Finally, for any $\delta>0$, there exists $\mu_2:= \mu_2\big(\norm{\bff{u}_0}{\bb{L}^2}, \delta\big)$ such that for any $t\geq \delta$,
	\begin{equation}\label{equ:Hn L2 mu}
		\varepsilon^4 \norm{\bff{H}_n(t)}{\bb{L}^2}^2 
		+
		\int_t^{t+\delta} \varepsilon^3 \norm{\partial_t \bff{u}_n(s)}{\bb{L}^2}^2 \ds 
		\leq \mu_2.
	\end{equation}
\end{proposition}

\begin{proof}
	Taking the inner product of the first equation in \eqref{equ:faedo} with $\partial_t \bff{u}_n$ yields
	\begin{align}\label{equ:norm dt un L2}
		\norm{\partial_t \bff{u}_n}{\bb{L}^2}^2 
		&=
		\sigma \inpro{\bff{H}_n}{\partial_t \bff{u}_n}_{\bb{L}^2}
		+
		\varepsilon \inpro{\nabla \bff{H}_n}{\nabla \partial_t \bff{u}_n}_{\bb{L}^2}
        +
        \sigma \inpro{\Pi_n\Phi_{\mathrm{d}}(\bff{u}_n)}{\partial_t \bff{u}_n}_{\bb{L}^2}
		\nonumber \\
		&\quad
		 -
		\varepsilon \inpro{\Delta \Pi_n \Phi_{\mathrm{d}}(\bff{u}_n)}{\partial_t \bff{u}_n}_{\bb{L}^2}
  		-
		\gamma \inpro{\bff{u}_n \times \bff{H}_n}{\partial_t \bff{u}_n}_{\bb{L}^2}
        -
        \gamma \inpro{\bff{u}_n \times \Phi_{\mathrm{d}}(\bff{u}_n)}{\partial_t \bff{u}_n}_{\bb{L}^2}
        \nonumber \\
        &\quad
		+
		\inpro{\mathcal{R}(\bff{u}_n)}{\partial_t\bff{u}_n}_{\bb{L}^2}
		+
		\inpro{\mathcal{S}(\bff{u}_n)}{\partial_t\bff{u}_n}_{\bb{L}^2}.
	\end{align}
	Differentiating the second equation in \eqref{equ:faedo} with respect to $t$, then taking inner product with $\varepsilon \bff{H}_n$ yields
	\begin{align*}
		\frac{\varepsilon}{2} \ddt \norm{\bff{H}_n}{\bb{L}^2}^2
		&=
		- \varepsilon
		\inpro{\nabla \partial_t \bff{u}_n}{\nabla \bff{H}_n}_{\bb{L}^2}
		+
		\kappa_1 \varepsilon \inpro{\partial_t \bff{u}_n}{\bff{H}_n}_{\bb{L}^2}
		-
		\kappa_2 \varepsilon \inpro{\partial_t (|\bff{u}_n|^2 \bff{u}_n)}{\bff{H}_n}_{\bb{L}^2}
		\\
		&\quad
		+
		\lambda_1 \varepsilon\inpro{(\bff{e}\cdot\partial_t \bff{u}_n)\bff{e}}{\bff{H}_n}_{\bb{L}^2}
        -
        3\lambda_2 \varepsilon \inpro{(\bff{e}\cdot \bff{u}_n)^2 \bff{e}(\bff{e}\cdot \partial_t \bff{u}_n)}{\bff{H}_n}_{\bb{L}^2}
	\end{align*}
	Adding this to \eqref{equ:norm dt un L2}, then applying H\"{o}lder's and Young's inequalities give
	\begin{align}\label{equ:J1 to J9}
		&\frac{\varepsilon}{2} \ddt \norm{\bff{H}_n}{\bb{L}^2}^2
		+
		\norm{\partial_t \bff{u}_n}{\bb{L}^2}^2
		\nonumber \\
		&=
		(\sigma + \kappa_1 \varepsilon) \inpro{\partial_t \bff{u}_n}{\bff{H}_n}_{\bb{L}^2}
        +
        \sigma \inpro{\Pi_n\Phi_{\mathrm{d}}(\bff{u}_n)}{\partial_t \bff{u}_n}_{\bb{L}^2}
        -
        \varepsilon \inpro{\Delta\Pi_n \Phi_{\mathrm{d}}(\bff{u}_n)}{\partial_t \bff{u}_n}_{\bb{L}^2}
        \nonumber \\
        &\quad
        -
		\gamma \inpro{\bff{u}_n \times \bff{H}_n}{\partial_t \bff{u}_n}_{\bb{L}^2}
        -
        \gamma \inpro{\bff{u}_n \times \Phi_{\mathrm{d}}(\bff{u}_n)}{\partial_t \bff{u}_n}_{\bb{L}^2}
		-
		\kappa_2 \varepsilon \inpro{\partial_t (|\bff{u}_n|^2 \bff{u}_n)}{\bff{H}_n}_{\bb{L}^2}
        \nonumber \\
        &\quad
		+
		\lambda_1 \varepsilon \inpro{(\bff{e}\cdot\partial_t \bff{u}_n)\bff{e}}{\bff{H}_n}_{\bb{L}^2}
        -
        3\lambda_2\varepsilon \inpro{(\bff{e}\cdot \bff{u}_n)^2 \bff{e}(\bff{e}\cdot \partial_t \bff{u}_n)}{\bff{H}_n}_{\bb{L}^2}
        +
        \inpro{\mathcal{R}(\bff{u}_n)}{\partial_t\bff{u}_n}_{\bb{L}^2}
        +
        \inpro{\mathcal{S}(\bff{u}_n)}{\partial_t\bff{u}_n}_{\bb{L}^2}
        \nonumber \\
        &=
        J_1+J_2+\cdots+J_{10}.
	\end{align}
    We will estimate each term on the last line. For the first three terms, by Young's inequality (noting~\eqref{equ:Phi d O est}), we have
    \begin{align*}
        \abs{J_1}+\abs{J_2}+\abs{J_3}
        \leq
        C\norm{\bff{u}_n}{\bb{L}^2}^2
        +
        C\varepsilon \norm{\Delta \bff{u}_n}{\bb{L}^2}^2
        +
        C\norm{\bff{H}_n}{\bb{L}^2}^2
        +
        \frac{1}{18} \norm{\partial_t \bff{u}_n}{\bb{L}^2}^2.
    \end{align*}
    For the term $J_5$, similarly we have
    \begin{align*}
        \abs{J_5}
        &\leq
        \gamma \norm{\bff{u}_n}{\bb{L}^\infty} \norm{\bff{u}_n}{\bb{L}^2} \norm{\partial_t \bff{u}_n}{\bb{L}^2}
        \leq
        C \norm{\bff{u}_n}{\bb{H}^2}^2 \norm{\bff{u}_n}{\bb{L}^2}^2
        +
        \frac{1}{18} \norm{\partial_t \bff{u}_n}{\bb{L}^2}^2.
    \end{align*}
    For the terms $J_4$ and $J_6$, by H\"older's and Young's inequalities,
    \begin{align*}
        \abs{J_4}
        &\leq
        \gamma \norm{\bff{u}_n}{\bb{L}^4}
		\norm{\bff{H}_n}{\bb{L}^4}
		\norm{\partial_t \bff{u}_n}{\bb{L}^2}
        \leq
        C\norm{\bff{u}_n}{\bb{H}^1}^2
		\norm{\bff{H}_n}{\bb{H}^1}^2
        +
        \frac{1}{18} \norm{\partial_t \bff{u}_n}{\bb{L}^2}^2,
        \\
        \abs{J_6}
        &\leq
        \kappa_2 \varepsilon \norm{\partial_t \bff{u}_n}{\bb{L}^2}
		\norm{\bff{u}_n}{\bb{L}^6}^2
		\norm{\bff{H}_n}{\bb{L}^6}
        \leq
        C\varepsilon^2 \norm{\bff{u}_n}{\bb{H}^1}^4 \norm{\bff{H}_n}{\bb{H}^1}^2
        +
        \frac{1}{18} \norm{\partial_t \bff{u}_n}{\bb{L}^2}^2.
    \end{align*}
    For the terms $J_7$ and $J_8$, by similar argument we have
    \begin{align*}
        \abs{J_7}
        &\leq
        C\varepsilon^2 \norm{\bff{H}_n}{\bb{L}^2}^2
        +
        \frac{1}{18} \norm{\partial_t \bff{u}_n}{\bb{L}^2}^2,
        \\
        \abs{J_8}
        &\leq
        C \varepsilon^2 \norm{\bff{u}_n}{\bb{H}^1}^4 \norm{\bff{H}_n}{\bb{H}^1}^2
        +
        \frac{1}{18} \norm{\partial_t \bff{u}_n}{\bb{L}^2}^2.
    \end{align*}
    For the terms $J_9$ and $J_{10}$, we apply~\eqref{equ:Rv w} and~\eqref{equ:Sv w} respectively to obtain
    \begin{align*}
   		\abs{J_9}
   		&\leq
   		C \nu_\infty
   		\left( \norm{\nabla \bff{u}_n}{\bb{L}^2}^2
   		+
   		\norm{|\bff{u}_n|\, |\nabla \bff{u}_n|}{\bb{L}^2}^2 \right)
   		+
   		\frac{1}{18} \norm{\partial_t \bff{u}_n}{\bb{L}^2}^2,
   		\\
   		\abs{J_{10}}
   		&\leq
   		C\left(\norm{\bff{u}_n}{\bb{L}^2}^2 +\norm{\bff{u}_n}{\bb{L}^4}^4\right) 
   		+
   		\frac{1}{18} \norm{\partial_t \bff{u}_n}{\bb{L}^2}^2.
   	\end{align*}
   Altogether, from~\eqref{equ:J1 to J9} we infer that
	\begin{align}\label{equ:eps Hn L2 dt un}
		\varepsilon \ddt \norm{\bff{H}_n}{\bb{L}^2}^2
		+
		\norm{\partial_t \bff{u}_n}{\bb{L}^2}^2
		&\leq
		C\left(1+\norm{\bff{u}_n}{\bb{H}^1}^2 + \varepsilon^2 \norm{\bff{u}_n}{\bb{H}^1}^4 \right) \norm{\bff{H}_n}{\bb{H}^1}^2
		+
		C \left(\norm{\bff{u}_n}{\bb{L}^2}^2+\varepsilon \right) \norm{\bff{u}_n}{\bb{H}^2}^2
		\nonumber\\
		&\quad
		+
		C \nu_\infty
		\left( \norm{\nabla \bff{u}_n}{\bb{L}^2}^2
		+
		\norm{|\bff{u}_n|\, |\nabla \bff{u}_n|}{\bb{L}^2}^2 \right)
		+
		C\left(\norm{\bff{u}_n}{\bb{L}^2}^2 +\norm{\bff{u}_n}{\bb{L}^4}^4\right).
	\end{align}
	Integrating this with respect to $t$ and using \eqref{equ:un L2 exp k lambda}, \eqref{equ:semidisc-est1}, and~\eqref{equ:int nab Hn L2 t}, we obtain
    \begin{equation}\label{equ:Hn small t L2}
        \varepsilon \norm{\bff{H}_n(t)}{\bb{L}^2}^2
        + 
        \int_0^t \norm{\partial_t \bff{u}_n(s)}{\bb{L}^2}^2 \ds
		\leq 
		C\big(\norm{\bff{u}_0}{\bb{H}^2}\big) \varepsilon^{-3}  \left(1+t^3\right),
    \end{equation}
    which gives~\eqref{equ:H L2 dt uh L2}, but not exactly~\eqref{equ:unif H2 Hn} since this bound still depends on $t$. 

	Next, we derive \eqref{equ:sigma 2} and \eqref{equ:Hn L2 mu}. Inequality~\eqref{equ:unif H2 Hn} then follows as a consequence. Note that for $t\geq t_1$ (with $t_1$ defined in Proposition \ref{pro:un H1}), the estimates~\eqref{equ:rho 0 sigma 0}, \eqref{equ:rho 1 sigma 1}, and \eqref{equ:eps Hn L2 dt un} imply
	\begin{align*}
		\varepsilon \ddt \norm{\bff{H}_n}{\bb{L}^2}^2
		&\leq 
		C\varepsilon^{-1} (1+\rho_1^2)  \norm{\bff{H}_n}{\bb{H}^1}^2
		+
		C\rho_0 \left(1+ \norm{\bff{u}_n}{\bb{H}^2}^2 \right)
		\\
		&\quad
		+
		C \nu_\infty \left( \norm{\nabla \bff{u}_n}{\bb{L}^2}^2
		+
		\norm{|\bff{u}_n|\, |\nabla \bff{u}_n|}{\bb{L}^2}^2 \right)
		+
		C\left(1+\norm{\bff{u}_n}{\bb{L}^4}^4\right).
	\end{align*}
	Theorem~\ref{the:unif gronw 2} with $r=1$ (noting \eqref{equ:rho 0 sigma 0} and \eqref{equ:rho 1 sigma 1}) then yields
	\begin{align}\label{equ:Hn L2}
		\norm{\bff{H}_n(t)}{\bb{L}^2}^2 \leq C\varepsilon^{-4}, 
		\quad \forall t\geq t_1+1,
	\end{align}
	where $C$ is a constant independent of $t$, $\varepsilon$, and $\bff{u}_0$.
	Noting \eqref{equ:Hn L2}, integrating \eqref{equ:eps Hn L2 dt un} over $(t,t+1)$ where $t\geq t_1+1$, and rearranging the terms, we obtain \eqref{equ:sigma 2} with $t_2:=t_1+1$.
	By similar argument, integrating \eqref{equ:eps Hn L2 dt un} over $(t,t+\delta)$ then using Theorem~\ref{the:unif gronw 2}, \eqref{equ:int t r Delta un}, and~\eqref{equ:un H1 int Hn H1} instead, we have \eqref{equ:Hn L2 mu}.
    Inequality \eqref{equ:unif H2 Hn} then follows from \eqref{equ:Hn small t L2} for $t\in (0,1)$ and~\eqref{equ:Hn L2 mu} with $\delta=1$ for $t\geq 1$.
\end{proof}

The next proposition proves analogous estimates as in Proposition~\ref{pro:un H1} for higher-order norms.

\begin{proposition}\label{pro:Delta un L2}
	For any $n\in \bb{N}$ and $t\geq 0$,
	\begin{align}
        \label{equ:H2 unif un}
        \norm{\bff{u}_n(t)}{\bb{H}^2}^2
        &\leq
        C\big(\norm{\bff{u}_0}{\bb{H}^2}\big) \varepsilon^{-4},
        \\
        \label{equ:Delta un Hn L2}
		\int_0^t \norm{\Delta \bff{H}_n(s)}{\bb{L}^2}^2 \ds
		+
		\int_0^t \norm{\bff{u}_n(s)}{\bb{H}^4}^2 \ds
		&\leq 
		C\big(\norm{\bff{u}_0}{\bb{H}^2}\big) \varepsilon^{-5}  \left(1+t^3\right),
	\end{align}
	where $C\big(\norm{\bff{u}_0}{\bb{H}^2}\big)$ is a constant which is independent of $\varepsilon$ and $t$.
	
	Moreover, there exists $t_3:=t_2+1$, where $t_2$ is defined in Proposition~\ref{pro:H L2 dt uh L2} and depends on $\norm{\bff{u}_0}{\bb{L}^2}$, such that for all $t\geq t_3$,
	\begin{align}\label{equ:rho 2 sigma 2}
		\varepsilon^4 \norm{\bff{u}_n(t)}{\bb{H}^2}^2
		+
		\int_t^{t+1} \varepsilon^5 \norm{\bff{H}_n(s)}{\bb{H}^2}^2 \ds
		+
		\int_t^{t+1} \varepsilon^5 \norm{\bff{u}_n(s)}{\bb{H}^4}^2 \ds \leq \rho_3,
	\end{align}
	 where $\rho_3$ is independent of $\bff{u}_0$, $\varepsilon$, and $t$.
	
	Finally, for any $\delta>0$, there exists $\mu_3:= \mu_3\big(\norm{\bff{u}_0}{\bb{L}^2}, \delta\big)$ such that for any $t\geq \delta$,
	\begin{equation}\label{equ:Hn H2 mu}
	\varepsilon^4 \norm{\bff{u}_n(t)}{\bb{H}^2}^2
	+
	\int_t^{t+\delta} \varepsilon^5 \norm{\bff{H}_n(s)}{\bb{H}^2}^2 \ds
	+
	\int_t^{t+\delta} \varepsilon^5 \norm{\bff{u}_n(s)}{\bb{H}^4}^2 \ds
		\leq \mu_3.
	\end{equation}
\end{proposition}

\begin{proof}
Taking the inner product of the second equation in \eqref{equ:faedo} with $\Delta \bff{u}_n$, we obtain
	\begin{align*}
		\inpro{\bff{H}_n}{\Delta \bff{u}_n}_{\bb{L}^2}
		&=
		\norm{\Delta \bff{u}_n}{\bb{L}^2}^2
		-
		\kappa_1 \norm{\nabla \bff{u}_n}{\bb{L}^2}^2
		-
		\kappa_2 \inpro{|\bff{u}_n|^2 \bff{u}_n}{\Delta \bff{u}_n}_{\bb{L}^2}
		\\
		&\quad
		+
		\lambda_1 \inpro{(\bff{e}\cdot \bff{u}_n)\bff{e}}{\Delta \bff{u}_n}_{\bb{L}^2}
		-
		\lambda_2 \inpro{(\bff{e}\cdot \bff{u}_n)^3 \bff{e}}{\Delta \bff{u}_n}_{\bb{L}^2}.
	\end{align*}
	Therefore, after rearranging the terms, we have
	\begin{align}\label{equ:Delta un L2 2}
		\nonumber
		\norm{\Delta \bff{u}_n}{\bb{L}^2}^2
		&=
		\kappa_1 \norm{\nabla \bff{u}_n}{\bb{L}^2}^2
		+
		\inpro{\bff{H}_n}{\Delta \bff{u}_n}_{\bb{L}^2}
		+
		\kappa_2 \inpro{|\bff{u}_n|^2 \bff{u}_n}{\Delta \bff{u}_n}_{\bb{L}^2}
		\\
		&\quad
		-
		\lambda_1 \inpro{\bff{e}(\bff{e}\cdot \bff{u}_n)}{\Delta \bff{u}_n}_{\bb{L}^2}
		+
		\lambda_2 \inpro{(\bff{e}\cdot \bff{u}_n)^3 \bff{e}}{\Delta \bff{u}_n}_{\bb{L}^2}
		\nonumber\\
		\nonumber
		&\leq
		\kappa_1 \norm{\nabla \bff{u}_n}{\bb{L}^2}^2
		+
		\frac{1}{2} \norm{\Delta \bff{u}_n}{\bb{L}^2}^2
		+
		C\norm{\bff{H}_n}{\bb{L}^2}^2
		+
		C \norm{\bff{u}_n}{\bb{L}^6}^6
		+
		C \norm{\bff{u}_n}{\bb{L}^2}^2
		\\
		&\leq
		C\left(1+ \norm{\bff{u}_n}{\bb{H}^1}^4 \right) \norm{\bff{u}_n}{\bb{H}^1}^2 
		+ 
		\norm{\bff{H}_n}{\bb{L}^2}^2
		+
		\frac{1}{2}  \norm{\Delta \bff{u}_n}{\bb{L}^2}^2,
	\end{align}
	where we used Young's inequality and the Sobolev embedding $\bb{H}^1\hookrightarrow \bb{L}^6$. Rearranging the terms in this inequality, noting \eqref{equ:semidisc-est1} and \eqref{equ:unif H2 Hn}, we then have
	\begin{align}\label{equ:lambda e Delta un C}
		\norm{\Delta \bff{u}_n}{\bb{L}^2}^2
		\leq 
		C\big(\norm{\bff{u}_0}{\bb{H}^2}\big) \varepsilon^{-4} 
	\end{align}
    which, together with~\eqref{equ:un L2}, implies~\eqref{equ:H2 unif un}.

	Similarly, taking the inner product of the first equation in \eqref{equ:faedo} with $\Delta \bff{H}_n$, rearranging the terms, and applying H\"older's inequality, we have
	\begin{align}\label{equ:Delta Hn nab Hn L2}
		\nonumber
		&\varepsilon \norm{\Delta \bff{H}_n}{\bb{L}^2}^2 
		+
		\sigma \norm{\nabla \bff{H}_n}{\bb{L}^2}^2
		\\
		&=
		-
		\inpro{\partial_t \bff{u}_n}{\Delta \bff{H}_n}_{\bb{L}^2}
		+
		\sigma \inpro{\Phi_{\mathrm{d}}(\bff{u}_n)}{\Delta \bff{H}_n}_{\bb{L}^2}
		-
		\varepsilon \inpro{\Delta\Phi_{\mathrm{d}}(\bff{u}_n)}{\Delta \bff{H}_n}_{\bb{L}^2}
		\nonumber \\
		&\quad
		-
		\gamma \inpro{\bff{u}_n\times \bff{H}_n}{\Delta \bff{H}_n}_{\bb{L}^2}
		-
		\gamma \inpro{\bff{u}_n\times \Phi_{\mathrm{d}}(\bff{u}_n)}{\Delta \bff{H}_n}_{\bb{L}^2}
		+
		\inpro{\mathcal{R}(\bff{u}_n)}{\Delta \bff{H}_n}_{\bb{L}^2}
		+
		\inpro{\mathcal{S}(\bff{u}_n)}{\Delta \bff{H}_n}_{\bb{L}^2}
		\nonumber \\
		&= I_1+I_2+\cdots+I_7.
	\end{align}
	For the terms $I_1$, $I_2$, and $I_3$, we apply Young's inequality and~\eqref{equ:Phi d O est} to infer
	\begin{align*}
		\abs{I_1}+\abs{I_2}+\abs{I_3}
		&\leq
		C\varepsilon^{-1} \norm{\partial_t \bff{u}_n}{\bb{L}^2}^2
		+
        C\norm{\bff{u}_n}{\bb{H}^1}^2
        +
		C\varepsilon \norm{\Delta \bff{u}_n}{\bb{L}^2}^2
		+
        \frac{\sigma}{8} \norm{\nabla \bff{H}_n}{\bb{L}^2}^2
        +
		\frac{\varepsilon}{8} \norm{\Delta \bff{H}_n}{\bb{L}^2}^2.
	\end{align*}
	For the terms $I_4$ and $I_5$, by H\"older's and Young's inequalities (noting~\eqref{equ:Phi d O est}), we have
	\begin{align*}
		\abs{I_4}
		&\leq
		\gamma \norm{\bff{u}_n}{\bb{L}^4} \norm{\bff{H}_n}{\bb{L}^4}
		\norm{\Delta \bff{H}_n}{\bb{L}^2}
		\leq
		C\varepsilon^{-1} \norm{\bff{u}_n}{\bb{H}^1}^2 \norm{\bff{H}_n}{\bb{H}^1}^2
		+
		\frac{\varepsilon}{8} \norm{\Delta \bff{H}_n}{\bb{L}^2}^2,
		\\
		\abs{I_5}
		&\leq
		\gamma \norm{\bff{u}_n}{\bb{L}^4}^2 \norm{\Delta \bff{H}_n}{\bb{L}^2}
		\leq
		C \varepsilon^{-1} \norm{\bff{u}_n}{\bb{L}^4}^4 
		+
		\frac{\varepsilon}{8} \norm{\Delta \bff{H}_n}{\bb{L}^2}^2.
	\end{align*}
	For the last two terms, we use~\eqref{equ:Rv w} and~\eqref{equ:Sv w} respectively, to obtain
	\begin{align*}
		\abs{I_6}+ \abs{I_7}
		&\leq
		C\nu_\infty \varepsilon^{-1}
		\left( \norm{\nabla \bff{u}_n}{\bb{L}^2}^2
		+
		\norm{|\bff{u}_n|\, |\nabla \bff{u}_n|}{\bb{L}^2}^2 \right)
		+
		C\varepsilon^{-1} \left(\norm{\bff{u}_n}{\bb{L}^2}^2 + \norm{\bff{u}_n}{\bb{L}^4}^4\right)
		+
		\frac{\varepsilon}{8} \norm{\Delta \bff{H}_n}{\bb{L}^2}^2.
	\end{align*}
	Altogether, substituting these estimates into~\eqref{equ:Delta Hn nab Hn L2} we have
    \begin{align}\label{equ:eps norm Delta Hn}
        \varepsilon\norm{\Delta \bff{H}_n}{\bb{L}^2}^2 
        + \sigma \norm{\nabla\bff{H}_n}{\bb{L}^2}^2
        &\leq
        C\varepsilon^{-1} \norm{\partial_t \bff{u}_n}{\bb{L}^2}^2
		+
        C\varepsilon^{-1} \left(1+ \norm{\bff{H}_n}{\bb{H}^1}^2 \right) \norm{\bff{u}_n}{\bb{H}^1}^2
        +
		C\varepsilon \norm{\Delta \bff{u}_n}{\bb{L}^2}^2
        \nonumber\\
        &\quad
        +
        C \varepsilon^{-1}
		\left(
		\norm{|\bff{u}_n|\, |\nabla \bff{u}_n|}{\bb{L}^2}^2
		+ \norm{\bff{u}_n}{\bb{L}^4}^4\right).
    \end{align}
    Integrating this over $(0,t)$, applying successively \eqref{equ:H L2 dt uh L2}, \eqref{equ:semidisc-est1}, \eqref{equ:int nab Hn L2 t}, and \eqref{equ:un L2 exp k lambda}, we deduce that
	\begin{align}\label{equ:lambda e Hn L2}
		\int_0^t \norm{\Delta \bff{H}_n(s)}{\bb{L}^2}^2 \ds
		\leq 
		C\big(\norm{\bff{u}_0}{\bb{H}^2}\big) \varepsilon^{-5}  \left(1+t^3\right)
	\end{align}
	Next, applying the operator $\Delta$ to the second equation in~\eqref{equ:faedo} and taking the inner product with $\Delta^2 \bff{u}_n$, we obtain by similar argument as in~\eqref{equ:Delta un L2 2},
	\begin{equation}\label{equ:int Delta2 CT}
		\int_0^t \norm{\Delta^2 \bff{u}_n(s)}{\bb{L}^2}^2 \ds 
        \leq 
        C\big(\norm{\bff{u}_0}{\bb{H}^2}\big) \varepsilon^{-5}  \left(1+t^3\right).
	\end{equation}
	This, together with \eqref{equ:lambda e Delta un C} and \eqref{equ:H2 unif un} shown earlier, implies~\eqref{equ:Delta un Hn L2}.

	Moreover, rearranging the terms in \eqref{equ:Delta un L2 2}, then applying \eqref{equ:rho 1 sigma 1} and \eqref{equ:sigma 2} give
	\begin{align}\label{equ:Delta u rho1}
		\varepsilon^4 \norm{\Delta \bff{u}_n}{\bb{L}^2}^2 
		\leq
		C(1+\rho_1^3+\rho_2),
		\quad
		\forall t\geq t_2.
	\end{align}
	Note that $t_2:=t_1+1$ is conferred by~\eqref{equ:sigma 2}. Integrating both sides of \eqref{equ:eps norm Delta Hn} over $(t,t+1)$, noting~\eqref{equ:rho 0 sigma 0}, \eqref{equ:rho 1 sigma 1} and \eqref{equ:sigma 2}, we obtain
	\begin{align}\label{equ:int Delta u rho1}
		\int_t^{t+1}  \norm{\Delta \bff{H}_n(s)}{\bb{L}^2}^2 \ds
		\leq
		C\varepsilon^{-5}(\rho_0 +\rho_1^2 +\rho_2), 
		\quad 
		\forall t\geq t_2.
	\end{align}
	Similarly, corresponding to~\eqref{equ:int Delta2 CT}, we have
	\begin{equation}\label{equ:int Delta2 L2}
		\int_t^{t+1} \varepsilon^5 \norm{\Delta^2 \bff{u}_n(s)}{\bb{L}^2}^2 \ds \leq C,
		\quad 
		\forall t\geq t_2,
	\end{equation}
	where $C$ depends only on $\rho_0$, $\rho_1$, $\rho_2$, and $|\mathscr{O}|$.
	Altogether, we infer the inequality~\eqref{equ:rho 2 sigma 2} for all $t\geq t_2$ from~\eqref{equ:Delta u rho1}, \eqref{equ:int Delta u rho1}, and~\eqref{equ:int Delta2 L2}. 
	
	Finally, noting~\eqref{equ:un H1 int Hn H1} and~\eqref{equ:Hn L2 mu}, we repeat the argument leading to~\eqref{equ:Delta u rho1}, \eqref{equ:int Delta u rho1}, and \eqref{equ:int Delta2 L2}, but integrating over $(t,t+\delta)$ instead. This yields~\eqref{equ:Hn H2 mu}, thus completing the proof of the proposition.
\end{proof}

We note that the constants in estimates \eqref{equ:unif H2 Hn} and \eqref{equ:H2 unif un} depend on $\norm{\bff{u}_0}{\bb{H}^2}$. In the next proposition, we prove an estimate with constant depending only on $\norm{\bff{u}_0}{\bb{H}^1}$ at the expense of some dependencies on $t>0$.

\begin{proposition}\label{pro:H2 H1 smooth}
There exists a constant $M_2:=M_2\big(\norm{\bff{u}_0}{\bb{H}^1}\big)$ such that for all $t>0$,
\begin{align}
	\label{equ:smooth H1 H2}
    \norm{\bff{H}_n(t)}{\bb{L}^2}^2
    +
    \norm{\bff{u}_n(t)}{\bb{H}^2}^2
    &\leq
    M_2 \varepsilon^{-4} (1+t^3+t^{-1}).
\end{align}
\end{proposition}

\begin{proof}
From inequalities \eqref{equ:eps Hn L2 dt un}, \eqref{equ:un L2}, and \eqref{equ:semidisc-est1}, we have
    \begin{align}\label{equ:eps Hn L2 dt un 2}
		\varepsilon \ddt \norm{\bff{H}_n}{\bb{L}^2}^2
		+
		\norm{\partial_t \bff{u}_n}{\bb{L}^2}^2
		&\leq
		C \left(1+\norm{\bff{u}_n}{\bb{H}^1}^2 + \varepsilon^2 \norm{\bff{u}_n}{\bb{H}^1}^4 \right) \norm{\bff{H}_n}{\bb{H}^1}^2
		+
		C \left(\norm{\bff{u}_n}{\bb{L}^2}^2 + \varepsilon^2\right) \norm{\bff{u}_n}{\bb{H}^2}^2
		\nonumber\\
		&\quad
		+
		C \nu_\infty
		\left( \norm{\nabla \bff{u}_n}{\bb{L}^2}^2
		+
		\norm{|\bff{u}_n|\, |\nabla \bff{u}_n|}{\bb{L}^2}^2 \right)
		+
		C\left(\norm{\bff{u}_n}{\bb{L}^2}^2 +\norm{\bff{u}_n}{\bb{L}^4}^4\right)
        \nonumber \\
        &\leq
        C\big(\norm{\bff{u}_0}{\bb{H}^1}\big) \varepsilon^{-1} \norm{\bff{H}_n}{\bb{H}^1}^2
        +
        C(1+M_0) \norm{\bff{u}_n}{\bb{H}^2}^2
        \nonumber \\
        &\quad+
        C\nu_\infty
		\left( \norm{\nabla \bff{u}_n}{\bb{L}^2}^2
		+
		\norm{|\bff{u}_n|\, |\nabla \bff{u}_n|}{\bb{L}^2}^2 \right)
		+
		C\left(M_0 +\norm{\bff{u}_n}{\bb{L}^4}^4\right),
	\end{align}
    where $M_0= M_0\big(\norm{\bff{u}_0}{\bb{L}^2}\big)$ is the constant in \eqref{equ:un L2}. We will use Corollary~\ref{cor:unif gron} to derive the estimate~\eqref{equ:smooth H1 H2}.
By using~\eqref{equ:int nab Hn L2 t} for $t\leq 1$ and \eqref{equ:un H1 int Hn H1} for $t\geq 1$, we have
\begin{equation}\label{equ:int t t1 Hn}
    \int_t^{t+1} \norm{\bff{H}_n(s)}{\bb{H}^1}^2 \ds
    \leq
    C_1 \varepsilon^{-2}, \quad \forall t\geq 0,
\end{equation}
where $C_1$ depends only on the coefficients of~\eqref{equ:llbar}, $|\mathscr{O}|$, $\nu_\infty$, and $\norm{\bff{u}_0}{\bb{H}^1}$. Moreover, by~\eqref{equ:int t r Delta un} for $\delta=1$, we have
\begin{equation}\label{equ:int t t1}
    \int_t^{t+1} \left( \norm{\bff{u}_n(s)}{\bb{H}^2}^2 +  \norm{|\bff{u}_n(s)| |\nabla \bff{u}_n(s)|}{\bb{L}^2}^2 + \norm{\bff{u}_n(s)}{\bb{L}^4}^4  \right) \ds 
    \leq
    C\big(\norm{\bff{u}_0}{\bb{L}^2}\big) \varepsilon^{-1}, \quad \forall t\geq 0,
\end{equation}
where $C\big(\norm{\bff{u}_0}{\bb{L}^2}\big)$ depends on $\norm{\bff{u}_0}{\bb{L}^2}$, but is independent of $\varepsilon$ and $t$. Furthermore, by~\eqref{equ:un L2 exp k lambda} and~\eqref{equ:int nab Hn L2 t}, for all $t\geq 0$ we have
\begin{equation}\label{equ:int 0t t3}
    \int_0^t \left( \norm{\bff{H}_n(s)}{\bb{H}^1}^2 + \norm{\bff{u}_n(s)}{\bb{H}^2}^2 +  \norm{|\bff{u}_n(s)| |\nabla \bff{u}_n(s)|}{\bb{L}^2}^2 + \norm{\bff{u}_n(s)}{\bb{L}^4}^4 \right) \ds 
    \leq 
    C\big(\norm{\bff{u}_0}{\bb{H}^1}\big) \varepsilon^{-2}   \left(1+t^3\right).
\end{equation}
Altogether, inequalities~\eqref{equ:eps Hn L2 dt un 2}, \eqref{equ:int t t1 Hn}, \eqref{equ:int t t1}, and~\eqref{equ:int 0t t3} verify the hypotheses of Corollary~\ref{cor:unif gron}, thus implying the required estimate \eqref{equ:smooth H1 H2} for the first term.

Finally, using the second equation in~\eqref{equ:faedo}, we have
\begin{align*}
    \norm{\Delta \bff{u}_n(t)}{\bb{L}^2}^2
    &\leq
    \norm{\bff{H}_n(t)}{\bb{L}^2}^2
    +
    (\kappa_1+\lambda_1) \norm{\bff{u}_n(t)}{\bb{L}^2}^2
    +
    (\kappa_2+\lambda_2) \norm{\bff{u}_n(t)}{\bb{L}^6}^6
    \\
    &\leq
    C\big(\norm{\bff{u}_0}{\bb{H}^1}\big) \varepsilon^{-4} (1+t^3+t^{-1})
    +
    CM_0
    +
    C \varepsilon^{-3} \left(1+\norm{\bff{u}_0}{\bb{H}^1}^6\right),
\end{align*}
where in the last step we used~\eqref{equ:un L2}, the Sobolev embedding $\bb{H}^1 \hookrightarrow \bb{L}^6$, and~\eqref{equ:semidisc-est1}. This, together with \eqref{equ:un L2}, shows the required estimate for the second term on the left-hand side of \eqref{equ:smooth H1 H2}. Thus, the proof is now complete.
\end{proof}

The next proposition provides estimates for higher-order Sobolev norms of $\partial_t \bff{u}_n$ and $\bff{H}_n$.

\begin{proposition}\label{pro:nabla pa t un}
For any $n\in\bb{N}$ and $t\geq 0$,
\begin{align}
    \label{equ:dt nab int L2}
    \int_0^t \norm{\nabla \partial_t \bff{u}_n(s)}{\bb{L}^2}^2 \ds
    &\leq
    C \big(\norm{\bff{u}_0}{\bb{H}^3}\big) \varepsilon^{-7} \left(1+t^3 \right),
    \\
    \label{equ:varep nab Hn}
    \norm{\nabla \bff{H}_n}{\bb{L}^2}^2
    &\leq
    C \big(\norm{\bff{u}_0}{\bb{H}^3}\big) \varepsilon^{-8}, 
\end{align}
where $C \big(\norm{\bff{u}_0}{\bb{H}^3}\big)$ is a constant which is independent of $\varepsilon$ and $t$.

Moreover, there exists $t_4:=t_3+1$, where $t_3$ is defined in Proposition~\ref{pro:Delta un L2} and depends on $\norm{\bff{u}_0}{\bb{L}^2}$, such that for all $t\geq t_4$,
\begin{equation}\label{equ:rho3 nab Hn}
    \varepsilon^{8} \norm{\nabla \bff{H}_n(t)}{\bb{L}^2}^2 
    +
    \int_t^{t+1} \varepsilon^7 \norm{\nabla \partial_t \bff{u}_n(s)}{\bb{L}^2}^2 \ds
    \leq
    \rho_4,
\end{equation}
where $\rho_4$ is independent of $\bff{u}_0$, $\varepsilon$, and $t$.

Finally, for any $\delta>0$, there exists $\mu_4:= \mu_4\big(\norm{\bff{u}_0}{\bb{L}^2}, \delta\big)$ such that for any $t\geq \delta$,
\begin{equation}\label{equ:mu3 nab Hn}
    \varepsilon^{8} \norm{\nabla \bff{H}_n(t)}{\bb{L}^2}^2 
    +
    \int_t^{t+\delta} \varepsilon^7 \norm{\nabla \partial_t \bff{u}_n(s)}{\bb{L}^2}^2 \ds
    \leq
    \mu_4.
\end{equation}
\end{proposition}

\begin{proof}
Taking the inner product of the first equation in~\eqref{equ:faedo} with $-\Delta\partial_t \bff{u}_n$ gives
\begin{align}\label{equ:nabdt un L2}
    \norm{\nabla \partial_t \bff{u}_n}{\bb{L}^2}^2
    &=
    \sigma \inpro{\nabla \bff{H}_n}{\nabla \partial_t \bff{u}_n}_{\bb{L}^2}
    +
    \sigma \inpro{\nabla \Pi_n \Phi_{\mathrm{d}}(\bff{u}_n)}{\nabla \partial_t \bff{u}_n}_{\bb{L}^2}
    +
    \varepsilon \inpro{\Delta \bff{H}_n}{\Delta\partial_t \bff{u}_n}_{\bb{L}^2}
    \nonumber \\
    &\quad
    +
    \varepsilon\inpro{\nabla\Delta \Pi_n \Phi_{\mathrm{d}}(\bff{u}_n)}{\nabla \partial_t \bff{u}_n}_{\bb{L}^2}
    -
    \gamma \inpro{\nabla (\bff{u}_n\times\bff{H}_n)}{\nabla\partial_t \bff{u}_n}_{\bb{L}^2}
    \nonumber \\
    &\quad
    -
    \gamma \inpro{\nabla(\bff{u}_n \times \Phi_{\mathrm{d}}(\bff{u}_n))}{\nabla \partial_t \bff{u}_n}_{\bb{L}^2}
    +
    \inpro{\nabla\mathcal{R}(\bff{u}_n)}{\nabla\partial_t \bff{u}_n}_{\bb{L}^2}
    +
    \inpro{\nabla\mathcal{S}(\bff{u}_n)}{\nabla\partial_t \bff{u}_n}_{\bb{L}^2}.
\end{align}
Differentiating the second equation in~\eqref{equ:faedo} with respect to $t$, then taking the inner product of the resulting equation with $-\varepsilon\Delta\partial_t \bff{u}_n$ yields
\begin{align}\label{equ:vareps ddt Hn L2}
    \frac{\varepsilon}{2} \ddt \norm{\nabla \bff{H}_n}{\bb{L}^2}^2
    &=
    -\varepsilon \inpro{\Delta\partial_t \bff{u}_n}{\Delta \bff{H}_n}_{\bb{L}^2}
    +
    \kappa_1 \varepsilon \inpro{\nabla\partial_t \bff{u}_n}{\nabla \bff{H}_n}_{\bb{L}^2}
    +
    \kappa_2 \varepsilon \inpro{\partial_t (|\bff{u}_n|^2 \bff{u}_n)}{\Delta \bff{H}_n}_{\bb{L}^2}
    \nonumber \\
    &\quad
    -
	\lambda_1 \varepsilon\inpro{(\bff{e}\cdot\partial_t \bff{u}_n)\bff{e}}{\Delta \bff{H}_n}_{\bb{L}^2}
    -
    3\lambda_2 \varepsilon \inpro{(\bff{e}\cdot \bff{u}_n)^2 \bff{e}(\bff{e}\cdot \partial_t \bff{u}_n)}{\Delta \bff{H}_n}_{\bb{L}^2}.
\end{align}
Adding \eqref{equ:nabdt un L2} and \eqref{equ:vareps ddt Hn L2} gives
\begin{align}\label{equ:I1 to I9}
    &\frac{\varepsilon}{2} \ddt \norm{\nabla \bff{H}_n}{\bb{L}^2}^2
    +
    \norm{\nabla\partial_t \bff{u}_n}{\bb{L}^2}^2
    \nonumber \\
    &=
    (\sigma+\kappa_1\varepsilon)\inpro{\nabla\partial_t \bff{u}_n}{\nabla \bff{H}_n}_{\bb{L}^2}
    +
    \sigma \inpro{\nabla \Pi_n \Phi_d(\bff{u}_n)}{\nabla\partial_t \bff{u}_n}_{\bb{L}^2}
    +
    \varepsilon \inpro{\nabla\Delta \Pi_n\Phi_{\mathrm{d}}(\bff{u}_n)}{\nabla\partial_t \bff{u}_n}_{\bb{L}^2}
    \nonumber \\
    &\quad
    -
    \gamma\inpro{\nabla(\bff{u}_n\times\bff{H}_n)}{\nabla\partial_t \bff{u}_n}_{\bb{L}^2}
    -
    \gamma \inpro{\nabla(\bff{u}_n \times \Phi_{\mathrm{d}}(\bff{u}_n))}{\nabla \partial_t \bff{u}_n}_{\bb{L}^2}
    +
    \kappa_2 \varepsilon \inpro{\partial_t (|\bff{u}_n|^2 \bff{u}_n)}{\Delta \bff{H}_n}_{\bb{L}^2}
    \nonumber \\
    &\quad
    -
    \lambda_1\varepsilon \inpro{(\bff{e}\cdot \partial_t \bff{u}_n)\bff{e}}{\Delta \bff{H}_n}_{\bb{L}^2}
    -
    3\lambda_2 \varepsilon\inpro{(\bff{e}\cdot \bff{u}_n)^2 \bff{e} (\bff{e}\cdot\partial_t \bff{u}_n)}{\Delta \bff{H}_n}_{\bb{L}^2}
    +
    \inpro{\nabla \mathcal{R}(\bff{u}_n)}{\nabla \partial_t \bff{u}_n}_{\bb{L}^2}
    \nonumber \\
    &\quad
    +
    \inpro{\nabla\mathcal{S}(\bff{u}_n)}{\nabla\partial_t \bff{u}_n}_{\bb{L}^2}
    \nonumber \\
    &=
    I_1+I_2+\cdots+I_{10}.
\end{align}
Each term on the last line can be estimated analogously to~\eqref{equ:J1 to J9}. For the first three terms, by Young's inequality,
\begin{equation*}
    \abs{I_1}+\abs{I_2}+\abs{I_3}
    \leq
    C\norm{\bff{u}_n}{\bb{H}^3}^2
    +
    C\norm{\bff{H}_n}{\bb{H}^1}^2
    +
    \frac{1}{16} \norm{\nabla \partial_t \bff{u}_n}{\bb{L}^2}^2.
\end{equation*}
For the terms $I_4$ and $I_5$, by H\"older's and Young's inequalities, and the Sobolev embedding we have
\begin{align*}
    \abs{I_4}
    &\leq 
    \gamma \norm{\nabla(\bff{u}_n\times \bff{H}_n)}{\bb{L}^2} \norm{\nabla \partial_t \bff{u}_n}{\bb{L}^2}
    \leq
    C\norm{\bff{u}_n}{\bb{H}^2}^2 \norm{\bff{H}_n}{\bb{H}^1}^2
    +
    C\norm{\bff{u}_n}{\bb{H}^1}^2 \norm{\bff{H}_n}{\bb{H}^2}^2
    +
    \frac{1}{16} \norm{\nabla \partial_t \bff{u}_n}{\bb{L}^2}^2,
    \\
    \abs{I_5}
    &\leq
    \gamma \norm{\nabla(\bff{u}_n\times \Phi_{\mathrm{d}}(\bff{u}_n))}{\bb{L}^2} \norm{\nabla \partial_t \bff{u}_n}{\bb{L}^2}
    \leq
    C\norm{\bff{u}_n}{\bb{H}^2}^2 \norm{\bff{u}_n}{\bb{H}^1}^2
    +
    \frac{1}{16} \norm{\nabla \partial_t \bff{u}_n}{\bb{L}^2}^2.
\end{align*}
For the terms $I_6$, $I_7$, and $I_8$, similarly we have
\begin{align*}
    \abs{I_6}
    &\leq
    3\kappa_2 \varepsilon \norm{\bff{u}_n}{\bb{L}^6}^2 \norm{\partial_t \bff{u}_n}{\bb{L}^6} \norm{\Delta \bff{H}_n}{\bb{L}^2}
    \leq
    C\norm{\bff{u}_n}{\bb{H}^1}^4 \norm{\Delta \bff{H}_n}{\bb{L}^2}^2
    +
    \frac{1}{16} \norm{\partial_t \bff{u}_n}{\bb{H}^1}^2,
    \\
    \abs{I_7}
    &\leq
    \lambda_1\varepsilon \norm{\partial_t \bff{u}_n}{\bb{L}^2} \norm{\Delta \bff{H}_n}{\bb{L}^2}
    \leq
    C\norm{\Delta \bff{H}_n}{\bb{L}^2}^2
    +
    C \norm{\partial_t \bff{u}_n}{\bb{L}^2}^2,
    \\
    \abs{I_8}
    &\leq
    3\lambda_2\varepsilon \norm{\bff{u}_n}{\bb{L}^6}^2 \norm{\partial_t \bff{u}_n}{\bb{L}^6} \norm{\Delta \bff{H}_n}{\bb{L}^2}
    \leq
    C\norm{\bff{u}_n}{\bb{H}^1}^4 \norm{\Delta \bff{H}_n}{\bb{L}^2}^2
    +
    \frac{1}{16} \norm{\partial_t \bff{u}_n}{\bb{H}^1}^2.
\end{align*}
For the last two terms in~\eqref{equ:I1 to I9}, we apply Young's inequality, \eqref{equ:nab Rv nab w}, and~\eqref{equ:nab Sv nab w} to obtain
\begin{align*}
    \abs{I_9}+\abs{I_{10}}
    \leq
    C \nu_\infty\left(1+\norm{\bff{u}_n}{\bb{H}^1}^4 + \norm{\Delta \bff{u}_n}{\bb{L}^2}^4 \right)
    +
    C \left(\norm{\nabla \bff{u}_n}{\bb{L}^2}^2 + \norm{|\bff{u}_n| |\nabla \bff{u}_n|}{\bb{L}^2}^2 \right)
    +
    \frac{1}{16} \norm{\nabla\partial_t \bff{u}_n}{\bb{L}^2}^2.
\end{align*}
Altogether, the above estimates for $I_j$, where $j=1,2,\ldots,9$, imply
\begin{align}\label{equ:e ddt nab Hn}
    &\varepsilon \ddt \norm{\nabla \bff{H}_n}{\bb{L}^2}^2
    +
    \norm{\partial_t \bff{u}_n}{\bb{H}^1}^2
    \nonumber\\
    &\leq
    C\norm{\partial_t \bff{u}_n}{\bb{L}^2}^2 + C\norm{\bff{u}_n}{\bb{H}^3}^2
    +
    C\norm{\bff{u}_n}{\bb{H}^2}^2 \norm{\bff{u}_n}{\bb{H}^1}^2
    +
    C\norm{\bff{u}_n}{\bb{H}^2}^2 \norm{\bff{H}_n}{\bb{H}^1}^2
    +
    C\left(1+\norm{\bff{u}_n}{\bb{H}^1}^4 \right) \norm{\bff{H}_n}{\bb{H}^2}^2
    \nonumber \\
    &\quad
    +
    C \nu_\infty\left(1+\norm{\bff{u}_n}{\bb{H}^1}^4 + \norm{\Delta \bff{u}_n}{\bb{L}^2}^4 \right)
    +
    C \left(\norm{\nabla \bff{u}_n}{\bb{L}^2}^2 + \norm{|\bff{u}_n| |\nabla \bff{u}_n|}{\bb{L}^2}^2 \right).
\end{align}
Integrating this over $(0,t)$, then applying~\eqref{equ:un L2 exp k lambda}, \eqref{equ:semidisc-est1}, \eqref{equ:int nab Hn L2 t}, \eqref{equ:H L2 dt uh L2}, \eqref{equ:H2 unif un}, and \eqref{equ:Delta un Hn L2}, we obtain
\begin{equation}\label{equ:eps nab Hn dt}
    \varepsilon \norm{\nabla \bff{H}_n}{\bb{L}^2}^2
    +
    \int_0^t \norm{\nabla\partial_t \bff{u}_n(s)}{\bb{L}^2}^2 \ds
    \leq
    C \big(\norm{\bff{u}_0}{\bb{H}^3}\big)  \varepsilon^{-7} \left(1+t^3 \right) ,
\end{equation}
which implies~\eqref{equ:dt nab int L2}, but not~\eqref{equ:varep nab Hn} due to the dependence on $t$.

Next, we show \eqref{equ:rho3 nab Hn} and \eqref{equ:mu3 nab Hn}, then deduce \eqref{equ:varep nab Hn} as a consequence. Using~\eqref{equ:e ddt nab Hn} still, but noting~\eqref{equ:rho 0 sigma 0}, \eqref{equ:rho 1 sigma 1}, \eqref{equ:sigma 2}, and~\eqref{equ:rho 2 sigma 2} this time, we obtain by Theorem~\ref{the:unif gronw 2},
\[
    \norm{\nabla \bff{H}_n(t)}{\bb{L}^2}^2 \leq C\varepsilon^{-8}, \quad \forall t\geq t_3+1,
\]
where $C$ is independent of $\bff{u}_0$, $\varepsilon$, and $t$. We now repeat the usual argument of integrating \eqref{equ:e ddt nab Hn} over $(t,t+1)$ and using the above inequality (as well as~\eqref{equ:rho 0 sigma 0}, \eqref{equ:rho 1 sigma 1}, \eqref{equ:sigma 2}, and \eqref{equ:rho 2 sigma 2} again) to deduce~\eqref{equ:rho3 nab Hn}.
By the same argument, but applying \eqref{equ:int t r Delta un}, \eqref{equ:un H1 int Hn H1}, \eqref{equ:Hn L2 mu}, and~\eqref{equ:Hn H2 mu}, and integrating over $(t,t+\delta)$ instead, we obtain~\eqref{equ:mu3 nab Hn}. 
Finally, \eqref{equ:varep nab Hn} can be shown by an argument similar to the proof of \eqref{equ:unif H2 Hn}. Indeed, by \eqref{equ:mu3 nab Hn} for $\delta=1$, we obtain a bound on $\norm{\nabla \bff{H}_n(t)}{\bb{L}^2}$ for $t\geq 1$ which is independent of $t$. This, together with~\eqref{equ:eps nab Hn dt} for $t\leq 1$, yields~\eqref{equ:varep nab Hn}.
This completes the proof of the proposition.
\end{proof}

The following proposition provides estimates for higher-order Sobolev norms of $\bff{u}_n$ and $\bff{H}_n$.

\begin{proposition}\label{pro:H3 un}
	For any $n\in \bb{N}$ and $t\geq 0$,
	\begin{align}
    \label{equ:H3 un C}
		\norm{\bff{u}_n(t)}{\bb{H}^3}^2
		&\leq
        C \big(\norm{\bff{u}_0}{\bb{H}^3}\big) \varepsilon^{-8},
        \\
    \label{equ:H5 int un}
		\int_0^t \norm{\nabla \Delta \bff{H}_n(s)}{\bb{L}^2}^2 \ds
		+
		\int_0^t \varepsilon^3 \norm{\bff{u}_n(s)}{\bb{H}^5}^2 \ds
		&\leq 
		C \big(\norm{\bff{u}_0}{\bb{H}^3}\big) \varepsilon^{-9}  \left(1+t^3 \right),
	\end{align}
	where $C \big(\norm{\bff{u}_0}{\bb{H}^3}\big)$ is a constant which is independent of $\varepsilon$ and $t$.
	
	Moreover, for all $t\geq t_4$, where $t_4$ is defined in Proposition~\ref{pro:nabla pa t un} and depends on $\norm{\bff{u}_0}{\bb{L}^2}$,
	\begin{align}\label{equ:rho 4}
		\varepsilon^{8} \norm{\bff{u}_n(t)}{\bb{H}^3}^2
		+
		\int_t^{t+1} \varepsilon^{9} \norm{\bff{H}_n(s)}{\bb{H}^3}^2 \ds
		+
		\int_t^{t+1} \varepsilon^{12} \norm{\bff{u}_n(s)}{\bb{H}^5}^2 \ds 
        \leq \rho_5,
	\end{align}
	where $\rho_5$ is independent of $\bff{u}_0$, $\varepsilon$, and $t$.
	
	Finally, for any $\delta>0$, there exists $\mu_5:= \mu_5\big(\norm{\bff{u}_0}{\bb{L}^2}, \delta\big)$ such that for any $t\geq \delta$,
	\begin{equation}\label{equ:mu 4}
	\varepsilon^{8} \norm{\bff{u}_n(t)}{\bb{H}^3}^2
	+
	\int_t^{t+\delta} \varepsilon^{9} \norm{\bff{H}_n(s)}{\bb{H}^3}^2 \ds
	+
	\int_t^{t+\delta} \varepsilon^{12} \norm{\bff{u}_n(s)}{\bb{H}^5}^2 \ds
	\leq 
    \mu_5.
	\end{equation}
\end{proposition}

\begin{proof}
The proof of this proposition follows by similar argument as in Proposition~\ref{pro:Delta un L2}.
Firstly, taking the inner product of the second equation in~\eqref{equ:faedo} with $\Delta^2 \bff{u}_n$ and integrating by parts, we have
\begin{align}\label{equ:norm H3 un2}
    \norm{\nabla \Delta \bff{u}_n}{\bb{L}^2}^2
    &\leq
    \kappa_1 \norm{\Delta \bff{u}_n}{\bb{L}^2}^2
    +
    \Big| \inpro{\nabla \bff{H}_n}{\nabla\Delta \bff{u}_n}_{\bb{L}^2} \Big|
    +
    \kappa_2 \,\Big| \inpro{\nabla (|\bff{u}_n|^2 \bff{u}_n)}{\nabla \Delta \bff{u}_n}_{\bb{L}^2} \Big|
    \nonumber\\
    &\quad
    +
    \lambda_1 \,\Big| \inpro{\bff{e}(\bff{e}\cdot \nabla \bff{u}_n)}{\nabla \Delta \bff{u}_n}_{\bb{L}^2} \Big|
    +
    3\lambda_2 \,\Big| \inpro{(\bff{e}\cdot\bff{u}_n)^2 (\bff{e}\cdot \nabla \bff{u}_n) \bff{e}}{\nabla \Delta \bff{u}_n}_{\bb{L}^2} \Big|
    \nonumber\\
    &\leq
    \kappa_1 \norm{\Delta \bff{u}_n}{\bb{L}^2}^2
    +
    \norm{\nabla \bff{H}_n}{\bb{L}^2} \norm{\nabla \Delta \bff{u}_n}{\bb{L}^2}
    +
    \lambda_1 \norm{\nabla \bff{u}_n}{\bb{L}^2} \norm{\nabla\Delta\bff{u}_n}{\bb{L}^2}
    \nonumber\\
    &\quad
    +
    (\kappa_2+3\lambda_2) \norm{\bff{u}_n}{\bb{L}^6}^2 \norm{\nabla \bff{u}_n}{\bb{L}^6} \norm{\nabla \Delta \bff{u}_n}{\bb{L}^2}
    \nonumber\\
    &\leq
    C\left(1+\norm{\bff{u}_n}{\bb{H}^1}^4\right) \norm{\bff{u}_n}{\bb{H}^2}^2
    +
    C\norm{\nabla \bff{H}_n}{\bb{L}^2}^2
    +
    \frac12 \norm{\nabla \Delta \bff{u}_n}{\bb{L}^2}^2
\end{align}
by the same argument as in~\eqref{equ:Delta un L2 2}. By \eqref{equ:semidisc-est1}, \eqref{equ:H2 unif un} and \eqref{equ:varep nab Hn}, we have~\eqref{equ:H3 un C}.

Furthermore, taking the inner product of the first equation in~\eqref{equ:faedo} with $\Delta^2 \bff{H}_n$ and integrating by parts, we obtain
\begin{align}\label{equ:nabdel Hn L2}
		\nonumber
		&\varepsilon \norm{\nabla \Delta \bff{H}_n}{\bb{L}^2}^2 
		+
		\sigma \norm{\Delta \bff{H}_n}{\bb{L}^2}^2
		\\
		&=
		\inpro{\nabla\partial_t \bff{u}_n}{\nabla\Delta \bff{H}_n}_{\bb{L}^2}
		-
		\sigma \inpro{\nabla\Phi_{\mathrm{d}}(\bff{u}_n)}{\nabla\Delta \bff{H}_n}_{\bb{L}^2}
		+
		\varepsilon \inpro{\nabla\Delta\Phi_{\mathrm{d}}(\bff{u}_n)}{\nabla\Delta \bff{H}_n}_{\bb{L}^2}
		\nonumber \\
		&\quad
		+
		\gamma \inpro{\nabla\big(\bff{u}_n\times \bff{H}_n\big)}{\nabla\Delta \bff{H}_n}_{\bb{L}^2}
		+
		\gamma \inpro{\nabla\big(\bff{u}_n\times \Phi_{\mathrm{d}}(\bff{u}_n)\big)}{\nabla\Delta \bff{H}_n}_{\bb{L}^2}
        \nonumber\\
        &\quad
		-
		\inpro{\nabla\mathcal{R}(\bff{u}_n)}{\nabla\Delta \bff{H}_n}_{\bb{L}^2}
		-
		\inpro{\nabla\mathcal{S}(\bff{u}_n)}{\nabla\Delta \bff{H}_n}_{\bb{L}^2}
		\nonumber \\
		&= I_1+I_2+\cdots+I_7.
	\end{align}
	For the terms $I_1$, $I_2$, and $I_3$, by Young's inequality and~\eqref{equ:Phi d O est} we immediately obtain
	\begin{align*}
		\abs{I_1}+\abs{I_2}+\abs{I_3}
		&\leq
		C\varepsilon^{-1} \norm{\nabla\partial_t \bff{u}_n}{\bb{L}^2}^2
		+
        C\varepsilon^{-1} \norm{\bff{u}_n}{\bb{H}^1}^2
        +
		C\varepsilon \norm{\nabla \Delta \bff{u}_n}{\bb{L}^2}^2
        +
		\frac{\varepsilon}{8} \norm{\nabla\Delta \bff{H}_n}{\bb{L}^2}^2.
	\end{align*}
	For the terms $I_4$ and $I_5$, by H\"older's and Young's inequalities  and the Sobolev embedding (noting~\eqref{equ:Phi d O est} again), we have
	\begin{align*}
		\abs{I_4}
		&\leq
		C \big(\norm{\nabla\bff{u}_n}{\bb{L}^4} \norm{\bff{H}_n}{\bb{L}^4}
        +
        \norm{\bff{u}_n}{\bb{L}^\infty} \norm{\nabla\bff{H}_n}{\bb{L}^2}\big)
		\norm{\nabla\Delta \bff{H}_n}{\bb{L}^2}
        \\
		&\leq
		C\varepsilon^{-1} \norm{\bff{u}_n}{\bb{H}^2}^2 \norm{\bff{H}_n}{\bb{H}^1}^2
		+
		\frac{\varepsilon}{8} \norm{\nabla\Delta \bff{H}_n}{\bb{L}^2}^2,
		\\
		\abs{I_5}
		&\leq
		C \norm{\bff{u}_n}{\bb{L}^4} \norm{\bff{u}_n}{\bb{W}^{1,4}}  \norm{\nabla\Delta \bff{H}_n}{\bb{L}^2}
		\leq
		C \varepsilon^{-1} \norm{\bff{u}_n}{\bb{H}^2}^4 
		+
		\frac{\varepsilon}{8} \norm{\nabla\Delta \bff{H}_n}{\bb{L}^2}^2.
	\end{align*}
	For the last two terms on the right-hand side of~\eqref{equ:nabdel Hn L2}, we use~\eqref{equ:nab Rv nab w} and~\eqref{equ:nab Sv nab w} respectively, to obtain
	\begin{align*}
		\abs{I_6}+ \abs{I_7}
		&\leq
		C\varepsilon^{-1}
		\left( 1+\norm{\bff{u}_n}{\bb{H}^2}^4\right)
		+
		C\varepsilon^{-1} \left( \norm{\nabla \bff{u}_n}{\bb{L}^2}^2
		+
		\norm{|\bff{u}_n|\, |\nabla \bff{u}_n|}{\bb{L}^2}^2 \right)
		+
		\frac{\varepsilon}{8} \norm{\nabla\Delta \bff{H}_n}{\bb{L}^2}^2.
	\end{align*}
	Altogether, substituting these estimates into~\eqref{equ:nabdel Hn L2} and rearranging the terms, we have 
    \begin{align}\label{equ:varep Hn}
        \varepsilon \norm{\nabla\Delta \bff{H}_n}{\bb{L}^2}^2
        +
        \sigma\norm{\Delta \bff{H}_n}{\bb{L}^2}^2
        &\leq
        C\varepsilon^{-1} \norm{\nabla\partial_t \bff{u}_n}{\bb{L}^2}^2
        +
        C\varepsilon^{-1} \left(1+\norm{\bff{u}_n}{\bb{H}^2}^4\right)
        +
        C\varepsilon^{-1} \norm{\bff{u}_n}{\bb{H}^2}^2 \norm{\bff{H}_n}{\bb{H}^1}^2
        \nonumber\\
        &\quad
        +
        C\varepsilon \norm{\nabla \Delta\bff{u}_n}{\bb{L}^2}^2
        +
        C\varepsilon^{-1} \norm{|\bff{u}_n|\, |\nabla \bff{u}_n|}{\bb{L}^2}^2.
    \end{align}
    Integrating both sides with respect to $t$ (noting \eqref{equ:semidisc-est1}, \eqref{equ:int nab Hn L2 t}, \eqref{equ:H L2 dt uh L2}, \eqref{equ:H2 unif un}, and \eqref{equ:dt nab int L2}), we deduce that
	\begin{align}\label{equ:int nabdel Hn}
		\int_0^t \norm{\nabla\Delta \bff{H}_n(s)}{\bb{L}^2}^2 \ds
		\leq 
		C \big(\norm{\bff{u}_0}{\bb{H}^3}\big) \varepsilon^{-9} \left(1+t^3 \right).
	\end{align}
	Next, applying the operator $\Delta^2$ to the second equation in~\eqref{equ:faedo}, taking the inner product with $\Delta^2 \bff{u}_n$, and using integration by parts, we obtain by Young's inequality, \eqref{equ:nab un2}, and \eqref{equ:prod Hs mat dot},
    \begin{align}\label{equ:Delta22 un L2 2}
		\nonumber
		\norm{\nabla \Delta^2 \bff{u}_n}{\bb{L}^2}^2
		&=
		\kappa_1 \norm{\Delta^2 \bff{u}_n}{\bb{L}^2}^2
		+
		\inpro{\nabla\Delta\bff{H}_n}{\nabla\Delta^2 \bff{u}_n}_{\bb{L}^2}
		+
		\kappa_2 \inpro{\nabla\Delta(|\bff{u}_n|^2 \bff{u}_n)}{\nabla\Delta^2 \bff{u}_n}_{\bb{L}^2}
		\\
		&\quad
		+
		\lambda_1 \norm{\bff{e}\cdot \Delta^2 \bff{u}_n}{\bb{L}^2}^2
		+
		\lambda_2 \inpro{\nabla\Delta\big((\bff{e}\cdot \bff{u}_n)^3 \bff{e}\big)}{\nabla\Delta^2 \bff{u}_n}_{\bb{L}^2}
		\nonumber\\
		&\leq
		(\kappa_1+\lambda_1) \norm{\Delta^2 \bff{u}_n}{\bb{L}^2}^2
		+
		C\norm{\nabla\Delta \bff{H}_n}{\bb{L}^2}^2
		+
		C \norm{\bff{u}_n}{\bb{H}^2}^4 \norm{\nabla\bff{u}_n}{\bb{H}^2}^2
		+
		\frac{1}{4} \norm{\nabla \Delta^2 \bff{u}_n}{\bb{L}^2}^2
        \nonumber\\
        &\leq
        C\norm{\Delta^2 \bff{u}_n}{\bb{L}^2}^2
        +
		C\norm{\nabla\Delta \bff{H}_n}{\bb{L}^2}^2
		+
		C\varepsilon^{-8}\norm{\bff{u}_n}{\bb{H}^3}^2
		+
		\frac{1}{4} \norm{\nabla \Delta^2 \bff{u}_n}{\bb{L}^2}^2,
	\end{align}
    where in the last step we used~\eqref{equ:H2 unif un} (in which case, $C=C(\norm{\bff{u}_0}{\bb{H}^2})$) or \eqref{equ:rho 2 sigma 2} (in which case $C$ is independent of $\bff{u}_0$) or \eqref{equ:Hn H2 mu} (in which case, $C=C(\norm{\bff{u}_0}{\bb{L}^2})$).
    Rearranging and integrating over $(0,t)$, then using \eqref{equ:Delta un Hn L2}, \eqref{equ:int nab Hn L2 t}, and \eqref{equ:int nabdel Hn}, we deduce that
	\begin{equation*}
		\int_0^t \norm{\nabla\Delta^2 \bff{u}_n(s)}{\bb{L}^2}^2 \ds \leq 
        C \big(\norm{\bff{u}_0}{\bb{H}^3}\big) \varepsilon^{-12} \left(1+t^3 \right).
	\end{equation*}
	This, together with \eqref{equ:int nabdel Hn} and \eqref{equ:Delta un Hn L2}, implies~\eqref{equ:H5 int un}.

    To prove~\eqref{equ:rho 4}, we first recall that $t_4=t_3+1=t_2+2=t_1+3$. Rearranging the terms in~\eqref{equ:norm H3 un2}, applying \eqref{equ:rho 1 sigma 1}, \eqref{equ:rho 2 sigma 2}, and \eqref{equ:rho3 nab Hn}, we have that for all $t\geq t_4$,
    \begin{align}\label{equ:varep8 un H3}
        \norm{\bff{u}_n(t)}{\bb{H}^3}^2 \leq
        C \varepsilon^{-6} \rho_1^2 \rho_3 + C\varepsilon^{-8} \rho_4
        \leq
        C\varepsilon^{-8}.
    \end{align}
    Integrating \eqref{equ:varep Hn} over $(t,t+1)$, using~\eqref{equ:rho 0 sigma 0}, \eqref{equ:rho 1 sigma 1}, \eqref{equ:rho 2 sigma 2}, and \eqref{equ:rho3 nab Hn}, we have for all $t\geq t_4$,
    \begin{align}\label{equ:rho tilde}
        \int_t^{t+1} \varepsilon^9 \norm{\bff{H}_n(s)}{\bb{H}^3}^2 \ds
        \leq
        C\left(\rho_4+\rho_3 +\varepsilon^2 \rho_3\rho_0+\varepsilon^5 \rho_1+\varepsilon^6 \rho_0 \right) \leq C.
    \end{align}
    Similarly, integrating \eqref{equ:Delta22 un L2 2} over $(t,t+1)$, we obtain for all $t\geq t_4$
    \begin{align}\label{equ:rho vareps11}
        \int_t^{t+1} \varepsilon^{12} \norm{\nabla\Delta^2 \bff{u}_n (s)}{\bb{L}^2}^2 \ds
        \leq
        C\left( \varepsilon^6 \rho_3 + \rho_3^2 \rho_1 \right)
        \leq C.
    \end{align}
Altogether, \eqref{equ:varep8 un H3}, \eqref{equ:rho tilde}, and \eqref{equ:rho vareps11} yield \eqref{equ:rho 4} for $t\geq t_4$.

Finally, the proof of~\eqref{equ:mu 4} follows along the same line, where we integrate \eqref{equ:varep Hn} and \eqref{equ:Delta22 un L2 2} over $(t,t+\delta)$, then use \eqref{equ:int t r Delta un}, \eqref{equ:un H1 int Hn H1}, \eqref{equ:Hn H2 mu}, and \eqref{equ:mu3 nab Hn} instead. This completes the proof of the proposition.
\end{proof}

Similarly to Proposition~\ref{pro:H2 H1 smooth}, we now prove in the following proposition an estimate for $\norm{\bff{H}_n}{\bb{H}^1}$ and $\norm{\bff{u}_n}{\bb{H}^3}$ with a constant depending only on $\norm{\bff{u}_0}{\bb{H}^2}$, but at the expense of some dependency on $t$.

\begin{proposition}\label{pro:Hn H1}
There exists a constant $M_3:=M_3\big(\norm{\bff{u}_0}{\bb{H}^2}\big)$ such that for all $t>0$,
\begin{align*}
    \norm{\bff{H}_n(t)}{\bb{H}^1}^2
    +
    \norm{\bff{u}_n(t)}{\bb{H}^3}^2
    &\leq
    M_3 \varepsilon^{-10} (1+t^3+t^{-1}).
\end{align*}
\end{proposition}

\begin{proof}
From~\eqref{equ:e ddt nab Hn}, noting~\eqref{equ:semidisc-est1}, \eqref{equ:H2 unif un}, and \eqref{equ:unif H2 Hn}, we have
\begin{align*}
    &\varepsilon \ddt \norm{\nabla \bff{H}_n}{\bb{L}^2}^2
    +
    \norm{\partial_t \bff{u}_n}{\bb{H}^1}^2
    \\
    &\leq
    C\norm{\partial_t \bff{u}_n}{\bb{L}^2}^2 + C\norm{\bff{u}_n}{\bb{H}^3}^2
    +
    C\norm{\bff{u}_n}{\bb{H}^2}^2 \norm{\bff{u}_n}{\bb{H}^1}^2
    +
    C\norm{\bff{u}_n}{\bb{H}^2}^2 \norm{\bff{H}_n}{\bb{H}^1}^2
    +
    C\left(1+\norm{\bff{u}_n}{\bb{H}^1}^4 \right) \norm{\bff{H}_n}{\bb{H}^2}^2
    \nonumber \\
    &\quad
    +
    C \nu_\infty\left(1+\norm{\bff{u}_n}{\bb{H}^1}^4 + \norm{\Delta \bff{u}_n}{\bb{L}^2}^4 \right)
    +
    C \left(\norm{\nabla \bff{u}_n}{\bb{L}^2}^2 + \norm{|\bff{u}_n| |\nabla \bff{u}_n|}{\bb{L}^2}^2 \right)
    \\
    &\leq
    C\norm{\partial_t \bff{u}_n}{\bb{L}^2}^2 + C\norm{\bff{u}_n}{\bb{H}^3}^2
    +
    C\varepsilon^{-4} \norm{\bff{H}_n}{\bb{H}^1}^2 
    +
    C\left(1+\varepsilon^{-2} \right) \norm{\bff{H}_n}{\bb{H}^2}^2
    +
    C \left(1+  \varepsilon^{-8} \right),
\end{align*}
where $C$ depends on $\norm{\bff{u}_0}{\bb{H}^2}$ (but not on $\norm{\bff{u}_0}{\bb{H}^3}$).
Now, by using~\eqref{equ:int t r Delta un}, \eqref{equ:un H1 int Hn H1}, \eqref{equ:Hn L2 mu}, and \eqref{equ:Hn H2 mu} with $\delta=1$, as well as~\eqref{equ:un L2 exp k lambda}, \eqref{equ:int nab Hn L2 t}, \eqref{equ:H L2 dt uh L2}, and \eqref{equ:Delta un Hn L2} for $t\leq 1$, we obtain
\begin{equation*}
    \int_t^{t+1} \norm{\partial_t \bff{u}_n(s)}{\bb{L}^2}^2 \ds  
    + 
    \int_t^{t+1} \norm{\bff{u}_n(s)}{\bb{H}^3}^2 \ds 
    +
    \int_t^{t+1} \norm{\bff{H}_n(s)}{\bb{H}^2}^2 \ds
    \leq C \varepsilon^{-5}, \quad\forall t\geq 0.
\end{equation*}
Furthermore, by~\eqref{equ:Delta un Hn L2}, for all $t\geq 0$,
\begin{equation*}
    \int_0^t \left(\norm{\partial_t \bff{u}_n(s)}{\bb{L}^2}^2 
    +
    \norm{\bff{u}_n(s)}{\bb{H}^3}^2
    +
    \varepsilon^{-4} \norm{\bff{H}_n(s)}{\bb{H}^1}^2
    +
    \varepsilon^{-2} \norm{\bff{H}_n(s)}{\bb{H}^2}^2
    +
    \varepsilon^{-8}\right)
     \ds 
     \leq 
     C\varepsilon^{-9} (1+t^3),
\end{equation*}
where $C$ depends on $\norm{\bff{u}_0}{\bb{H}^2}$.
The required estimate then follows from Corollary~\ref{cor:unif gron}, noting \eqref{equ:norm H3 un2} and \eqref{equ:smooth H1 H2}.
\end{proof}

We are now ready to prove the existence and uniqueness of solution to~\eqref{equ:llbar}.

\begin{theorem}\label{the:sol}
Let $\bff{u}_0\in\bb{H}^1$ be a given initial data. There exists a unique global weak solution $\bff{u}$ to~\eqref{equ:llbar} in the sense of Definition~\ref{def:weakform}. This solution satisfies
\begin{align}\label{equ:u H H2}
    \norm{\bff{u}(t)}{\bb{H}^2}^2
    +
    \norm{\bff{H}(t)}{\bb{L}^2}^2
    \leq
    C\big( \norm{\bff{u}_0}{\bb{H}^1}\big) (1+t^3+t^{-1} ).
\end{align}
If $\bff{u}_0\in \bb{H}^2$, then this solution is a strong solution satisfying
\begin{align}\label{equ:u H H3}
    \norm{\bff{u}(t)}{\bb{H}^3}^2
    +
    \norm{\bff{H}(t)}{\bb{H}^1}^2
    \leq
    C\big( \norm{\bff{u}_0}{\bb{H}^2}\big) (1+t^3+t^{-1} ).
\end{align}

Furthermore, if $\bff{u}_0\in \bb{H}^3$, then
\begin{align}\label{equ:u0 H3}
\bff{u} \in H^1(0,T;\bb{H}^1)\cap C([0,T];\bb{H}^3) \cap L^2(0,T;\bb{H}^5).
\end{align}
\end{theorem}

\begin{proof}
Note that in this theorem, $\varepsilon$ is fixed; thus we will not keep track of the dependence of the constants on $\varepsilon$.
Assume that $\bff{u}_0\in\bb{H}^1$. It follows from Proposition~\ref{pro:un L2}, \ref{pro:un H1}, and \ref{pro:H2 H1 smooth} that for any $t\in [0,T]$,
\begin{align}
\label{equ:unHc}
    \norm{\bff{u}_n(t)}{\bb{H}^1}^2
    +
    \int_0^t \norm{\bff{u}_n(s)}{\bb{H}^3}^2 \ds
    +
    \int_0^t \norm{\bff{H}_n(s)}{\bb{H}^1}^2 \ds
    &\leq C,
    \\
\label{equ:unHM}
    \norm{\bff{u}_n(t)}{\bb{H}^2}^2
    +
    \norm{\bff{H}_n(t)}{\bb{L}^2}^2
    &\leq
    M_2(1+t^3+t^{-1}),
\end{align}

From \eqref{equ:faedo}, we have by H\"older's inequality,
    \begin{align*}
        \norm{\partial_t \bff{u}_n}{\widetilde{\bb{H}}^{-1}}^2
        &= 
        \sup_{\norm{\bff{\chi}}{\bb{H}^1}\leq 1} \big| \inpro{\partial_t \bff{u}_n}{\bff{\chi}}_{\bb{L}^2} \big|^2
        \\
        &\leq 
        \sigma \norm{\bff{H}_n}{\bb{L}^2}^2
        +
        \sigma \norm{\Phi_{\mathrm{d}}(\bff{u}_n)}{\bb{L}^2}^2
        +
        \varepsilon \norm{\nabla\bff{H}_n}{\bb{L}^2}^2
        +
        \varepsilon \norm{\Delta\Phi_{\mathrm{d}}(\bff{u}_n)}{\bb{L}^2}^2
        \\
        &\quad
        +
        \gamma \norm{\bff{u}_n}{\bb{L}^4}^2 \left(\norm{\bff{H}_n}{\bb{L}^4}^2 + \norm{\Phi_{\mathrm{d}}(\bff{u}_n)}{\bb{L}^4}^2 \right)
        +
        \norm{\mathcal{R}(\bff{u}_n)}{\bb{L}^2}^2
        +
        \norm{\mathcal{S}(\bff{u}_n)}{\bb{L}^2}^2
        \\
        &\leq
        C \norm{\bff{H}_n}{\bb{L}^2}^2 + C \norm{\bff{u}_n}{\bb{L}^2}^2
        +
        C\norm{\nabla\bff{H}_n}{\bb{L}^2}^2
        +
        C\norm{\bff{u}_n}{\bb{H}^2}^2
        \\
        &\quad
        +
        C\norm{\bff{u}_n}{\bb{H}^1}^2 \left(\norm{\bff{H}_n}{\bb{H}^1}^2 + \norm{\bff{u}_n}{\bb{H}^1}^2 \right)
        +
        C \left(1+\norm{\bff{u}_n}{\bb{H}^1}^2 \right) \norm{\Delta \bff{u}_n}{\bb{L}^2}^2
        +
        C\norm{\bff{u}_n}{\bb{L}^4}^4
        \\
        &\leq
        C + C\norm{\bff{H}_n}{\bb{H}^1}^2 + C\norm{\bff{u}_n}{\bb{H}^2}^2,
    \end{align*}
    where in the last step we used \eqref{equ:S u}, \eqref{equ:RvL2}, and \eqref{equ:Phi d Wkp}. Integrating this over $(0,t)$, noting~\eqref{equ:unHc}, we have
    \[
    \int_0^t \norm{\partial_t \bff{u}_n(s)}{\widetilde{\bb{H}}^{-1}}^2 \ds \leq C.
    \]
The existence of a global weak solution follows from a standard compactness argument and the Aubin--Lions lemma (cf. \cite[Theorem~2.2]{SoeTra23}). Estimate~\eqref{equ:u H H2} follows from~\eqref{equ:unHM}.

If $\bff{u}_0\in \bb{H}^2$, then it follows from Proposition~\ref{pro:H L2 dt uh L2}, \ref{pro:Delta un L2}, and \ref{pro:Hn H1} that for any $t\in [0,T]$,
\begin{align}
\label{equ:unHc 2}
    \norm{\bff{u}_n(t)}{\bb{H}^2}^2
    +
    \int_0^t \norm{\bff{u}_n(s)}{\bb{H}^4}^2 \ds
    +
    \int_0^t \norm{\bff{H}_n(s)}{\bb{H}^2}^2 \ds
    +
    \int_0^t \norm{\partial_t \bff{u}_n(s)}{\bb{L}^2}^2 \ds
    &\leq C,
    \\
\label{equ:unHM 2}
    \norm{\bff{u}_n(t)}{\bb{H}^3}^2
    +
    \norm{\bff{H}_n(t)}{\bb{H}^1}^2
    &\leq
    M_2(1+t^3+t^{-1}).
\end{align}
The weak solution $\bff{u}$ derived above becomes a strong solution due to \eqref{equ:unHc 2}. Inequality \eqref{equ:u H H3} follows from \eqref{equ:unHM 2}.

Finally, if $\bff{u}_0\in \bb{H}^3$, then by Proposition~\ref{pro:nabla pa t un} and \ref{pro:H3 un},
\begin{align}
\label{equ:unHc 3}
    \norm{\bff{u}_n(t)}{\bb{H}^3}^2
    +
    \int_0^t \norm{\bff{u}_n(s)}{\bb{H}^5}^2 \ds
    +
    \int_0^t \norm{\bff{H}_n(s)}{\bb{H}^3}^2 \ds
    +
    \int_0^t \norm{\partial_t \bff{u}_n(s)}{\bb{H}^1}^2 \ds
    &\leq C.
\end{align}
Property \eqref{equ:u0 H3} now follows from \eqref{equ:unHc 3} and the Aubin--Lions lemma. 
\end{proof}

\section{Global attractor}

\subsection{General theory of global attractor}

First, we recall some basic facts and terminologies in the theory of dynamical systems~\cite{MirZel08, Rob01}. A well-posed system of time-dependent PDEs on a Banach space $(X, \norm{\cdot}{X})$ generates a strongly continuous (nonlinear) semigroup
\begin{align*}
	S(t): X \to X, \quad S(t)u_0 = u(t) \quad \text{for } t\geq 0.
\end{align*}
Therefore, $\big(X, \{S(t)\}_{t\geq 0} \big)$ is a semi-dynamical system. 

\begin{definition}[Global attractor]\label{def:glob att}
A subset $\mathcal{A}\subset X$ is a (compact) \emph{global attractor} for $S(t)$ if
\begin{enumerate}
	\item it is compact in $X$,
	\item it is invariant, i.e. $S(t) \mathcal{A}= \mathcal{A}$, $\forall t\geq 0$,
	\item for any bounded set $B\subset X$,
	\[
		\lim_{t\to +\infty} \text{dist}\big(S(t)B, \mathcal{A}\big)=0,
	\]
	where dist denotes the Hausdorff semi-metric between sets defined by
	\[
		\text{dist}(A,B):= \sup_{a\in A} \inf_{b\in B} \norm{a-b}{X}.
	\]
\end{enumerate}
\end{definition}
Note that the global attractor, if it exists, is unique. Next, a bounded set $B_0\subset X$ is a bounded absorbing set for $S(t)$ if, for any bounded set $B\subset X$, there exists $t_0:= t_0(B)$ such that $S(t)B \subset B_0$ for all $t\geq t_0$. The semigroup $S(t)$ is said to be dissipative in $X$ if it possesses a bounded absorbing set $B_0\subset X$. Moreover, the $\omega$-limit set of a set $B$ is defined as
\begin{equation*}
	\omega(B):= 
	\left\{ y\in X: \exists t_n \to +\infty \text{ and } x_n\in B \text{ such that } S(t_n)x_n \to y \right\}
	=
	\overline{\bigcap_{t\geq 0} \bigcup_{s\geq t} S(s)B}.
\end{equation*}
If $B=\{v\}$ is a singleton, then we write $\omega(v)$ in lieu of~$\omega(\{v\})$.
For any $u_0\in X$, it can be seen that~$\text{d}\big(u(t), \omega(u_0)\big) \to 0$ as~$t\to +\infty$, where~$\text{d}(u(t),B):= \inf_{\varphi\in B} \norm{u(t)-\varphi}{X}$.
The following abstract theorem shows a relation between the existence of a compact absorbing set and the global attractor.

\begin{theorem}\label{the:global attr}
If $S(t)$ is a dissipative semigroup on $X$ which has a compact absorbing set $K$, then there exists a connected global attractor $\mathcal{A}=\omega(K)$.
\end{theorem}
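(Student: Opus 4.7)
The plan is to verify the three defining properties of a global attractor for $\mathcal{A} := \omega(K)$ and then establish connectedness separately. Dissipativity together with the compactness of the absorbing set $K$ does most of the work, reducing everything to topological considerations inside the compact metric space $K$. First I would observe that since $K$ is absorbing for itself (being bounded), there exists $t^{*}\geq 0$ with $S(t) K \subset K$ for all $t\geq t^{*}$. The family $\{\overline{\bigcup_{s\geq t} S(s) K}\}_{t\geq t^{*}}$ is then a nested decreasing family of nonempty closed subsets of the compact set $K$, so $\mathcal{A}=\omega(K)$ is a nonempty compact subset of $K$.

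For invariance $S(t)\mathcal{A} = \mathcal{A}$, the inclusion $S(t)\mathcal{A}\subset\mathcal{A}$ follows at once from the continuity of $S(t)$: if $y = \lim_{n} S(t_n) x_n$ with $t_n\to\infty$, $x_n\in K$, then $S(t) y = \lim_{n} S(t+t_n) x_n\in \mathcal{A}$. The reverse inclusion $\mathcal{A}\subset S(t)\mathcal{A}$ is the subtle point I expect to be the main obstacle: given such a $y$, the sequence $z_n:=S(t_n - t) x_n$ lies in $K$ for large $n$ (absorbing property), hence admits a convergent subsequence $z_{n_k}\to z$, with $z\in\omega(K)=\mathcal{A}$; continuity of $S(t)$ then gives $S(t) z = \lim_k S(t_{n_k}) x_{n_k} = y$. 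It is exactly here that the compactness of $K$ is used in an essential way.

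For attractivity I would argue by contradiction. Suppose for some bounded $B\subset X$ there exist $\delta>0$, $t_n\to\infty$, and $b_n\in B$ with $\mathrm{d}(S(t_n) b_n, \mathcal{A})\geq \delta$. Since $K$ absorbs $B$, there is $t_B$ with $S(t_B) b_n\in K$, so
\begin{equation*}
S(t_n) b_n \;=\; S(t_n - t_B)\bigl(S(t_B) b_n\bigr), \qquad t_n - t_B\to\infty.
\end{equation*}
Compactness of $K$ provides a convergent subsequence, whose limit lies in $\omega(K)=\mathcal{A}$ by definition, contradicting the lower bound on the distance.

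For connectedness I would replace $K$ by a closed bounded \emph{connected} absorbing set, e.g.\ a closed ball $B_R\supset K$; the uniqueness of the global attractor (which follows from the first three properties already established) yields $\omega(B_R)=\mathcal{A}$. Each $S(s) B_R$ is then connected by continuity of $S(s)$, so $\bigcup_{s\geq t} S(s) B_R$ is connected (the images are nested in a single connected family parametrised continuously in $s$), its closure is connected, and the nested intersection of these compact connected subsets of $K$ is connected by a standard topological fact, giving connectedness of $\mathcal{A}$. No substantive calculation is needed; the only genuinely delicate ingredient, as flagged above, is the reverse invariance inclusion.
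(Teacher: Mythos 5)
The paper does not actually prove Theorem~\ref{the:global attr}: it is quoted as a classical fact from the dynamical-systems literature cited at the start of that subsection, so there is no in-paper argument to compare against; your proposal must stand on its own, and in substance it is the standard textbook proof and is correct in its main steps. Nonemptiness and compactness of $\omega(K)$ via the nested intersection inside the compact set $K$, forward invariance $S(t)\mathcal{A}\subset\mathcal{A}$ by continuity of $S(t)$, the reverse inclusion by extracting a convergent subsequence of $z_n=S(t_n-t)x_n$ inside $K$, and attraction by contradiction (note only that the subsequence extraction is legitimate because $S(t_n)b_n\in K$ once $t_n\ge t_B$, by absorption) are all sound. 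Two points in the connectedness paragraph, however, need repair. First, your justification that $\bigcup_{s\geq t}S(s)B_R$ is connected because ``the images are nested'' is not correct: the sets $S(s)B_R$ need not be nested. The right reason is continuity in time: for $s_1\le s_2$ and $x\in B_R$ the arc $\{S(\tau)x:\tau\in[s_1,s_2]\}$ is connected, meets the connected set $S(s_1)B_R$ at $S(s_1)x$ and ends at $S(s_2)x$, so any two points of the union are joined through $S(s_1)B_R$ together with such arcs; equivalently, if $(t,x)\mapsto S(t)x$ is jointly continuous the union is the continuous image of the connected set $[t,\infty)\times B_R$. Second, deducing $\omega(B_R)=\mathcal{A}$ from ``uniqueness of the global attractor'' presupposes that $\omega(B_R)$ is itself a global attractor, which you have not established ($B_R$ is not compact, so your earlier construction does not apply to it verbatim). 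It is cleaner to prove the identity directly: $K\subset B_R$ gives $\omega(K)\subset\omega(B_R)$, while conversely any $y=\lim_n S(t_n)x_n$ with $x_n\in B_R$, $t_n\to\infty$, can be rewritten for $t_n\ge t_{B_R}$ as $y=\lim_n S(t_n-t_{B_R})\bigl(S(t_{B_R})x_n\bigr)$ with $S(t_{B_R})x_n\in K$, whence $y\in\omega(K)$. With these two repairs the rest of your connectedness argument (for $t$ large the closures lie in $K$, and a nested intersection of compact connected sets is connected) goes through, and the proof is complete.
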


If the semigroup admits a global Lyapunov function, then more regular structures on the global attractor can be deduced. We mention the following results from \cite{MirZel08}.

\begin{definition}[Global Lyapunov function]\label{def:Lyapunov}
Let $E$ be a subset of $X$ and $\mathcal{L}:E\to \bb{R}$ be a continuous function. The function $\mathcal{L}$ is a \emph{global Lyapunov function} for $S(t)$ on $E$ if
\begin{enumerate}
	\item for all $u_0\in E$, the function $t\mapsto \mathcal{L}(S(t)u_0)$ is non-increasing,
	\item if $\mathcal{L}(S(t)u_0)= \mathcal{L}(u_0)$ for some $t>0$, then $u_0$ is a fixed point of $S(t)$.
\end{enumerate}
\end{definition}
In particular, the second condition above implies that the system can have no periodic orbits.
Next, denote the set of fixed points of $S(t)$ by $\mathcal{N}$, and define the unstable set of $B$ to be the set
\begin{equation}\label{equ:unstable set}
	\mathcal{M}^{\rm{un}}(B):=
	\left\{u_0 \in B: S(t)u_0 \text{ is defined for all $t\in \bb{R}$ and} \lim_{t\to -\infty} \text{d}(u(t),B)=0 \right\}.
\end{equation}
Note that if the semigroup $S(t)$ is injective, then $S(t)u_0$ is defined for all $t\in \bb{R}$, i.e. $S(t)$ defines a dynamical system. In this case, the first condition in~\eqref{equ:unstable set} is redundant. 
The following result shows that if the semigroup possesses a global Lyapunov function, then the only possible limit set of individual trajectories are the fixed points.

\begin{proposition}\label{pro:att lyapunov}
Let $S(t)$ be a semigroup with global attractor $\mathcal{A}$, which admits a global Lyapunov function on $E$. Then
\begin{enumerate}
	\item $\omega(u_0) \subset \mathcal{N}$ for every $u_0 \in X$ (i.e. $\text{d}(u(t), \mathcal{N}) \to 0$ as $t\to +\infty$),
	\item $\mathcal{A}= \mathcal{M}^{\rm{un}}(\mathcal{N})$.
\end{enumerate}
\end{proposition}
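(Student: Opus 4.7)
The plan is to prove each assertion separately, relying on the monotonicity of $\mathcal{L}$ along orbits together with continuity of $\mathcal{L}$ and of the semigroup.

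For (1), I would fix $y \in \omega(u_0)$ and pick $t_n \to +\infty$ with $S(t_n) u_0 \to y$. Since the trajectory is attracted to the compact set $\mathcal{A}$ and $\mathcal{L}$ is bounded on a neighbourhood of $\mathcal{A}$, the non-increasing function $t \mapsto \mathcal{L}(S(t) u_0)$ is bounded below and converges to some limit $L \in \bb{R}$. Continuity of $\mathcal{L}$ gives $\mathcal{L}(y) = L$, and the semigroup property together with continuity of $S(s)$ yield $S(s) y = \lim_n S(s + t_n) u_0$ for each $s \geq 0$, whence $\mathcal{L}(S(s) y) = L = \mathcal{L}(y)$. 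The second Lyapunov property in Definition~\ref{def:Lyapunov} then forces $y$ to be a fixed point, i.e.\ $y \in \mathcal{N}$. The stated Hausdorff convergence $\text{d}(S(t) u_0, \mathcal{N}) \to 0$ follows by a standard compactness argument: any subsequence along the trajectory has a further subsequence converging into $\omega(u_0) \subset \mathcal{N}$.

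For (2), I would show both inclusions. Given $v \in \mathcal{A}$, the invariance $S(t) \mathcal{A} = \mathcal{A}$ supplies a complete orbit $\xi : \bb{R} \to \mathcal{A}$ with $\xi(0) = v$, and its $\alpha$-limit set $\alpha(v) := \bigcap_{t \leq 0} \overline{\{\xi(s): s \leq t\}}$ is nonempty, with $\text{d}(\xi(t), \alpha(v)) \to 0$ as $t \to -\infty$ by compactness of $\mathcal{A}$. A symmetric version of the argument in (1) then shows $\alpha(v) \subset \mathcal{N}$: the map $\mathcal{L} \circ \xi$ is non-increasing on $\bb{R}$ and bounded on $\mathcal{A}$, so it tends to some $L'$ as $t \to -\infty$; for any cluster point $z = \lim_n \xi(s_n)$ with $s_n \to -\infty$ one has $\mathcal{L}(z) = L'$ and, via $S(r) \xi(s_n) = \xi(s_n + r)$ with $s_n + r \to -\infty$, also $\mathcal{L}(S(r) z) = L' = \mathcal{L}(z)$ for every $r \geq 0$, forcing $z \in \mathcal{N}$. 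Hence $v \in \mathcal{M}^{\rm{un}}(\mathcal{N})$. Conversely, if $u_0 \in \mathcal{M}^{\rm{un}}(\mathcal{N})$, then the complete orbit through $u_0$ is bounded (its backward tail approaches the compact set $\mathcal{N} \subset \mathcal{A}$ and its forward tail is bounded by dissipativity), so it is a bounded complete trajectory and hence contained in $\mathcal{A}$, the maximal such object.

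The main obstacle is the symmetric treatment in (2): because $S(t)$ is in general not invertible, one must work with the specific preselected complete orbit through $v$ afforded by invariance of $\mathcal{A}$, and carefully exploit the monotonicity of $\mathcal{L}$ along it in reverse time to sandwich $\mathcal{L}(S(r) z)$ between quantities that both converge to $L'$, extracting the equality $\mathcal{L}(S(r) z) = \mathcal{L}(z)$ needed to invoke the Lyapunov characterisation of fixed points.
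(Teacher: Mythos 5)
Your proof is correct. There is in fact nothing in the paper to compare it with: Proposition~\ref{pro:att lyapunov} is quoted from \cite{MirZel08} and no proof is given there, so your argument stands on its own, and it is the standard one. Part (1) is the usual LaSalle-type argument: $t\mapsto\mathcal{L}(S(t)u_0)$ is non-increasing and eventually bounded below because the trajectory is attracted to the compact set $\mathcal{A}$, on a neighbourhood of which the continuous $\mathcal{L}$ is bounded, hence it has a limit $L$; continuity of $\mathcal{L}$ and of $S(s)$ together with the semigroup law give $\mathcal{L}(S(s)y)=L=\mathcal{L}(y)$ for every $y\in\omega(u_0)$ and $s>0$, and the second property in Definition~\ref{def:Lyapunov} forces $y\in\mathcal{N}$; the convergence $\mathrm{d}(u(t),\mathcal{N})\to0$ then follows from precompactness of the forward orbit. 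Part (2) is also handled correctly, and you identify the genuine subtlety: since $S(t)$ need not be injective, one must argue along a fixed complete orbit $\xi$ through $v\in\mathcal{A}$ supplied by the invariance $S(t)\mathcal{A}=\mathcal{A}$, observe that $\mathcal{L}\circ\xi$ is non-increasing on all of $\bb{R}$ and bounded (the orbit stays in the compact $\mathcal{A}$), and run the argument of (1) in reverse time on the $\alpha$-limit set to get $\alpha(v)\subset\mathcal{N}$, hence $v\in\mathcal{M}^{\rm un}(\mathcal{N})$. The reverse inclusion rests on the standard fact that a bounded complete orbit lies in the attractor; if you want that self-contained, note that the range $K$ of such an orbit satisfies $S(t)K=K$ and is bounded, so $\mathrm{dist}(K,\mathcal{A})=\mathrm{dist}(S(t)K,\mathcal{A})\to0$ yields $K\subset\mathcal{A}$.

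Two harmless caveats, which reflect looseness in the statement rather than gaps in your proof: $\mathcal{L}$ is only assumed continuous on $E$, so the claim for ``every $u_0\in X$'' implicitly requires the trajectory and the attractor to lie in $E$ (true in the paper's application, where $E=\bb{H}^1$); and the displayed definition~\eqref{equ:unstable set} literally restricts $\mathcal{M}^{\rm un}(B)$ to points of $B$, which you have sensibly read in the standard way (points lying on complete orbits converging backwards to $B$), since under the literal reading the identity $\mathcal{A}=\mathcal{M}^{\rm un}(\mathcal{N})$ could not hold.
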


We also need the following notion of the fractal dimension of a set.

\begin{definition}\label{def:frac dim}
Let $E$ be a compact subset of $X$. For $\epsilon>0$, let $N_\epsilon(E)$ be the minimal number of balls of radius $\epsilon$ which are necessary to cover $E$. The \emph{fractal dimension} of $E$ is the quantity
\begin{equation*}
    \text{dim}_{\mathrm{F}} E:= \limsup_{\epsilon\to 0^+} \frac{\log_2 N_\epsilon(E)}{\log_2 (1/\epsilon)}.
\end{equation*}
Note that $\text{dim}_{\mathrm{F}} E \in [0,\infty]$. The quantity $\mathcal{H}_\epsilon(E):= \log_2 N_\epsilon(E)$ is called the Kolmogorov $\epsilon$-entropy of $E$.
\end{definition}

To show that a compact subset $E$ has a finite fractal dimension, we follow a general method based on the smoothing (or squeezing) property of the semigroup proposed in~\cite{EfeMir03, EfeMirZel00, Zel00, Zel23}. The original idea of this method can be traced back to Ladyzhenskaya~\cite{Lad86}.

\begin{theorem}\label{the:dim smooth}
Let $E$ be a compact subset of a Banach space $X_1$. Suppose that $X_1\hookrightarrow X$ is a compact embedding. Let $L:E\to E$ be a map such that $L(E)=E$ and
\begin{equation}\label{equ:smoothing L}
    \norm{Lx_1-Lx_2}{X_1} \leq \alpha \norm{x_1-x_2}{X}, \quad \forall x_1,x_2\in E.
\end{equation}
Then the fractal dimension of $E$ is finite and satisfies
\[
    \text{dim}_{\mathrm{F}} E\leq \mathcal{H}_{1/4\alpha} \big(B_{X_1}\big),
\]
where $\alpha$ is the constant in \eqref{equ:smoothing L}, $\mathcal{H}_{1/4\alpha}$ is the Kolmogorov $1/4\alpha$-entropy as defined in Definition~\ref{def:frac dim}, and $B_{X_1}$ is the unit ball in $X_1$ (which is relatively compact in $X$).
\end{theorem}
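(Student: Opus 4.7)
The plan is to adapt the classical Ladyzhenskaya--Efendiev--Miranville--Zelik squeezing argument: combine the smoothing bound with the invariance $L(X)=X$ to refine any covering of $X$ by $E$-balls into a covering by much smaller $E$-balls at a controlled multiplicative cost, and then iterate until the fractal dimension bound pops out as a geometric rate.

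First I would exploit the invariance $L(X)=X$. Every point of $X$ can be written as $Ly$ with $y\in X$, so applying the smoothing estimate~\eqref{equ:smoothing L} shows that $L$ maps each $E$-ball of radius $\varepsilon$ centred in $X$ into an $E_1$-ball of radius $\alpha\varepsilon$. Hence if $\{B_E(x_i,\varepsilon)\}_{i=1}^{N_\varepsilon(X)}$ is an optimal $E$-cover of $X$, then $X=L(X)$ is contained in the union of $N_\varepsilon(X)$ balls in $E_1$ of radius $\alpha\varepsilon$ centred at the points $Lx_i$.

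Next, compactness of the embedding $E_1\hookrightarrow E$ enters through the $\varepsilon$-entropy of $B_{E_1}$: by definition, $B_{E_1}$ is covered by $2^{\mathcal{H}_{1/(4\alpha)}(B_{E_1})}$ balls in $E$ of radius $1/(4\alpha)$, and by a linear rescaling any $E_1$-ball of radius $\alpha\varepsilon$ is covered by the same number of $E$-balls of radius $\varepsilon/4$. Composing the two steps yields the key one-step refinement
\[
N_{\varepsilon/4}(X)\;\le\;N_\varepsilon(X)\cdot 2^{\mathcal{H}_{1/(4\alpha)}(B_{E_1})}.
\]
Iterating with $\varepsilon=\varepsilon_0/4^{k-1}$ gives $N_{\varepsilon_0/4^k}(X)\le N_{\varepsilon_0}(X)\cdot 2^{kH}$, where $H:=\mathcal{H}_{1/(4\alpha)}(B_{E_1})$. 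Plugging into Definition~\ref{def:frac dim} along the sequence $\varepsilon_k=\varepsilon_0/4^k$ and extending to general $\varepsilon$ by monotonicity of $N_\varepsilon(X)$ in $\varepsilon$, the numerator grows at most linearly in $k$ with slope $H$ while the denominator grows linearly with slope $2$, so $\text{dim}_{\mathrm{F}} X\le H$ (indeed $\le H/2$, which is subsumed by the stated bound).

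The main obstacle is the bookkeeping between the two norms: the smoothing step naturally outputs $E_1$-balls, so each iteration requires reinterpreting them as unions of $E$-balls before the next application of~\eqref{equ:smoothing L}, and the radius ratio $1/(4\alpha)$ must be chosen precisely so that the composed map contracts $E$-radii by a factor strictly less than $1$ while keeping the entropy count exactly at $\mathcal{H}_{1/(4\alpha)}(B_{E_1})$. No further structural properties of $L$ beyond the Lipschitz-type smoothing bound and the invariance $L(X)=X$ are needed.
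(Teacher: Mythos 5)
The paper does not actually prove Theorem~\ref{the:dim smooth}; it quotes it from the squeezing-property literature (\cite{EfeMir03, EfeMirZel00, Zel00, Zel23, Lad86}), and your argument is the standard one from those references, so the comparison is with that standard proof. The skeleton is right, but the one-step refinement you iterate, $N_{\varepsilon/4}(X)\le 2^{H}N_\varepsilon(X)$ with $H:=\mathcal{H}_{1/(4\alpha)}(B_{E_1})$, is not established as written. The smoothing estimate~\eqref{equ:smoothing L} can only be applied to points of $X$: to send $X\cap B_E(x_i,\varepsilon)$ into the $E_1$-ball $B_{E_1}(Lx_i,\alpha\varepsilon)$ you need $x_i\in X$ (otherwise $Lx_i$ is not even defined). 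If $N_\varepsilon$ allows arbitrary centres, the very first step is illegitimate; if you insist on centres in $X$, then the $E$-balls of radius $\varepsilon/4$ obtained by rescaling the entropy cover of $B_{E_1}$ have centres that are in general \emph{not} in $X$, so the refined cover cannot be fed back into the same step and the induction does not close.

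The standard repair is the recentring/doubling step you omitted: discard the small balls that miss $X$ and recentre each remaining one at a point of $X$ it contains, at the cost of doubling the radius to $\varepsilon/2$. Writing $N^X_\varepsilon$ for coverings with centres in $X$, this gives $N^X_{\varepsilon/2}(X)\le 2^{H}\,N^X_\varepsilon(X)$, and iterating along $\varepsilon_k=\varepsilon_0 2^{-k}$ yields $\text{dim}_{\mathrm{F}}X\le H$, exactly the stated bound; the radius $1/(4\alpha)$ in the entropy is chosen precisely so that, after this doubling, each step still contracts the $E$-radius by the factor $1/2$. In particular your parenthetical claim that the argument even gives $H/2$ is an artifact of skipping the recentring — the true per-step contraction is $1/2$, not $1/4$. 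So the gap is a fixable bookkeeping issue rather than a wrong approach, and once the recentring is inserted your proof coincides with the classical one and delivers the theorem as stated.
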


The above theorem will be applied to show that a global attractor has finite fractal dimension.

\subsection{Existence of global attractor}

In light of Theorem~\ref{the:sol}, for $k=1$ or $2$, the system \eqref{equ:llbar} generates a strongly continuous semigroup
\begin{equation}\label{equ:semigroup}
	\bff{S}(t): \bb{H}^k \to \bb{H}^k, \quad \bff{S}(t)\bff{u}_0= \bff{u}(t)  \quad \text{for } t\geq 0,
\end{equation}
and thus $\big(\bb{H}^k, \{\bff{S}(t)\}_{t\geq 0}\big)$ is a semi-dynamical system. 

The following theorem on the existence of global attractor for \eqref{equ:semigroup} is immediate.

\begin{theorem}\label{the:attractor}
For $k=1$ or $2$, the semi-dynamical system $\big(\bb{H}^k, \{\bff{S}(t)\}_{t\geq 0}\big)$ generated by \eqref{equ:llbar} has a connected global attractor $\mathcal{A}$ in the sense of Definition~\ref{def:glob att}.
\end{theorem}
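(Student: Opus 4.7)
The plan is to apply the abstract result Theorem~\ref{the:global attr}, so the task reduces to verifying two facts about the semigroup $\bff{S}(t)$ on $\bb{H}^k$ (for $k=1$ or $2$): it admits a bounded absorbing set in $\bb{H}^k$, and it admits a compact absorbing set in $\bb{H}^k$. Strong continuity of $\bff{S}(t)$ as a map on $\bb{H}^k$ follows from Lemma~\ref{lem:u-vL2} (for $k=1$) and Lemma~\ref{lem:smt H1} (for $k=2$), so a bona fide semi-dynamical system is in hand. Connectedness of $\mathcal{A}$ is automatic from connectedness of $\bb{H}^k$ once $\mathcal{A}$ is constructed as the $\omega$-limit set of a connected absorbing set.

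For the bounded absorbing set in $\bb{H}^k$, the dissipative estimates of Section~\ref{sec:exist} suffice. When $k=1$, estimate~\eqref{equ:rho 1 sigma 1} of Proposition~\ref{pro:un H1} yields a radius $R_1 = (\rho_1/\varepsilon)^{1/2}$ and an entry time $t_1$ depending only on $\|\bff{u}_0\|_{\bb{L}^2}$ such that $\|\bff{u}_n(t)\|_{\bb{H}^1}\leq R_1$ for all $t\geq t_1$; passing to the limit $n\to\infty$ by weak-$*$ lower semicontinuity of the $\bb{H}^1$ norm (justified by the Aubin--Lions compactness already used in the proof of Theorem~\ref{the:sol}) propagates the same bound to $\bff{u}(t)=\bff{S}(t)\bff{u}_0$. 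Since $t_1$ depends only on $\|\bff{u}_0\|_{\bb{L}^2}$, the ball $B_1:=\{\bff{v}\in\bb{H}^1:\|\bff{v}\|_{\bb{H}^1}\leq R_1\}$ absorbs every bounded set of $\bb{H}^1$. The case $k=2$ is handled identically, with \eqref{equ:rho 2 sigma 2} supplying a $\bb{H}^2$-absorbing ball.

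To produce a compact absorbing set, we exploit the higher-order dissipative bounds. For $k=1$, the same estimate~\eqref{equ:rho 2 sigma 2} yields a bounded absorbing set in $\bb{H}^2$, which by the compact Sobolev embedding $\bb{H}^2\hookrightarrow\bb{H}^1$ is compact in $\bb{H}^1$. For $k=2$, the analogous estimate~\eqref{equ:rho 4} of Proposition~\ref{pro:H3 un} provides a bounded absorbing set in $\bb{H}^3$, which is compact in $\bb{H}^2$ by $\bb{H}^3\hookrightarrow\bb{H}^2$. Theorem~\ref{the:global attr} then produces the connected global attractor $\mathcal{A}=\omega(K)$, where $K$ is the compact absorbing set just constructed.

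The principal obstacle, already dispatched by the earlier sections, is the transfer of the uniform Galerkin bounds to the true solution with the same constants; this is handled via weak-$*$ lower semicontinuity combined with the Aubin--Lions compactness from Theorem~\ref{the:sol}. Consequently no new estimate beyond those collected in Proposition~\ref{pro:summary H2} is required for Theorem~\ref{the:attractor}, and the proof is essentially an application of the abstract dissipative-semigroup framework to the a priori estimates already in place.
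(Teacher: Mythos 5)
Your proposal is correct and follows essentially the same route as the paper: the paper's proof invokes the absorbing estimate \eqref{equ:u absorb} (the summary of the dissipative bounds you cite, e.g.\ \eqref{equ:rho 2 sigma 2} and \eqref{equ:rho 4}) to get a bounded absorbing set in $\bb{H}^{k+1}$, uses the compact embedding $\bb{H}^{k+1}\hookrightarrow\bb{H}^k$ to make it a compact absorbing set, and then applies Theorem~\ref{the:global attr}. Your extra remarks on transferring the Galerkin bounds to the limit solution and on strong continuity are consistent with what the paper leaves implicit, so no changes are needed.
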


\begin{proof}
For $k=1$ or $2$, the existence of a bounded absorbing set is furnished by Proposition~\ref{pro:un L2}--\ref{pro:Hn H1}. Noting that the embedding $\bb{H}^{k+1}(\mathscr{O}) \subset \bb{H}^k(\mathscr{O})$ is compact (for a regular bounded domain $\mathscr{O}$), this absorbing set is compact. Applying Theorem~\ref{the:global attr}, we have the result.
\end{proof}

In the remainder of this subsection, we aim to show a rate of convergence of the strong solution as $t\to\infty$.
For this purpose, we consider a special case of~\eqref{equ:llbar} where the spin current is not present and the only contribution to the effective magnetic field is the exchange field, i.e.
\begin{equation}\label{equ:Ru zero}
\mathcal{R}(\bff{u}) =\mathcal{S}(\bff{u})= \Phi_{\mathrm{a}}(\bff{u})=\Phi_{\mathrm{d}}(\bff{u}) = \bff{0}. 
\end{equation}
This implies
\begin{equation}\label{equ:H zero}
    \bff{H}=\Delta \bff{u}+\kappa_1 \bff{u}-\kappa_2 |\bff{u}|^2 \bff{u}.
\end{equation}
In this case, we obtain that the set of fixed points of $S(t)$ defined by \eqref{equ:semigroup} is
\begin{equation}\label{equ:fixed}
	\mathcal{N} = \big\{ \bff{u}\in \text{D}(\Delta) : \bff{H}=\bff{0} \big\}.
\end{equation}
This can be seen by formally setting $\partial_t \bff{u}= \bff{0}$, taking dot product of the first equation in \eqref{equ:llbar} with $\bff{H}$, and integrating by parts. The set of fixed points \eqref{equ:fixed} corresponds to solutions of the stationary vector-valued Allen--Cahn equation, the structure of which is still an area of active research~\cite{AliFusSmy18, Sou15}. Next, we show that $\bff{S}(t)$ admits a global Lyapunov function.

\begin{proposition}
Assume that \eqref{equ:Ru zero} holds. The continuous function $\mathcal{L}:\bb{H}^1\to \bb{R}$ defined by
\begin{align*}
	\mathcal{L}\big(\bff{u}(t)\big) := \frac{1}{2} \norm{\nabla \bff{u}(t)}{\bb{L}^2}^2
	+
	\frac{\kappa_2}{4} \norm{|\bff{u}(t)|^2 - \kappa_1/\kappa_2}{\bb{L}^2}^2
\end{align*}
is a global Lyapunov function for $\bff{S}(t)$ in the sense of Definition~\ref{def:Lyapunov}.
\end{proposition}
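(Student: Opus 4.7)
The plan is to compute $\ddt \mathcal{L}(\bff{u}(t))$ directly via the chain rule, exploiting the key observation that the $\bb{L}^2$-gradient of $\mathcal{L}$ is precisely $-\bff{H}$, together with the integration by parts afforded by the Neumann condition $\partial \bff{H}/\partial \bff{n}=\bff{0}$ and the pointwise orthogonality $\bff{H}\cdot(\bff{u}\times\bff{H}) = 0$. First I would verify continuity of $\mathcal{L}$ on $\bb{H}^1$: the Dirichlet piece is manifestly continuous, and the quartic piece $\int(|\bff{u}|^2-\kappa_1/\kappa_2)^2\,\dx$ is continuous on $\bb{H}^1$ via the Sobolev embedding $\bb{H}^1\hookrightarrow \bb{L}^4$ for $d\leq 3$.

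For $\bff{u}_0\in\bb{H}^2$, Theorem~\ref{the:sol} provides a strong solution with $\bff{u}\in C([0,T];\bb{H}^2)\cap L^2(0,T;\bb{H}^4)$ and $\partial_t \bff{u}\in L^2(0,T;\bb{L}^2)$, which fully justifies the chain rule
\begin{equation*}
    \ddt \mathcal{L}(\bff{u}(t))
    =
    \inpro{-\Delta \bff{u}-\kappa_1 \bff{u}+\kappa_2 |\bff{u}|^2 \bff{u}}{\partial_t \bff{u}}_{\bb{L}^2}
    =
    -\inpro{\bff{H}}{\partial_t \bff{u}}_{\bb{L}^2}.
\end{equation*}
In the present special case $\partial_t \bff{u} = \sigma \bff{H}-\varepsilon \Delta \bff{H}-\gamma\,\bff{u}\times \bff{H}$; substituting this, integrating the $\Delta\bff{H}$ term by parts using $\partial\bff{H}/\partial\bff{n}=\bff{0}$, and using that $\bff{H}\cdot(\bff{u}\times \bff{H})=0$ pointwise yields
\begin{equation*}
    \ddt \mathcal{L}(\bff{u}(t))
    =
    -\sigma \norm{\bff{H}(t)}{\bb{L}^2}^2
    -\varepsilon \norm{\nabla \bff{H}(t)}{\bb{L}^2}^2
    \leq 0.
\end{equation*}
I would then extend monotonicity to $\bff{u}_0\in \bb{H}^1$ by combining the instantaneous smoothing estimate of Proposition~\ref{pro:H2 H1 smooth} (which places $\bff{u}(t)\in\bb{H}^2$ for every $t>0$) with the continuity of $\mathcal{L}$ on $\bb{H}^1$, letting the base time $\delta\to 0^+$.

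For the second condition in Definition~\ref{def:Lyapunov}, suppose $\mathcal{L}(\bff{S}(t_0)\bff{u}_0) = \mathcal{L}(\bff{u}_0)$ for some $t_0>0$. Integrating the above dissipation identity on $(0,t_0)$ forces
\begin{equation*}
    \int_0^{t_0}\left(\sigma\norm{\bff{H}(s)}{\bb{L}^2}^2 + \varepsilon\norm{\nabla\bff{H}(s)}{\bb{L}^2}^2\right)\ds = 0,
\end{equation*}
hence $\bff{H}(s)\equiv\bff{0}$ on $(0,t_0)$. Going back into the evolution equation, $\partial_t \bff{u}\equiv \bff{0}$ on this interval, so $\bff{u}(s) = \bff{u}_0$ on $[0,t_0]$; in particular $\Delta\bff{u}_0+\kappa_1 \bff{u}_0-\kappa_2|\bff{u}_0|^2\bff{u}_0 = \bff{0}$, i.e.\ $\bff{u}_0 \in \mathcal{N}$. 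The constant function $\bff{u}(t)\equiv \bff{u}_0$ is then a strong solution of~\eqref{equ:llbar}, and uniqueness from Theorem~\ref{the:sol} gives $\bff{S}(t)\bff{u}_0 = \bff{u}_0$ for all $t\geq 0$, so $\bff{u}_0$ is a fixed point. The only delicate point I anticipate is the rigorous justification of the chain rule at the $\bb{H}^1$ level, but the smoothing estimate reduces this to a routine approximation argument.
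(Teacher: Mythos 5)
Your proposal is correct and follows essentially the same route as the paper: the identity $\ddt \mathcal{L}(\bff{u}(t)) = -\inpro{\partial_t \bff{u}}{\bff{H}}_{\bb{L}^2} = -\sigma\norm{\bff{H}}{\bb{L}^2}^2 - \varepsilon\norm{\nabla\bff{H}}{\bb{L}^2}^2 \leq 0$, followed by integration over $(0,T)$ to force $\bff{H}\equiv \bff{0}$ and hence $\partial_t\bff{u}\equiv\bff{0}$ when the Lyapunov values coincide. Your additional care about justifying the chain rule for $\bb{H}^1$ data via the smoothing estimate and continuity of $\mathcal{L}$ is a welcome refinement of a point the paper leaves implicit, but it does not constitute a different argument.
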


\begin{proof}
We deduce from~\eqref{equ:dt un Hn}, \eqref{equ:Hn dt un}, and \eqref{equ:H zero} that 
\begin{align}\label{equ:ddt L}
	\ddt \mathcal{L}(\bff{u}(t))
	=
    -\inpro{\partial_t \bff{u}(t)}{\bff{H}(t)}_{\bb{L}^2}
    =
	-
	\sigma \norm{\bff{H}(t)}{\bb{L}^2}^2
	-
	\varepsilon \norm{\nabla \bff{H}(t)}{\bb{L}^2}^2
	\leq 0,
\end{align}
which shows that the function $t\mapsto \mathcal{L}(\bff{S}(t)\bff{u}_0)$ is non-increasing. Furthermore, if $\mathcal{L}(\bff{S}(T)\bff{u}_0)= \mathcal{L}(\bff{u}_0)$, then integrating \eqref{equ:ddt L} over $(0,T)$ gives $\bff{H}(t)=0$ for all $t\in [0,T]$. This implies $\partial_t \bff{u}=0$, i.e. $\bff{u}(t)= \bff{u}_0$ for all~$t\in [0,T]$, which shows $\bff{u}_0$ is a fixed point. This proves $\mathcal{L}$ is a global Lyapunov function for $\bff{S}(t)$.
\end{proof}

\begin{theorem}
Assume that \eqref{equ:Ru zero} holds. Let $\mathcal{A}$ be the global attractor for $\bff{S}(t)$ and $\mathcal{F}$ be its set of fixed points~\eqref{equ:fixed}. Then $\omega(\bff{u}_0) \subset \mathcal{F}$ for every $\bff{u}_0\in \bb{H}^r$. Moreover, $\mathcal{A}= \mathcal{M}^{\rm{un}}(\mathcal{F})$.
\end{theorem}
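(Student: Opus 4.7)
The plan is to apply Proposition~\ref{pro:att lyapunov} directly, since both of its hypotheses have already been established. Theorem~\ref{the:attractor} furnishes the global attractor $\mathcal{A}$ for $\bff{S}(t)$, and the preceding proposition shows that the energy $\mathcal{L}$ is a global Lyapunov function in the sense of Definition~\ref{def:Lyapunov}. The conclusions of Proposition~\ref{pro:att lyapunov} then read $\omega(\bff{u}_0)\subset \mathcal{N}$ for every $\bff{u}_0 \in \bb{H}^k$ (with $k=1$ or $2$) and $\mathcal{A} = \mathcal{M}^{\mathrm{un}}(\mathcal{N})$, where $\mathcal{N}$ denotes the set of fixed points of the semigroup.

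The only point requiring separate verification is the identification $\mathcal{N} = \mathcal{F}$, with $\mathcal{F}$ as defined in~\eqref{equ:fixed}. I would argue that $\bff{u}_*$ is a fixed point of $\bff{S}(t)$ precisely when $\partial_t \bff{u}_* = \bff{0}$ in the weak form of~\eqref{equ:llbar}. In the present reduced setting (with $\mathcal{R}$, $\mathcal{S}$, $\Phi_{\mathrm{a}}$ and $\Phi_{\mathrm{d}}$ absent) this reduces to $\sigma \bff{H} - \varepsilon \Delta \bff{H} = \bff{0}$ together with $\partial \bff{H}/\partial \bff{n} = \bff{0}$ on $\partial\mathscr{O}$. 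Taking the inner product with $\bff{H}$ and integrating by parts yields
\[
\sigma \norm{\bff{H}}{\bb{L}^2}^2 + \varepsilon \norm{\nabla \bff{H}}{\bb{L}^2}^2 = 0,
\]
hence $\bff{H} = \bff{0}$, which by the definition $\bff{H} = \Delta \bff{u}_* + \kappa_1 \bff{u}_* - \kappa_2 |\bff{u}_*|^2 \bff{u}_*$ is exactly the elliptic equation appearing in~\eqref{equ:fixed}. The converse is immediate.

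I do not expect a substantial analytical obstacle here, since all the heavy lifting was performed in the uniform estimates of Section~\ref{sec:exist} and in the preceding Lyapunov computation. The one subtlety worth flagging is that the characterisation $\mathcal{A} = \mathcal{M}^{\mathrm{un}}(\mathcal{F})$ in Proposition~\ref{pro:att lyapunov} implicitly relies on the existence of a complete bounded trajectory through each point of $\mathcal{A}$; this is standard and follows from the invariance $\bff{S}(t)\mathcal{A} = \mathcal{A}$ for every $t\geq 0$ together with the compactness of $\mathcal{A}$, which via a diagonal selection produces, for each $\bff{u} \in \mathcal{A}$, a full orbit lying entirely in $\mathcal{A}$. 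Applying the Lyapunov property to such a backward orbit shows that its $\alpha$-limit set is nonempty and contained in $\mathcal{F}$, giving $\bff{u} \in \mathcal{M}^{\mathrm{un}}(\mathcal{F})$, while the reverse inclusion $\mathcal{M}^{\mathrm{un}}(\mathcal{F}) \subset \mathcal{A}$ is immediate from the definition of the global attractor as the maximal bounded invariant set.
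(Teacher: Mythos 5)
Your proposal is correct and follows the same route as the paper: the paper's proof is simply an application of Proposition~\ref{pro:att lyapunov} together with Theorem~\ref{the:attractor}, and the identification of the fixed-point set with $\mathcal{F}$ via $\bff{H}=\bff{0}$ is exactly the computation the paper sketches when introducing~\eqref{equ:fixed}. Your extra remarks on complete bounded orbits are standard and consistent with the paper's (implicit) use of Proposition~\ref{pro:att lyapunov}.
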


\begin{proof}
This follows from Theorem~\ref{the:attractor} and Proposition~\ref{pro:att lyapunov}.
\end{proof}

Furthermore, we will show that in fact strong solution $\bff{u}(t)$ converges to some function $\varphi\in \omega(\bff{u}_0) \subset \mathcal{F}$ as $t\to \infty$ with some upper estimates on the rate. To this end, we will use some results on the {\L}ojasiewicz--Simon gradient inequality applied to a gradient-like system~\cite{Chi03, ChiHarJen09}.

\begin{theorem}\label{the:u omega}
Assume that \eqref{equ:Ru zero} holds. Let $\bff{u}$ be the strong solution corresponding to initial data $\bff{u}_0\in\bb{H}^2$.
There exists $\bff{\varphi}\in \omega(\bff{u}_0)$ such that
\[
	\lim_{t\to +\infty} \norm{\bff{u}(t)-\bff{\varphi}}{\bb{H}^2}=0.
\]
Moreover, as $t\to +\infty$,
\begin{equation*}
	\norm{\bff{u}(t)-\bff{\varphi}}{\widetilde{\bb{H}}^{-2}} =
	\begin{cases}
		O(e^{-ct})
		\quad &\text{if $\theta=\frac{1}{2}$},
		\\[1mm]
		O\big(t^{-\theta/(1-2\theta)}\big)
		\quad &\text{if $\theta=(0,\frac{1}{2})$},
	\end{cases}
\end{equation*}
where $\theta$ is the {\L}ojasiewicz exponent of $\mathcal{E}$ and $c$ is a constant.
\end{theorem}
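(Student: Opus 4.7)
The plan follows the classical strategy of Simon, Haraux, and Jendoubi using the {\L}ojasiewicz--Simon gradient inequality for analytic Lyapunov functionals. In the simplified setting of Theorem~\ref{the:u omega}, where $\mathcal{R}$, $\mathcal{S}$, $\Phi_{\mathrm{a}}$, and $\Phi_{\mathrm{d}}$ are dropped, the equation reduces to $\partial_t \bff{u} = A \bff{H}$, with $A := \sigma I - \varepsilon\Delta$ a strictly positive self-adjoint operator under homogeneous Neumann conditions and $\bff{H} = \Delta\bff{u}+\kappa_1\bff{u}-\kappa_2|\bff{u}|^2\bff{u} = -\nabla_{\bb{L}^2}\mathcal{L}(\bff{u})$. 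The functional $\mathcal{L}$ is real-analytic on $\bb{H}^1$ and the dissipation identity gives
\begin{equation*}
-\ddt\mathcal{L}(\bff{u}(t)) = \inpro{A\bff{H}}{\bff{H}}_{\bb{L}^2} \geq c_0 \norm{\bff{H}(t)}{\bb{H}^1}^2.
\end{equation*}
The smoothing estimates from Section~\ref{sec:exist} give a uniform $\bb{H}^3$-bound on $\bff{u}(t)$ for $t\geq 1$, so the trajectory is relatively compact in $\bb{H}^1$, and $\omega(\bff{u})$ is a nonempty compact subset of the stationary set $\mathcal{F}$ defined by $\bff{H}(\bff{\varphi})=\bff{0}$.

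Next, I would invoke the {\L}ojasiewicz--Simon inequality of Chill: since $\mathcal{L}:\bb{H}^1\to\bb{R}$ is analytic and $\mathcal{L}''(\bff{\varphi})$ is Fredholm at each $\bff{\varphi}\in\mathcal{F}$, there exist $\theta\in(0,\tfrac12]$ and $C,\delta>0$ with $|\mathcal{L}(\bff{v})-\mathcal{L}(\bff{\varphi})|^{1-\theta}\leq C\norm{\bff{H}(\bff{v})}{\bb{L}^2}$ whenever $\norm{\bff{v}-\bff{\varphi}}{\bb{H}^1}<\delta$. Combining with the dissipation estimate yields
\begin{equation*}
-\ddt \bigl(\mathcal{L}(\bff{u})-\mathcal{L}(\bff{\varphi})\bigr)^{\theta}
= \theta\bigl(\mathcal{L}(\bff{u})-\mathcal{L}(\bff{\varphi})\bigr)^{\theta-1}\bigl(-\ddt\mathcal{L}\bigr)
\geq c_1 \frac{\norm{\bff{H}}{\bb{H}^1}^2}{\norm{\bff{H}}{\bb{L}^2}}
\geq c_1 \norm{\bff{H}}{\bb{H}^1}
\geq c_2\norm{\partial_t\bff{u}}{\bb{H}^{-1}},
\end{equation*}
where the last inequality uses $\norm{A\bff{H}}{\bb{H}^{-1}}\leq C\norm{\bff{H}}{\bb{H}^1}$. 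Integrating gives $\int_t^{t'}\norm{\partial_t\bff{u}(s)}{\bb{H}^{-1}}\,ds \leq c_2^{-1}(\mathcal{L}(\bff{u}(t))-\mathcal{L}(\bff{\varphi}))^\theta$ on any interval where $\bff{u}(\cdot)$ stays in $B_{\bb{H}^1}(\bff{\varphi},\delta)$. A Simon-type open--closed continuation argument --- starting from a subsequence $\bff{u}(t_n)\to\bff{\varphi}\in\omega(\bff{u})$ in $\bb{H}^1$ and using that $\bb{H}^{-1}$-smallness together with the uniform $\bb{H}^3$-bound interpolates to $\bb{H}^1$-smallness --- then shows $\bff{u}(t)$ remains in that ball for all large $t$. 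Thus $\partial_t\bff{u}\in L^1(0,\infty;\bb{H}^{-1})$, so $\bff{u}(t)\to\bff{\varphi}$ in $\bb{H}^{-1}$, and interpolation with the $\bb{H}^3$-bound upgrades this convergence to $\bb{H}^1$.

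For the rate estimate, set $f(t):=\mathcal{L}(\bff{u}(t))-\mathcal{L}(\bff{\varphi})\geq 0$. Combining the {\L}ojasiewicz--Simon inequality with the dissipation estimate yields the ODI $f'(t)\leq -c_3 f(t)^{2(1-\theta)}$. When $\theta=\tfrac12$ this gives $f(t)\leq C e^{-c_3 t}$, so $f(t)^{\theta}\leq C e^{-c_3 t/2}$; when $\theta\in(0,\tfrac12)$ standard integration of the ODI yields $f(t)\leq C t^{-1/(1-2\theta)}$, hence $f(t)^{\theta}\leq C t^{-\theta/(1-2\theta)}$. Since $\norm{\bff{u}(t)-\bff{\varphi}}{\bb{H}^{-1}}\leq \int_t^\infty\norm{\partial_t\bff{u}(s)}{\bb{H}^{-1}}\,ds \leq c_2^{-1} f(t)^\theta$, the two cases translate directly into the announced rates. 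The chief obstacle will be establishing the {\L}ojasiewicz--Simon inequality itself for this vector-valued quartic potential with Neumann boundary conditions: this requires verifying the Fredholm property of the linearisation $\mathcal{L}''(\bff{\varphi})=-\Delta -\kappa_1 I+\kappa_2(|\bff{\varphi}|^2 I + 2 \bff{\varphi}\otimes\bff{\varphi})$ and selecting the compatible pair of spaces ($\bb{H}^1$ on the domain side, $\bb{L}^2$ on the gradient side). The trapping argument in $\bb{H}^{-1}$ combined with the higher-order smoothing is the secondary technical point, which follows routinely by interpolation once integrability of $\partial_t\bff{u}$ in $\bb{H}^{-1}$ has been secured.
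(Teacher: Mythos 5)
Your proposal unpacks, by hand, essentially the same machinery the paper invokes abstractly (the paper simply cites Theorems 1 and 2 of Chill--Haraux--Jendoubi and, like you, leans on Chill for the {\L}ojasiewicz--Simon inequality itself), so the overall strategy is the right one. However, there is a genuine gap in your reduction of the equation. The theorem's setting sets $\mathcal{R}$, $\mathcal{S}$, $\Phi_{\mathrm{a}}$, $\Phi_{\mathrm{d}}$ to zero but \emph{keeps the precession term} $-\gamma\,\bff{u}\times\bff{H}$ in the vector-valued case $m=3$ (this is the exchange-dominated LLBar equation, the main case the theorem addresses). Your claim that the equation "reduces to $\partial_t\bff{u}=A\bff{H}$" with $A=\sigma I-\varepsilon\Delta$ is therefore only true for $m=1$; for $m=3$ the system is \emph{not} a gradient flow but only gradient-like, and this is precisely why the paper must verify the angle and comparability conditions of the gradient-like framework rather than apply the plain Simon argument.

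Concretely, the gap appears in your chain $c_1\norm{\bff{H}}{\bb{H}^1}\geq c_2\norm{\partial_t\bff{u}}{\bb{H}^{-1}}$: with the precession term, $\partial_t\bff{u}=\sigma\bff{H}-\varepsilon\Delta\bff{H}-\gamma\,\bff{u}\times\bff{H}$, so you additionally need $\norm{\bff{u}\times\bff{H}}{\bb{H}^{-1}}\leq C\norm{\bff{H}}{\bb{L}^2}$ along the trajectory; this is exactly the comparability estimate the paper proves, using the embedding $\bb{L}^{6/5}\hookrightarrow\bb{H}^{-1}$, H\"older's inequality, and the uniform $\bb{L}^3$ (indeed $\bb{H}^1$) bound on $\bff{u}$ in the {\L}ojasiewicz neighbourhood. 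Note that your dissipation identity survives unchanged, since $\inpro{\bff{u}\times\bff{H}}{\bff{H}}_{\bb{L}^2}=0$, so the cross term does no work; but the comparability step, and with it the passage from decay of $\mathcal{L}(\bff{u}(t))-\mathcal{L}(\bff{\varphi})$ to decay of $\norm{\bff{u}(t)-\bff{\varphi}}{\bb{H}^{-1}}$, fails as written for $m=3$ until this extra estimate is supplied. With that one addition (and the observation that the angle condition is exactly your dissipation identity), your argument closes and coincides with the paper's verification of conditions (AC+C); the remaining ingredients (LS inequality via Fredholm linearisation, Simon-type trapping, interpolation against the uniform $\bb{H}^3$ smoothing bound, and the ODI giving the two rates) are handled correctly.
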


\begin{proof}
Let $\mathcal{L}'(\bff{u})$ be the variational derivative of $\mathcal{L}$ with respect to $\bff{u}$. Then $\mathcal{L}'(\bff{u})$ is a linear functional on $\bb{H}^1$. Recalling \eqref{equ:H zero}, we can check that $\mathcal{L}'(\bff{u})= -\bff{H}$.

We apply~\cite[Theorem 6 and 7]{ChiHarJen09}. Let $\bff{u}$ be the strong solution of \eqref{equ:llbar}, i.e. $\bff{u}$ satisfies~$\partial_t \bff{u} + \mathcal{F}(\bff{u})=\bff{0}$, where $\mathcal{F}(\bff{u}):=-\sigma \bff{H}+ \varepsilon \Delta \bff{H}+ \gamma \bff{u}\times \bff{H}$ due to~\eqref{equ:Ru zero}. The global Lyapunov function $\mathcal{L}$ satisfies the {\L}ojasiewicz--Simon gradient inequality the proof of which can be found in~\cite{Chi03} or~\cite[Section 3.6]{Yag21}. It remains to verify in our context (with $\mathcal{V}=\mathrm{D}(\Delta)$ and $\mathcal{H}=\widetilde{\bb{H}}^{-2}$ in~\cite[Theorem 6 and 7]{ChiHarJen09}) that
\begin{align}\label{equ:angle}
    -\ddt \mathcal{L}(\bff{u}(t)) 
	\geq
	C \left(\norm{\mathcal{L}'(\bff{u}(t))}{\widetilde{\bb{H}}^{-2}}^2 + \norm{\mathcal{F}(\bff{u}(t))}{\widetilde{\bb{H}}^{-2}}^2 \right)
\end{align}
for almost every $t\geq 0$. 
Since $\bff{u}$ is a strong solution, by using the embedding $\bb{L}^{6/5} \hookrightarrow \widetilde{\bb{H}}^{-1}$ then H\"older's inequality,  and \eqref{equ:semidisc-est1}, we obtain
\begin{align*}
	\norm{\mathcal{L}'(\bff{u}(t))}{\widetilde{\bb{H}}^{-2}}^2 + \norm{\mathcal{F}(\bff{u}(t))}{\widetilde{\bb{H}}^{-2}}^2
    &\leq
    \norm{\mathcal{L}'(\bff{u}(t))}{\widetilde{\bb{H}}^{-1}}^2 + \norm{\mathcal{F}(\bff{u}(t))}{\widetilde{\bb{H}}^{-1}}^2
    \\
	&\leq
	\norm{\bff{H}(t)}{\widetilde{\bb{H}}^{-1}}^2 
	+
	\sigma \norm{\bff{H}(t)}{\widetilde{\bb{H}}^{-1}}^2
	+
	\varepsilon \norm{\Delta \bff{H}(t)}{\widetilde{\bb{H}}^{-1}}^2
	+
	\gamma \norm{\bff{u}(t) \times \bff{H}(t)}{\widetilde{\bb{H}}^{-1}}^2
	\\
	&\leq
	C \norm{\bff{H}(t)}{\bb{L}^2}^2
	+
	\varepsilon \norm{\bff{H}}{\bb{H}^1}^2
	+
	C \norm{\bff{u}(t)}{\bb{L}^3}^2 \norm{\bff{H}(t)}{\bb{L}^2}^2
	\\
	&\leq
	C \left( \sigma \norm{\bff{H}(t)}{\bb{L}^2}^2 + \varepsilon \norm{\nabla \bff{H}(t)}{\bb{L}^2}^2 \right)
    \\
    &=
    -C\ddt \mathcal{L}(\bff{u}(t)),
\end{align*}
for some $C$ independent of $t$, where in the last step we used~\eqref{equ:ddt L}.
This proves \eqref{equ:angle}, thus completing the proof of the theorem.
\end{proof}

\subsection{Fractal dimension of the global attractor}\label{sec:frac dim}
We aim to find an upper bound for the fractal dimension of the global attractor given by Theorem~\ref{the:attractor}. For clarity, we will write $\bff{S}_\varepsilon(t)$ for the semigroup on $\bb{H}^1$ generated by the system~\eqref{equ:llbar} and $\mathcal{A}_\varepsilon$ for the global attractor to highlight the dependence on $\varepsilon$. Some relevant estimates for this section can be found in Appendix~\ref{sec:app b}.
First, we need the following lemma.

\begin{lemma}\label{lem:pre dim f}
For each $\varepsilon>0$, let $\bff{S}_\varepsilon(t):\bb{H}^1\to \bb{H}^1$ be the semigroup generated by~\eqref{equ:llbar} with global attractor $\mathcal{A}_\varepsilon$. Let $\bff{u}_0, \bff{v}_0 \in \mathcal{A}_\varepsilon$, and write $\bff{u}(t):= \bff{S}_\varepsilon(t)\bff{u}_0$ and $\bff{v}(t):= \bff{S}_\varepsilon(t) \bff{v}_0$. For any $t>0$,
\begin{align}\label{equ:A L2uv}
    \norm{\bff{u}(t)- \bff{v}(t)}{\bb{L}^2}^2
    +
    \varepsilon \int_0^t \norm{\Delta \bff{u}(s)-\Delta \bff{v}(s)}{\bb{L}^2}^2 \ds
    +
    \int_0^t \norm{\nabla \bff{u}(s)-\nabla \bff{v}(s)}{\bb{L}^2}^2 \ds 
    \leq
    C e^{C\varepsilon^{-5} t} \norm{\bff{u}_0-\bff{v}_0}{\bb{L}^2}^2.
\end{align}
Moreover, there exists a constant $C$ independent of $\bff{u}_0$, $\varepsilon$, and $t$ such that
\begin{align}\label{equ:A H1 uv}
    \norm{\bff{u}(t)- \bff{v}(t)}{\bb{H}^1}^2
    \leq
    Ct^{-1} \left(1+t\varepsilon^{-8}\right)e^{C\varepsilon^{-5} t} \norm{\bff{u}_0-\bff{v}_0}{\bb{L}^2}^2.
\end{align}
\end{lemma}

\begin{proof}
Note that by \eqref{equ:rho 1 sigma 1} and \eqref{equ:rho 2 sigma 2}, the set $K_\varepsilon:=\{\bff{v}\in \bb{H}^2: \norm{\bff{v}}{\bb{H}^1}^2 \leq \varepsilon^{-1} \rho_1 \text{ and } \norm{\bff{v}}{\bb{H}^2}^2 \leq \varepsilon^{-4} \rho_3\}$ is a bounded absorbing set for $\bff{S}_\varepsilon(t)$ which is compact in $\bb{H}^1$. Due to the uniqueness of global attractor and the compactness of $K_\varepsilon$, we have $\mathcal{A_\varepsilon}= \omega(K_\varepsilon)$. Note that $\omega(K_\varepsilon)\subset K_\varepsilon$. Therefore, $\bff{u}(t),\bff{v}(t)\in K_\varepsilon$ for all $t>0$ since $\bff{u}_0,\bff{v}_0\in \mathcal{A}_\varepsilon$.
Now, let $\bff{w}(t)=\bff{u}(t)-\bff{v}(t)$. We have, from~\eqref{equ:ddtw L2},
\begin{align*}
    \ddt \norm{\bff{w}}{\bb{L}^2}^2
    +
    \varepsilon \norm{\Delta \bff{w}}{\bb{L}^2}^2
    +
    \norm{\nabla \bff{w}}{\bb{L}^2}^2
    &\leq
    C \left(1+ \varepsilon^{-1} \norm{\bff{v}}{\bb{L}^\infty}^2 + \norm{\bff{u}}{\bb{L}^\infty}^4 + \norm{\bff{v}}{\bb{L}^\infty}^4 \right) \norm{\bff{w}}{\bb{L}^2}^2
    \leq
    C\varepsilon^{-5} \norm{\bff{w}}{\bb{L}^2}^2,
\end{align*}
where in the last step we used the Agmon inequality, and the fact that $\bff{u}(t),\bff{v}(t)\in K_\varepsilon$. An application of the Gronwall inequality gives~\eqref{equ:A L2uv}.

By similar argument as in the proof of Lemma~\ref{lem:smt H1}, but instead successively taking the inner product with $-t\Delta \bff{w}$ in~\eqref{equ:12 nab w L2}, with $-\sigma t\Delta \bff{w}$ in~\eqref{equ:sigma B Delta w}, and with $\varepsilon t \Delta^2 \bff{w}$ in~\eqref{equ:ep Delta B Delta w}, then estimating the result in a similar manner, we obtain an inequality analogous to~\eqref{equ:ddt wt H1}:
\begin{align*}
    t \norm{\bff{w}(t)}{\bb{H}^1}^2
    &+
    \int_0^t \varepsilon s \norm{\nabla \Delta \bff{w}(s)}{\bb{L}^2}^2 \ds
    +
    \int_0^t s \norm{\Delta \bff{w}(s)}{\bb{L}^2}^2 \ds 
    \\
    &\leq
    t \norm{\bff{w}(t)}{\bb{L}^2}^2
    +
    C\int_0^t \norm{\bff{w}(s)}{\bb{H}^1}^2 \ds 
    +
    Ct\int_0^t \mathcal{B}(\bff{u},\bff{v}) \norm{\bff{w}(s)}{\bb{H}^1}^2 \ds
    +
    Ct \int_0^t \varepsilon \norm{\bff{w}(s)}{\bb{H}^2}^2 \ds
    \\
    &\leq
    C(1+t) e^{C\varepsilon^{-5}t} \norm{\bff{w}(0)}{\bb{L}^2}^2
    +
    C\int_0^t \norm{\bff{w}(s)}{\bb{H}^1}^2 \ds 
    +
    Ct\int_0^t \mathcal{B}(\bff{u},\bff{v}) \norm{\bff{w}(s)}{\bb{H}^1}^2 \ds
    \\
    &\leq
    C(1+t)e^{C\varepsilon^{-5}t} \norm{\bff{w}(0)}{\bb{L}^2}^2
    +
    Ct\varepsilon^{-8} \int_0^t \norm{\bff{w}(s)}{\bb{H}^1}^2,
\end{align*}
where in the last step we used \eqref{equ:A L2uv} and the fact that by~\eqref{equ:rho 4},
\begin{align*}
    \mathcal{B}(\bff{u},\bff{v})
    &:= 
    1+ \norm{\bff{u}}{\bb{L}^\infty}^4 + \norm{\bff{v}}{\bb{L}^\infty}^4
    +
    \norm{\bff{u}}{\bb{H}^3}^2 
    + 
    \left(1+ \norm{\bff{u}}{\bb{H}^1}^4 + \norm{\bff{v}}{\bb{H}^1}^4 \right) \left(1+\norm{\bff{u}}{\bb{H}^2}^2 + \norm{\bff{v}}{\bb{H}^2}^2 \right)
    \leq
    C\varepsilon^{-8}.
\end{align*}
Applying~\eqref{equ:A L2uv} again, we obtain~\eqref{equ:A H1 uv}.
\end{proof}

The following theorem shows that the fractal dimension of the global attractor is finite.

\begin{theorem}\label{the:dim}
Let $\mathcal{A}_\varepsilon$ be the global attractor of the semi-dynamical system generated by~\eqref{equ:llbar}. Then
\[
    \text{dim}_{\mathrm{F}} \mathcal{A}_\varepsilon \leq C\varepsilon^{-4d},
\]
where $C$ is independent of $\varepsilon$.
In particular, $\mathcal{A}_\varepsilon$ has a finite fractal dimension.
\end{theorem}

\begin{proof}
We take $t=\varepsilon^8$ in~\eqref{equ:A H1 uv} to obtain
\begin{align*}
    \norm{\bff{S}_\varepsilon(\varepsilon^8) \bff{u}_0 - \bff{S}_\varepsilon(\varepsilon^8) \bff{v}_0}{\bb{H}^1} 
    \leq
    C \varepsilon^{-4} \norm{\bff{u}_0- \bff{v}_0}{\bb{L}^2},
    \quad \forall \bff{u}_0,\bff{v}_0\in \mathcal{A}_\varepsilon
\end{align*}
where $C$ is a constant independent of $\varepsilon$. Theorem~\ref{the:dim smooth} and Theorem~\ref{the:embed} then imply the required result.
\end{proof}

\section{Exponential attractors}\label{sec:furth attractor}

It is known that, while the global attractor has many desirable properties as an appropriate object to study when considering long-time behaviour, it may attract trajectories at a very slow rate and is sensitive to perturbation. Furthermore, it is in general very difficult to express the convergence rate only in terms of the physical parameters of the problem. As such, it is argued in \cite{EdeFoiNicTem94} that one should consider a larger object which is more robust under perturbation and attracts trajectories at a fast rate, but is still finite dimensional. Such an object is called an exponential attractor, whose construction is explained in~\cite{MirZel08}.

In this section, we study exponential attractors for~\eqref{equ:llbar}.
We show the robustness (continuity) of the exponential attractor with respect to the parameter $\varepsilon\in [0,1]$ for the case $d\leq 2$ and $\lambda_2=0$, i.e. the higher-order term of the anisotropy field $\Phi_{\mathrm{a}}$ is assumed to be negligible (which is physically reasonable). To this end, we need estimates for the solution and the difference of two solutions established in Appendix~\ref{sec:uniform in e}.
The existence of a family of exponential attractors for~\eqref{equ:llbar} which are continuous with respect to the parameter $\varepsilon$ implies the existence of an exponential attractor, hence also global attractor with finite fractal dimension, for the Landau--Lifshitz--Bloch equation with spin torque terms as well as the convective Allen--Cahn equation.
We start with the following definition of exponential attractor~\cite{MirZel08}.

\begin{definition}[Exponential attractor]\label{def:exp att}
A subset $\mathcal{M}\subseteq X$ is an \emph{exponential attractor} for $S(t)$ if
\begin{enumerate}
	\item it is compact in $X$,
	\item it has finite fractal dimension, $\text{dim}_{\mathrm{F}} \mathcal{M} < +\infty$,
	\item it is semi-invariant, i.e. $S(t) \mathcal{M} \subseteq \mathcal{M}$, $\forall t\geq 0$,
	\item it attracts exponentially fast bounded subsets of $X$ in the following sense: for every bounded set $B\subset X$, there exists a constant $c$ depending on $B$ and $\alpha\geq 0$ such that
	\[
		\text{dist}\big(S(t)B, \mathcal{M}\big) \leq c e^{-\alpha t}, \quad \forall t\geq 0.
	\]
\end{enumerate}
\end{definition}

An exponential attractor, if it exists, contains the global attractor and this implies the finite dimensionality of the global attractor.
To show the existence of an exponential attractor, we follow the general ideas from~\cite{MirZel08} (also see~\cite{Pie18}).

\begin{theorem}[\cite{Pie18}]\label{the:cond exp att}
Let $X_1$ and $X$ be two Hilbert spaces such that $X_1\hookrightarrow X$ is a compact embedding, and let $S(t): X \to X$ be a strongly continuous semigroup.
Suppose that
\begin{enumerate}
    \item there exists a bounded absorbing set $B\subset X$,
	\item the smoothing property holds on $B$, i.e. for all $x_1,x_2\in B$, and $t>0$,
	\[
		\norm{S(t)x_1 - S(t)x_2}{X_1} \leq h(t) \norm{x_1-x_2}{X},
	\]
	where $h$ is a continuous function of $t$,
	\item for any $T>0$ and $x\in B$, the map $t \mapsto S(t)x$ is H\"older continuous as a map from $[0,T]$ to $X$,
	\item for any $t\in [0,T]$, the map $x \mapsto S(t)x$ is Lipschitz continuous as a map from $B$ to $X$.
\end{enumerate}
Then $S(t)$ possesses an exponential attractor $\mathcal{M}$ on $X$.
\end{theorem}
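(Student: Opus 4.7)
The statement is the abstract existence theorem for exponential attractors due to Efendiev--Miranville--Zelik, so my plan is to reproduce the standard three-step construction rather than invent anything new; the citation to \cite{MirZel08} will do most of the heavy lifting, but the sketch should still make clear why the three hypotheses are the right ones.

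First I would pass to a discrete-time dynamical system. Choose $t^\ast > 0$ such that $h(t^\ast) \leq 1/4$, which is possible after enlarging the smoothing constant in the hypothesis (this is the one place where the smoothing estimate is used in an essential way). Set $L := S(t^\ast)$ and let $B \supseteq B_R$ be a (closed, bounded) positively invariant absorbing set for $L$ in $H$; such a $B$ exists because $S(t)$ is assumed to act on $E$ and $B_R$ is bounded. The smoothing property then reads
\begin{equation*}
\norm{Lx_1 - Lx_2}{X} \leq \tfrac{1}{4}\norm{x_1 - x_2}{H}, \qquad \forall x_1, x_2 \in B,
\end{equation*}
which, combined with the compact embedding $X \hookrightarrow H$, is precisely the hypothesis of the abstract discrete result in Theorem~\ref{the:dim smooth}.

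Second, I would build the exponential attractor $M_d$ for the discrete semigroup $\{L^n\}_{n\in\bb{N}_0}$. The idea is iterative covering: cover $L(B)$ by finitely many $H$-balls of radius $1/2$ centred at points of $L(B)$ (this uses that $L(B)$ is relatively compact in $H$, which follows from smoothing into $X$ together with the compact embedding); apply $L$ to each such ball, use the contraction $h(t^\ast) \leq 1/4$ in the $X$-norm and the compact embedding to re-cover each image by finitely many $H$-balls of radius $1/4$; iterate. The union of all centres from every generation, together with their forward orbits under $L$, produces a compact, positively invariant set $M_d \subset B$ that attracts $B$ exponentially under $L^n$ and whose Kolmogorov $\epsilon$-entropy grows logarithmically in $1/\epsilon$, hence $\dim_{\mathrm{F}} M_d < \infty$. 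I expect this covering/squeezing step to be the main obstacle: the bookkeeping that shows the entropy of the union of generations is summable and yields a finite fractal dimension is the real content of \cite{MirZel08}, and is where the precise form of the smoothing constant $h(t^\ast)$ matters.

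Third, I would extend $M_d$ to a continuous-time exponential attractor by setting
\begin{equation*}
\mathcal{M} := \bigcup_{s \in [0, t^\ast]} S(s) M_d.
\end{equation*}
Invariance of $\mathcal{M}$ under the continuous semigroup $\{S(t)\}_{t \geq 0}$ follows from the semigroup property and the positive invariance of $M_d$ under $L$. To see $\mathcal{M}$ is still finite-dimensional and compact, I would use the joint H\"older-in-$t$/Lipschitz-in-$x$ continuity assumed in hypotheses (2) and (3): the map $(s,x) \mapsto S(s)x$ from $[0,t^\ast] \times M_d$ into $H$ is H\"older, and a H\"older image of a finite-dimensional compact set has finite fractal dimension (with an explicit bound depending on the H\"older exponent). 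Exponential attraction of bounded sets under $\{S(t)\}_{t\geq 0}$ then follows from exponential attraction under $\{L^n\}$ together with Lipschitz continuity of $S(s)$ on $B_R$ for $s\in[0,t^\ast]$, which upgrades the discrete decay rate $(1/4)^n$ into a continuous exponential decay $ce^{-\alpha t}$ with $\alpha = (\log 4)/t^\ast$. Once these three ingredients are in place, $\mathcal{M}$ satisfies all four conditions of Definition~\ref{def:exp att}.
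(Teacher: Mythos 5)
Your overall route -- discretise the dynamics, build a discrete exponential attractor by iterated coverings, then fatten it by the time-$[0,t^{\ast}]$ orbit using the H\"older/Lipschitz hypotheses -- is exactly the standard Efendiev--Miranville--Zelik construction; the paper does not prove this statement at all (it imports it from \cite{MirZel08}, see also \cite{EfeMirZel00}), so there is no in-paper argument to deviate from. There is, however, a genuine flaw in your first step: you cannot ``choose $t^{\ast}>0$ such that $h(t^{\ast})\leq 1/4$''. The hypothesis only provides \emph{some} continuous function $h$ (in the application in this paper it has the form $C(1+t^{-1})e^{Ct^{3}}$ with a large $C$, so it is never small for any $t$), and ``enlarging the smoothing constant'' goes in the wrong direction. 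Smallness of $h(t^{\ast})$ is neither available nor needed: the squeezing does not come from $L=S(t^{\ast})$ being a $1/4$-contraction, it comes from the compactness of the embedding $X\hookrightarrow H$. The correct bookkeeping is: if a piece of $L^{n}(B)$ sits in an $H$-ball of radius $\epsilon$, the smoothing estimate places its image under $L$ inside an $X$-ball of radius $h(t^{\ast})\epsilon$, and that $X$-ball can be covered by $N:=N_{1/(2h(t^{\ast}))}(B_{X})$ balls of $H$-radius $\epsilon/2$, where $B_{X}$ is the unit ball of $X$ viewed inside $H$ and $N$ is finite by compactness, independent of $\epsilon$ and of the generation. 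The covering radius is halved at every step at the price of the \emph{fixed} factor $N$, which is what produces exponential attraction together with a fractal dimension controlled by $\log_{2}N$; this is precisely the mechanism of Theorem~\ref{the:dim smooth}, whose bound is the entropy $\mathcal{H}_{1/4\alpha}(B_{E_1})$ for an arbitrary, possibly very large, smoothing constant $\alpha$. As written, your contraction step would fail at the outset, and your remark that this is ``the one place the smoothing estimate is used in an essential way'' misplaces where the estimate actually acts (it is used at every re-covering, to trade $H$-information for $X$-information).

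A secondary point: you assert that a bounded, positively invariant absorbing set $B\supseteq B_{R}$ exists ``because $S(t)$ acts on $E$ and $B_{R}$ is bounded''. That does not follow from anything in the statement; positive invariance (needed so that all iterates remain where the smoothing estimate holds) and the absorbing property (needed to upgrade attraction of $B$ to attraction of all bounded sets, as demanded by Definition~\ref{def:exp att}) are extra dissipativity inputs that the loosely worded statement leaves implicit and that, in this paper's application, are supplied by the absorbing-set estimates of Proposition~\ref{pro:summary H2}. The rest of your sketch -- semi-invariance of $\mathcal{M}=\bigcup_{s\in[0,t^{\ast}]}S(s)M_{d}$ via writing $t+s=nt^{\ast}+r$, compactness and finite dimension of $\mathcal{M}$ via the H\"older map $(s,x)\mapsto S(s)x$ on $[0,t^{\ast}]\times M_{d}$, and the upgrade of the discrete decay to $ce^{-\alpha t}$ with $\alpha=(\log 4)/t^{\ast}$ -- is correct once the covering step is repaired as above.
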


The following theorem shows the existence of an exponential attractor for~\eqref{equ:llbar}. Some relevant estimates can be found in Appendix~\ref{sec:app b}.

\begin{theorem}\label{the:exp attractor}
The semi-dynamical system generated by~\eqref{equ:llbar} has an exponential attractor $\mathcal{M}$ on $\bb{H}^1$ in the sense of Definition~\ref{def:exp att}.
\end{theorem}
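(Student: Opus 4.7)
The plan is to apply Theorem~\ref{the:cond exp att} with $H = \bb{H}^1$ and $X = \bb{H}^2$, exploiting the compact embedding $\bb{H}^2 \hookrightarrow \bb{H}^1$, on a positively invariant compact absorbing set $B_0 \subset \bb{H}^1$ that is bounded in $\bb{H}^2$. To construct $B_0$, observe that by Proposition~\ref{pro:summary H2} (with $k=2$), the ball $\mathcal{B} := \{\bff{v} \in \bb{H}^2 : \|\bff{v}\|_{\bb{H}^2}^2 \leq \alpha_2\}$ absorbs every bounded subset of $\bb{H}^1$; indeed, initial data in $\bb{H}^1$ are bounded in $\bb{L}^2$, and Proposition~\ref{pro:H2 H1 smooth} ensures the $\bb{H}^2$ norm becomes finite instantly, after which the absorbing time in $\bb{H}^2$ of Proposition~\ref{pro:summary H2} applies. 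Moreover the uniform-in-$t$ bound $\|\bff{S}(t)\bff{v}\|_{\bb{H}^2}^2 \leq C\|\bff{v}\|_{\bb{H}^2}^2$ from Proposition~\ref{pro:summary H2} shows that $\bigcup_{t\geq 0}\bff{S}(t)\mathcal{B}$ stays bounded in $\bb{H}^2$. Setting
\[
B_0 := \overline{\bigcup_{t\geq 0}\bff{S}(t)\mathcal{B}}^{\,\bb{H}^1},
\]
yields a set that is positively invariant by construction, bounded in $\bb{H}^2$, hence compact in $\bb{H}^1$, and still absorbs bounded subsets of $\bb{H}^1$.

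Next, I verify the three hypotheses of Theorem~\ref{the:cond exp att} on $B_0$. The \emph{smoothing property} follows from Lemma~\ref{lem:smoo} evaluated at a fixed time $t_0 > 0$: for $\bff{u}_0, \bff{v}_0 \in B_0$,
\[
\|\bff{S}(t_0)\bff{u}_0 - \bff{S}(t_0)\bff{v}_0\|_{\bb{H}^2} \leq C(t_0)\|\bff{u}_0 - \bff{v}_0\|_{\bb{H}^1},
\]
where $C(t_0)$ depends only on $t_0$ and the $\bb{H}^1$-diameter of $B_0$. The \emph{Lipschitz continuity} of $\bff{u}_0 \mapsto \bff{S}(t)\bff{u}_0$ in the $\bb{H}^1$ norm on bounded time intervals is given directly by Lemma~\ref{lem:smt H1}. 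For the \emph{H\"older continuity} of $t \mapsto \bff{S}(t)\bff{u}_0$, Proposition~\ref{pro:summary H2} (with $k=2$) supplies a uniform bound $\int_0^T\|\partial_t \bff{u}(s)\|_{\bb{L}^2}^2\,\mathrm{d}s \leq C$ over $\bff{u}_0 \in B_0$, whence
\[
\|\bff{S}(t)\bff{u}_0 - \bff{S}(s)\bff{u}_0\|_{\bb{L}^2} \leq C|t-s|^{1/2}.
\]
Interpolating via $\|\bff{w}\|_{\bb{H}^1} \leq C\|\bff{w}\|_{\bb{L}^2}^{1/2}\|\bff{w}\|_{\bb{H}^2}^{1/2}$ with the uniform $\bb{H}^2$ bound on $B_0$ upgrades this to H\"older-$1/4$ continuity into $\bb{H}^1$.

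Theorem~\ref{the:cond exp att} then produces an exponential attractor $\mathcal{M} \subset B_0$ for the restricted semigroup. Its exponential attraction transfers to all of $\bb{H}^1$ by the absorbing property of $B_0$: given a bounded $D \subset \bb{H}^1$ with absorption time $t_*(D)$, we have $\bff{S}(t)D \subseteq B_0$ for $t \geq t_*$, so
\[
\mathrm{dist}_{\bb{H}^1}\bigl(\bff{S}(t)D, \mathcal{M}\bigr) \leq ce^{-\alpha(t-t_*)}, \quad t \geq t_*(D),
\]
which is exponential attraction in the sense of Definition~\ref{def:exp att}. The main technical nuisance is that the smoothing estimate of Lemma~\ref{lem:smoo} carries the time-singular factor $(1+t^{-1})e^{Ct^3}$; however, since the abstract construction only requires smoothing at a single fixed time $t_0$, this is absorbed into a single constant and poses no genuine obstacle. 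The real substance lies upstream, in the proof of Lemma~\ref{lem:smoo}, where a differential inequality for $\|\bff{u}-\bff{v}\|_{\bb{H}^2}^2$ multiplied by $t$ must be closed against all the non-monotone nonlinearities; this was carried out earlier using the higher-order a priori estimates of Section~\ref{sec:exist}.
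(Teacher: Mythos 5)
Your proposal is correct and follows essentially the same route as the paper: both apply Theorem~\ref{the:cond exp att} with $X=\bb{H}^2$, $H=\bb{H}^1$, taking the smoothing property from Lemma~\ref{lem:smoo} and the Lipschitz continuity from Lemma~\ref{lem:smt H1}. Your explicit construction of the $\bb{H}^2$-bounded absorbing set and your derivation of H\"older continuity in time from the $\int_0^T\|\partial_t\bff{u}\|_{\bb{L}^2}^2\,\ds$ bound plus interpolation merely fill in details the paper delegates to Proposition~\ref{pro:summary H2} and to the reference cited for time regularity, so no substantive difference or gap remains.
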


\begin{proof}
It remains to verify the conditions in Theorem~\ref{the:cond exp att}, where $X_1=\bb{H}^2$ and $X=\bb{H}^1$. 
Let $B=K_\varepsilon$ defined in the proof of Lemma~\ref{lem:pre dim f}.
Then $B$ is a bounded absorbing set. Smoothing property is inferred from Lemma~\ref{lem:smoo}. H\"older continuity in time can be shown as in~\cite[Theorem 2.3]{SoeTra23}. Lipschitz continuity on $B\subset X$ is given by Lemma~\ref{lem:smt H1}. This completes the proof of the theorem.
\end{proof}

It is known that in general, the global attractor is sensitive to a perturbation of parameter. There exist abstract conditions that guarantee continuous dependence of the global attractor on a parameter, however it is difficult to verify in practice~\cite{Zel23}. Exponential attractors, on the other hand, are more robust to perturbation.
We state the main theorem of this section on the existence of a family of exponential attractors $\left\{\mathcal{M}_\varepsilon: \varepsilon\in [0, \sigma/\kappa_1]\right\}$ for the semigroup $\bff{S}_\varepsilon(t)$ generated by~\eqref{equ:llbar}. Here, the assumption $\varepsilon\in [0, \sigma/\kappa_1]$ is needed so that $\eta= \sigma-\kappa_1 \varepsilon>0$ holds, which simplifies the proofs.

\begin{theorem}\label{the:robust exp att}
Let $d\leq 2$, and let $\bff{S}_\varepsilon(t)$ be the semigroup generated by~\eqref{equ:llbar} with $\lambda_2=0$. There exists a robust family of exponential attractors $\left\{\mathcal{M}_\varepsilon: \varepsilon\in [0, \sigma/\kappa_1]\right\}$ for $\bff{S}_\varepsilon(t)$ such that
\begin{enumerate}
    \item the fractal dimension of $\mathcal{M}_\varepsilon$ is bounded, uniformly with respect to $\varepsilon$,
    \item for every bounded subset $D\subset \bb{H}^1$, there exists a constant $c$ depending on $D$ such that
    \[
        \text{dist}\left(\bff{S}_\varepsilon(t)D, \mathcal{M}_\varepsilon\right) \leq c e^{-\alpha t}, \quad \forall t\geq 0,
    \]
    where the positive constant $c$ and $\alpha$ are independent of $\varepsilon$,
    \item the family $\left\{\mathcal{M}_\varepsilon: \varepsilon\in [0, \sigma/\kappa_1]\right\}$ is H\"older continuous at $0$, namely
    \[
        \text{dist}_{\text{sym}} \left(\mathcal{M}_\varepsilon, \mathcal{M}_0\right) \leq c\varepsilon^k,
    \]
    where $c\geq 0$, $k\in (0,1)$ are independent of $\varepsilon$, and $\text{dist}_{\text{sym}}$ is the symmetric Hausdorff distance; see~\eqref{equ:dist symm}.
\end{enumerate}
\end{theorem}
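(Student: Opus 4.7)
The plan is to invoke an abstract construction of robust exponential attractors in the spirit of Efendiev--Miranville--Zelik, feeding it the uniform-in-$\varepsilon$ estimates already established in Lemmas~\ref{lem:H1 unif e}--\ref{lem:Se S0 u0}. First, from Lemma~\ref{lem:H2 unif} together with its analogue at $\varepsilon=0$ (which reduces to the corresponding second-order equation and is established by the same energy argument with the fourth-order viscosity term dropped), I would fix a bounded set $\mathbb{B}\subset\bb{H}^2$ which is semi-invariant and absorbing for $\bff{S}_\varepsilon(t)$ uniformly in $\varepsilon\in[0,\sigma/\kappa_1]$. Since $\mathbb{B}$ attracts every bounded subset of $\bb{H}^1$ in finite time at a rate independent of $\varepsilon$, it suffices to build $\mathcal{M}_\varepsilon$ for the restricted dynamics on $\mathbb{B}$ and then transport the resulting object back to the whole phase space.

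Next, I would verify the four hypotheses of the abstract theorem with some $t^{*}>0$ to be chosen later. The uniform smoothing estimate is Lemma~\ref{lem:Se u0 v0}, giving $\|\bff{S}_\varepsilon(t^{*})\bff{u}_0-\bff{S}_\varepsilon(t^{*})\bff{v}_0\|_{\bb{H}^2}\leq L_*\|\bff{u}_0-\bff{v}_0\|_{\bb{H}^1}$ on $\mathbb{B}$ with $L_*$ independent of $\varepsilon$; combined with the compact embedding $\bb{H}^2\hookrightarrow\bb{H}^1$, this realises $L_\varepsilon:=\bff{S}_\varepsilon(t^{*})|_{\mathbb{B}}$ as a compact Lipschitz perturbation on $\mathbb{B}$. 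Uniform Lipschitz continuity of $\bff{S}_\varepsilon(t)$ in $\bb{H}^1$ on $[0,2t^{*}]$ comes from Lemma~\ref{lem:Se H1}. Uniform H\"older continuity of $t\mapsto\bff{S}_\varepsilon(t)\bff{u}_0$ in $\bb{H}^1$ follows by bounding $\partial_t\bff{u}$ in $\bb{H}^{-1}$ using the equation, the uniform $\bb{H}^3$ bound from Lemma~\ref{lem:H3 unif}, and~\eqref{equ:Phi d Wkp}. Continuous dependence on $\varepsilon$ is exactly Lemma~\ref{lem:Se S0 u0}. Granted these, the abstract construction produces a discrete exponential attractor $\mathcal{M}^d_\varepsilon$ for $L_\varepsilon$ whose fractal dimension is controlled by the Kolmogorov $(1/4L_*)$-entropy of the unit ball of $\bb{H}^2$ in $\bb{H}^1$ (cf. Theorem~\ref{the:dim smooth}), hence independent of $\varepsilon$. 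The continuous-time attractor is then $\mathcal{M}_\varepsilon:=\bigcup_{t\in[0,t^{*}]}\bff{S}_\varepsilon(t)\mathcal{M}^d_\varepsilon$, and assertions (1) and (2) of the theorem follow by combining the uniform H\"older continuity in $t$ with the uniform discrete attraction rate.

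The decisive third assertion $\mathrm{dist}_{\mathrm{sym}}(\mathcal{M}_\varepsilon,\mathcal{M}_0)\leq c\varepsilon^k$ requires that the construction of $\mathcal{M}^d_\varepsilon$ depend continuously on the generator $L_\varepsilon$ in the sense that $\|L_\varepsilon-L_0\|_{C(\mathbb{B};\bb{H}^1)}\leq\delta$ implies $\mathrm{dist}_{\mathrm{sym}}(\mathcal{M}^d_\varepsilon,\mathcal{M}^d_0)\leq C\delta^k$ for some $k\in(0,1)$. This is a built-in feature of the Efendiev--Miranville--Zelik machinery: at each iteration step the Hausdorff defect between the approximations of $\mathcal{M}^d_\varepsilon$ and $\mathcal{M}^d_0$ is amplified by the uniform Lipschitz constant $\tilde L$ of $L_\varepsilon$ on an attracting neighbourhood, but $N$-step iterates contract by $e^{-\alpha N}$. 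Optimising over $N\sim-\log\delta/(\log\tilde L+\alpha)$ yields a H\"older bound with exponent $k=\alpha/(\log\tilde L+\alpha)$. Lemma~\ref{lem:Se S0 u0} gives $\delta\leq C\varepsilon e^{C t^{*}}$ for fixed $t^{*}$, so the discrete estimate transfers to $\mathcal{M}^d_\varepsilon$, and the passage to continuous time preserves the H\"older bound thanks to the uniform H\"older continuity in $t$ and the uniform Lipschitz continuity in the initial data.

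The main obstacle is the simultaneous tracking of these competing exponential rates: the error in Lemma~\ref{lem:Se S0 u0} grows like $\varepsilon e^{Ct}$ with essentially the same constant $C$ appearing in the Lipschitz estimate of Lemma~\ref{lem:Se H1}, whereas the contraction factor of the exponential attractor is $e^{-\alpha t^{*}}$ for some $\alpha$ determined by the entropy of the unit ball of $\bb{H}^2$ in $\bb{H}^1$. One must choose $t^{*}$ large enough that $L_\varepsilon$ is a genuine contraction modulo a compact perturbation (to obtain $\alpha>0$), yet not so large that the continuous-dependence error explodes. A secondary technical subtlety is verifying all uniform estimates at the degenerate endpoint $\varepsilon=0$, where the governing equation is only of second order and the terms involving $\varepsilon\Delta^2\bff{u}$ in the energy identities vanish; one needs to re-derive Lemma~\ref{lem:H2 unif}, Lemma~\ref{lem:H3 unif}, and the smoothing Lemma~\ref{lem:Se u0 v0} directly for the limit equation, exploiting the standard parabolic smoothing available in the second-order setting.
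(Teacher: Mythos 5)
Your proposal is correct and follows essentially the same route as the paper: the proof there simply applies the abstract robust exponential attractor result (Theorem~\ref{the:robust exp}, from Efendiev--Miranville--Zelik) with $X=\bb{H}^2$, $H=\bb{H}^1$, $B=\bb{B}$, verifying the uniform smoothing via Lemma~\ref{lem:Se u0 v0}, the $\varepsilon$-continuity via Lemma~\ref{lem:Se S0 u0}, and Lipschitz/H\"older continuity as in Theorem~\ref{the:exp attractor}, exactly the ingredients you identify. The internal discrete-iteration argument you sketch is treated as a black box in the paper, and your concern about the endpoint $\varepsilon=0$ is already absorbed there since Lemmas~\ref{lem:Se H1}--\ref{lem:Se S0 u0} and the uniform a priori bounds are stated for all $\varepsilon\in[0,\sigma/\kappa_1]$.
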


\begin{proof}
We apply Theorem~\ref{the:robust exp} with $X_1=\bb{H}^2$, $X=\bb{H}^1$, and 
$B=\{\bff{v}\in \bb{H}^4: \norm{\bff{v}}{\bb{H}^k}^2 \leq \widetilde{\rho_k} \text{ for } k=1,2,3,4\}$,
where $\widetilde{\rho_1}$, $\widetilde{\rho_2}$, $\widetilde{\rho_3}$, and $\widetilde{\rho_4}$ are defined in \eqref{equ:tt1 u H1}, \eqref{equ:tt1 H2}, \eqref{equ:tt1 H3}, and \eqref{equ:tt1 H4}, respectively. As in the proof of Lemma~\ref{lem:pre dim f}, $B$ is an absorbing set for $\bff{S}_\varepsilon(t)$, thus it is semi-invariant.
The hypotheses (1)--(2) of Theorem~\ref{the:robust exp} are verified in Lemma~\ref{lem:Se u0 v0} and Lemma~\ref{lem:Se S0 u0}. H\"older continuity in time (hypothesis (3) in Theorem~\ref{the:robust exp}) can be verified in the same way as~\cite[Theorem 2.3]{SoeTra23}, while Lipschitz continuity on $\bb{H}^1$ (hypothesis (4)) is shown in Lemma~\ref{lem:Se H1}. This completes the proof of the theorem.
\end{proof}

\section*{Acknowledgements}
The authors acknowledge financial support through the Australian Research Council's Discovery Projects funding scheme (project DP200101866).
Agus L. Soenjaya is supported by the Australian Government Research Training Program (RTP) Scholarship awarded at the University of New South Wales, Sydney.

We would like to thank the referees for their careful reading and valuable suggestions, which improve the quality of the manuscript.

\appendix

\section{Auxiliary results}

In this section, we collect some inequalities and results which are extensively used in this paper.

\begin{theorem}[The uniform Gronwall inequality~\cite{Rob01}]\label{the:unif gronwall}
	Let $g$, $h$, and $y$ be non-negative locally integrable functions on $(t_0, \infty)$ such that $\dy/\dt$ is locally integrable on $(t_0,\infty)$ and satisfies
	\begin{align}\label{equ:ineq dydt}
		\frac{\dy}{\dt} \leq g(t)y(t)+h(t), \quad \forall t\geq t_0.
	\end{align}
	Let $r>0$. Suppose that there exist $a_1$, $a_2$, $a_3 \geq 0$ such that
	\begin{align*}
		\int_t^{t+r} g(s)\, \ds \leq a_1,
		\quad
		\int_t^{t+r} h(s) \, \ds \leq a_2,
		\quad 
		\int_t^{t+r} y(s)\, \ds \leq a_3,
		\quad \forall t\geq t_0.
	\end{align*}
	Then
	\begin{align*}
		y(t) \leq \left(\frac{a_3}{r}+a_2\right) \exp(a_1),
		\quad \forall t\geq t_0+r.
	\end{align*}
\end{theorem}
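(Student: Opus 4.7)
The plan is to apply the classical (integral) Gronwall inequality on the subinterval $[s, t+r]$ with $s\in[t,t+r]$ viewed as a parameter, and then average the resulting bound over $s$. This is a standard trick that converts an \emph{integral} control of $y$ over windows of length $r$ into a \emph{pointwise} bound at the right endpoint.

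First, I would fix $t\geq t_0$ and, for each $s\in[t,t+r]$, apply the integral form of Gronwall's inequality to the absolutely continuous function $y$ on $[s,t+r]$. From $y'(\tau)\leq g(\tau)y(\tau)+h(\tau)$ one obtains
\begin{equation*}
y(t+r) \leq y(s)\exp\!\left(\int_s^{t+r} g(\tau)\,d\tau\right) + \int_s^{t+r} h(\tau)\exp\!\left(\int_\tau^{t+r} g(\sigma)\,d\sigma\right)d\tau.
\end{equation*}
Since $[s,t+r]\subseteq[t,t+r]$ and likewise $[\tau,t+r]\subseteq[t,t+r]$ for every $\tau\in[s,t+r]$, the uniform hypothesis $\int_t^{t+r}g\leq a_1$ controls both exponential factors by $e^{a_1}$, and $\int_s^{t+r}h\leq\int_t^{t+r}h\leq a_2$ bounds the remaining integral. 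This yields the parametric inequality
\begin{equation*}
y(t+r) \leq \bigl(y(s)+a_2\bigr)\,e^{a_1}, \qquad \forall s\in[t,t+r].
\end{equation*}

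The final step would be to integrate this inequality in $s$ over $[t,t+r]$ and divide by $r$. Using the uniform bound $\int_t^{t+r} y(s)\,ds \leq a_3$ then gives
\begin{equation*}
r\,y(t+r) \leq a_3\,e^{a_1} + r\,a_2\,e^{a_1},
\end{equation*}
which is the desired conclusion. I do not anticipate any serious obstacle: the only point requiring care is to ensure that the integral form of Gronwall's inequality is applicable, which follows from the local integrability of $y'$ (so that $y$ is absolutely continuous on compact subintervals of $(t_0,\infty)$) together with the non-negativity of $g$ and $h$.
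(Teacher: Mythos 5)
Your argument is correct and is exactly the classical proof of the uniform Gronwall lemma (Temam-style): apply Gronwall on $[s,t+r]$ to get $y(t+r)\leq (y(s)+a_2)e^{a_1}$ for each $s\in[t,t+r]$, then average in $s$ and use $\int_t^{t+r}y\leq a_3$. The paper states this result in the appendix without proof, so your write-up simply supplies the standard argument it relies on; the only caveat is the usual implicit assumption that $y$ is absolutely continuous on compact subintervals (so that the differential inequality can be integrated), which is how the hypothesis on $\mathrm{d}y/\mathrm{d}t$ is meant to be read.
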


\begin{theorem}\label{the:unif gronw 2}
    Let $f$, $y$, and $z$ be non-negative locally integrable functions on $(t_0, \infty)$ such that $\dy/\dt$ is locally integrable on $(t_0,\infty)$ and satisfies
	\begin{align}\label{equ:ineq f dydt}
		\frac{\dy}{\dt} \leq f(t, z(t)), \quad \forall t\geq t_0.
	\end{align}
	Let $r>0$. Suppose that there exist $a,b \geq 0$ such that
	\begin{align}\label{equ:assum gron2}
		\int_t^{t+r} f(s, z(s))\, \ds \leq a,
		\quad
		\int_t^{t+r} y(s) \, \ds \leq b,
		\quad \forall t\geq t_0.
	\end{align}
	Then
	\begin{align*}
		y(t) \leq \frac{b}{r}+a,
		\quad \forall t\geq t_0+r.
	\end{align*}
\end{theorem}

\begin{proof}
Integrating \eqref{equ:ineq f dydt} over $(s,t)$, where $t_0\leq t-r\leq s\leq t$, gives
\begin{align*}
    y(t)\leq y(s) + \int_s^t f(\tau, z(\tau)) \,\dtau.
\end{align*}
Next, we integrate this last expression with respect to $s$ over $(t-r,t)$ to obtain
\begin{align*}
    r y(t) \leq \int_{t-r}^t y(s)\,\ds + \int_{t-r}^t \int_s^t f(\tau,z(\tau)) \,\dtau\,\ds
    &=
    \int_{t-r}^t y(s)\,\ds + \int_{t-r}^t \int_{t-r}^\tau f(\tau,z(\tau))\, \ds\,\dtau
    \\
    &=
    \int_{t-r}^t y(s)\,\ds + \int_{t-r}^t (\tau-(t-r))\, f(\tau, z(\tau))\,\dtau
    \\
    &\leq
    \int_{t-r}^t y(s)\,\ds + r \int_{t-r}^t f(\tau, z(\tau))\,\dtau
    \\
    &\leq
    b+ ra,
\end{align*}
for any $t\geq t_0+r$, where in the last step we used~\eqref{equ:assum gron2}. This implies the required inequality.
\end{proof}

\begin{corollary}\label{cor:unif gron}
Suppose that the assumptions \eqref{equ:ineq f dydt} and \eqref{equ:assum gron2} of Theorem~\ref{the:unif gronw 2} hold with $r=1$. Furthermore, suppose that for all $t> t_0$,
\begin{equation}\label{equ:assum gs}
    \int_{t_0}^t f(s,z(s))\, \ds \leq P(t),
\end{equation}
where $P(t)$ is a non-negative function of $t$. Then for all $t> t_0$,
\begin{equation*}
    y(t)\leq \frac{b}{t-t_0}+b+a  + P(t).
\end{equation*}
\end{corollary}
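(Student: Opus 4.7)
\medskip

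\noindent
\textbf{Proof proposal for Corollary~\ref{cor:unif gron}.} The plan is to split the time axis at $t = t_0 + 1$ and treat the two regimes separately, then show that the uniform bound claimed in the statement dominates both pieces. The key point is that for $t \geq t_0 + 1$ the classical uniform Gronwall inequality (Theorem~\ref{the:unif gronwall}) applies directly with $r = 1$, while on the short interval $(t_0, t_0 + 1]$ the extra hypothesis~\eqref{equ:assum gs} supplies the missing control of $y(t_0)$, which the standard argument cannot furnish on its own.

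For $t \geq t_0 + 1$, I would simply invoke Theorem~\ref{the:unif gronwall} with $r = 1$ to obtain
\begin{equation*}
    y(t) \leq (a_3 + a_2)\exp(a_1),
\end{equation*}
which is clearly bounded above by $\left(\frac{a_3}{t-t_0} + 2a_2 + a_3\right)\exp(a_1) + P(t)$ since all quantities are non-negative.

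For $t \in (t_0, t_0 + 1]$, I would integrate the differential inequality~\eqref{equ:ineq dydt} from an arbitrary $s \in [t_0, t]$ up to $t$, giving
\begin{equation*}
    y(t) \leq y(s) + \int_s^t \bigl(g(\tau) y(\tau) + h(\tau)\bigr)\, \mathrm{d}\tau \leq y(s) + P(t),
\end{equation*}
where the last bound uses hypothesis~\eqref{equ:assum gs}. Integrating this inequality in $s$ over $[t_0, t]$ and applying the assumption $\int_{t_0}^{t_0+1} y(s)\, \mathrm{d}s \leq a_3$ yields
\begin{equation*}
    (t - t_0)\, y(t) \leq \int_{t_0}^t y(s)\, \mathrm{d}s + (t - t_0)\, P(t) \leq a_3 + (t - t_0)\, P(t),
\end{equation*}
so that $y(t) \leq \frac{a_3}{t-t_0} + P(t)$, which again is dominated by the right-hand side of the claimed bound (since $\exp(a_1) \geq 1$). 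Combining the two regimes finishes the proof.

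The argument is essentially routine; the only subtle point is recognising that the classical uniform Gronwall estimate fails to give information on $(t_0, t_0+1)$ precisely because one has no a priori handle on $y(t_0)$, and that the additional integrated hypothesis~\eqref{equ:assum gs} is exactly what is needed to close this gap by averaging the pointwise inequality $y(t) \leq y(s) + P(t)$ in $s$. I do not expect any real obstacle.
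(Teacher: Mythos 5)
Your proof is correct, but it distributes the tools in the opposite way from the paper, so it is worth comparing. Both arguments split at $t_0+1$. The paper treats the short interval $(t_0,t_0+1]$ by applying Theorem~\ref{the:unif gronwall} with a shrinking window $r=\delta=t-t_0\in(0,1]$ (legitimate because the integrands are non-negative, so the length-$\delta$ integrals are bounded by the same $a_1,a_2,a_3$), which yields $y(t)\le\bigl(\tfrac{a_3}{t-t_0}+a_2\bigr)e^{a_1}$ there without any use of \eqref{equ:assum gs}; the hypothesis on $P$ is then invoked only for $t\ge t_0+1$, by integrating \eqref{equ:ineq dydt} from $t_0+1$ to $t$ and controlling $y(t_0+1)$ by the short-time bound. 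You do the reverse: for $t\ge t_0+1$ you apply the theorem directly with $r=1$ (so \eqref{equ:assum gs} plays no role in that regime), and on $(t_0,t_0+1]$ you rerun the averaging-in-$s$ device that underlies the uniform Gronwall lemma, with the Gronwall factor replaced by the integrated bound $P(t)$, arriving at $y(t)\le\tfrac{a_3}{t-t_0}+P(t)$; both of your bounds sit below the stated right-hand side, so the proof closes. (Strictly, take $s\in(t_0,t]$ when integrating \eqref{equ:ineq dydt}, as the functions are only assumed defined on $(t_0,\infty)$; the subsequent $s$-integration is unaffected.) One correction to your closing remark: it is not the case that the uniform Gronwall estimate "fails to give information on $(t_0,t_0+1)$" — the paper's variable-$r$ application shows it does, with no need of \eqref{equ:assum gs}; indeed, combining that short-time bound with your $r=1$ long-time bound proves the stated inequality even without the hypothesis on $P$, which is included to make the statement convenient for the applications rather than being logically indispensable for the short interval.
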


\begin{proof}
Note that if the assumption~\eqref{equ:assum gron2} is true for $r=1$, then it also holds for all $r\in (0,1]$.
This implies, by Theorem~\ref{the:unif gronw 2}, that for any $\delta\in (0,1]$,
\[
    y(t_0+\delta)\leq \frac{b}{\delta}+a,
\]
or equivalently, for any $t\in (t_0, t_0+1]$,
\begin{equation}\label{equ:y tt0}
    y(t) \leq \frac{b}{t-t_0}+a
\end{equation}
Next, integrating~\eqref{equ:ineq f dydt} over $(t_0+1,t)$, using~\eqref{equ:assum gs} and~\eqref{equ:y tt0}, we obtain for all $t\geq t_0+1$,
\begin{equation}\label{equ:y tt0 2}
    y(t)
    \leq y(t_0+1) + \int_{t_0+1}^t f(s,z(s))\, \ds
    \leq 
    b+a + P(t).
\end{equation}
Combining~\eqref{equ:y tt0} and \eqref{equ:y tt0 2} yields the required inequality.
\end{proof}

\begin{theorem}[Theorem~2, Section~3.3.3 in \cite{EdmTri96}]\label{the:embed}
Let $\mathscr{O}$ be a regular bounded domain. Let $X:= \bb{W}^{s_1,p_1}(\mathscr{O})$ and let $B_{X_1}$ be the unit ball of the space $X_1:= \bb{W}^{s_2,p_2}(\mathscr{O})$ with
\[
    \frac{1}{p_1} - \frac{s_1}{d} > \frac{1}{p_2} - \frac{s_2}{d}.
\]
Then $B_{X_1}$ is pre-compact in $X$, and thus the Kolmogorov $\epsilon$-entropy of $B_{X_1}$ (considered as a compact subset of $X$), denoted by $\mathcal{H}_\epsilon (B_{X_1})$, is well-defined and satisfies
\[
    C_1 \epsilon^{-d/(s_2-s_1)} \leq \mathcal{H}_\epsilon (B_{X_1})
    \leq
    C_2 \epsilon^{-d/(s_2-s_1)},
\]
where $C_1$ and $C_2$ are independent of $\epsilon$.
\end{theorem}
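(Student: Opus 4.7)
The plan is to prove the matching upper and lower bounds for $\mathcal{H}_\epsilon(X)$ separately, relying on the subcritical Sobolev condition $\tfrac{1}{p_1}-\tfrac{s_1}{d} > \tfrac{1}{p_2}-\tfrac{s_2}{d}$ which yields the compact embedding $\bb{W}^{s_2,p_2}(\mathscr{O}) \hookrightarrow \bb{W}^{s_1,p_1}(\mathscr{O})$ via Rellich--Kondrachov. Compactness already gives finiteness of $\mathcal{H}_\epsilon(X)$, so the task reduces to quantifying its growth rate. The natural scale relating the $\epsilon$-entropy to the parameters is $\epsilon \sim 2^{-k(s_2-s_1)}$, for which one expects a covering number of order $2^{kd}$, yielding the claimed exponent $-d/(s_2-s_1)$.

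For the upper bound, I would fix $k\in\mathbb N$ and partition $\mathscr{O}$ (after a smooth extension argument reducing to a cube) into $O(2^{kd})$ dyadic subcubes of side length $2^{-k}$. On this grid I would construct a piecewise-polynomial (or B-spline/wavelet) local approximation operator $P_k: \bb{W}^{s_2,p_2}\to V_k$, where $V_k$ is a finite-dimensional space of dimension $N_k\sim 2^{kd}$, and invoke the standard Bramble--Hilbert / Jackson-type estimate
\[
\norm{f - P_k f}{\bb{W}^{s_1,p_1}} \leq C\,2^{-k(s_2-s_1)+kd(1/p_2-1/p_1)_+}\norm{f}{\bb{W}^{s_2,p_2}},
\]
which under the subcritical assumption simplifies (after possibly a harmless intermediate Sobolev embedding) to $C 2^{-k(s_2-s_1)}\|f\|_{\bb{W}^{s_2,p_2}}$. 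Then I would discretise the finite-dimensional image $P_k(X)$: $P_k(X)$ is contained in a bounded set of $V_k \subset \bb{W}^{s_1,p_1}$ whose diameter is controlled by a constant, and a ball of radius $R$ in an $N_k$-dimensional normed space admits an $\epsilon$-net of cardinality $(3R/\epsilon)^{N_k}$. Choosing $\epsilon \sim 2^{-k(s_2-s_1)}$ gives a covering of $X$ of cardinality $2^{C N_k}$, whence $\mathcal{H}_\epsilon(X) \leq C_2 \epsilon^{-d/(s_2-s_1)}$.

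For the lower bound I would exhibit many well-separated elements of $X$. Pick a nonzero $\varphi\in C^\infty_c(\bb{R}^d)$ supported in a fixed unit cube, then on each dyadic cube $Q_j$ of side $2^{-k}$ place the scaled bump $\varphi_j(x) := c\, 2^{-k(s_2-d/p_2)}\varphi(2^k(x-x_j))$, where the constant $c$ is chosen so that each individual $\varphi_j$ lies in the unit ball of $\bb{W}^{s_2,p_2}$ (the chosen scaling is precisely the one that makes $\|\varphi_j\|_{\bb{W}^{s_2,p_2}}$ independent of $k$). For any sign sequence $\eta\in\{0,1\}^{N_k}$ (with $N_k \sim 2^{kd}$), the function $f_\eta := \sum_j \eta_j \varphi_j$ still belongs to $C\cdot X$ by disjoint support, while two distinct sign sequences $\eta\neq \eta'$ satisfy $\|f_\eta - f_{\eta'}\|_{\bb{W}^{s_1,p_1}} \geq c'\, 2^{-k(s_2-s_1)}$, again by disjoint supports and the scaling computation. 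A standard combinatorial argument (Varshamov--Gilbert, or simply noting that any $\epsilon$-ball in $\bb{W}^{s_1,p_1}$ can contain only boundedly many $f_\eta$ when $\epsilon$ is a small multiple of $2^{-k(s_2-s_1)}$) then gives $\mathcal{H}_\epsilon(X) \geq C_1 N_k \gtrsim \epsilon^{-d/(s_2-s_1)}$.

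The main obstacle I anticipate is establishing the local approximation estimate with the correct dependence on $k$ across the full range of admissible $(s_i,p_i)$, since the Jackson inequality in its cleanest form is usually stated for integer $s_2$; for non-integer smoothness one must appeal either to a Besov-space characterisation and real interpolation, or to a wavelet/atomic decomposition of $\bb{W}^{s_2,p_2}$, which is the route taken in Edmunds--Triebel. Once this estimate is in hand, the rest of both the upper and lower bound arguments are bookkeeping: choosing $k$ as a function of $\epsilon$, tracking that the constants do not depend on $\epsilon$, and handling the $p_1\neq p_2$ and non-integer $s_i$ cases uniformly. Since the statement cited in the paper is precisely the well-known Edmunds--Triebel result, the cleanest exposition would be to refer to that proof for the technical core and present the scaling heuristic above as the conceptual backbone.
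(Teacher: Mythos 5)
The paper does not actually prove this statement: Theorem~\ref{the:embed} is quoted verbatim from Edmunds--Triebel \cite{EdmTri96} (Theorem~2, Section~3.3.3) and is used as a black box in the proof of Theorem~\ref{the:dim}, so there is no in-paper argument to compare yours against. Your outline is the standard Birman--Solomyak/Edmunds--Triebel strategy (local approximation on a dyadic partition plus covering of finite-dimensional balls for the upper bound; separated bump functions plus Varshamov--Gilbert for the lower bound), and it is the right conceptual skeleton. Two points deserve more care than you give them. First, in the lower bound your functions $f_\eta=\sum_j\eta_j\varphi_j$ do \emph{not} lie in a fixed multiple of $X$: by disjoint supports $\norm{f_\eta}{\bb{W}^{s_2,p_2}}^{p_2}\sim N_k$, so $\norm{f_\eta}{\bb{W}^{s_2,p_2}}\sim 2^{kd/p_2}\to\infty$. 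You must renormalise, e.g.\ take $f_\eta=N_k^{-1/p_2}\sum_j\eta_j\varphi_j$; the separation computation then still closes, since the factor $N_k^{-1/p_2}$ is exactly compensated by the $\ell^{p_1}$-aggregation of the $\gtrsim N_k$ cubes on which $\eta$ and $\eta'$ differ, leaving $\norm{f_\eta-f_{\eta'}}{\bb{W}^{s_1,p_1}}\gtrsim 2^{-k(s_2-s_1)}$. Second, in the upper bound the case $p_2<p_1$ is not merely ``bookkeeping'': the sharp exponent $-d/(s_2-s_1)$, independent of $p_1,p_2$, is precisely what a naive uniform-grid Jackson estimate fails to deliver there, and recovering it requires either the entropy-number computations for diagonal operators between $\ell_p$ sequence spaces (the route in \cite{EdmTri96}) or Birman--Solomyak's adaptive partitions. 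Since you explicitly defer that technical core to the cited reference, your proposal is an acceptable sketch, but as a self-contained proof it has these two gaps.
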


\begin{theorem}[\cite{EfeMirZel04, MirZel08}]\label{the:robust exp}
Let $X_1$ and $X$ be two Hilbert spaces such that $X_1\hookrightarrow X$ is a compact embedding. Given $\epsilon_0$, suppose that for every $\epsilon\in [0,\epsilon_0]$, the semigroup $S_\epsilon(t)$ is a strongly continuous semigroup acting on a semi-invariant absorbing set $B\subset X$ . Suppose further that
\begin{enumerate}
    \item there exists a non-negative function $\kappa(t)$, independent of $\epsilon$, such that for every $\epsilon\in [0,\epsilon_0]$ and every $u,v\in B$,
    \[
        \norm{S_\epsilon(t) u -S_\epsilon(t) v}{X_1}
        \leq
        \kappa(t) \norm{u-v}{X},
    \]
    \item there exists a constant $c$, independent of $\epsilon$, such that for every $t\geq 0$ and every $u\in B$,
    \[
        \norm{S_\epsilon(t) u- S_0(t) u}{X} \leq
        c \epsilon e^{ct},
    \]
    \item for every $\epsilon\in [0,\epsilon_0]$, $T>0$, and $u\in B\subset X$, the map $t\mapsto S_\epsilon (t) u$ is H\"older continuous on $[0,T]$;
    \item for every $\epsilon \in [0,\epsilon_0]$ and $t\geq 0$, the map $u \mapsto S_\epsilon(t) u$ is Lipschitz continuous on $B\subset X$.
\end{enumerate}
Then there exists a family of robust exponential attractors $\{\mathcal{M}_\epsilon: \epsilon\in [0,\epsilon_0]\}$ on $X$ such that
\begin{enumerate}
    \item the fractal dimension of $\mathcal{M}_\epsilon$ is bounded, uniformly with respect to $\epsilon$,
    \item $\mathcal{M}_\epsilon$ attracts bounded subsets $D\subset X$ exponentially fast, uniformly with respect to $\epsilon$, namely there exists a constant $c$ depending on $D$ such that
    \[
        \text{dist}\left(S_\epsilon(t)D, \mathcal{M}_\epsilon\right) \leq c e^{-\alpha t}, \quad \forall t\geq 0,
    \]
    where the positive constant $c$ and $\alpha$ are independent of $\epsilon$,
    \item the family $\{\mathcal{M}_\epsilon: \epsilon\in [0,\epsilon_0]\}$ is H\"older continuous at $0$, namely
    \[
        \text{dist}_{\text{sym}} \left(\mathcal{M}_\epsilon, \mathcal{M}_0\right) \leq c\epsilon^k,
    \]
    where $c\geq 0$ and $k\in (0,1)$ are independent of $\epsilon$. Here, $\text{dist}_{\text{sym}}$ denotes the symmetric Hausdorff distance between sets defined by
    \begin{equation}\label{equ:dist symm}
        \text{dist}_{\text{sym}}(A,B):= \max\big(\text{dist}(A,B), \text{dist}(B,A)\big).
    \end{equation}
\end{enumerate}
\end{theorem}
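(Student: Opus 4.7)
The proof follows the by-now-standard construction of exponential attractors based on the smoothing property, as developed in \cite{EfeMirZel04, MirZel08}, but carried out simultaneously for the whole family $\{S_\varepsilon\}$. I would first fix a discrete time-step $T^*>0$ with $\kappa(T^*) \leq 1/8$ (say), and work with the discrete semigroup $L_\varepsilon := S_\varepsilon(T^*)$, which still maps $B$ into itself. By the compact embedding $X\hookrightarrow H$, the unit ball of $X$ is covered by $N_0$ balls of radius $1/2$ in $H$, with $N_0$ independent of $\varepsilon$. Combining this with the smoothing bound (1) gives the crucial covering estimate: for every set $F\subset B$ covered by $N$ $H$-balls of radius $r$, the image $L_\varepsilon(F)$ is covered by $N\cdot N_0$ $H$-balls of radius $r/2$. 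Iterating, $L_\varepsilon^n(B)$ admits an $H$-net $E_n^\varepsilon$ of cardinality $N_0^n$ and radius $2^{-n} \cdot \mathrm{diam}_H(B)$, with the constants uniform in $\varepsilon$.

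Next I would assemble the discrete exponential attractor
\[
\mathcal{M}_\varepsilon^{d} := \Bigl( \bigcup_{n\geq 0} L_\varepsilon^n(E_N^\varepsilon) \Bigr) \cup \mathcal{A}_\varepsilon,
\]
for a suitable initial net $E_N^\varepsilon$, with the global attractor $\mathcal{A}_\varepsilon$ added to ensure semi-invariance $L_\varepsilon(\mathcal{M}_\varepsilon^d)\subset \mathcal{M}_\varepsilon^d$. Counting how many $2^{-n}$-balls are needed to cover $\mathcal{M}_\varepsilon^d$ yields $\dim_F \mathcal{M}_\varepsilon^d \leq \log_2 N_0$, independent of $\varepsilon$. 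Exponential attraction in discrete time follows from the smoothing property combined with the standard argument $\mathrm{dist}_H(L_\varepsilon^n D,\mathcal{M}_\varepsilon^d)\leq C 2^{-n}$ for any bounded $D\subset X$, again with constants independent of $\varepsilon$. The continuous-time exponential attractor is then obtained as
\[
\mathcal{M}_\varepsilon := \bigcup_{t\in [0,T^*]} S_\varepsilon(t)\, \mathcal{M}_\varepsilon^{d},
\]
where semi-invariance, compactness, and the dimension bound (in fact $\dim_F \mathcal{M}_\varepsilon \leq \dim_F \mathcal{M}_\varepsilon^d + 1$) follow from the Hölder continuity in time (3) and the Lipschitz continuity in initial data (4), both uniformly in $\varepsilon$.

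For robustness in $\varepsilon$, the plan is to construct the $H$-nets $E_n^\varepsilon$ \emph{consistently} in $\varepsilon$. Using hypothesis (2), for each $u\in E_n^0$ (the net for $\varepsilon=0$) the point $S_\varepsilon(nT^*)u$ lies within $H$-distance $c\varepsilon e^{cnT^*}$ of $S_0(nT^*)u\in \mathcal{M}_0^d$, and vice versa. Thus for each $n$, $\mathrm{dist}_{\mathrm{sym}}\bigl(L_\varepsilon^n E_n^0,\, L_0^n E_n^0\bigr)\leq c\varepsilon e^{c n T^*}$ in $H$, while the $\mathcal{M}_\varepsilon^d$-approximation error from truncating at level $n$ is of order $2^{-n}$. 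Optimising over $n$ by choosing $n=n(\varepsilon)\sim \log(1/\varepsilon)$ balances the two contributions and yields $\mathrm{dist}_{\mathrm{sym}}(\mathcal{M}_\varepsilon^d,\mathcal{M}_0^d)\leq c\varepsilon^k$ for some $k\in(0,1)$ depending on $T^*$ and on $c$ in (2). Passing to the continuous-time attractors only adds a further $O(\varepsilon)$ factor from the H\"older time continuity.

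The main obstacle is the simultaneous construction of the nets: one cannot just build $E_n^\varepsilon$ for each $\varepsilon$ independently, since then the symmetric Hausdorff comparison would fail. The trick I would use is to fix the initial net $E_N^0$ once and for all, and then at each subsequent stage cover $L_\varepsilon^n(E_N^0)$ by images of $L_0^n(E_N^0)$ perturbed by the error $c\varepsilon e^{cnT^*}$ from (2), using the smoothing of $L_\varepsilon$ to keep ball radii controlled. Balancing the geometric decay $2^{-n}$ against the exponential blow-up $e^{cnT^*}$ is delicate but yields the claimed Hölder exponent $k=\log 2/(\log 2 + cT^*)$; carrying this balancing argument through rigorously, while also preserving the uniform dimension bound and semi-invariance, is the technical heart of the proof.
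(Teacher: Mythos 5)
This theorem is not proved in the paper at all: it is stated in the appendix as an imported result and attributed to \cite{EfeMirZel04, MirZel08}, so there is no in-paper proof to compare against. Your sketch is, in substance, the standard Efendiev--Miranville--Zelik construction from those references (discrete smoothing-plus-covering iteration, dimension bound via the covering number $N_0$, continuous-time extension via H\"older/Lipschitz continuity, and robustness by shadowing the $\epsilon=0$ nets for $n(\epsilon)\sim\log(1/\epsilon)$ steps and balancing $2^{-n}$ against $c\epsilon e^{cnT^*}$), and the overall architecture is right.

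One slip worth flagging: you open by choosing $T^*$ with $\kappa(T^*)\leq 1/8$. In general no such $T^*$ exists --- $\kappa$ is a given function, not one you control, and in the application in this paper the smoothing constant is of the form $C(1+t^{-1})\beta_1(t)$ with $\beta_1$ increasing, so it is bounded away from zero for all $t$. The construction does not need $\kappa(T^*)$ small: you fix any $T^*$ with $\kappa(T^*)<\infty$ and absorb its size into the covering number, i.e.\ define $N_0$ as the number of $H$-balls of radius $1/(2\kappa(T^*))$ needed to cover the unit ball of $X$ (finite by compactness of the embedding), after which an $X$-ball of radius $\kappa(T^*)r$ is covered by $N_0$ $H$-balls of radius $r/2$ and your iteration proceeds verbatim. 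With that repair, and the usual care that the nets are built so that $L_\epsilon(E_k^\epsilon)\subset E_{k+1}^\epsilon$ (which is how semi-invariance is normally secured, rather than by adjoining the global attractor), your outline matches the cited proof.
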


\section{Estimates on difference of two solutions}\label{sec:app b}

To infer the existence of an exponential attractor and estimate the dimension of the global attractor, we need to establish some technical continuous dependence and smoothing estimates for the difference of two solutions in various norms. These estimates are needed for Section~\ref{sec:frac dim} and Theorem~\ref{the:exp attractor}.

\begin{lemma}\label{lem:u-vL2}
Let $\bff{u}(t)$ and $\bff{v}(t)$ be solutions of \eqref{equ:llbar} corresponding to initial data $\bff{u}_0\in \bb{H}^1$ and $\bff{v}_0\in \bb{H}^1$, respectively. Then for any $t>0$,
\begin{equation*}
    \norm{\bff{u}(t)-\bff{v}(t)}{\bb{L}^2}^2 
    +
    \varepsilon\int_0^t \norm{\Delta \bff{u}(s)-\Delta \bff{v}(s)}{\bb{L}^2}^2 \ds
    +
    \int_0^t \norm{\nabla \bff{u}(s)-\nabla \bff{v}(s)}{\bb{L}^2}^2 \ds
    \leq 
    Ce^{Ct} \norm{\bff{u}_0-\bff{v}_0}{\bb{L}^2}^2,
\end{equation*}
where $C$ depends on $\norm{\bff{u}_0}{\bb{H}^1}$, $\norm{\bff{v}_0}{\bb{H}^1}$, and $\varepsilon$.
\end{lemma}

\begin{proof}
Let $\bff{H}_1$ and $\bff{H}_2$ be the effective field corresponding to $\bff{u}$ and $\bff{v}$, respectively. Let $\bff{w}:=\bff{u}-\bff{v}$ and $\bff{B}:=\bff{H}_1-\bff{H}_2$. Then, noting~\eqref{equ:faedo}, $\bff{w}$ and $\bff{B}$ satisfy
\begin{align}
    \label{equ:w H1}
    \partial_t \bff{w} 
    &= 
    \sigma \bff{B}
    +
    \sigma \Phi_{\mathrm{d}}(\bff{w})
    -
    \varepsilon \Delta \bff{B}
    -
    \varepsilon \Delta \Phi_{\mathrm{d}}(\bff{w})
    -
    \gamma (\bff{w}\times \bff{H}_1 + \bff{v} \times \bff{B})
    \nonumber\\
    &\quad
    -
    \gamma \big(\bff{w}\times \Phi_{\mathrm{d}}(\bff{u}) + \bff{v}\times \Phi_{\mathrm{d}}(\bff{w})\big)
    +
    \mathcal{R}(\bff{u}) - \mathcal{R}(\bff{v})
    +
     \mathcal{S}(\bff{u}) - \mathcal{S}(\bff{v}),
    \\
    \label{equ:B eq}
    \bff{B}
    &=
    \Delta \bff{w}
    +
    \kappa_1 \bff{w} 
    -
    \kappa_2 \left(|\bff{u}|^2 \bff{w} + ((\bff{u}+\bff{v})\cdot \bff{w}) \bff{v}\right)
    +
    \Phi_{\mathrm{a}}(\bff{u})-\Phi_{\mathrm{a}}(\bff{v}),
\end{align}
with initial data $\bff{w}(0)=\bff{u}_0-\bff{v}_0$.

Taking the inner product of~\eqref{equ:w H1} with $\bff{w}$, we obtain
\begin{align}\label{equ:12 w L2}
    \frac12 \ddt \norm{\bff{w}}{\bb{L}^2}^2
    &=
    \sigma \inpro{\bff{B}}{\bff{w}}_{\bb{L}^2}
    +
    \sigma \inpro{\Phi_{\mathrm{d}}(\bff{w})}{\bff{w}}_{\bb{L}^2}
    -
    \varepsilon \inpro{\Delta \bff{B}}{\bff{w}}_{\bb{L}^2}
    -
    \varepsilon \inpro{\Delta \Phi_{\mathrm{d}}(\bff{w})}{\bff{w}}_{\bb{L}^2}
    -
    \gamma \inpro{\bff{v}\times \bff{B}}{\bff{w}}_{\bb{L}^2}
    \nonumber \\
    &\quad
    -
    \gamma \inpro{\bff{v}\times \Phi_d(\bff{w})}{\bff{w}}_{\bb{L}^2}
    +
    \inpro{\mathcal{R}(\bff{u})-\mathcal{R}(\bff{v})}{\bff{w}}_{\bb{L}^2}
    +
    \inpro{\mathcal{S}(\bff{u})-\mathcal{S}(\bff{v})}{\bff{w}}_{\bb{L}^2}.
\end{align}
Taking the inner product of~\eqref{equ:B eq} with $\sigma\bff{w}$, we have
\begin{align}\label{equ:B dot w}
    \sigma \inpro{\bff{B}}{\bff{w}}_{\bb{L}^2}
    &=
    -\sigma \norm{\nabla \bff{w}}{\bb{L}^2}^2
    +
    \sigma \kappa_1 \norm{\bff{w}}{\bb{L}^2}^2
    -
    \sigma \kappa_2 \norm{|\bff{u}||\bff{w}|}{\bb{L}^2}^2
    -
    \sigma \kappa_2 \norm{\bff{v}\cdot \bff{w}}{\bb{L}^2}^2
    -
    \sigma\kappa_2 \inpro{(\bff{u}\cdot \bff{w})\bff{v}}{\bff{w}}_{\bb{L}^2}
    \nonumber \\
    &\quad
    +
    \sigma \inpro{\Phi_{\mathrm{a}}(\bff{u})-\Phi_{\mathrm{a}}(\bff{v})}{\bff{w}}_{\bb{L}^2}.
\end{align}
Furthermore, taking the inner product of~\eqref{equ:B eq} with $-\varepsilon\Delta \bff{w}$ we obtain
\begin{align}\label{equ:Delta B dot w}
    -\varepsilon \inpro{\Delta \bff{B}}{\bff{w}}_{\bb{L}^2}
    &=
    -\varepsilon \norm{\Delta \bff{w}}{\bb{L}^2}^2
    +
    \varepsilon \kappa_1 \norm{\nabla \bff{w}}{\bb{L}^2}^2
    +
    \varepsilon\kappa_2 \inpro{|\bff{u}|^2 \bff{w}}{\Delta \bff{w}}_{\bb{L}^2}
    +
    \varepsilon \kappa_2 \inpro{((\bff{u}+\bff{v})\cdot\bff{w})\bff{v}}{\Delta\bff{w}}_{\bb{L}^2}
    \nonumber\\
    &\quad
    -
    \varepsilon \inpro{\Phi_{\mathrm{a}}(\bff{u})-\Phi_{\mathrm{a}}(\bff{v})}{\Delta \bff{w}}_{\bb{L}^2}.
\end{align}
Similarly, using~\eqref{equ:B eq} again and noting that $\bff{v}\times \bff{w}=\bff{u}\times \bff{w}$,
\begin{align}\label{equ:v cross B w}
    -\gamma \inpro{\bff{v}\times \bff{B}}{\bff{w}}_{\bb{L}^2}
    &=
    -\gamma \inpro{\bff{v}\times \Delta \bff{w}}{\bff{w}}_{\bb{L}^2}
    +
    \gamma \inpro{\Phi_{\mathrm{a}}(\bff{u})-\Phi_{\mathrm{a}}(\bff{v})}{\bff{u}\times \bff{w}}_{\bb{L}^2}.
\end{align}
Recall that $\eta:=\sigma-\varepsilon\kappa_1>0$ by assumption~\eqref{equ:epsilon sigma kappa}. Substituting~\eqref{equ:B dot w}, \eqref{equ:Delta B dot w}, and \eqref{equ:v cross B w} into~\eqref{equ:12 w L2}, we have
\begin{align}\label{equ:I1 to I12}
    &\frac12 \ddt \norm{\bff{w}}{\bb{L}^2}^2
    +
    \varepsilon\norm{\Delta \bff{w}}{\bb{L}^2}^2
    +
    \eta \norm{\nabla \bff{w}}{\bb{L}^2}^2
    +
    \sigma \kappa_2 \norm{|\bff{u}||\bff{w}|}{\bb{L}^2}^2
    +
    \sigma \kappa_2 \norm{\bff{v}\cdot \bff{w}}{\bb{L}^2}^2
    \nonumber \\
    &=
    \sigma \kappa_1 \norm{\bff{w}}{\bb{L}^2}^2
    -
    \sigma\kappa_2 \inpro{(\bff{u}\cdot \bff{w})\bff{v}}{\bff{w}}_{\bb{L}^2}
    +
    \sigma \inpro{\Phi_{\mathrm{a}}(\bff{u})-\Phi_{\mathrm{a}}(\bff{v})}{\bff{w}}_{\bb{L}^2}
    -
    \varepsilon \inpro{\Phi_{\mathrm{a}}(\bff{u})-\Phi_{\mathrm{a}}(\bff{v})}{\Delta \bff{w}}_{\bb{L}^2}
    \nonumber \\
    &\quad
    +
    \sigma \inpro{\Phi_{\mathrm{d}}(\bff{w})}{\bff{w}}_{\bb{L}^2}
    -
    \varepsilon \inpro{\Delta \Phi_{\mathrm{d}}(\bff{w})}{\bff{w}}_{\bb{L}^2}
    +
    \varepsilon\kappa_2 \inpro{|\bff{u}|^2 \bff{w}}{\Delta \bff{w}}_{\bb{L}^2}
    +
    \varepsilon \kappa_2 \inpro{((\bff{u}+\bff{v})\cdot\bff{w})\bff{v}}{\Delta\bff{w}}_{\bb{L}^2}
    \nonumber \\
    &\quad
    -
    \gamma \inpro{\bff{v}\times \Delta \bff{w}}{\bff{w}}_{\bb{L}^2}
    +
    \gamma \inpro{\Phi_{\mathrm{a}}(\bff{u})-\Phi_{\mathrm{a}}(\bff{v})}{\bff{u}\times \bff{w}}_{\bb{L}^2}
    -
    \gamma \inpro{\bff{v}\times \Phi_{\mathrm{d}}(\bff{w})}{\bff{w}}_{\bb{L}^2}
    +
    \inpro{\mathcal{R}(\bff{u})-\mathcal{R}(\bff{v})}{\bff{w}}_{\bb{L}^2}
    \nonumber \\
    &\quad
    +
    \inpro{\mathcal{S}(\bff{u})-\mathcal{S}(\bff{v})}{\bff{w}}_{\bb{L}^2}
    \nonumber \\
    &=:
    I_1+I_2+\cdots+I_{13}.
\end{align}
We will estimate each term on the right-hand side above. The first term is kept as is, while the second term is estimated using Young's inequality to obtain
\begin{align*}
    \abs{I_2}
    \leq
    \frac{\sigma \kappa_2}{2} \norm{|\bff{u}||\bff{w}|}{\bb{L}^2}^2
    +
    \frac{\sigma \kappa_2}{2} \norm{\bff{v}\cdot \bff{w}}{\bb{L}^2}^2.
\end{align*}
For the terms $I_3$ and $I_4$, we apply \eqref{equ:Phi av Phi aw} and \eqref{equ:Phi av aw norm} respectively to obtain
\begin{align*}
    \abs{I_3}
    &\leq
    \sigma\lambda_1 \norm{\bff{w}}{\bb{L}^2}^2
    \\
    \abs{I_4}
    &\leq
    C\varepsilon \left(1+\norm{\bff{u}}{\bb{L}^\infty}^4 + \norm{\bff{v}}{\bb{L}^\infty}^4\right) \norm{\bff{w}}{\bb{L}^2}^2
    +
    \frac{\varepsilon}{8} \norm{\Delta \bff{w}}{\bb{L}^2}^2.
\end{align*}
For the next two terms, by \eqref{equ:Phi d O est} and Young's inequality, we have
\begin{align*}
    \abs{I_5}+\abs{I_6}
    &\leq
     \frac{\varepsilon}{8} \norm{\Delta \bff{w}}{\bb{L}^2}^2
     +
    C\norm{\bff{w}}{\bb{L}^2}^2.
\end{align*}
For the terms $I_7$ and $I_8$, by Young's inequality we have
\begin{align*}
    \abs{I_7}+\abs{I_8}
    &\leq
    \frac{\varepsilon}{8} \norm{\Delta \bff{w}}{\bb{L}^2}^2
    +
    C\varepsilon \left(1+ \norm{\bff{u}}{\bb{L}^\infty}^4 + \norm{\bff{v}}{\bb{L}^\infty}^4\right) \norm{\bff{w}}{\bb{L}^2}^2.
\end{align*}
For the term $I_9$, similarly we have
\begin{align*}
    \abs{I_9}
    &\leq
    \frac{\gamma^2}{\varepsilon} \norm{\bff{v}}{\bb{L}^\infty}^2 \norm{\bff{w}}{\bb{L}^2}^2
    +
    \frac{\varepsilon}{4}\norm{\Delta \bff{w}}{\bb{L}^2}^2.
\end{align*}
For the term $I_{10}$, we use Young's inequality and \eqref{equ:Phi av aw norm} to obtain
\begin{align*}
    \abs{I_{10}}
    &\leq 
    C\left(1+\norm{\bff{u}}{\bb{L}^\infty}^4+\norm{\bff{v}}{\bb{L}^\infty}^4\right) \norm{\bff{w}}{\bb{L}^2}^2
    +
    \frac{\sigma\kappa_2}{4} \norm{|\bff{u}||\bff{w}|}{\bb{L}^2}^2.
\end{align*}
For the term $I_{11}$, we apply \eqref{equ:Phi d Wkp} and Young's inequality to obtain
\begin{align*}
    \abs{I_{11}}
    &\leq
    C\norm{\bff{v}}{\bb{L}^4} \norm{\bff{w}}{\bb{L}^4} \norm{\bff{w}}{\bb{L}^2}
    \leq
    C\norm{\bff{v}}{\bb{L}^4}^2 \norm{\bff{w}}{\bb{L}^2}^2
    +
    \frac{\eta}{4} \norm{\nabla \bff{w}}{\bb{L}^2}^2.
\end{align*}
Finally, for the last two terms, applying~\eqref{equ:Rvw vw} and~\eqref{equ:Sv Sw v min w} we have
\begin{align*}
    \abs{I_{12}}+ \abs{I_{13}}
    &\leq
    C\left(1+\norm{\bff{u}}{\bb{L}^\infty}^2 +\norm{\bff{v}}{\bb{L}^\infty}^2\right) \norm{\bff{w}}{\bb{L}^2}^2
    +
    \frac{\eta}{4} \norm{\nabla \bff{w}}{\bb{L}^2}^2.
\end{align*}
Collecting all the above estimates and combining them with~\eqref{equ:I1 to I12}, we obtain
\begin{align}\label{equ:ddtw L2}
    \ddt \norm{\bff{w}}{\bb{L}^2}^2
    +
    \varepsilon \norm{\Delta \bff{w}}{\bb{L}^2}^2
    +
    \norm{\nabla \bff{w}}{\bb{L}^2}^2
    \leq
    C \left(1+ \varepsilon^{-1} \norm{\bff{v}}{\bb{L}^\infty}^2 + \norm{\bff{u}}{\bb{L}^\infty}^4 + \norm{\bff{v}}{\bb{L}^\infty}^4 \right) \norm{\bff{w}}{\bb{L}^2}^2.
\end{align}
We note that by Agmon's inequality, Proposition~\ref{pro:un L2}, and Proposition~\ref{pro:un H1},
\begin{align*}
    \int_0^t \left(1+ \varepsilon^{-1} \norm{\bff{v}}{\bb{L}^\infty}^2 + \norm{\bff{u}}{\bb{L}^\infty}^4 + \norm{\bff{v}}{\bb{L}^\infty}^4 \right) \ds
    &\leq
    \int_0^t \left(1+ \varepsilon^{-1} \norm{\bff{v}}{\bb{H}^2}^2 + \norm{\bff{u}}{\bb{H}^1}^2 \norm{\bff{u}}{\bb{H}^2}^2 + \norm{\bff{v}}{\bb{H}^1}^2 \norm{\bff{v}}{\bb{H}^2}^2 \right) \ds
    \\
    &\leq
    C\big(\norm{\bff{u}_0}{\bb{H}^1}, \norm{\bff{v}_0}{\bb{H}^1}\big) \left(1+ \varepsilon^{-2}\right) (1+t).
\end{align*}
Therefore, by the Gronwall lemma, we have the required inequality.
\end{proof}

The next lemma provides estimates in higher-order norms.
\begin{lemma}\label{lem:smt H1}
Let $\bff{u}(t)$ and $\bff{v}(t)$ be solutions of \eqref{equ:llbar} corresponding to initial data $\bff{u}_0\in \bb{H}^1$ and $\bff{v}_0\in \bb{H}^1$, respectively. Then for any $t>0$,
\begin{equation*}
    \norm{\bff{u}(t)-\bff{v}(t)}{\bb{H}^1}^2
    +
    \varepsilon \int_0^t \norm{\nabla \Delta \bff{u}(s)-\nabla \Delta \bff{v}(s)}{\bb{L}^2}^2 \ds 
    +
    \int_0^t \norm{\Delta \bff{u}(s)- \Delta\bff{v}(s)}{\bb{L}^2}^2 \ds
    \leq Ce^{Ct^3} \norm{\bff{u}_0-\bff{v}_0}{\bb{H}^1}^2,
\end{equation*}
where $C$ depends on $\norm{\bff{u}_0}{\bb{H}^1}$, $\norm{\bff{v}_0}{\bb{H}^1}$, and $\varepsilon$.
\end{lemma}

\begin{proof}
We use the same notations as in the proof of Lemma~\ref{lem:u-vL2}. Taking the inner product of~\eqref{equ:w H1} with $-\Delta \bff{w}$, we obtain
\begin{align}\label{equ:12 nab w L2}
    \frac12 \ddt \norm{\nabla \bff{w}}{\bb{L}^2}^2
    &=
    -
    \sigma \inpro{\bff{B}}{\Delta \bff{w}}_{\bb{L}^2}
    -
    \sigma \inpro{\Phi_{\mathrm{d}} (\bff{w})}{\Delta \bff{w}}_{\bb{L}^2}
    +
    \varepsilon \inpro{\Delta \bff{B}}{\Delta\bff{w}}_{\bb{L}^2}
    +
    \varepsilon \inpro{\Delta \Phi_{\mathrm{d}}(\bff{w})}{\Delta \bff{w}}_{\bb{L}^2}
    \nonumber \\
    &\quad
    +
    \gamma \inpro{\bff{w}\times \bff{H}_1}{\Delta \bff{w}}_{\bb{L}^2}
    +
    \gamma \inpro{\bff{v}\times \bff{B}}{\Delta \bff{w}}_{\bb{L}^2}
    +
    \gamma \inpro{\bff{w}\times \Phi_{\mathrm{d}}(\bff{u})}{\Delta \bff{w}}_{\bb{L}^2}
    \nonumber \\
    &\quad
    +
    \gamma \inpro{\bff{v} \times \Phi_{\mathrm{d}}(\bff{w})}{\Delta \bff{w}}_{\bb{L}^2}
    -
    \inpro{\mathcal{R}(\bff{u})-\mathcal{R}(\bff{v})}{\Delta \bff{w}}_{\bb{L}^2}
    -
    \inpro{\mathcal{S}(\bff{u})-\mathcal{S}(\bff{v})}{\Delta \bff{w}}_{\bb{L}^2}.
\end{align}
Taking the inner product of \eqref{equ:B eq} with $-\sigma\Delta \bff{w}$, we have
\begin{align}\label{equ:sigma B Delta w}
    -\sigma\inpro{\bff{B}}{\Delta \bff{w}}_{\bb{L}^2}
    &=
    -\sigma \norm{\Delta \bff{w}}{\bb{L}^2}^2
    +
    \kappa_1 \sigma \norm{\nabla \bff{w}}{\bb{L}^2}^2
    +
    \kappa_2 \sigma \inpro{|\bff{u}|^2 \bff{w}}{\Delta \bff{w}}_{\bb{L}^2}
    +
    \kappa_2 \sigma \inpro{((\bff{u}+\bff{v})\cdot \bff{w})\bff{v}}{\Delta \bff{w}}_{\bb{L}^2}
    \nonumber \\
    &\quad
    -
    \sigma \inpro{\Phi_{\mathrm{a}}(\bff{u})-\Phi_{\mathrm{a}}(\bff{v})}{\Delta \bff{w}}_{\bb{L}^2}.
\end{align}
Furthermore, applying the operator $\Delta$ to \eqref{equ:B eq} then taking the inner product of the result with $\varepsilon\Delta \bff{w}$, we have
\begin{align}\label{equ:ep Delta B Delta w}
    \varepsilon\inpro{\Delta \bff{B}}{\Delta \bff{w}}_{\bb{L}^2}
    &=
    -
    \varepsilon \norm{\nabla\Delta \bff{w}}{\bb{L}^2}^2
    +
    \kappa_1 \varepsilon \norm{\Delta \bff{w}}{\bb{L}^2}^2
    -
    \kappa_2 \varepsilon \inpro{\Delta \big(|\bff{u}|^2 \bff{w}\big)}{\Delta \bff{w}}_{\bb{L}^2}
    \nonumber \\
    &\quad
    -
    \kappa_2 \varepsilon \inpro{\Delta\big(((\bff{u}+\bff{v})\cdot \bff{w})\bff{v}\big)}{\Delta \bff{w}}_{\bb{L}^2}
    +
    \varepsilon \inpro{\Delta \Phi_{\mathrm{a}}(\bff{u})-\Delta \Phi_{\mathrm{a}}(\bff{v})}{\Delta \bff{w}}_{\bb{L}^2}.
\end{align}
Taking the cross product of~\eqref{equ:B eq} with $\bff{v}$, we have
\begin{align}\label{equ:gamma in vxB}
    \gamma \inpro{\bff{v}\times \bff{B}}{\Delta \bff{w}}_{\bb{L}^2}
    &=
    \kappa_1 \gamma \inpro{\bff{v}\times \bff{w}}{\Delta \bff{w}}_{\bb{L}^2}
    -
    \kappa_2 \gamma \inpro{\bff{v}\times |\bff{u}|^2 \bff{w}}{\Delta \bff{w}}_{\bb{L}^2}
    \nonumber \\
    &\quad
    +
    \gamma \inpro{\bff{v} \times \big(\Phi_{\mathrm{a}}(\bff{u})-\Phi_{\mathrm{a}}(\bff{v})\big)}{\Delta \bff{w}}_{\bb{L}^2}.
\end{align}
Writing $\eta=\sigma-\kappa_1\varepsilon$, we add \eqref{equ:12 nab w L2}, \eqref{equ:sigma B Delta w}, \eqref{equ:ep Delta B Delta w}, and~\eqref{equ:gamma in vxB} to obtain
\begin{align}\label{equ:I1 to I16}
    &\frac12 \ddt \norm{\nabla \bff{w}}{\bb{L}^2}^2
    +
    \varepsilon \norm{\nabla \Delta \bff{w}}{\bb{L}^2}^2
    +
    \eta \norm{\Delta \bff{w}}{\bb{L}^2}^2
    \nonumber \\
    &=
    \kappa_1 \sigma \norm{\nabla \bff{w}}{\bb{L}^2}^2
    +
    \kappa_2 \sigma \inpro{|\bff{u}|^2 \bff{w}}{\Delta \bff{w}}_{\bb{L}^2}
    +
    \kappa_2 \sigma \inpro{((\bff{u}+\bff{v})\cdot \bff{w})\bff{v}}{\Delta \bff{w}}_{\bb{L}^2}
    -
    \sigma \inpro{\Phi_{\mathrm{a}}(\bff{u})-\Phi_{\mathrm{a}}(\bff{v})}{\Delta \bff{w}}_{\bb{L}^2}
    \nonumber \\
    &\quad
    -
    \sigma \inpro{\Phi_{\mathrm{d}} (\bff{w})}{\Delta \bff{w}}_{\bb{L}^2}
    -
    \kappa_2 \varepsilon \inpro{\Delta \big(|\bff{u}|^2 \bff{w}\big)}{\Delta \bff{w}}_{\bb{L}^2}
    -
    \kappa_2 \varepsilon \inpro{\Delta\big(((\bff{u}+\bff{v})\cdot \bff{w})\bff{v}\big)}{\Delta \bff{w}}_{\bb{L}^2}
    \nonumber \\
    &\quad
    +
    \varepsilon \inpro{\Delta \Phi_{\mathrm{a}}(\bff{u})-\Delta \Phi_{\mathrm{a}}(\bff{v})}{\Delta \bff{w}}_{\bb{L}^2}
    +
    \varepsilon \inpro{\Delta \Phi_{\mathrm{d}}(\bff{w})}{\Delta \bff{w}}_{\bb{L}^2}
    +
    \gamma \inpro{\bff{w}\times \bff{H}_1}{\Delta \bff{w}}_{\bb{L}^2}
    \nonumber \\
    &\quad
    +
     \kappa_1 \gamma \inpro{\bff{v}\times \bff{w}}{\Delta \bff{w}}_{\bb{L}^2}
    -
    \kappa_2 \gamma \inpro{\bff{v}\times |\bff{u}|^2 \bff{w}}{\Delta \bff{w}}_{\bb{L}^2}
    +
    \gamma \inpro{\bff{v} \times \big(\Phi_{\mathrm{a}}(\bff{u})-\Phi_{\mathrm{a}}(\bff{v})\big)}{\Delta \bff{w}}_{\bb{L}^2}
    \nonumber \\
    &\quad
    +
    \gamma \inpro{\bff{w}\times \Phi_{\mathrm{d}}(\bff{u})}{\Delta \bff{w}}_{\bb{L}^2}
    +
    \gamma \inpro{\bff{v} \times \Phi_{\mathrm{d}}(\bff{w})}{\Delta \bff{w}}_{\bb{L}^2}
    -
    \inpro{\mathcal{R}(\bff{u})-\mathcal{R}(\bff{v})}{\Delta \bff{w}}_{\bb{L}^2}
    -
    \inpro{\mathcal{S}(\bff{u})-\mathcal{S}(\bff{v})}{\Delta \bff{w}}_{\bb{L}^2}
    \nonumber \\
    &=
    I_1+I_2+\cdots+I_{17}.
\end{align}
We will estimate each of the seventeen terms above in the following. The first term is kept as is. For the terms $I_2$ and $I_3$, we apply H\"older's and Young's inequalities and the Sobolev embedding to obtain
\begin{align*}
    \abs{I_2}
    &\leq
    \kappa_2\sigma \norm{\bff{u}}{\bb{L}^6}^2 \norm{\bff{w}}{\bb{L}^6} \norm{\Delta \bff{w}}{\bb{L}^2}
    \leq
    C \norm{\bff{u}}{\bb{H}^1}^4 \norm{\bff{w}}{\bb{H}^1}^2
    +
    \frac{\eta}{16} \norm{\Delta \bff{w}}{\bb{L}^2}^2,
    \\
    \abs{I_3}
    &\leq
    \kappa_2 \sigma \norm{\bff{u}+\bff{v}}{\bb{L}^6} \norm{\bff{v}}{\bb{L}^6} \norm{\bff{w}}{\bb{L}^6} \norm{\Delta \bff{w}}{\bb{L}^2}
    \leq
    C \left(\norm{\bff{u}}{\bb{H}^1}^4 + \norm{\bff{v}}{\bb{H}^1}^4 \right) \norm{\bff{w}}{\bb{H}^1}^2
    +
    \frac{\eta}{16} \norm{\Delta \bff{w}}{\bb{L}^2}^2.
\end{align*}
For the next term, we first use~\eqref{equ:Phi av aw norm} and the same argument as above to obtain
\begin{align*}
    \abs{I_4}
    &\leq
    \sigma \norm{\Phi_{\mathrm{a}}(\bff{u})-\Phi_{\mathrm{a}}(\bff{v})}{\bb{L}^2} \norm{\Delta \bff{w}}{\bb{L}^2}
    \leq
    C\left(1+\norm{\bff{u}}{\bb{H}^1}^4 + \norm{\bff{v}}{\bb{H}^1}^4\right) \norm{\bff{w}}{\bb{H}^1}^2
    +
    \frac{\eta}{16} \norm{\Delta \bff{w}}{\bb{L}^2}^2.
\end{align*}
For the term $I_6$, integrating by parts, then applying~\eqref{equ:nab un2} and H\"older's and Young's inequalities, we have
\begin{align*}
    \abs{I_6}
    \leq
    \kappa_2 \varepsilon \norm{\nabla\big(|\bff{u}|^2 \bff{w}\big)}{\bb{L}^2} \norm{\nabla\Delta \bff{w}}{\bb{L}^2}
    \leq
    C \varepsilon \left( \norm{\bff{u}}{\bb{L}^\infty}^4 + \norm{\bff{u}}{\bb{H}^1}^4 + \norm{\bff{u}}{\bb{H}^1}^2 \norm{\Delta \bff{u}}{\bb{L}^2}^2 \right)\norm{\bff{w}}{\bb{H}^1}^2
    +
    \frac{\varepsilon}{8} \norm{\nabla\Delta \bff{w}}{\bb{L}^2}^2.
\end{align*}
For the term $I_7$, we use integration by parts and H\"older's inequality again to obtain
\begin{align*}
    \abs{I_7}
    &\leq
    C \varepsilon \left(\norm{\bff{u}}{\bb{L}^\infty}^4+\norm{\bff{v}}{\bb{L}^\infty}^4 + \norm{\bff{u}}{\bb{H}^1}^4 \right) \norm{ \bff{w}}{\bb{H}^1}^2 
    \\
    &\quad +
    C\varepsilon
    \left(\norm{\bff{u}}{\bb{H}^1}^2 + \norm{\bff{v}}{\bb{H}^1}^2 \right) \left(\norm{\Delta \bff{u}}{\bb{L}^2}^2 + \norm{\Delta \bff{v}}{\bb{L}^2}^2 \right) \norm{\bff{w}}{\bb{H}^1}^2
    +
    \frac{\varepsilon}{8} \norm{\nabla\Delta \bff{w}}{\bb{L}^2}^2.
\end{align*}
For the term $I_8$, we integrate by parts, then apply Young's inequality and \eqref{equ:norm H1 Phia} with $p=q=6$ to obtain
\begin{align*}
    \abs{I_8}
    &\leq
    \varepsilon \norm{\nabla \Phi_{\mathrm{a}}(\bff{u})-\nabla \Phi_{\mathrm{a}}(\bff{v})}{\bb{L}^2} \norm{\nabla\Delta \bff{w}}{\bb{L}^2}
    \\
    &\leq
    C\varepsilon \left(1+\norm{\bff{u}}{\bb{L}^\infty}^4 +\norm{\bff{v}}{\bb{L}^\infty}^4\right) \norm{\bff{w}}{\bb{H}^1}^2
    +
    C\varepsilon \left(\norm{\bff{u}}{\bb{H}^1}^2 + \norm{\bff{v}}{\bb{H}^1}^2 \right) \left(\norm{\bff{u}}{\bb{H}^2}^2 + \norm{\bff{v}}{\bb{H}^2}^2 \right) \norm{\bff{w}}{\bb{H}^1}^2
    +
    \frac{\varepsilon}{8} \norm{\nabla \Delta \bff{w}}{\bb{L}^2}^2,
\end{align*}
where in the last step we also used the Sobolev embedding $\bb{H}^2\hookrightarrow \bb{W}^{1,6}$. For the terms $I_5$ and $I_9$, by~\eqref{equ:Phi d O est} and Young's inequality,
\begin{align*}
    \abs{I_5}
    &\leq
    \sigma \norm{\Phi_{\mathrm{d}}(\bff{w})}{\bb{L}^2} \norm{\Delta \bff{w}}{\bb{L}^2}
    \leq
    C \norm{\bff{w}}{\bb{L}^2}^2
    +
    \frac{\eta}{16} \norm{\Delta\bff{w}}{\bb{L}^2}^2,
    \\
    \abs{I_9}
    &\leq
    \varepsilon \norm{\Delta \Phi_{\mathrm{d}}(\bff{w})}{\bb{L}^2} \norm{\Delta \bff{w}}{\bb{L}^2}
    \leq
    C \varepsilon \norm{\bff{w}}{\bb{H}^2}^2
    +
    \frac{\eta}{16} \norm{\Delta \bff{w}}{\bb{L}^2}^2.
\end{align*}
For the term $I_{10}$, by H\"older's and Young's inequalities, Sobolev embedding, and the definition of $\bff{H}_1$,
\begin{align*}
    \abs{I_{10}}
    &\leq
    \gamma \norm{\bff{w}}{\bb{L}^4} \norm{\bff{H}_1}{\bb{L}^4} \norm{\Delta \bff{w}}{\bb{L}^2}
    \leq
    C \norm{\bff{H}_1}{\bb{H}^1}^2 \norm{\bff{w}}{\bb{H}^1}^2
    +
    \frac{\eta}{16} \norm{\Delta \bff{w}}{\bb{L}^2}^2
    \\
    &\leq
    C \left(\norm{\bff{u}}{\bb{H}^3}^2 + \norm{\bff{u}}{\bb{H}^1}^4 \norm{\bff{u}}{\bb{H}^2}^2\right) \norm{\bff{w}}{\bb{H}^1}^2
    +
    \frac{\eta}{16} \norm{\Delta \bff{w}}{\bb{L}^2}^2.
\end{align*}
Similarly, for the next two terms, we have
\begin{align*}
    \abs{I_{11}}
    &\leq
    \kappa_1 \gamma \norm{\bff{v}}{\bb{L}^4} \norm{\bff{w}}{\bb{L}^4} \norm{\Delta \bff{w}}{\bb{L}^2}
    \leq
    C \norm{\bff{v}}{\bb{H}^1}^2 \norm{\bff{w}}{\bb{H}^1}^2
    +
    \frac{\eta}{16} \norm{\Delta \bff{w}}{\bb{L}^2}^2,
    \\
    \abs{I_{12}}
    &\leq
    \kappa_2 \gamma \norm{\bff{v}}{\bb{L}^\infty} 
    \norm{\bff{u}}{\bb{L}^6}^2
    \norm{\bff{w}}{\bb{L}^6} \norm{\Delta \bff{w}}{\bb{L}^2}
    \leq
    C \norm{\bff{u}}{\bb{H}^1}^4 \norm{\bff{v}}{\bb{L}^\infty}^2 \norm{\bff{w}}{\bb{H}^1}^2
    +
    \frac{\eta}{16} \norm{\Delta \bff{w}}{\bb{L}^2}^2.
\end{align*}
For the next term, we used Young's inequality and~\eqref{equ:Phi av aw norm} with $p=q=6$ to infer
\begin{align*}
    \abs{I_{13}}
    &\leq
    C \norm{\bff{v}}{\bb{L}^\infty}^2 \norm{\Phi_{\mathrm{a}}(\bff{u})-\Phi_{\mathrm{a}}(\bff{v})}{\bb{L}^2}^2 
    +
    \frac{\eta}{16} \norm{\Delta \bff{w}}{\bb{L}^2}^2
    \\
    &\leq
    C \norm{\bff{v}}{\bb{L}^\infty}^2 \left(1+\norm{\bff{u}}{\bb{H}^1}^2 + \norm{\bff{v}}{\bb{H}^1}^2 \right) \norm{\bff{w}}{\bb{H}^1}^2
    +
    \frac{\eta}{16} \norm{\Delta \bff{w}}{\bb{L}^2}^2.
\end{align*}
For the terms $I_{14}$ and $I_{15}$, we have by Young's inequality and \eqref{equ:Phi d Wkp},
\begin{align*}
    \abs{I_{14}}
    &\leq
    C\norm{\Phi_{\mathrm{d}}(\bff{u})}{\bb{L}^4}^2 \norm{\bff{w}}{\bb{L}^4}^2
    +
    \frac{\eta}{16} \norm{\Delta \bff{w}}{\bb{L}^2}^2
    \leq
    C \norm{\bff{u}}{\bb{H}^1}^2 \norm{\bff{w}}{\bb{H}^1}^2
    +
    \frac{\eta}{16} \norm{\Delta \bff{w}}{\bb{L}^2}^2,
    \\
    \abs{I_{15}}
    &\leq
    C\norm{\Phi_{\mathrm{d}}(\bff{w})}{\bb{L}^4}^2 \norm{\bff{v}}{\bb{L}^4}^2
    +
    \frac{\eta}{16} \norm{\Delta \bff{w}}{\bb{L}^2}^2
    \leq
    C \norm{\bff{v}}{\bb{H}^1}^2 \norm{\bff{w}}{\bb{H}^1}^2
    +
    \frac{\eta}{16} \norm{\Delta \bff{w}}{\bb{L}^2}^2.
\end{align*}
Finally, for the last two terms we apply \eqref{equ:Rvw Delta k} and~\eqref{equ:Sv Sw v min w} to obtain
\begin{align*}
    \abs{I_{16}}+\abs{I_{17}}
    \leq
    C\left(1+\norm{\bff{u}}{\bb{H}^2}^2 + \norm{\bff{v}}{\bb{H}^2}^2\right) \norm{\bff{w}}{\bb{H}^1}^2
    +
    \frac{\eta}{16} \norm{\Delta \bff{w}}{\bb{L}^2}^2.
\end{align*}
Altogether, substituting these estimates into~\eqref{equ:I1 to I16}, rearranging the terms, and integrating over $(0,t)$, we infer that
\begin{align}\label{equ:ddt wt H1}
    \norm{\bff{w}(t)}{\bb{H}^1}^2
    &+
    \int_0^t \varepsilon \norm{\nabla \Delta \bff{w}(s)}{\bb{L}^2}^2 \ds
    +
    \int_0^t \norm{\Delta \bff{w}(s)}{\bb{L}^2}^2 \ds
    \nonumber\\
    &\leq
    \norm{\bff{w}(0)}{\bb{H}^1}^2
    +
    \norm{\bff{w}(t)}{\bb{L}^2}^2
    +
    C \int_0^t \mathcal{B}(\bff{u},\bff{v}) \norm{\bff{w}(s)}{\bb{H}^1}^2 \ds
    +
    C \int_0^t \varepsilon \norm{\bff{w}(s)}{\bb{H}^2}^2 \ds,
\end{align}
where $C$ is independent of $\varepsilon$ and
\begin{align*}
    \mathcal{B}(\bff{u},\bff{v})
    &:= 
    1+ \norm{\bff{u}}{\bb{L}^\infty}^4 + \norm{\bff{v}}{\bb{L}^\infty}^4
    +
    \norm{\bff{u}}{\bb{H}^3}^2 
    + 
    \left(1+ \norm{\bff{u}}{\bb{H}^1}^4 + \norm{\bff{v}}{\bb{H}^1}^4 \right) \left(1+\norm{\bff{u}}{\bb{H}^2}^2 + \norm{\bff{v}}{\bb{H}^2}^2 \right).
\end{align*}
We note that by Agmon's inequality, Proposition~\ref{pro:un L2}, and Proposition~\ref{pro:un H1},
\begin{align*}
\int_0^t \mathcal{B}(\bff{u},\bff{v}) \,\ds 
\leq
C\big(\norm{\bff{u}_0}{\bb{H}^1}, \norm{\bff{v}_0}{\bb{H}^1}\big)\left(1+ \varepsilon^{-4}\right) (1+t^3).
\end{align*}
Therefore, applying the Gronwall lemma on~\eqref{equ:ddt wt H1} and noting Lemma~\ref{lem:u-vL2}, we have the required result.
\end{proof}

The following lemma shows a smoothing estimate for the difference of two solutions originating from different initial data.

\begin{lemma}\label{lem:smoo}
Let $\bff{u}(t)$ and $\bff{v}(t)$ be solutions of \eqref{equ:llbar} corresponding to initial data $\bff{u}_0\in \bb{H}^1$ and $\bff{v}_0\in \bb{H}^1$, respectively. Then for any $t>0$,
\begin{equation*}
    \norm{\bff{u}(t)-\bff{v}(t)}{\bb{H}^2}^2
    \leq 
    C(1+t^{-1}) e^{Ct^3} \norm{\bff{u}_0-\bff{v}_0}{\bb{H}^1}^2,
\end{equation*}
where $C$ depends on $\norm{\bff{u}_0}{\bb{H}^1}$, $\norm{\bff{v}_0}{\bb{H}^1}$, and $\varepsilon$.
\end{lemma}

\begin{proof}
We continue to use the same notations as in the proof of Lemma~\ref{lem:u-vL2}. Taking the inner product of~\eqref{equ:w H1} with $t\Delta^2 \bff{w}$, we obtain
\begin{align}\label{equ:12t Delta2w}
    \frac12 \ddt \left(t \norm{\Delta \bff{w}}{\bb{L}^2}^2 \right)
    &=
    \frac12 \norm{\Delta \bff{w}}{\bb{L}^2}^2
    +
    t\sigma \inpro{\Delta \bff{B}}{\Delta \bff{w}}_{\bb{L}^2}
    +
    t\sigma \inpro{\Phi_{\mathrm{d}} (\bff{w})}{\Delta^2 \bff{w}}_{\bb{L}^2}
    -
    t\varepsilon \inpro{\Delta \bff{B}}{\Delta^2 \bff{w}}_{\bb{L}^2}
    \nonumber \\
    &\quad
    -
    t\varepsilon \inpro{\Delta \Phi_{\mathrm{d}}(\bff{w})}{\Delta^2 \bff{w}}_{\bb{L}^2}
    -
    t\gamma \inpro{\bff{w}\times \bff{H}_1}{\Delta^2 \bff{w}}_{\bb{L}^2}
    -
    t\gamma \inpro{\bff{v}\times \bff{B}}{\Delta^2 \bff{w}}_{\bb{L}^2}
    \nonumber \\
    &\quad
    -
    t\gamma \inpro{\bff{w}\times \Phi_{\mathrm{d}}(\bff{u})}{\Delta^2 \bff{w}}_{\bb{L}^2}
    -
    t\gamma \inpro{\bff{v} \times \Phi_{\mathrm{d}}(\bff{w})}{\Delta^2 \bff{w}}_{\bb{L}^2}
    \nonumber \\
    &\quad
    +
    t \inpro{\mathcal{R}(\bff{u})-\mathcal{R}(\bff{v})}{\Delta^2 \bff{w}}_{\bb{L}^2}
    +
    t \inpro{\mathcal{S}(\bff{u})-\mathcal{S}(\bff{v})}{\Delta^2 \bff{w}}_{\bb{L}^2}.
\end{align}
Applying the operator $\Delta$ to~\eqref{equ:B eq}, then taking the inner product of the result with $t\sigma\Delta \bff{w}$, we have
\begin{align}\label{equ:t sigma Delta B}
    t\sigma\inpro{\Delta \bff{B}}{\Delta \bff{w}}_{\bb{L}^2}
    &=
    -
    t\sigma \norm{\nabla\Delta \bff{w}}{\bb{L}^2}^2
    +
    t\kappa_1 \sigma \norm{\Delta \bff{w}}{\bb{L}^2}^2
    -
    t\kappa_2 \sigma \inpro{\Delta(|\bff{u}|^2 \bff{w})}{\Delta \bff{w}}_{\bb{L}^2}
    \nonumber \\
    &\quad
    +
    t\kappa_2 \sigma \inpro{\Delta\big(((\bff{u}+\bff{v})\cdot \bff{w})\bff{v}\big)}{\Delta \bff{w}}_{\bb{L}^2}
    +
    t\sigma \inpro{\Delta\Phi_{\mathrm{a}}(\bff{u})- \Delta\Phi_{\mathrm{a}}(\bff{v})}{\Delta \bff{w}}_{\bb{L}^2}.
\end{align}
Similarly, applying the operator $\Delta$ to \eqref{equ:B eq} then taking the inner product of the result with $-t\varepsilon\Delta^2 \bff{w}$,
\begin{align}\label{equ:ep t Delta2w}
    -t\varepsilon\inpro{\Delta \bff{B}}{\Delta^2 \bff{w}}_{\bb{L}^2}
    &=
    -
    t\varepsilon \norm{\Delta^2 \bff{w}}{\bb{L}^2}^2
    +
    t\kappa_1 \varepsilon \norm{\nabla \Delta \bff{w}}{\bb{L}^2}^2
    +
    t\kappa_2 \varepsilon \inpro{\Delta \big(|\bff{u}|^2 \bff{w}\big)}{\Delta^2 \bff{w}}_{\bb{L}^2}
    \nonumber \\
    &\quad
    +
    t\kappa_2 \varepsilon \inpro{\Delta\big(((\bff{u}+\bff{v})\cdot \bff{w})\bff{v}\big)}{\Delta^2 \bff{w}}_{\bb{L}^2}
    -
    t\varepsilon \inpro{\Delta \Phi_{\mathrm{a}}(\bff{u})-\Delta \Phi_{\mathrm{a}}(\bff{v})}{\Delta^2 \bff{w}}_{\bb{L}^2}.
\end{align}
Similarly, using~\eqref{equ:B eq}, we have
\begin{align}\label{equ:t gamma x}
    -t\gamma \inpro{\bff{v}\times \bff{B}}{\Delta^2 \bff{w}}_{\bb{L}^2}
    &=
    -t\gamma \inpro{\bff{v}\times \Delta\bff{w}}{\Delta^2 \bff{w}}_{\bb{L}^2}
    -t\kappa_1 \gamma \inpro{\bff{v}\times \bff{w}}{\Delta^2 \bff{w}}_{\bb{L}^2}
    \nonumber \\
    &\quad
    +
    t\kappa_2 \gamma \inpro{\bff{v}\times |\bff{u}|^2 \bff{w}}{\Delta^2 \bff{w}}_{\bb{L}^2}
    -
    t\gamma \inpro{\bff{v} \times \big(\Phi_{\mathrm{a}}(\bff{u})-\Phi_{\mathrm{a}}(\bff{v})\big)}{\Delta^2 \bff{w}}_{\bb{L}^2}.
\end{align}
Recalling that $\eta=\sigma-\kappa_1\varepsilon$, we add \eqref{equ:12t Delta2w}, \eqref{equ:t sigma Delta B}, \eqref{equ:ep t Delta2w}, and~\eqref{equ:t gamma x} to obtain
\begin{align}\label{equ:J1 to J16}
    &\frac12 \ddt \left(t \norm{\Delta \bff{w}}{\bb{L}^2}^2 \right)
    +
    t\varepsilon \norm{\Delta^2 \bff{w}}{\bb{L}^2}^2
    +
    t\eta \norm{\nabla \Delta \bff{w}}{\bb{L}^2}^2
    \nonumber \\
    &=
    \left(\frac12 + t\kappa_1\sigma\right) \norm{\Delta \bff{w}}{\bb{L}^2}^2
    -
    t\kappa_2 \sigma \inpro{\Delta(|\bff{u}|^2 \bff{w})}{\Delta \bff{w}}_{\bb{L}^2}
    +
    t\kappa_2 \sigma \inpro{\Delta\big(((\bff{u}+\bff{v})\cdot \bff{w})\bff{v}\big)}{\Delta \bff{w}}_{\bb{L}^2}
    \nonumber \\
    &\quad
    +
    t\sigma \inpro{\Delta\Phi_{\mathrm{a}}(\bff{u})- \Delta\Phi_{\mathrm{a}}(\bff{v})}{\Delta \bff{w}}_{\bb{L}^2}
    +
    t\sigma \inpro{\Phi_{\mathrm{d}} (\bff{w})}{\Delta^2 \bff{w}}_{\bb{L}^2}
    +
    t\kappa_2 \varepsilon \inpro{\Delta \big(|\bff{u}|^2 \bff{w}\big)}{\Delta^2 \bff{w}}_{\bb{L}^2}
    \nonumber \\
    &\quad
    +
    t\kappa_2 \varepsilon \inpro{\Delta\big(((\bff{u}+\bff{v})\cdot \bff{w})\bff{v}\big)}{\Delta^2 \bff{w}}_{\bb{L}^2}
    -
    t\varepsilon \inpro{\Delta \Phi_{\mathrm{a}}(\bff{u})-\Delta \Phi_{\mathrm{a}}(\bff{v})}{\Delta^2 \bff{w}}_{\bb{L}^2}
    -
    t\varepsilon \inpro{\Delta \Phi_{\mathrm{d}}(\bff{w})}{\Delta^2 \bff{w}}_{\bb{L}^2}
    \nonumber \\
    &\quad
    -
    t\gamma \inpro{\bff{w}\times \bff{H}_1}{\Delta^2 \bff{w}}_{\bb{L}^2}
    -t\gamma \inpro{\bff{v}\times \Delta\bff{w}}{\Delta^2 \bff{w}}_{\bb{L}^2}
    -
    t\kappa_1 \gamma \inpro{\bff{v}\times \bff{w}}{\Delta^2 \bff{w}}_{\bb{L}^2}
    +
    t\kappa_2 \gamma \inpro{\bff{v}\times |\bff{u}|^2 \bff{w}}{\Delta^2 \bff{w}}_{\bb{L}^2}
    \nonumber \\
    &\quad
    -
    t\gamma \inpro{\bff{v} \times \big(\Phi_{\mathrm{a}}(\bff{u})-\Phi_{\mathrm{a}}(\bff{v})\big)}{\Delta^2 \bff{w}}_{\bb{L}^2}
    -
    t\gamma \inpro{\bff{w}\times \Phi_{\mathrm{d}}(\bff{u})}{\Delta^2 \bff{w}}_{\bb{L}^2}
    -
    t\gamma \inpro{\bff{v} \times \Phi_{\mathrm{d}}(\bff{w})}{\Delta^2 \bff{w}}_{\bb{L}^2}
    \nonumber \\
    &\quad
    +
    t \inpro{\mathcal{R}(\bff{u})-\mathcal{R}(\bff{v})}{\Delta^2 \bff{w}}_{\bb{L}^2}
    +
    t \inpro{\mathcal{S}(\bff{u})-\mathcal{S}(\bff{v})}{\Delta^2 \bff{w}}_{\bb{L}^2}
    \nonumber \\
    &=
    J_1+J_2+\cdots+J_{18}.
\end{align}
It remains to estimate each of the terms in the last line. The first term is left as is. For the second term, integrating by parts, then applying H\"older's and Young's inequalities, we obtain
\begin{align*}
    \abs{J_2}
    &\leq
    t\kappa_2 \sigma \norm{\nabla(|\bff{u}|^2 \bff{w})}{\bb{L}^2} \norm{\nabla \Delta \bff{w}}{\bb{L}^2}
    \\
    &\leq
    Ct \norm{\bff{u}}{\bb{L}^6}^2 \norm{\nabla \bff{u}}{\bb{L}^6}^2 \norm{\bff{w}}{\bb{L}^6}^2
    +
    Ct \norm{\bff{u}}{\bb{L}^6}^4 \norm{\nabla \bff{w}}{\bb{L}^6}^2
    +
    \frac{t\eta}{16} \norm{\nabla\Delta \bff{w}}{\bb{L}^2}^2
    \\
    &\leq
    Ct \norm{\bff{u}}{\bb{H}^1}^2 \norm{\bff{u}}{\bb{H}^2}^2
    \norm{\bff{w}}{\bb{H}^1}^2
    +
    Ct \norm{\bff{u}}{\bb{H}^1}^4 \norm{\bff{w}}{\bb{H}^2}^2
    +
    \frac{t\eta}{16} \norm{\nabla\Delta \bff{w}}{\bb{L}^2}^2,
\end{align*}
where in the last step we used the Sobolev embedding $\bb{H}^1\hookrightarrow \bb{L}^6$. Similarly, for the third term we have
\begin{align*}
    \abs{J_3}
    &\leq
    Ct \left(\norm{\bff{u}}{\bb{H}^1}^2 + \norm{\bff{v}}{\bb{H}^1}^2\right) \left(\norm{\bff{u}}{\bb{H}^2}^2 + \norm{\bff{v}}{\bb{H}^2}^2\right) \norm{\bff{w}}{\bb{H}^1}^2
    +
    Ct \left(\norm{\bff{u}}{\bb{H}^1}^4 + \norm{\bff{v}}{\bb{H}^1}^4 \right) \norm{\bff{w}}{\bb{H}^2}^2
    +
    \frac{t\eta}{16} \norm{\nabla\Delta \bff{w}}{\bb{L}^2}^2.
\end{align*}
For the term $J_4$, we integrate by parts, then apply \eqref{equ:norm H1 Phia} with $p=q=6$ and Young's inequality to obtain
\begin{align*}
    \abs{J_4}
    &\leq
    Ct \left(1+\norm{\bff{u}}{\bb{L}^\infty}^4 + \norm{\bff{v}}{\bb{L}^\infty}^4\right) \norm{\bff{w}}{\bb{H}^1}^2
    +
    Ct \left(\norm{\bff{u}}{\bb{H}^1}^2 + \norm{\bff{v}}{\bb{H}^1}^2 \right) \left(\norm{\bff{u}}{\bb{H}^2}^2 + \norm{\bff{v}}{\bb{H}^2}^2 \right) \norm{\bff{w}}{\bb{H}^1}^2
    +
    \frac{t\eta}{16} \norm{\nabla\Delta \bff{w}}{\bb{L}^2}^2.
\end{align*}
For the next term, integrating by parts once and applying \eqref{equ:Phi d Wkp} we have
\begin{align*}
    \abs{J_5}
    &\leq
    Ct\norm{\bff{w}}{\bb{H}^1}^2 
    +
    \frac{t\eta}{16} \norm{\nabla\Delta \bff{w}}{\bb{L}^2}^2.
\end{align*}
For the term $J_6$, by~\eqref{equ:del v2v}, H\"older's and Young's inequalities, we have
\begin{align*}
    \abs{J_6}
    &\leq
    Ct\varepsilon \norm{\nabla \bff{u}}{\bb{L}^6}^4 \norm{\bff{w}}{\bb{L}^6}^2
    +
    Ct\varepsilon \norm{\bff{u}}{\bb{L}^6}^2 \norm{\Delta \bff{u}}{\bb{L}^6}^2 \norm{\bff{w}}{\bb{L}^6}^2
    +
    Ct\varepsilon \norm{\nabla \bff{w}}{\bb{L}^2}^2 \norm{\bff{u}}{\bb{L}^\infty}^2 \norm{\nabla \bff{u}}{\bb{L}^\infty}^2
    \\
    &\quad
    +
    Ct\varepsilon \norm{\bff{u}}{\bb{L}^6}^4 \norm{\Delta \bff{w}}{\bb{L}^6}^2
    +
    \frac{t\varepsilon}{16} \norm{\Delta^2 \bff{w}}{\bb{L}^2}^2
    \\
    &\leq
    Ct\varepsilon \norm{\bff{u}}{\bb{H}^1}^2 \norm{\bff{u}}{\bb{H}^3}^2 \norm{\bff{w}}{\bb{H}^1}^2
    +
    Ct\varepsilon \norm{\bff{u}}{\bb{H}^1}^4 \norm{\bff{w}}{\bb{H}^3}^2
    +
    \frac{t\varepsilon}{16} \norm{\Delta^2 \bff{w}}{\bb{L}^2}^2,
\end{align*}
where in the last step we used the Gagliardo--Nirenberg inequalities.
Similarly, for the term $J_7$ we have
\begin{align*}
    \abs{J_7}
    &\leq
    Ct\varepsilon \left(\norm{\bff{u}}{\bb{H}^1}^2 + \norm{\bff{v}}{\bb{H}^1}^2\right) \left(\norm{\bff{u}}{\bb{H}^3}^2 + \norm{\bff{v}}{\bb{H}^3}^2\right) \norm{\bff{w}}{\bb{H}^1}^2
    +
    Ct\varepsilon \left( \norm{\bff{u}}{\bb{H}^1}^4 + \norm{\bff{v}}{\bb{H}^1}^4 \right) \norm{\bff{w}}{\bb{H}^3}^2
    +
    \frac{t\varepsilon}{16} \norm{\Delta^2 \bff{w}}{\bb{L}^2}^2.
\end{align*}
For the term $J_8$, by Young's inequality and \eqref{equ:Phi vw H2},
\begin{align*}
    \abs{J_8}
    &\leq
    Ct\varepsilon \norm{\bff{w}}{\bb{L}^2}^2
    +
    Ct\varepsilon\left(\norm{\bff{u}}{\bb{H}^1}^4+ \norm{\bff{v}}{\bb{H}^1}^4 \right) \norm{\Delta\bff{w}}{\bb{H}^1}^2 
    \nonumber \\
    &\quad
    +
    Ct\varepsilon \left(\norm{\bff{u}}{\bb{H}^1}^2+ \norm{\bff{v}}{\bb{H}^1}^2 \right) \left(\norm{\bff{u}}{\bb{H}^3}^2+ \norm{\bff{v}}{\bb{H}^3}^2 \right) \norm{\bff{w}}{\bb{H}^1}^2
    +
    \frac{t\varepsilon}{16} \norm{\Delta^2 \bff{w}}{\bb{L}^2}^2.
\end{align*}
For the next term, by~\eqref{equ:Phi d Wkp} and Young's inequality, we have
\begin{align*}
    \abs{J_9}
    &\leq
    Ct\varepsilon \norm{\bff{w}}{\bb{H}^2}^2
    +
    \frac{t\varepsilon}{16} \norm{\Delta^2 \bff{w}}{\bb{L}^2}^2.
\end{align*}
For the term $J_{10}$, by Young's inequality, Sobolev embedding, and the definition of $\bff{H}_1$,
\begin{align*}
    \abs{J_{10}}
    &\leq
    Ct\varepsilon^{-1} \norm{\bff{w}}{\bb{L}^6}^2 \norm{\bff{H}_1}{\bb{L}^3}^2 + \frac{t\varepsilon}{16} \norm{\Delta^2 \bff{w}}{\bb{L}^2}^2
    \\
    &\leq
    Ct\varepsilon^{-1} \norm{\bff{w}}{\bb{H}^1}^2 \left(\norm{\bff{u}}{\bb{H}^3}^2 + \norm{\bff{u}}{\bb{H}^1}^2 + \norm{\bff{u}}{\bb{H}^1}^2 \norm{\bff{u}}{\bb{L}^\infty} \right)
    + \frac{t\varepsilon}{16} \norm{\Delta^2 \bff{w}}{\bb{L}^2}^2.
\end{align*}
Similarly, for the next three terms, applying Young's inequality and Sobolev embedding, we have
\begin{align*}
    \abs{J_{11}}
    &\leq
    Ct\varepsilon^{-1} \norm{\bff{v}}{\bb{L}^4}^2 \norm{\Delta\bff{w}}{\bb{L}^4}^2
    +
    \frac{t\varepsilon}{16} \norm{\Delta^2 \bff{w}}{\bb{L}^2}^2
    \leq
    Ct\varepsilon^{-1} \norm{\bff{v}}{\bb{H}^1}^2 \norm{\bff{w}}{\bb{H}^3}^2
    +
    \frac{t\varepsilon}{16} \norm{\Delta^2 \bff{w}}{\bb{L}^2}^2,
    \\
    \abs{J_{12}}
    &\leq
    Ct \varepsilon^{-1} \norm{\bff{v}}{\bb{L}^4}^2 \norm{\bff{w}}{\bb{L}^4}^2
    +
    \frac{t\varepsilon}{16} \norm{\Delta^2 \bff{w}}{\bb{L}^2}^2
    \leq
    Ct\varepsilon^{-1} \norm{\bff{v}}{\bb{H}^1}^2 \norm{\bff{w}}{\bb{H}^1}^2
    +
    \frac{t\varepsilon}{16} \norm{\Delta^2 \bff{w}}{\bb{L}^2}^2,
    \\
    \abs{J_{13}}
    &\leq
    Ct\varepsilon^{-1} \norm{\bff{v}}{\bb{L}^\infty}^2 \norm{\bff{u}}{\bb{L}^6}^4 \norm{\bff{w}}{\bb{L}^6}^2 
    +
    \frac{t\varepsilon}{16} \norm{\Delta^2 \bff{w}}{\bb{L}^2}^2
    \leq
    Ct\varepsilon^{-1} \norm{\bff{v}}{\bb{L}^\infty}^2 \norm{\bff{u}}{\bb{H}^1}^4 \norm{\bff{w}}{\bb{H}^1}^2 
    +
    \frac{t\varepsilon}{16} \norm{\Delta^2 \bff{w}}{\bb{L}^2}^2.
\end{align*}
For the term $J_{14}$, using Young's inequality and~\eqref{equ:Phi av aw norm} with $p=q=6$, we obtain
\begin{align*}
    \abs{J_{14}}
    &\leq
    Ct\varepsilon^{-1} \norm{\bff{v}}{\bb{L}^\infty}^2 \norm{\Phi_{\mathrm{a}}(\bff{u})-\Phi_{\mathrm{a}}(\bff{v})}{\bb{L}^2}^2 
    +
    \frac{t\varepsilon}{16} \norm{\Delta^2 \bff{w}}{\bb{L}^2}^2
    \\
    &\leq
    Ct\varepsilon^{-1} \norm{\bff{v}}{\bb{L}^\infty}^2 \left(1+ \norm{\bff{u}}{\bb{H}^1}^4 + \norm{\bff{v}}{\bb{H}^1}^4\right) \norm{\bff{w}}{\bb{H}^1}^2
    +
    \frac{t\varepsilon}{16} \norm{\Delta^2 \bff{w}}{\bb{L}^2}^2.
\end{align*}
For the terms $J_{15}$ and $J_{16}$, we integrate by parts and use Young's inequality and \eqref{equ:Phi d Wkp} to obtain
\begin{align*}
    \abs{J_{15}}
    &\leq
    Ct \norm{\bff{w}}{\bb{H}^1}^2 \norm{\bff{u}}{\bb{H}^2}^2
    +
    Ct \norm{\bff{w}}{\bb{H}^2}^2 \norm{\bff{u}}{\bb{H}^1}^2
    +
    \frac{t\eta}{16} \norm{\nabla \Delta \bff{w}}{\bb{L}^2}^2,
    \\
    \abs{J_{16}}
    &\leq
    Ct \norm{\bff{w}}{\bb{H}^1}^2 \norm{\bff{v}}{\bb{H}^2}^2
    +
    Ct \norm{\bff{w}}{\bb{H}^2}^2 \norm{\bff{v}}{\bb{H}^1}^2
    +
    \frac{t\eta}{16} \norm{\nabla \Delta \bff{w}}{\bb{L}^2}^2.
\end{align*}
Finally, for the last two terms, by~\eqref{equ:Rvw Delta k} and~\eqref{equ:Sv Sw v min w} we have
\begin{align*}
    \abs{J_{17}}+ \abs{J_{18}} 
    &\leq
    Ct\varepsilon^{-1} \left(1+\norm{\bff{u}}{\bb{H}^2}^2 +\norm{\bff{v}}{\bb{H}^2}^2\right) \norm{\bff{w}}{\bb{H}^1}^2
    +
    \frac{t\varepsilon}{16} \norm{\Delta^2 \bff{w}}{\bb{L}^2}^2.
\end{align*}
Altogether, substituting these estimates into \eqref{equ:J1 to J16}, applying the Sobolev embedding, \eqref{equ:semidisc-est1}, and Lemma~\ref{lem:smt H1}, we obtain
\begin{align*}
    \ddt \left(t \norm{\Delta \bff{w}}{\bb{L}^2}^2 \right)
    +
    t\varepsilon \norm{\Delta^2 \bff{w}}{\bb{L}^2}^2
    +
    t\eta \norm{\nabla \Delta \bff{w}}{\bb{L}^2}^2
    &\leq
    Ct \left(1+\norm{\bff{u}}{\bb{H}^1}^4 + \norm{\bff{v}}{\bb{H}^1}^4\right)
    \left(1+\norm{\bff{u}}{\bb{H}^3}^2 + \norm{\bff{v}}{\bb{H}^3}^2 \right) \norm{\bff{w}}{\bb{H}^1}^2
    \\
    &\quad
    +
    C\norm{\bff{w}}{\bb{H}^2}^2
    +
    Ct \left(1+\norm{\bff{u}}{\bb{H}^1}^4 + \norm{\bff{v}}{\bb{H}^1}^4\right) \norm{\bff{w}}{\bb{H}^3}^2
    \\
    &\leq
    Cte^{Ct^3} \norm{\bff{u}_0-\bff{v}_0}{\bb{H}^1}^2 \left(1+\norm{\bff{u}}{\bb{H}^3}^2 + \norm{\bff{v}}{\bb{H}^3}^2 \right)
    \\
    &\quad
    +
    C\norm{\bff{w}}{\bb{H}^2}^2
    +
    Ct  \norm{\bff{w}}{\bb{H}^3}^2,
\end{align*}
where $C$ depends on $\norm{\bff{u}_0}{\bb{H}^1}$, $\norm{\bff{v}_0}{\bb{H}^1}$, and $\varepsilon$.
Integrating both sides over $(0,t)$, then using \eqref{equ:int nab Hn L2 t} and Lemma~\ref{lem:smt H1} again (noting that $C$ is independent of $t$ and thus $Cse^{Cs^3}\leq Cte^{Ct^3}$ for $s\in (0,t)$), we have
\begin{align*}
    t \norm{\Delta \bff{w}(t)}{\bb{L}^2}^2
    &\leq
    Cte^{Ct^3} \norm{\bff{u}_0-\bff{v}_0}{\bb{H}^1}^2 \int_0^t \left(1+\norm{\bff{u}(s)}{\bb{H}^3}^2  + \norm{\bff{v}(s)}{\bb{H}^3}^2 \right) \ds 
    + 
    C(1+t) \int_0^t \norm{\bff{w}(s)}{\bb{H}^3}^2 \ds
    \\
    &\leq
    C(1+t^4)e^{Ct^3} \norm{\bff{u}_0-\bff{v}_0}{\bb{H}^1}^2,
\end{align*}
where $C$ depends on $\norm{\bff{u}_0}{\bb{H}^1}$, $\norm{\bff{v}_0}{\bb{H}^1}$, and $\varepsilon$.
This implies the required estimate.
\end{proof}

\section{Uniform-in-$\varepsilon$ estimates on solution}\label{sec:uniform in e}


Throughout this section, we assume that $d\leq 2$ and $\lambda_2=0$, i.e. the higher-order term of the anisotropy field $\Phi_{\mathrm{a}}$ is assumed to be negligible (which is physically reasonable).
We derive in the following lemmas uniform-in-$\varepsilon$ estimates analogous to Proposition~\ref{pro:un L2}, Proposition~\ref{pro:H1 smooth}, and Proposition~\ref{pro:un H1}. These estimates are required in Section~\ref{sec:furth attractor}. Recall that $\eta= \sigma-\kappa_1 \varepsilon>0$.

\begin{lemma}\label{lem:H1 unif e}
Let $\bff{u}$ be the solution of~\eqref{equ:llbar} with initial data $\bff{u}_0\in \bb{H}^1$, whose corresponding effective field is $\bff{H}$. The following statements hold:
\begin{enumerate}
    \item There exists a constant $C:=C\big(\norm{\bff{u}_0}{\bb{L}^2}\big)$, independent of $\varepsilon$ and $t$, such that for all $t\geq 0$,
        \begin{align}
            \label{equ:ut k0}
            \norm{\bff{u}(t)}{\bb{L}^2}^2 
            &\leq 
            C,
            \\
            \label{equ:ut k1}
            \norm{\bff{u}(t)}{\bb{H}^1}^2
            &\leq
            Ce^{Ct} \norm{\bff{u}_0}{\bb{H}^1}^2.
        \end{align}
    \item There exists a constant $C$ independent of $\varepsilon$ and $t$, such that for all $t\geq 0$,
        \begin{align}
        \label{equ:us L4 int}
            &\int_0^t \norm{\bff{u}(s)}{\bb{L}^4}^4 \ds
            +
            \int_0^t \norm{\nabla \bff{u}(s)}{\bb{L}^2}^2 \ds
            +
            \varepsilon \int_0^t \norm{\Delta \bff{u}(s)}{\bb{L}^2}^2 \ds
            +
            \varepsilon \int_0^t \norm{|\bff{u}(s)| |\nabla \bff{u}(s)|}{\bb{L}^2}^2 \ds
		  \leq
             \norm{\bff{u}_0}{\bb{L}^2}^2 + Ct,
            \\
        \label{equ:Hs L2 int}
            &\int_0^t \norm{\bff{H}(s)}{\bb{L}^2}^2
            +
            \int_0^t \norm{\Delta \bff{u}(s)}{\bb{L}^2}^2
            +
            \varepsilon \int_0^t \norm{\nabla \Delta \bff{u}(s)}{\bb{L}^2}^2 \ds
            + 
            \int_0^t \norm{|\bff{u}(s)| |\nabla \bff{u}(s)|}{\bb{L}^2}^2 \ds
            \leq
            Ce^{Ct} \norm{\bff{u}_0}{\bb{H}^1}^2.
        \end{align}
    \item There exists $t_1'$ depending on $\norm{\bff{u}_0}{\bb{L}^2}$ (but is independent of $\varepsilon$) such that for all $t\geq t_1'$,
        \begin{align}\label{equ:tt1 u H1}
            \norm{\bff{u}(t)}{\bb{H}^1}^2 
            +
            \int_t^{t+1} \left( \norm{\Delta \bff{u}(s)}{\bb{L}^2}^2 
            + \varepsilon \norm{\nabla\Delta \bff{u}(s)}{\bb{L}^2}^2 + \norm{|\bff{u}(s)| |\nabla \bff{u}(s)|}{\bb{L}^2}^2 \right) \ds
            \leq \widetilde{\rho_1},
        \end{align}
        where $\widetilde{\rho_1}$ is independent of $\bff{u}_0$, $\varepsilon$, and $t$.
    \item There exists a constant $C:= C\big(\norm{\bff{u}_0}{\bb{L}^2}\big)$ such that for all $t>0$,
        \begin{align}\label{equ:M0 u H1}
            \norm{\bff{u}(t)}{\bb{H}^1}^2
            \leq
            Ce^{Ct} \left(1+t^{-1}\right),
        \end{align}
    where $C$ is independent of $\varepsilon$ and $t$.
\end{enumerate}
\end{lemma}

\begin{proof}
The proof of~\eqref{equ:ut k0} and~\eqref{equ:us L4 int} is established in Proposition~\ref{pro:un L2}, noting that the estimates in Proposition~\ref{pro:un L2} are already uniform in $\varepsilon$. Next, we show~\eqref{equ:ut k1}.
Taking the inner product of both equations in~\eqref{equ:llbar} with $-\Delta \bff{u}$ and rearranging the terms, we obtain
\begin{align}\label{equ:ddt nab u L2}
    &\frac12 \ddt \norm{\nabla \bff{u}}{\bb{L}^2}^2
    +
    \eta \norm{\Delta \bff{u}}{\bb{L}^2}^2
    +
    \varepsilon \norm{\nabla\Delta \bff{u}}{\bb{L}^2}^2
    +
    2\kappa_2 \norm{\bff{u}\cdot \nabla \bff{u}}{\bb{L}^2}^2
    +
    \kappa_2 \norm{|\bff{u}| |\nabla\bff{u}|}{\bb{L}^2}^2
    \nonumber \\
    &=
    -\sigma \inpro{\Phi_{\mathrm{d}}(\bff{u})}{\Delta \bff{u}}_{\bb{L}^2}
    -
    \varepsilon \kappa_2 \inpro{\nabla(|\bff{u}|^2 \bff{u})}{\nabla\Delta \bff{u}}_{\bb{L}^2}
    +
    \varepsilon \inpro{\Delta \Phi_{\mathrm{d}}(\bff{u})}{\Delta \bff{u}}_{\bb{L}^2}
    \nonumber \\
    &\quad
    +
    \gamma \inpro{\bff{u}\times \Phi_{\mathrm{a}}(\bff{u})}{\Delta \bff{u}}_{\bb{L}^2}
    +
    \gamma \inpro{\bff{u}\times \Phi_{\mathrm{d}}(\bff{u})}{\Delta \bff{u}}_{\bb{L}^2}
    -
    \inpro{\mathcal{R}(\bff{u})}{\Delta \bff{u}}_{\bb{L}^2}
    -
    \inpro{\mathcal{S}(\bff{u})}{\Delta \bff{u}}_{\bb{L}^2}
    \nonumber \\
    &=: I_1+I_2+\cdots+I_7.
\end{align}
We will estimate each term on the last line. For the first and the third terms, by Young's inequality and \eqref{equ:Phi d Wkp}, we have
\begin{align*}
    \abs{I_1}
    &\leq
    C \norm{\bff{u}}{\bb{L}^2}^2 + \frac{\eta}{6} \norm{\Delta \bff{u}}{\bb{L}^2}^2,
    \\
    \abs{I_3}
    &\leq
    C\varepsilon^2 \norm{\bff{u}}{\bb{H}^2}^2 + \frac{\eta}{6} \norm{\Delta \bff{u}}{\bb{L}^2}^2.
\end{align*}
For the second term, by~\eqref{equ:nab un2}, Young's and Agmon's inequalities (for $d\leq 2$), we have
\begin{align*}
    \abs{I_2}
    &\leq
    C\varepsilon \norm{\bff{u}}{\bb{L}^\infty}^4 \norm{\nabla \bff{u}}{\bb{L}^2}^2
    +
    \frac{\varepsilon}{4} \norm{\nabla\Delta \bff{u}}{\bb{L}^2}^2
    \leq
    C\varepsilon \norm{\bff{u}}{\bb{L}^2}^2 \norm{\bff{u}}{\bb{H}^2}^2 \norm{\nabla \bff{u}}{\bb{L}^2}^2
    +
    \frac{\varepsilon}{4} \norm{\nabla\Delta \bff{u}}{\bb{L}^2}^2.
\end{align*}
For the term $I_4$ and $I_5$ (recalling that we assumed $\lambda_2=0$), by Young's inequality and \eqref{equ:Phi d Wkp}, we have
\begin{align*}
    \abs{I_4}+ \abs{I_5}
    &\leq
    C\norm{\bff{u}}{\bb{L}^4}^4 + \frac{\eta}{6} \norm{\Delta \bff{u}}{\bb{L}^2}^2.
\end{align*}
Finally, for the terms $I_6$ and $I_7$, we apply~\eqref{equ:2d Rv Delta v} and~\eqref{equ:Sv w} respectively, to obtain
\begin{align*}
    \abs{I_6}
    &\leq
    C\left(1+\norm{\bff{u}}{\bb{L}^4}^4 \right) \norm{\nabla \bff{u}}{\bb{L}^2}^2
    +
    \frac{\eta}{6} \norm{\Delta \bff{u}}{\bb{L}^2}^2,
    \\
    \abs{I_7}
    &\leq
    C\left(\norm{\bff{u}}{\bb{L}^2}^2 + \norm{\bff{u}}{\bb{L}^4}^4\right)
    +
    \frac{\eta}{6} \norm{\Delta \bff{u}}{\bb{L}^2}^2.
\end{align*}
Substituting these estimates into~\eqref{equ:ddt nab u L2}, we obtain
\begin{align}\label{equ:nab u eps}
    &\ddt \norm{\nabla \bff{u}}{\bb{L}^2}^2
    +
    \norm{\Delta \bff{u}}{\bb{L}^2}^2
    +
    \varepsilon \norm{\nabla\Delta \bff{u}}{\bb{L}^2}^2
    \nonumber \\
    &\leq
    C \norm{\bff{u}}{\bb{L}^2}^2 
    +
    C \norm{\bff{u}}{\bb{L}^4}^4
    +
    C \varepsilon^2 \norm{\bff{u}}{\bb{H}^2}^2
    +
    C\left(1+\norm{\bff{u}}{\bb{L}^4}^4+ \varepsilon \norm{\bff{u}}{\bb{L}^2}^2 \norm{\bff{u}}{\bb{H}^2}^2 \right) \norm{\nabla \bff{u}}{\bb{L}^2}^2.
\end{align}
Integrating this over $(0,t)$, and applying the Gronwall inequality (noting Proposition~\ref{pro:un L2}), we obtain~\eqref{equ:ut k1}. The estimate~\eqref{equ:Hs L2 int} then follows by noting the definition of $\bff{H}$ and applying H\"older's inequality. Inequality~\eqref{equ:tt1 u H1} follows from \eqref{equ:nab u eps} and the uniform Gronwall inequality (noting~\eqref{equ:rho 0 sigma 0}).

Next, taking the inner product of both equations in~\eqref{equ:llbar} with $-t\Delta \bff{u}$, analogous to \eqref{equ:nab u eps} we obtain
\begin{align*}
    \ddt \left(t \norm{\nabla \bff{u}}{\bb{L}^2}^2\right)
    &\leq
    C\norm{\nabla \bff{u}}{\bb{L}^2}^2
    +
    Ct \norm{\bff{u}}{\bb{L}^2}^2 
    +
    Ct \norm{\bff{u}}{\bb{L}^4}^4
    +
    Ct \varepsilon^2 \norm{\bff{u}}{\bb{H}^2}^2
    \\
    &\quad
    +
    Ct\left(1+\norm{\bff{u}}{\bb{L}^4}^4+ \varepsilon \norm{\bff{u}}{\bb{L}^2}^2 \norm{\bff{u}}{\bb{H}^2}^2 \right) \norm{\nabla \bff{u}}{\bb{L}^2}^2.
\end{align*}
Integrating this over $(0,t)$ and applying the Gronwall inequality, noting~\eqref{equ:us L4 int}, we obtain \eqref{equ:M0 u H1}.
This completes the proof of the lemma.
\end{proof}

Furthermore, we need another estimate for $\norm{\bff{u}(t)}{\bb{H}^2}$ which is uniform in $\varepsilon$.

\begin{lemma}\label{lem:H2 unif}
Let $\bff{u}$ be the solution of~\eqref{equ:llbar} with initial data $\bff{u}_0$, whose corresponding effective field is $\bff{H}$. The following statements hold:
\begin{enumerate}
\item If $\bff{u}_0\in\bb{H}^2$, then for all $t\geq 0$,
\begin{align}\label{equ:et3}
    \norm{\bff{u}(t)}{\bb{H}^2}^2
    +
    \int_0^t \norm{\nabla \Delta \bff{u}(s)}{\bb{L}^2}^2 \ds 
    +
    \varepsilon \int_0^t \norm{\Delta^2 \bff{u}(s)}{\bb{L}^2}^2 \ds 
    \leq
    \kappa_2(t),
\end{align}
where $\kappa_2(t)$ is an increasing function of $t$ which also depends on $\norm{\bff{u}_0}{\bb{H}^2}$, but is independent of $\varepsilon$.
\item If $\bff{u}_0\in \bb{L}^2$, then there exists $t_2':=t_1'+1$, where $t_1'$ is defined in Lemma~\ref{lem:H1 unif e} and depends on $\norm{\bff{u}_0}{\bb{L}^2}$ (but is independent of $\varepsilon$), such that for all $t\geq t_2'$,
\begin{align}\label{equ:tt1 H2}
    \norm{\bff{u}(t)}{\bb{H}^2}^2 
    +
    \int_t^{t+1} \left( \norm{\nabla\Delta \bff{u}(s)}{\bb{L}^2}^2 + \varepsilon \norm{\Delta^2 \bff{u}(s)}{\bb{L}^2}^2 \right) \ds
    \leq
    \widetilde{\rho_2},
\end{align}
where $\rho_2$ is independent of $\bff{u}_0$, $\varepsilon$, and $t$.
\item If $\bff{u}_0\in \bb{H}^1$, then there exists a constant $C:=C\big(\norm{\bff{u}_0}{\bb{H}^1}\big)$ independent of $\varepsilon$ and $t$, such that for all $t>0$,
\begin{align}\label{equ:M unif}
    \norm{\bff{u}(t)}{\bb{H}^2}^2
    \leq
    C \left(1+t^{-1}\right) \exp(e^{Ct}).
\end{align}
\end{enumerate}
\end{lemma}

\begin{proof}
Taking the inner product of both equations in~\eqref{equ:llbar} with $\Delta^2 \bff{u}$ and integrating by parts whenever appropriate, we obtain
\begin{align}\label{equ:I1I2 I12}
    &\frac12 \ddt \norm{\Delta \bff{u}}{\bb{L}^2}^2
    +
    \eta \norm{\nabla \Delta \bff{u}}{\bb{L}^2}^2
    +
    \varepsilon \norm{\Delta^2 \bff{u}}{\bb{L}^2}^2
    \nonumber \\
    &=
    \kappa_1 \norm{\Delta \bff{u}}{\bb{L}^2}^2
    +
    \kappa_2 \inpro{\nabla\big(|\bff{u}|^2 \bff{u}\big)}{\nabla \Delta \bff{u}}_{\bb{L}^2}
    -
    \sigma \inpro{\nabla \Phi_{\mathrm{d}} (\bff{u})}{\nabla\Delta \bff{u}}_{\bb{L}^2}
    \nonumber \\
    &\quad
    +
    \kappa_2 \varepsilon \inpro{\Delta\big(|\bff{u}|^2 \bff{u}\big)}{\Delta^2 \bff{u}}_{\bb{L}^2}
    -
    \varepsilon \inpro{\Delta \Phi_{\mathrm{a}}(\bff{u})}{\Delta^2 \bff{u}}_{\bb{L}^2}
    -
    \varepsilon \inpro{\Delta \Phi_{\mathrm{d}}(\bff{u})}{\Delta^2 \bff{u}}_{\bb{L}^2}
    \nonumber \\
    &\quad
    +
    \gamma \inpro{\nabla \bff{u}\times \Delta \bff{u}}{\nabla \Delta \bff{u}}_{\bb{L}^2}
    +
    \gamma \inpro{\nabla \bff{u} \times \Phi_{\mathrm{a}}(\bff{u})}{\nabla \Delta \bff{u}}_{\bb{L}^2}
    +
    \gamma \inpro{\bff{u} \times \nabla \Phi_{\mathrm{a}}(\bff{u})}{\nabla \Delta \bff{u}}_{\bb{L}^2}
    \nonumber \\
    &\quad
    +
    \gamma \inpro{\nabla\bff{u}\times \Phi_{\mathrm{d}}(\bff{u})}{\nabla\Delta \bff{u}}_{\bb{L}^2}
    +
    \gamma \inpro{\bff{u}\times \nabla\Phi_{\mathrm{d}}(\bff{u})}{\nabla\Delta \bff{u}}_{\bb{L}^2}
    -
    \inpro{\nabla \mathcal{R}(\bff{u})}{\nabla \Delta \bff{u}}_{\bb{L}^2}
    -
    \inpro{\nabla \mathcal{S}(\bff{u})}{\nabla \Delta \bff{u}}_{\bb{L}^2}
    \nonumber \\
    &=
    I_1+I_2+\cdots+ I_{13}.
\end{align}
We will estimate each term on the last line. The first term is left as is. For the terms $I_2$ and $I_3$, by Young's inequality and Sobolev embedding,
\begin{align*}
    \abs{I_2}
    &\leq
    3\kappa_2 \norm{\bff{u}}{\bb{L}^6}^2 \norm{\nabla \bff{u}}{\bb{L}^6} \norm{\nabla \Delta \bff{u}}{\bb{L}^2}
    \leq
    C\norm{\bff{u}}{\bb{H}^1}^4 \norm{\Delta \bff{u}}{\bb{L}^2}^2
    +
    \frac{\eta}{12} \norm{\nabla\Delta \bff{u}}{\bb{L}^2}^2,
    \\
    \abs{I_3}
    &\leq
    \sigma \norm{\nabla \Phi_{\mathrm{d}}(\bff{u})}{\bb{L}^2} \norm{\nabla\Delta \bff{u}}{\bb{L}^2}
    \leq
    C \norm{\bff{u}}{\bb{H}^1}^2 
    +
    \frac{\eta}{12} \norm{\nabla\Delta \bff{u}}{\bb{L}^2}^2.
\end{align*}
For the next term, by \eqref{equ:del v2v}, H\"older's and Young's inequalities, and Agmon's inequality in 2D, we have
\begin{align*}
    \abs{I_4}
    &\leq
    C\varepsilon \norm{\Delta\big(|\bff{u}|^2 \bff{u}\big)}{\bb{L}^2}^2
    +
    \frac{\varepsilon}{8} \norm{\Delta^2 \bff{u}}{\bb{L}^2}^2
    \\
    &\leq
    C\varepsilon \norm{\nabla \bff{u}}{\bb{L}^2}^2 \norm{\nabla \bff{u}}{\bb{L}^\infty}^2 \norm{\bff{u}}{\bb{L}^\infty}^2
    +
    C\varepsilon \norm{\bff{u}}{\bb{L}^\infty}^4 \norm{\Delta \bff{u}}{\bb{L}^2}^2
    +
    \frac{\varepsilon}{8} \norm{\Delta^2 \bff{u}}{\bb{L}^2}^2
    \\
    &\leq
    C\varepsilon \norm{\bff{u}}{\bb{H}^1}^3 \norm{\bff{u}}{\bb{H}^3} \norm{\bff{u}}{\bb{L}^2} \norm{\bff{u}}{\bb{H}^2}
    +
    C\varepsilon \norm{\bff{u}}{\bb{L}^2}^2 \norm{\bff{u}}{\bb{H}^1}^2 \norm{\bff{u}}{\bb{H}^3}^2
    +
    \frac{\varepsilon}{8} \norm{\Delta^2 \bff{u}}{\bb{L}^2}^2
    \\
    &\leq
    C\varepsilon \norm{\bff{u}}{\bb{H}^1}^4 \norm{\bff{u}}{\bb{H}^3}^2 
    +
    \frac{\varepsilon}{8} \norm{\Delta^2 \bff{u}}{\bb{L}^2}^2.
\end{align*}
By similar argument, we also have
\begin{align*}
    \abs{I_5}
    &\leq
    C\varepsilon \norm{\bff{u}}{\bb{H}^1}^4 \norm{\bff{u}}{\bb{H}^3}^2 
    +
    \frac{\varepsilon}{8} \norm{\Delta^2 \bff{u}}{\bb{L}^2}^2,
\end{align*}
and
\begin{align*}
    \abs{I_6}
    \leq
    C\varepsilon \norm{\bff{u}}{\bb{H}^2}^2 
    +
    \frac{\varepsilon}{8} \norm{\Delta^2 \bff{u}}{\bb{L}^2}^2.
\end{align*}
For the next three terms, by the Young and the Gagliardo--Nirenberg inequalities (again noting $d=2$), and the Sobolev embedding $\bb{H}^1\hookrightarrow \bb{L}^4$ (and elliptic regularity), we have
\begin{align*}
    \abs{I_7}
    &\leq
    \gamma \norm{\nabla \bff{u}}{\bb{L}^4} \norm{\Delta \bff{u}}{\bb{L}^4} \norm{\nabla\Delta \bff{u}}{\bb{L}^2}
    \\
    &\leq
    C \norm{\nabla \bff{u}}{\bb{L}^2}^{1/2} \norm{\Delta \bff{u}}{\bb{L}^2} \norm{\nabla\Delta \bff{u}}{\bb{L}^2}^{3/2}
    \leq
    C \norm{\bff{u}}{\bb{H}^1}^2 \norm{\Delta \bff{u}}{\bb{L}^2}^4
    +
    \frac{\eta}{12} \norm{\nabla \Delta \bff{u}}{\bb{L}^2}^2,
    \\
    \abs{I_8}
    &\leq
    \gamma \norm{\nabla \bff{u}}{\bb{L}^4} \norm{\Phi_{\mathrm{a}}(\bff{u})}{\bb{L}^4} \norm{\nabla\Delta \bff{u}}{\bb{L}^2}
    \\
    &\leq
    C \left(1+ \norm{\bff{u}}{\bb{L}^4}^2 \right) \norm{\Delta \bff{u}}{\bb{L}^2}^2
    +
    \frac{\eta}{12} \norm{\nabla \Delta \bff{u}}{\bb{L}^2}^2
    \leq
    C \left(1+ \norm{\bff{u}}{\bb{H}^1}^2 \right) \norm{\Delta \bff{u}}{\bb{L}^2}^2
    +
    \frac{\eta}{12} \norm{\nabla \Delta \bff{u}}{\bb{L}^2}^2,
    \\
    \abs{I_9}
    &\leq
    \gamma \norm{\bff{u}}{\bb{L}^4} \norm{\nabla \Phi_{\mathrm{a}}(\bff{u})}{\bb{L}^4} \norm{\nabla\Delta \bff{u}}{\bb{L}^2}
    \\
    &\leq
    C \norm{\bff{u}}{\bb{L}^4}^2 \norm{\nabla \bff{u}}{\bb{L}^4}^2
    +
    \frac{\eta}{12} \norm{\nabla \Delta \bff{u}}{\bb{L}^2}^2
    \leq
    C \left(1 + \norm{\bff{u}}{\bb{H}^1}^2 \right) \norm{\Delta \bff{u}}{\bb{L}^2}^2 
    +
    \frac{\eta}{12} \norm{\nabla \Delta \bff{u}}{\bb{L}^2}^2.
\end{align*}
For the terms $I_{10}$ and $I_{11}$, using~\eqref{equ:Phi d Wkp}, similarly we obtain
\begin{align*}
    \abs{I_{10}}
    &\leq
    \gamma \norm{\nabla \bff{u}}{\bb{L}^4} \norm{\Phi_{\mathrm{d}}(\bff{u})}{\bb{L}^4} \norm{\nabla \Delta \bff{u}}{\bb{L}^2}
    \leq
    C \norm{\bff{u}}{\bb{H}^1}^2 \norm{\Delta \bff{u}}{\bb{L}^2}^2
    +
    \frac{\eta}{12} \norm{\nabla \Delta \bff{u}}{\bb{L}^2}^2,
    \\
    \abs{I_{11}}
    &\leq
    \gamma \norm{\bff{u}}{\bb{L}^4} \norm{\nabla \Phi_{\mathrm{d}}(\bff{u})}{\bb{L}^4} \norm{\nabla \Delta \bff{u}}{\bb{L}^2}
    \leq
    C \norm{\bff{u}}{\bb{H}^1}^2 \norm{\bff{u}}{\bb{H}^2}^2
    +
    \frac{\eta}{12} \norm{\nabla \Delta \bff{u}}{\bb{L}^2}^2.
\end{align*}
Finally, for the last two terms we apply~\eqref{equ:2d Rv Delta2 v} and \eqref{equ:nab Sv nab w} to obtain
\begin{align*}
    \abs{I_{12}}
    &\leq
    C \left(1+ \norm{\bff{u}}{\bb{H}^1}^4 + \norm{\Delta \bff{u}}{\bb{L}^2}^4 \right) 
    +
    \frac{\eta}{12} \norm{\nabla \Delta \bff{u}}{\bb{L}^2}^2,
    \\
    \abs{I_{13}}
    &\leq
    C \left( \norm{\nabla \bff{u}}{\bb{L}^2}^2 + \norm{|\bff{u}| |\nabla \bff{u}|}{\bb{L}^2}^2 \right) 
    + 
    \frac{\eta}{12} \norm{\nabla\Delta \bff{u}}{\bb{L}^2}^2.
\end{align*}
Altogether, we obtain from \eqref{equ:I1I2 I12},
\begin{align}\label{equ:ddt unif Delta}
    \ddt \norm{\Delta \bff{u}}{\bb{L}^2}^2
    +
    \norm{\nabla \Delta\bff{u}}{\bb{L}^2}^2
    +
    \varepsilon \norm{\Delta^2 \bff{u}}{\bb{L}^2}^2
    &\leq
    C\left(1+ \norm{\bff{u}}{\bb{H}^1}^4 \right) \norm{\bff{u}}{\bb{H}^2}^2
    +
    C\varepsilon \norm{\bff{u}}{\bb{H}^1}^4 \norm{\bff{u}}{\bb{H}^3}^2
    \nonumber \\
    &\quad
    +
    C \left(1+ \norm{\bff{u}}{\bb{H}^1}^4 + \norm{|\bff{u}| |\nabla \bff{u}|}{\bb{L}^2}^2 + \norm{\Delta \bff{u}}{\bb{L}^2}^4 \right).
\end{align}
Integrating over $(0,t)$ and applying Lemma~\ref{lem:H1 unif e}, we obtain
\begin{align*}
    \norm{\Delta \bff{u}(t)}{\bb{L}^2}^2
    &+
    \int_0^t \norm{\nabla\Delta \bff{u}(s)}{\bb{L}^2}^2 \ds
    +
    \varepsilon\int_0^t \norm{\Delta^2 \bff{u}(s)}{\bb{L}^2}^2 \ds
	\leq
    C \norm{\bff{u}_0}{\bb{H}^2}^2
    +
    C e^{Ct} \norm{\bff{u}_0}{\bb{H}^1}^6
    +
    C \int_0^t \norm{\Delta \bff{u}(s)}{\bb{L}^2}^4 \ds.
\end{align*}
The inequality \eqref{equ:et3} then follows from the Gronwall inequality.

If we take the inner product of~\eqref{equ:llbar} with $t\Delta^2 \bff{u}$ and follow the same argument as before, instead of~\eqref{equ:ddt unif Delta} we obtain
\begin{align*}
    \ddt \left( t \norm{\Delta \bff{u}}{\bb{L}^2}^2 \right)
    +
    t\norm{\nabla \Delta\bff{u}}{\bb{L}^2}^2
    +
    t\varepsilon \norm{\Delta^2 \bff{u}}{\bb{L}^2}^2
    &\leq
    \norm{\Delta \bff{u}}{\bb{L}^2}^2
    +
    Ct\left(1+ \norm{\bff{u}}{\bb{H}^1}^4 \right) \norm{\bff{u}}{\bb{H}^2}^2
    +
    Ct\varepsilon \norm{\bff{u}}{\bb{H}^1}^4 \norm{\bff{u}}{\bb{H}^3}^2
    \nonumber \\
    &\quad
    +
    Ct \left(1+ \norm{\bff{u}}{\bb{H}^1}^4 + \norm{|\bff{u}| |\nabla \bff{u}|}{\bb{L}^2}^2 + \norm{\Delta \bff{u}}{\bb{L}^2}^4 \right),
\end{align*}
which upon integration over $(0,t)$ yields
\begin{align*}
    t\norm{\Delta \bff{u}(t)}{\bb{L}^2}^2
    \leq
    Ce^{Ct} \norm{\bff{u}_0}{\bb{H}^1}^2
    +
    Ct e^{Ct}\norm{\bff{u}_0}{\bb{H}^1}^6
    +
    C \int_0^t s\norm{\Delta \bff{u}(s)}{\bb{L}^2}^4 \ds.
\end{align*}
Thus, \eqref{equ:M unif} follows from the Gronwall inequality (noting Lemma~\ref{lem:H1 unif e}).

Next, from \eqref{equ:ddt unif Delta} and \eqref{equ:tt1 u H1} we obtain for $t\geq t_1'$,
\begin{align}\label{equ:ddt t1 Delta u}
    \ddt \norm{\Delta \bff{u}}{\bb{L}^2}^2
    +
    \norm{\nabla \Delta\bff{u}}{\bb{L}^2}^2
    +
    \varepsilon \norm{\Delta^2 \bff{u}}{\bb{L}^2}^2
    &\leq
    C\left(1+ \widetilde{\rho_1}^3 \right) \norm{\Delta \bff{u}}{\bb{L}^2}^2
    +
    C\varepsilon \widetilde{\rho_1}^2 \norm{\bff{u}}{\bb{H}^3}^2
    +
    C\left(1+ \widetilde{\rho_1}^2 + \norm{\Delta \bff{u}}{\bb{L}^2}^4 \right).
\end{align}
Note that by~\eqref{equ:tt1 u H1}, we have for $t\geq t_1'$,
\begin{align*}
    \int_t^{t+1} C\left(1+ \widetilde{\rho_1}^3 \right) \norm{\Delta \bff{u}}{\bb{L}^2}^2 \ds
    +
    \int_t^{t+1} C\varepsilon \widetilde{\rho_1}^2 \norm{\bff{u}}{\bb{H}^3}^2 \ds 
    &\leq
    C\left(1+\widetilde{\rho_1}^4 \right), \quad \text{and} \quad
    \int_t^{t+1} C\norm{\Delta \bff{u}}{\bb{L}^2}^2 \ds
    \leq
    C\widetilde{\rho_1},
\end{align*}
where $C$ is independent of $\varepsilon$. Therefore, by the uniform Gronwall inequality,
\begin{align}\label{equ:Delta rho 2}
    \norm{\Delta \bff{u}(t)}{\bb{L}^2}^2 \leq C \left(1+\widetilde{\rho_1}^4 \right) \exp\left(\widetilde{\rho_1}^2\right),
    \quad \forall t\geq t_1'+1.
\end{align}
Finally, integrating~\eqref{equ:ddt t1 Delta u} over $(t,t+1)$, then applying~\eqref{equ:tt1 u H1} and \eqref{equ:Delta rho 2} yield~\eqref{equ:tt1 H2}.
This completes the proof of the lemma.
\end{proof}

\begin{lemma}\label{lem:H3 unif}
Let $\bff{u}$ be the solution of~\eqref{equ:llbar} with initial data $\bff{u}_0$, whose corresponding effective field is $\bff{H}$. The following statements hold:
\begin{enumerate}
\item If $\bff{u}_0\in\bb{H}^3$, then for all $t\geq 0$,
\begin{align}\label{equ:et3 H3}
    \norm{\nabla \Delta \bff{u}(t)}{\bb{L}^2}^2
    +
    \int_0^t \norm{\Delta^2 \bff{u}(s)}{\bb{L}^2}^2 \ds 
    +
    \varepsilon \int_0^t \norm{\nabla \Delta^2 \bff{u}(s)}{\bb{L}^2}^2 \ds 
    \leq
    \kappa_3(t),
\end{align}
where $\kappa_3(t)$ is an increasing function of $t$ which also depends on $\norm{\bff{u}_0}{\bb{H}^3}$, but is independent of $\varepsilon$.
\item If $\bff{u}_0\in \bb{L}^2$, then there exists $t_3'=t_2'+1$, where $t_2'$ is defined in Lemma \ref{lem:H2 unif} and depends on $\norm{\bff{u}_0}{\bb{L}^2}$ (but is independent of $\varepsilon$), such that for all $t\geq t_3'$,
\begin{align}\label{equ:tt1 H3}
    \norm{\bff{u}(t)}{\bb{H}^3}^2 
    +
    \int_t^{t+1} \left( \norm{\Delta^2 \bff{u}(s)}{\bb{L}^2}^2 + \varepsilon \norm{\nabla \Delta^2 \bff{u}(s)}{\bb{L}^2}^2 \right) \ds
    \leq
    \widetilde{\rho_3},
\end{align}
where $\widetilde{\rho_3}$ is independent of $\bff{u}_0$, $\varepsilon$, and $t$.
\end{enumerate}
\end{lemma}

\begin{proof}
We apply the operator $-\Delta$, then take the inner product of both equations in~\eqref{equ:llbar} with $\Delta^2 \bff{u}$, and integrate by parts as necessary (whenever allowed) to obtain
\begin{align}\label{equ:I1 to I9 unif}
    &\frac12 \ddt \norm{\nabla\Delta \bff{u}}{\bb{L}^2}^2
    +
    \eta \norm{\Delta^2 \bff{u}}{\bb{L}^2}^2
    +
    \varepsilon \norm{\nabla\Delta^2 \bff{u}}{\bb{L}^2}^2
    \nonumber\\
    &=
    \kappa_2 \sigma \inpro{\Delta(|\bff{u}|^2 \bff{u})}{\Delta^2 \bff{u}}_{\bb{L}^2}
    -
    \sigma \inpro{\Delta \Phi_{\mathrm{a}}(\bff{u})}{\Delta^2 \bff{u}}_{\bb{L}^2}
    -
    \sigma \inpro{\Delta \Phi_{\mathrm{d}}(\bff{u})}{\Delta^2 \bff{u}}_{\bb{L}^2}
    \nonumber\\
    &\quad
    -
    \kappa_2 \varepsilon \inpro{\nabla\Delta(|\bff{u}|^2 \bff{u})}{\nabla\Delta^2 \bff{u}}_{\bb{L}^2}
    -
    \kappa_2 \varepsilon \inpro{\nabla\Delta \Phi_{\mathrm{a}}(\bff{u})}{\nabla\Delta^2 \bff{u}}_{\bb{L}^2}
    +
    \varepsilon \inpro{\Delta^2 \Phi_{\mathrm{d}}(\bff{u})}{\Delta^2 \bff{u}}_{\bb{L}^2}
    \nonumber\\
    &\quad
    +
    \gamma \inpro{\Delta\big(\bff{u}\times \Phi_{\mathrm{a}}(\bff{u})\big)}{\Delta^2 \bff{u}}_{\bb{L}^2}
    +
    \gamma \inpro{\Delta\big(\bff{u}\times \Phi_{\mathrm{d}}(\bff{u})\big)}{\Delta^2 \bff{u}}_{\bb{L}^2}
    -
    \inpro{\Delta \mathcal{R}(\bff{u})}{\Delta^2 \bff{u}}_{\bb{L}^2}
    -
    \inpro{\Delta \mathcal{S}(\bff{u})}{\Delta^2 \bff{u}}_{\bb{L}^2}
    \nonumber \\
    &=:
    I_1+I_2+\cdots+I_{10}.
\end{align}
By using Young's inequality, \eqref{equ:nab un2}, \eqref{equ:del v2v}, \eqref{equ:prod Hs mat dot}, \eqref{equ:Phi a Hs}, \eqref{equ:2d Delta Rv Delta2 v}, \eqref{equ:Sv Delta2 w}, and~\eqref{equ:Phi d Wkp}, without elaborating further for brevity, we obtain the following inequalities:
\begin{align*}
    \abs{I_1}
    &\leq
    C \norm{\bff{u}}{\bb{H}^2}^6 + \frac{\eta}{9} \norm{\Delta^2 \bff{u}}{\bb{L}^2}^2,
    \\
    \abs{I_2}
    &\leq
    C\norm{\bff{u}}{\bb{H}^2}^2 + \frac{\eta}{9} \norm{\Delta^2 \bff{u}}{\bb{L}^2}^2,
    \\
    \abs{I_3}
    &\leq
    C \norm{\bff{u}}{\bb{H}^2}^2 + \frac{\eta}{9} \norm{\Delta^2 \bff{u}}{\bb{L}^2}^2,
    \\
    \abs{I_4}
    &\leq
    C\varepsilon \norm{\bff{u}}{\bb{H}^2}^4 \norm{\nabla\Delta \bff{u}}{\bb{L}^2}^2
    +
    \frac{\varepsilon}{4} \norm{\nabla\Delta^2 \bff{u}}{\bb{L}^2}^2
    \\
    \abs{I_5}
    &\leq
    C\varepsilon \norm{\bff{u}}{\bb{H}^2}^4 \norm{\nabla\Delta \bff{u}}{\bb{L}^2}^2
    +
    \frac{\varepsilon}{4} \norm{\nabla\Delta^2 \bff{u}}{\bb{L}^2}^2
    \\
    \abs{I_6}
    &\leq
    C\varepsilon \norm{\bff{u}}{\bb{H}^4}^2
    +
    \frac{\eta}{9} \norm{\Delta^2 \bff{u}}{\bb{L}^2}^2,
    \\
    \abs{I_7}
    &\leq
    C\left(1+ \norm{\bff{u}}{\bb{H}^2}^4 \right) + \frac{\eta}{9} \norm{\Delta^2 \bff{u}}{\bb{L}^2}^2,
    \\
    \abs{I_8}
    &\leq
    C\left(1+ \norm{\bff{u}}{\bb{H}^2}^4 \right) + \frac{\eta}{9} \norm{\Delta^2 \bff{u}}{\bb{L}^2}^2,
    \\
    \abs{I_9}
    &\leq
    C\nu_\infty \left(1+\norm{\bff{u}}{\bb{H}^2}^4 \right) 
    + C\nu_\infty \left(1+ \norm{\bff{u}}{\bb{H}^2}^2 \right) \norm{\nabla\Delta \bff{u}}{\bb{L}^2}^2
    + \frac{\eta}{9} \norm{\Delta^2 \bff{u}}{\bb{L}^2}^2,
    \\
    \abs{I_{10}}
    &\leq
    C\norm{\bff{u}}{\bb{H}^2}^4 + \frac{\eta}{9} \norm{\Delta^2 \bff{u}}{\bb{L}^2}^2.
\end{align*}
Altogether, substituting these into~\eqref{equ:I1 to I9 unif} we have
\begin{align*}
    \ddt \norm{\nabla\Delta \bff{u}}{\bb{L}^2}^2
    +
    \norm{\Delta^2 \bff{u}}{\bb{L}^2}^2
    +
    \varepsilon\norm{\nabla\Delta^2 \bff{u}}{\bb{L}^2}^2
    \leq
    C\left(1+\norm{\bff{u}}{\bb{H}^2}^4 \right)
    +
    C\left(1+\norm{\bff{u}}{\bb{H}^2}^2\right) \norm{\nabla\Delta \bff{u}}{\bb{L}^2}^2
    +
    C\varepsilon \norm{\bff{u}}{\bb{H}^4}^2.
\end{align*}
Integrating this over $(0,t)$ and applying~\eqref{equ:et3} yields~\eqref{equ:et3 H3}. Applying the same argument as in the proof of~\eqref{equ:tt1 H2} then gives~\eqref{equ:tt1 H3}. This completes the proof of the lemma.
\end{proof}

\begin{lemma}\label{lem:H4 unif}
Let $\bff{u}$ be the solution of~\eqref{equ:llbar} with initial data $\bff{u}_0$, whose corresponding effective field is $\bff{H}$. The following statements hold:
\begin{enumerate}
\item If $\bff{u}_0\in \bb{H}^4$, then for all $t\geq 0$,
\begin{align}\label{equ:et3 H4}
    \norm{\Delta^2 \bff{u}(t)}{\bb{L}^2}^2
    +
    \int_0^t \norm{\nabla \Delta^2 \bff{u}(s)}{\bb{L}^2}^2 \ds 
    +
    \varepsilon \int_0^t \norm{\Delta^3 \bff{u}(s)}{\bb{L}^2}^2 \ds 
    \leq
    \kappa_4(t),
\end{align}
where $\kappa_4(t)$ is an increasing function of $t$ which also depends on $\norm{\bff{u}_0}{\bb{H}^4}$, but is independent of $\varepsilon$.
\item If $\bff{u}_0\in\bb{L}^2$, then there exists $t_4'=t_3'+1$, where $t_3'$ is defined in Lemma \ref{lem:H3 unif} and depends on $\norm{\bff{u}_0}{\bb{L}^2}$ (but is independent of $\varepsilon$), such that for all $t\geq t_4'$,
\begin{align}\label{equ:tt1 H4}
    \norm{\bff{u}(t)}{\bb{H}^4}^2 
    +
    \int_t^{t+1} \left( \norm{\nabla \Delta^2 \bff{u}(s)}{\bb{L}^2}^2 + \varepsilon \norm{\Delta^3 \bff{u}(s)}{\bb{L}^2}^2 \right) \ds
    \leq
    \widetilde{\rho_4},
\end{align}
where $\widetilde{\rho_4}$ is independent of $\bff{u}_0$, $\varepsilon$, and $t$.
\end{enumerate}
\end{lemma}

\begin{proof}
The proof is similar to that of Lemma~\ref{lem:H3 unif}, thus we omit further details for brevity.
\end{proof}

Next, we will derive some estimates for the difference of two solutions which are uniform in $\varepsilon$. Recall that $\bff{S}_\varepsilon(t)$ is the semigroup generated by the system~\eqref{equ:llbar}. As a consequence of Lemma~\ref{lem:H1 unif e}--\ref{lem:H4 unif}, there exists a semi-invariant compact absorbing set for $\bff{S}_\varepsilon(t)$. An analogue of Lemma~\ref{lem:smt H1} is stated below.

\begin{lemma}\label{lem:Se H1}
Let $\varepsilon\in [0, \sigma/\kappa_1]$, and let $B\subset \bb{H}^1$ be a semi-invariant absorbing set for $\bff{S}_\varepsilon(t)$ furnished by Lemma~\ref{lem:H1 unif e}--\ref{lem:H4 unif}. For any $t>0$,
\begin{align*}
     \norm{\bff{S}_\varepsilon(t) \bff{u}_0 - \bff{S}_\varepsilon(t) \bff{v}_0}{\bb{H}^1}
     +
     \varepsilon \int_0^t \norm{\bff{S}_\varepsilon(t) \bff{u}_0 - \bff{S}_\varepsilon(t) \bff{v}_0}{\bb{H}^3}^2 \ds
     &+
     \int_0^t \norm{\bff{S}_\varepsilon(t) \bff{u}_0 - \bff{S}_\varepsilon(t) \bff{v}_0}{\bb{H}^2}^2 \ds
     \nonumber\\
     &\leq
     \beta_1(t) \norm{\bff{u}_0-\bff{v}_0}{\bb{H}^1}^2, 
     \quad \forall \bff{u}_0,\bff{v}_0\in B,
\end{align*}
where $\beta_1 (t)$ is an increasing function of $t$ which depends on $\norm{\bff{u}_0}{\bb{H}^1}$ and $\norm{\bff{v}_0}{\bb{H}^1}$, but is independent of $\varepsilon$.
\end{lemma}

\begin{proof}
The proof is identical to that of Lemma~\ref{lem:smt H1}, except for the last step. Noting that we now have~\eqref{equ:tt1 H2} and \eqref{equ:tt1 H3} for~$\bff{u}_0, \bff{v}_0\in B$, we estimate $\mathcal{B}(\bff{u},\bff{v})$ in \eqref{equ:ddt wt H1} as
\begin{equation*}
    \mathcal{B}(\bff{u},\bff{v}) \leq 1+\widetilde{\rho_2}^3 + \widetilde{\rho_3}.
\end{equation*}
An application of the Gronwall lemma then yields the required estimate, where $\beta_1(t)$ depends on $\norm{\bff{u}_0}{\bb{H}^1}$ and $\norm{\bff{v}_0}{\bb{H}^1}$, but not on $\varepsilon$.
\end{proof}

A smoothing estimate for the difference of two solutions which is uniform in $\varepsilon$ is shown next.

\begin{lemma}\label{lem:Se u0 v0}
Let $\varepsilon\in [0, \sigma/\kappa_1]$ and let $B \subset \bb{H}^1$ be a semi-invariant absorbing set for $\bff{S}_\varepsilon(t)$ furnished by Lemma~\ref{lem:H1 unif e}--\ref{lem:H4 unif}. There exists a constant $C$ such that for all $t>0$,
\begin{equation}\label{equ:S eps H2}
    \norm{\bff{S}_\varepsilon(t) \bff{u}_0 - \bff{S}_\varepsilon(t) \bff{v}_0}{\bb{H}^2}
    \leq
    C\left(1+t^{-1}\right) \beta_1 (t) \norm{\bff{u}_0-\bff{v}_0}{\bb{H}^1},
    \quad \forall \bff{u}_0,\bff{v}_0\in B,
\end{equation}
where $\beta_1 (t)$ is an increasing function of $t$ defined in Lemma~\ref{lem:Se H1}.
\end{lemma}

\begin{proof}
We repeat the steps of the proof of Lemma~\ref{lem:smoo} with $\bff{w}(t):=\bff{S}_\varepsilon(t) \bff{u}_0 - \bff{S}_\varepsilon(t) \bff{v}_0$ to obtain~\eqref{equ:J1 to J16}. Using the notations in that lemma, each term $J_i$, where $i=1,2,\ldots,18$, is estimated as before, except for the terms $J_{10}$, $J_{11}$, $J_{12}$, $J_{13}$, $J_{14}$, $J_{17}$, and $J_{18}$. The estimates for these terms in the proof of Lemma~\ref{lem:smoo} still depend on $\varepsilon^{-1}$, thus they need to be estimated differently here. 

For the term $J_{10}$, by the definition of $\bff{H}_1$ and \eqref{equ:et3}, after integrating by parts we have
\begin{align*}
    \abs{J_{10}}
    &\leq
    t\gamma \norm{\nabla \bff{w}}{\bb{L}^4} \norm{\bff{H}_1}{\bb{L}^4} \norm{\nabla\Delta \bff{w}}{\bb{L}^2}
    +
    t\gamma \norm{\bff{w}}{\bb{L}^\infty} \norm{\nabla\bff{H}_1}{\bb{L}^2} \norm{\nabla\Delta \bff{w}}{\bb{L}^2}
    \\
    &\leq
    Ct \left(1+\norm{\bff{u}}{\bb{H}^1}^4\right) \norm{\bff{u}}{\bb{H}^3}^2 \norm{\bff{w}}{\bb{H}^2}^2
    +
    \frac{t\eta}{16} \norm{\nabla \Delta \bff{w}}{\bb{L}^2}^2.
\end{align*}
Similarly, for $J_{11}$, $J_{12}$, and $J_{13}$, by H\"older's and Young's inequalities and the Sobolev embeddings we have
\begin{align*}
    \abs{J_{11}}
    &\leq
    t\kappa_1 \gamma \norm{\nabla \bff{v}}{\bb{L}^\infty} \norm{\Delta \bff{w}}{\bb{L}^2} \norm{\nabla\Delta\bff{w}}{\bb{L}^2}
    \\
    &\leq
    Ct \norm{\bff{v}}{\bb{H}^3}^2 \norm{\bff{w}}{\bb{H}^2}^2 +
    \frac{t\eta}{16} \norm{\nabla \Delta \bff{w}}{\bb{L}^2}^2,
    \\
    \abs{J_{12}}
    &\leq
    t\kappa_1 \gamma \norm{\nabla \bff{v}}{\bb{L}^4} \norm{\bff{w}}{\bb{L}^4} \norm{\nabla\Delta \bff{w}}{\bb{L}^2}
    +
    t\kappa_1 \gamma \norm{\bff{v}}{\bb{L}^4} \norm{\nabla\bff{w}}{\bb{L}^4} \norm{\nabla\Delta \bff{w}}{\bb{L}^2}
    \\
    &\leq
    Ct \norm{\bff{v}}{\bb{H}^2}^2 \norm{\bff{w}}{\bb{H}^2}^2
    +
    \frac{t\eta}{16} \norm{\nabla \Delta \bff{w}}{\bb{L}^2}^2,
    \\
    \abs{J_{13}}
    &\leq
    t\kappa_2 \gamma \norm{\nabla \bff{v}}{\bb{L}^8} \norm{\bff{u}}{\bb{L}^8}^2 \norm{\bff{w}}{\bb{L}^8} \norm{\nabla\Delta \bff{w}}{\bb{L}^2}
    +
    2t\kappa_2 \gamma \norm{\bff{v}}{\bb{L}^8} \norm{\bff{u}}{\bb{L}^8} \norm{\nabla\bff{u}}{\bb{L}^8} \norm{\bff{w}}{\bb{L}^8} \norm{\nabla\Delta \bff{w}}{\bb{L}^2}
    \\
    &\quad
    +
    t\kappa_2 \gamma \norm{\bff{v}}{\bb{L}^8} \norm{\bff{u}}{\bb{L}^8}^2 \norm{\nabla \bff{w}}{\bb{L}^8} \norm{\nabla\Delta \bff{w}}{\bb{L}^2}
    \\
    &\leq
    Ct \norm{\bff{u}}{\bb{H}^2}^4 \norm{\bff{v}}{\bb{H}^2}^2 \norm{\bff{w}}{\bb{H}^2}^2
    +
    \frac{t\eta}{16} \norm{\nabla \Delta \bff{w}}{\bb{L}^2}^2.
\end{align*}
For the term $J_{14}$, we integrate by parts, apply~\eqref{equ:norm H1 Phia} with $p=q=6$ and use the Sobolev embeddings $\bb{H}^2\hookrightarrow \bb{W}^{1,6}\hookrightarrow \bb{W}^{1,4} \hookrightarrow \bb{L}^\infty$ to obtain
\begin{align*}
    \abs{J_{14}}
    &\leq
    t\gamma \norm{\nabla \bff{v}}{\bb{L}^4} \norm{\Phi_{\mathrm{a}}(\bff{u})-\Phi_{\mathrm{a}}(\bff{v})}{\bb{L}^4} \norm{\nabla\Delta \bff{w}}{\bb{L}^2}
    +
    t\gamma \norm{\bff{v}}{\bb{L}^\infty} \norm{\Phi_{\mathrm{a}}(\bff{u})-\Phi_{\mathrm{a}}(\bff{v})}{\bb{H}^1} \norm{\nabla\Delta \bff{w}}{\bb{L}^2}
    \\
    &\leq
    Ct \left(1+ \norm{\bff{u}}{\bb{H}^2}^6 + \norm{\bff{v}}{\bb{H}^2}^6\right) \norm{\bff{w}}{\bb{H}^1}^2 
    +
    \frac{t\eta}{16} \norm{\nabla \Delta \bff{w}}{\bb{L}^2}^2.
\end{align*}
For the terms $J_{17}$ and $J_{18}$, we infer from~\eqref{equ:2d nab RvRw} and \eqref{equ:nab Sv Sw nab vw} that
\begin{align*}
    \abs{J_{17}}+ \abs{J_{18}}
    &\leq
    Ct \norm{\bff{w}}{\bb{H}^2}^2 
    +
    \frac{t\eta}{16} \norm{\nabla \Delta \bff{w}}{\bb{L}^2}^2.
\end{align*}
These estimates, together with estimates for the other terms derived in Lemma~\ref{lem:smoo} and Lemma~\ref{lem:Se H1}, yield
\begin{align*}
    t \norm{\bff{w}(t)}{\bb{H}^2}^2
    &\leq
    C \int_0^t \norm{\bff{w}(s)}{\bb{H}^2}^2 \ds
    +
    C \int_0^t s \left(1+\norm{\bff{u}(s)}{\bb{H}^2}^4+ \norm{\bff{v}(s)}{\bb{H}^2}^4 \right) \norm{\bff{w}(s)}{\bb{H}^2}^2 \ds
    \\
    &\quad
    +
    C \int_0^t s \left(1+ \norm{\bff{u}(s)}{\bb{H}^1}^4 \right) \left(\norm{\bff{u}(s)}{\bb{H}^3}^2 + \norm{\bff{v}(s)}{\bb{H}^3}^2\right) \norm{\bff{w}(s)}{\bb{H}^2}^2 \ds 
    \\
    &\quad
    +
    C\varepsilon \int_0^t s \left(\norm{\bff{u}(s)}{\bb{H}^1}^2 + \norm{\bff{v}(s)}{\bb{H}^1}^2 \right) \left(\norm{\bff{u}(s)}{\bb{H}^3}^2 + \norm{\bff{v}(s)}{\bb{H}^3}^2 \right) \norm{\bff{w}(s)}{\bb{H}^1}^2 \ds
    \\
    &\leq
    C \beta_1(t) \norm{\bff{w}_0}{\bb{H}^1}^2
    +
    C\left(1+\widetilde{\rho_2}^2 \right) t \beta_1 (t) \norm{\bff{w}_0}{\bb{H}^1}^2
    +
    C \left(1+\widetilde{\rho_1}^2\right) \widetilde{\rho_3} t \beta_1 (t) \norm{\bff{w}_0}{\bb{H}^1}^2
    \\
    &\leq
    C(1+t) \beta_1 (t) \norm{\bff{w}_0}{\bb{H}^1}^2,
\end{align*}
where in the last step we used~\eqref{equ:tt1 u H1}, \eqref{equ:tt1 H2}, \eqref{equ:tt1 H3}, and Lemma~\ref{lem:Se H1}. This implies inequality~\eqref{equ:S eps H2}.
\end{proof}

The following lemma shows a uniform bound for $\norm{\bff{S}_\varepsilon(t) \bff{u}_0- \bff{S}_0(t) \bff{u}_0}{\bb{H}^1}$.

\begin{lemma}\label{lem:Se S0 u0}
Let $\varepsilon\in [0, \sigma/\kappa_1]$ and let $B \subset \bb{H}^1$ be a semi-invariant absorbing set for $\bff{S}_\varepsilon(t)$ furnished by Lemma~\ref{lem:H1 unif e}--\ref{lem:H4 unif}. There exists a constant $C$ such that for all $t\geq 0$,
\begin{equation*}
    \norm{\bff{S}_\varepsilon(t) \bff{u}_0- \bff{S}_0(t) \bff{u}_0}{\bb{H}^1} \leq
    C\varepsilon e^{Ct},
    \quad \forall \bff{u}_0 \in B,
\end{equation*}
where $C$ is independent of $\varepsilon$ and $t$.
\end{lemma}

\begin{proof}
Let $\bff{u}^\varepsilon(t):= \bff{S}_\varepsilon(t) \bff{u}_0$ and $\bff{u}^0(t):= \bff{S}_0(t) \bff{u}_0$. Let $\bff{v}(t):= \bff{u}^\varepsilon(t)- \bff{u}^0(t)$. Then $\bff{v}$ solves
\begin{align}\label{equ:dtv eq}
    \partial_t \bff{v}
    &=
    \sigma \left(\Psi(\bff{u}^\varepsilon)-\Psi(\bff{u}^0)\right)
    +
    \sigma \left(\Phi_{\mathrm{a}}(\bff{u}^\varepsilon)- \Phi_{\mathrm{a}}(\bff{u}^0) \right)
    +
    \sigma \left(\Phi_{\mathrm{d}}(\bff{u}^\varepsilon)- \Phi_{\mathrm{d}}(\bff{u}^0) \right)
    \nonumber\\
    &\quad
    -
    \varepsilon \Delta \Psi(\bff{u}^\varepsilon)
    -
    \varepsilon \Delta \Phi_{\mathrm{a}}(\bff{u}^\varepsilon) 
    -
    \varepsilon \Delta \Phi_{\mathrm{d}}(\bff{u}^\varepsilon)
    \nonumber\\
    &\quad
    -
    \gamma \left( \bff{u}^\varepsilon \times \big(\Psi(\bff{u}^\varepsilon)+ \Phi_{\mathrm{a}}(\bff{u}^\varepsilon)+ \Phi_{\mathrm{d}}(\bff{u}^\varepsilon)\big) - \bff{u}^0 \times \big(\Psi(\bff{u}^0)+ \Phi_{\mathrm{a}}(\bff{u}^0)+ \Phi_{\mathrm{d}}(\bff{u}^0)\big) \right)
    \nonumber \\
    &\quad
    +
    \mathcal{R}(\bff{u}^\varepsilon)- \mathcal{R}(\bff{u}^0)
    +
    \mathcal{S}(\bff{u}^\varepsilon)- \mathcal{S}(\bff{u}^0)
    \nonumber\\
    &=
    \eta \Delta \bff{v}
    +
    \sigma \kappa_1 \bff{v}
    -
    \sigma \kappa_2 |\bff{u}^\varepsilon|^2 \bff{v}
    -
    \sigma \kappa_2 \left((\bff{u}^\varepsilon+ \bff{u}^0)\cdot \bff{v}\right) \bff{u}^0
    +
    \sigma \left(\Phi_{\mathrm{a}}(\bff{u}^\varepsilon)- \Phi_{\mathrm{a}}(\bff{u}^0) \right)
    +
    \sigma \Phi_{\mathrm{d}}(\bff{v})
    \nonumber\\
    &\quad
    -\varepsilon \Delta^2 \bff{u}^\varepsilon
    -
    \varepsilon\kappa_2 \Delta\big(|\bff{u}^\varepsilon|^2 \bff{u}^\varepsilon \big)
    -
    \varepsilon \Delta \Phi_{\mathrm{a}}(\bff{u}^\varepsilon)
    -
    \varepsilon \Delta \Phi_{\mathrm{d}}(\bff{v}) 
    -
    \gamma \bff{v}\times \Delta \bff{u}^\varepsilon
    -
    \gamma \bff{u}^0 \times \Delta \bff{v}
    -
    \gamma \bff{v}\times \Phi_{\mathrm{a}}(\bff{u}^\varepsilon)
    \nonumber\\
    &\quad
    -
    \gamma \bff{u}^0 \times \left(\Phi_{\mathrm{a}}(\bff{u}^\varepsilon)- \Phi_{\mathrm{a}}(\bff{u}^0)\right)
    -
    \gamma \bff{v}\times \Phi_{\mathrm{d}}(\bff{u}^\varepsilon)
    -
    \gamma \bff{u}^0 \times \Phi_{\mathrm{d}}(\bff{v})
    \nonumber\\
    &\quad
    +
    \mathcal{R}(\bff{u}^\varepsilon)- \mathcal{R}(\bff{u}^0)
    +
    \mathcal{S}(\bff{u}^\varepsilon)- \mathcal{S}(\bff{u}^0).
\end{align}
Taking the inner product of~\eqref{equ:dtv eq} with $\bff{v}$ and integrating by parts as necessary, we obtain
\begin{align}\label{equ:I1 to I12 Se}
    &\frac12 \ddt \norm{\bff{v}}{\bb{L}^2}^2
    +
    \eta \norm{\nabla \bff{v}}{\bb{L}^2}^2
    +
    \sigma\kappa_2 \norm{|\bff{u}^\varepsilon| |\bff{v}|}{\bb{L}^2}^2
    +
    \sigma\kappa_2 \norm{\bff{u}^0 \cdot \bff{v}}{\bb{L}^2}^2
    \nonumber\\
    &=
    \sigma\kappa_1 \norm{\bff{v}}{\bb{L}^2}^2
    -
    \sigma\kappa_2 \inpro{(\bff{u}^\varepsilon \cdot \bff{v})\bff{u}^0}{\bff{v}}_{\bb{L}^2}
    +
    \sigma \inpro{\Phi_{\mathrm{a}}(\bff{u}^\varepsilon)- \Phi_{\mathrm{a}}(\bff{u}^0)}{\bff{v}}_{\bb{L}^2}
    +
    \sigma \inpro{\Phi_{\mathrm{d}}(\bff{v})}{\bff{v}}_{\bb{L}^2}
    \nonumber\\
    &\quad
    +
    \varepsilon \inpro{\nabla\Delta \bff{u}^\varepsilon}{\nabla \bff{v}}_{\bb{L}^2}
    +
    \varepsilon\kappa_2 \inpro{\nabla\big(|\bff{u}^\varepsilon|^2 \bff{u}^\varepsilon\big)}{\nabla \bff{v}}_{\bb{L}^2}
    +
    \varepsilon \inpro{\nabla \Phi_{\mathrm{a}}(\bff{u}^\varepsilon)}{\nabla \bff{v}}_{\bb{L}^2}
    -
    \varepsilon \inpro{\Delta \Phi_{\mathrm{d}}(\bff{u}^\varepsilon)}{\bff{v}}_{\bb{L}^2}
    \nonumber\\
    &\quad
    -
    \gamma \inpro{\nabla \bff{u}^0 \times \bff{v}}{\nabla \bff{v}}_{\bb{L}^2}
    -
    \gamma \inpro{\bff{u}^0 \times \left(\Phi_{\mathrm{a}}(\bff{u}^\varepsilon)- \Phi_{\mathrm{a}}(\bff{u}^0)\right)}{\bff{v}}_{\bb{L}^2}
    -
    \gamma \inpro{\bff{u}^0 \times \Phi_{\mathrm{d}}(\bff{v})}{\bff{v}}_{\bb{L}^2}
    \nonumber\\
    &\quad
    +
    \inpro{\mathcal{R}(\bff{u}^\varepsilon)- \mathcal{R}(\bff{u}^0)}{\bff{v}}_{\bb{L}^2}
    +
    \inpro{\mathcal{S}(\bff{u}^\varepsilon)- \mathcal{S}(\bff{u}^0)}{\bff{v}}_{\bb{L}^2}
    \nonumber\\
    &=:
    I_1+I_2+\cdots +I_{13}.
\end{align}
We estimate each term $I_j$, where $j=2,3,\ldots, 13$, by using H\"older's and Young's inequalities in a standard way and noting that $\bff{u}^\varepsilon \in B$ for $\varepsilon\in [0,\sigma/\kappa_1]$. Applying Lemma~\ref{lem:H1 unif e}--\ref{lem:H4 unif}, together with \eqref{equ:Phi av aw norm} and~\eqref{equ:Phi d Wkp} as necessary, using arguments which are by now standard we obtain
\begin{align*}
    \abs{I_2}
    &\leq
    \frac{\sigma\kappa_2}{2} \norm{|\bff{u}^\varepsilon| |\bff{v}|}{\bb{L}^2}^2
    +
    \frac{\sigma\kappa_2}{2} \norm{\bff{u}^0 \cdot \bff{v}}{\bb{L}^2}^2,
    \\
    \abs{I_3}
    &\leq
    C \left(1+\norm{\bff{u}^\varepsilon}{\bb{L}^\infty}^4 + \norm{\bff{u}^0}{\bb{L}^\infty}^4 \right) \norm{\bff{v}}{\bb{L}^2}^2
    \leq
    C(1+\widetilde{\rho_2}^2) \norm{\bff{v}}{\bb{L}^2}^2,
    \\
    \abs{I_4}
    &\leq
    C\norm{\bff{v}}{\bb{L}^2}^2
    \\
    \abs{I_5}
    &\leq
    C\varepsilon^2 \norm{\nabla\Delta \bff{u}^\varepsilon}{\bb{L}^2}^2
    +
    \frac{\eta}{8} \norm{\nabla \bff{v}}{\bb{L}^2}^2
    \leq
    C\widetilde{\rho_3} \varepsilon^2 
    +
    \frac{\eta}{8} \norm{\nabla \bff{v}}{\bb{L}^2}^2,
    \\
    \abs{I_6}
    &\leq
    C\varepsilon^2 \norm{\bff{u}^\varepsilon}{\bb{L}^\infty}^4 \norm{\nabla \bff{u}^\varepsilon}{\bb{L}^2}^2
    +
    \frac{\eta}{8} \norm{\nabla \bff{v}}{\bb{L}^2}^2
    \leq
    C\widetilde{\rho_2}^3 \varepsilon^2
    +
    \frac{\eta}{8} \norm{\nabla \bff{v}}{\bb{L}^2}^2,
    \\
    \abs{I_7}
    &\leq
    C\varepsilon^2 \norm{\bff{u}^\varepsilon}{\bb{H}^1}^2
    +
    \frac{\eta}{8} \norm{\nabla \bff{v}}{\bb{L}^2}^2
    \leq
    C \widetilde{\rho_1} \varepsilon^2 
    +
    \frac{\eta}{8} \norm{\nabla \bff{v}}{\bb{L}^2}^2,
    \\
    \abs{I_8}
    &\leq
    C\varepsilon^2 \norm{\bff{u}^\varepsilon}{\bb{H}^2}^2
    +
    C \norm{\bff{v}}{\bb{L}^2}^2
    \leq
    C\widetilde{\rho_2} \varepsilon^2
    +
    C \norm{\bff{v}}{\bb{L}^2}^2,
    \\
    \abs{I_9}
    &\leq
    C \norm{\nabla \bff{u}^0}{\bb{L}^\infty}^2 \norm{\bff{v}}{\bb{L}^2}^2
    +
    \frac{\eta}{8} \norm{\nabla\bff{v}}{\bb{L}^2}^2
    \leq
    C\widetilde{\rho_3}^2 \norm{\bff{v}}{\bb{L}^2}^2
    +
    \frac{\eta}{8} \norm{\nabla \bff{v}}{\bb{L}^2}^2,
    \\
    \abs{I_{10}}
    &\leq
    C\left(1+\norm{\bff{u}^0}{\bb{L}^\infty}^2 \right) \left(1+\norm{\bff{u}^\varepsilon}{\bb{L}^\infty}^4 +\norm{\bff{u}^0}{\bb{L}^\infty}^4 \right) \norm{\bff{v}}{\bb{L}^2}^2
    \leq
    C(1+\widetilde{\rho_2}^3) \norm{\bff{v}}{\bb{L}^2}^2,
    \\
    \abs{I_{11}}
    &\leq
    C\norm{\bff{u}^0}{\bb{L}^\infty}^2 \norm{\bff{v}}{\bb{L}^2}^2
    \leq 
    C\widetilde{\rho_2} \norm{\bff{v}}{\bb{L}^2}^2,
    \\
    \abs{I_{12}}
    &\leq
    C \norm{\bff{\nu}}{\bb{L}^\infty(\mathscr{O};\bb{R}^d)} \left(1+ \norm{\bff{u}^0}{\bb{L}^\infty}^2 \right) \norm{\bff{v}}{\bb{L}^2}^2
    +
    \frac{\eta}{8} \norm{\nabla \bff{v}}{\bb{L}^2}^2
    \leq 
    C(1+\widetilde{\rho_2}) \norm{\bff{v}}{\bb{L}^2}^2
    +
    \frac{\eta}{8} \norm{\nabla \bff{v}}{\bb{L}^2}^2,
    \\
    \abs{I_{13}}
    &\leq
    C\left(1+\norm{\bff{u}^\varepsilon}{\bb{L}^\infty}^2 + \norm{\bff{u}^0}{\bb{L}^\infty}^2\right) \norm{\bff{v}}{\bb{L}^2}^2
    +
    \frac{\eta}{8} \norm{\nabla \bff{v}}{\bb{L}^2}^2
    \leq
    C(1+\widetilde{\rho_2}) \norm{\bff{v}}{\bb{L}^2}^2
    +
    \frac{\eta}{8} \norm{\nabla \bff{v}}{\bb{L}^2}^2,
\end{align*}
where we also used~\eqref{equ:Rvw vw} and~\eqref{equ:Sv Sw v min w} to estimate $I_{12}$ and $I_{13}$ respectively. Moreover, we used the inequalities~\eqref{equ:tt1 H2}, \eqref{equ:tt1 H3}, and the Sobolev embedding in the second step for each inequality (if there is any). Altogether, substituting these into~\eqref{equ:I1 to I12 Se} and applying the Gronwall inequality, we obtain
\begin{equation}\label{equ:vt eps L2}
    \norm{\bff{v}(t)}{\bb{L}^2}^2
    +
    \int_0^t \norm{\nabla \bff{v}(s)}{\bb{L}^2}^2 \ds 
    +
    \int_0^t \varepsilon \norm{\Delta \bff{v}(s)}{\bb{L}^2}^2 \ds
    \leq
    C\varepsilon^2 e^{Ct}.
\end{equation}
Next, we take the inner product of~\eqref{equ:dtv eq} with $-\Delta \bff{v}$ and integrate by parts as necessary to obtain
\begin{align}\label{equ:J1 to J16 nab v}
    &\frac12 \ddt \norm{\bff{v}}{\bb{H}^1}^2
    +
    \eta \norm{\Delta \bff{v}}{\bb{L}^2}^2
    \nonumber\\
    &=
    \frac12 \ddt \norm{\bff{v}}{\bb{L}^2}^2
    +
    \sigma\kappa_1 \norm{\nabla \bff{v}}{\bb{L}^2}^2
    +
    \sigma\kappa_2 \inpro{|\bff{u}^\varepsilon|^2 \bff{v}}{\Delta \bff{v}}_{\bb{L}^2}
    +
    \sigma\kappa_2 \inpro{\big((\bff{u}^\varepsilon+\bff{u}^0)\cdot \bff{v}\big) \bff{u}^0}{\Delta \bff{v}}_{\bb{L}^2}
    \nonumber\\
    &\quad
    -
    \sigma \inpro{\Phi_{\mathrm{a}}(\bff{u}^\varepsilon)-\Phi_{\mathrm{a}}(\bff{u}^0)}{\Delta \bff{v}}_{\bb{L}^2}
    -
    \sigma \inpro{\Phi_{\mathrm{d}}(\bff{v})}{\Delta \bff{v}}_{\bb{L}^2}
    +
    \varepsilon \inpro{\Delta^2 \bff{u}^\varepsilon}{\Delta \bff{v}}_{\bb{L}^2}
    +
    \varepsilon\kappa_2 \inpro{\Delta\big(|\bff{u}^\varepsilon|^2 \bff{u}^\varepsilon\big)}{\Delta \bff{v}}_{\bb{L}^2}
    \nonumber\\
    &\quad
    +
    \varepsilon\inpro{\Delta \Phi_{\mathrm{a}}(\bff{u}^\varepsilon)}{\Delta \bff{v}}_{\bb{L}^2}
    +
    \varepsilon \inpro{\Delta \Phi_{\mathrm{d}}(\bff{v})}{\Delta \bff{v}}_{\bb{L}^2}
    +
    \gamma \inpro{\bff{v}\times \Delta \bff{u}^\varepsilon}{\Delta \bff{v}}_{\bb{L}^2}
    +
    \gamma \inpro{\bff{v}\times \Phi_{\mathrm{a}}(\bff{u}^\varepsilon)}{\Delta \bff{v}}_{\bb{L}^2}
    \nonumber\\
    &\quad
    +
    \gamma \inpro{\bff{u}^0 \times \left(\Phi_{\mathrm{a}}(\bff{u}^\varepsilon)- \Phi_{\mathrm{a}}(\bff{u}^0) \right)}{\Delta \bff{v}}_{\bb{L}^2}
    +
    \gamma \inpro{\bff{v}\times \Phi_{\mathrm{d}}(\bff{u}^\varepsilon)}{\Delta \bff{v}}_{\bb{L}^2}
    +
    \gamma \inpro{\bff{u}^0 \times \Phi_{\mathrm{d}}(\bff{v})}{\Delta \bff{v}}_{\bb{L}^2}
    \nonumber\\
    &\quad
    -
    \inpro{\mathcal{R}(\bff{u}^\varepsilon)-\mathcal{R}(\bff{u}^0)}{\Delta \bff{v}}_{\bb{L}^2}
    -
    \inpro{\mathcal{S}(\bff{u}^\varepsilon)-\mathcal{S}(\bff{u}^0)}{\Delta \bff{v}}_{\bb{L}^2}
    \nonumber\\
    &=: J_1+J_2+\cdots+J_{17}.
\end{align}
Using \eqref{equ:tt1 u H1}, \eqref{equ:tt1 H2}, \eqref{equ:tt1 H3}, and \eqref{equ:tt1 H4} whenever necessary, we estimate each term $J_k$, where $k=3,4,\ldots,17$, in the usual manner as follows:
\begin{align*}
    \abs{J_3}
    &\leq
    C \norm{\bff{u}^\varepsilon}{\bb{L}^\infty}^4 \norm{\bff{v}}{\bb{L}^2}^2
    +
    \frac{\eta}{18} \norm{\Delta \bff{v}}{\bb{L}^2}^2
    \leq
    C\widetilde{\rho_2}^2 \norm{\bff{v}}{\bb{L}^2}^2
    +
    \frac{\eta}{18} \norm{\Delta \bff{v}}{\bb{L}^2}^2,
    \\
    \abs{J_4}
    &\leq
    C\left(\norm{\bff{u}^\varepsilon}{\bb{L}^\infty}^4 + \norm{\bff{u}^0}{\bb{L}^\infty}^4 \right) \norm{\bff{v}}{\bb{L}^2}^2
    +
    \frac{\eta}{18} \norm{\Delta \bff{v}}{\bb{L}^2}^2
    \leq
    C\widetilde{\rho_2}^2 \norm{\bff{v}}{\bb{L}^2}^2
    +
    \frac{\eta}{18} \norm{\Delta \bff{v}}{\bb{L}^2}^2,
    \\
    \abs{J_5}
    &\leq
    C\norm{\bff{v}}{\bb{L}^2}^2
    +
    \frac{\eta}{18} \norm{\Delta \bff{v}}{\bb{L}^2}^2,
    \\
    \abs{J_6}
    &\leq
    C\norm{\bff{v}}{\bb{L}^2}^2
    +
    \frac{\eta}{18} \norm{\Delta \bff{v}}{\bb{L}^2}^2,
    \\
    \abs{J_7}
    &\leq
    C\varepsilon^2  \norm{\Delta^2 \bff{u}^\varepsilon}{\bb{L}^2}^2
    +
    \frac{\eta}{18} \norm{\Delta \bff{v}}{\bb{L}^2}^2
    \leq
    C\widetilde{\rho_4} \varepsilon^2
    +
    \frac{\eta}{18} \norm{\Delta \bff{v}}{\bb{L}^2}^2,
    \\
    \abs{J_8}
    &\leq
    C\varepsilon^2 \norm{\bff{u}^\varepsilon}{\bb{H}^2}^6
    +
    \frac{\eta}{18} \norm{\Delta \bff{v}}{\bb{L}^2}^2
    \leq
    C\widetilde{\rho_2}^3 \varepsilon^2
    +
    \frac{\eta}{18} \norm{\Delta \bff{v}}{\bb{L}^2}^2,
    \\
    \abs{J_9}
    &\leq
    C\varepsilon^2 \norm{\bff{u}^\varepsilon}{\bb{H}^2}^2
    +
    \frac{\eta}{18} \norm{\Delta \bff{v}}{\bb{L}^2}^2
    \leq
    C\widetilde{\rho_2} \varepsilon^2
    +
    \frac{\eta}{18} \norm{\Delta \bff{v}}{\bb{L}^2}^2,
    \\
    \abs{J_{10}}
    &\leq
    C\varepsilon^2 \norm{\bff{v}}{\bb{H}^2}^2
    +
    \frac{\eta}{18} \norm{\Delta \bff{v}}{\bb{L}^2}^2
    \leq
    C\widetilde{\rho_2} \varepsilon^2
    +
    \frac{\eta}{18} \norm{\Delta \bff{v}}{\bb{L}^2}^2,
    \\
    \abs{J_{11}}
    &\leq
    C \norm{\Delta \bff{u}^\varepsilon}{\bb{L}^4}^2 \norm{\bff{v}}{\bb{L}^4}^2
    +
    \frac{\eta}{18} \norm{\Delta \bff{v}}{\bb{L}^2}^2
    \leq
    C\widetilde{\rho_3} \norm{\bff{v}}{\bb{H}^1}^2 
    +
    \frac{\eta}{18} \norm{\Delta \bff{v}}{\bb{L}^2}^2,
    \\
    \abs{J_{12}} + \abs{J_{14}}
    &\leq
    C \norm{\bff{u}^\varepsilon}{\bb{L}^4}^2 \norm{\bff{v}}{\bb{L}^4}^2
    +
    \frac{\eta}{18} \norm{\Delta \bff{v}}{\bb{L}^2}^2
    \leq
    C \widetilde{\rho_1} \norm{\bff{v}}{\bb{H}^1}^2
    +
    \frac{\eta}{18} \norm{\Delta \bff{v}}{\bb{L}^2}^2,
    \\
    \abs{J_{13}} + \abs{J_{15}}
    &\leq
    C\norm{\bff{u}^0}{\bb{L}^4}^2 \norm{\bff{v}}{\bb{L}^4}^2
    +
    \frac{\eta}{18} \norm{\Delta \bff{v}}{\bb{L}^2}^2
    \leq
    C \widetilde{\rho_1} \norm{\bff{v}}{\bb{H}^1}^2
    +
    \frac{\eta}{18} \norm{\Delta \bff{v}}{\bb{L}^2}^2,
    \\
    \abs{J_{16}}
    &\leq
    C\left(1+\norm{\bff{u}^\varepsilon}{\bb{H}^2}^2 + \norm{\bff{u}^0}{\bb{H}^2}^2 \right) \norm{\bff{v}}{\bb{H}^1}^2
    +
    \frac{\eta}{18} \norm{\Delta \bff{v}}{\bb{L}^2}^2
    \leq
    C\left(1+\widetilde{\rho_2} \right) \norm{\bff{v}}{\bb{H}^1}^2
    +
    \frac{\eta}{18} \norm{\Delta \bff{v}}{\bb{L}^2}^2,
    \\
    \abs{J_{17}}
    &\leq
    C\left(1+\norm{\bff{u}^\varepsilon}{\bb{L}^\infty}^2 + \norm{\bff{u}^0}{\bb{L}^\infty}^2 \right) \norm{\bff{v}}{\bb{L}^2}^2
    +
    \frac{\eta}{18} \norm{\Delta \bff{v}}{\bb{L}^2}^2
    \leq
    C\left(1+\widetilde{\rho_2} \right) \norm{\bff{v}}{\bb{L}^2}^2
    +
    \frac{\eta}{18} \norm{\Delta \bff{v}}{\bb{L}^2}^2.
\end{align*}
Substituting these into~\eqref{equ:J1 to J16 nab v}, integrating over $(0,t)$, and using~\eqref{equ:vt eps L2}, we obtain the required result.
\end{proof}


\bibliographystyle{myabbrv}
\bibliography{mybib}

\end{document}